\documentclass[11pt]{article}
\usepackage[a4paper,margin=29mm]{geometry}
\usepackage{amsmath,amssymb,amsthm,mathtools,mathrsfs}
\usepackage{enumitem,microtype,booktabs,array}
\usepackage{xcolor}
\usepackage[colorlinks=true,linkcolor=blue!55!black,citecolor=blue!55!black,urlcolor=blue!55!black]{hyperref}
\usepackage[nameinlink,capitalise]{cleveref}

\newcommand{\cO}{\mathcal O}
\newcommand{\F}{\mathbb F}
\newcommand{\Cp}{\mathbb C_p}
\newcommand{\Iw}{\mathrm{Iw}}
\newcommand{\ur}{\mathrm{ur}}
\newcommand{\new}{\mathrm{new}}
\newcommand{\midd}{\mathrm{mid}}
\newcommand{\NP}{\operatorname{NP}}
\newcommand{\Dig}{\operatorname{Dig}}
\newcommand{\GL}{\operatorname{GL}}
\newcommand{\val}{v_p}
\newcommand{\eps}{\varepsilon}
\newcommand{\nS}{\mathrm{nS}}
\newtheorem{theorem}{Theorem}[section]
\newtheorem{proposition}[theorem]{Proposition}
\newtheorem{corollary}[theorem]{Corollary}
\newtheorem{lemma}[theorem]{Lemma}
\newtheorem{lemmanotation}[theorem]{Lemma--Notation}
\theoremstyle{definition}
\newtheorem{definition}[theorem]{Definition}
\newtheorem{notation}[theorem]{Notation}

\theoremstyle{remark}
\newtheorem{remark}[theorem]{Remark}

\title{The Local Ghost Theorem in the Tr\`es Ramifi\'ee Case}
\author{Liyan Wang}
\date{}

\begin{document}
\maketitle

\begin{abstract}
Let \(p\geq11\) and let
\(\bar\rho\cong\left(\begin{smallmatrix}\omega&*\\0&1\end{smallmatrix}\right)\)
be a tr\`es ramifi\'ee nonsplit extension.  For a primitive
projective-augmented module of the corresponding Steinberg type, we determine
the Iwahori and unramified dimensions and construct the associated ghost
series.  We prove the tr\`es ramifi\'ee analogues of the local results of
Liu--Truong--Xiao--Zhao and thereby establish the local ghost conjecture in
this case.  Here the Iwahori dimension increases by one between consecutive
relevant classical weights and may therefore be odd.  Accordingly, the proof
of the near-Steinberg results must treat both integral and half-integral
centres, as well as the fixed central index in the Atkin--Lehner symmetry.
The global applications in the tr\`es ramifi\'ee case and the analogous local
and global results in the peu ramifi\'ee case are the subject of ongoing work.
\end{abstract}

\tableofcontents

\section{Introduction}

The ghost conjecture of Bergdall and Pollack predicts slopes of modular forms
from a power series whose coefficients are explicit products of classical
weight parameters \cite{BP}.  Its local analogue, as formulated by Liu,
Truong, Xiao, and Zhao in \cite{LTXZ1}, replaces the global space of forms by
a primitive projective-augmented module.  The associated Iwahori and
unramified dimensions determine the multiplicities in the ghost
coefficients, while the augmented action defines a compact \(U_p\)-operator
on the corresponding space of abstract overconvergent forms.

Liu, Truong, Xiao, and Zhao develop the combinatorial part of this theory in
\cite{LTXZ1}.  Their Section~4 establishes the dimension formulas, halo
behavior, compatibility with the theta maps and Atkin--Lehner involutions,
\(p\)-stabilization, and ghost duality.  Their Section~5 characterizes the
vertices of the ghost Newton polygons in terms of near-Steinberg ranges.  The
second paper \cite{LTXZ2} proves the local ghost conjecture by combining the
corank of the specialized \(U_p\)-matrix, the modified Mahler estimates, and
Lagrange interpolation for arbitrary finite minors.  The purpose of the
present note is to establish the corresponding results for the first excluded
tr\`es ramifi\'ee case.

After twisting, the generic reducible representations treated in
\cite{LTXZ1,LTXZ2} have the form
\[
 \begin{pmatrix}\omega^{a+1}&*\\0&1\end{pmatrix},
 \qquad 1\leq a\leq p-4
\]
We instead fix
\[
 \bar\rho\cong
 \begin{pmatrix}\omega&*\\0&1\end{pmatrix},\qquad *\ne0
\]
with tr\`es ramifi\'ee extension class, as recalled in
Section~\ref{sec:setup}.  The relevant mod-\(p\)
\(K=\GL_2(\mathbb Z_p)\)-cosocle is the single Steinberg Serre weight
\(\sigma_{p-1,0}=\operatorname{Sym}^{p-1}\mathbb F^2\) \cite{PasBM,HP}.

Our main result is the following.

\begin{theorem}\label{thm:intro}
Let \(p\geq11\), let \(\widetilde H\) be a primitive
projective-augmented module of type \(\bar\rho\), and let
\(\varepsilon=\omega^{-s_\varepsilon}\times\omega^{s_\varepsilon}\) be
  relevant.  After the fixed unramified normalization specified in
\cref{prop:Pi}, write
\[
 C_{\widetilde H}^{(\varepsilon)}(w,T)
 =\det(1-TU_p\mid
       \mathrm S_{\widetilde H}^{\dagger,(\varepsilon)})
\]
and let \(G^{(\varepsilon)}(w,T)\) be the corresponding ghost series.  Then, for every \(w_\star\in\mathfrak m_{\Cp}\),
\[
 \NP\bigl(C_{\widetilde H}^{(\varepsilon)}(w_\star,-)\bigr)
 =
 \NP\bigl(G^{(\varepsilon)}(w_\star,-)\bigr)
\]
The equality includes the slope-zero term when \(s_\varepsilon=0\).
\end{theorem}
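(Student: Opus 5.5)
The plan is to follow the architecture of \cite{LTXZ2}, replacing each generic input with its tr\`es ramifi\'ee counterpart. Both Newton polygons are determined by their vertices, and the ghost polygon is controlled by Section-5-type combinatorics. It therefore suffices to show two things for every \(w_\star\in\mathfrak m_{\Cp}\) and every \(n\) for which \((n,v_p(g_n(w_\star)))\) is a vertex of \(\NP(G^{(\varepsilon)}(w_\star,-))\):
\[
 v_p\bigl(c_n(w_\star)\bigr)=v_p\bigl(g_n(w_\star)\bigr),
\]
and \(v_p(c_k(w_\star))\ge \NP(G^{(\varepsilon)}(w_\star,-))(k)\) for all \(k\).

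The first step is to set up the power basis of \(\mathrm S_{\widetilde H}^{\dagger,(\varepsilon)}\) using the single Serre weight \(\sigma_{p-1,0}\). I will use the unramified normalization of \cref{prop:Pi} to write \(U_p\) as an infinite matrix. From the explicit action of the augmented \(\begin{pmatrix}p&0\\0&1\end{pmatrix}\)-operator on the Mahler basis I will prove the modified Mahler estimates. These give the lower bound for each entry \(U_{ij}\) in terms of \(\deg\) of the basis vectors. The resulting bound is
\[
 v_p\bigl(\det(U_{\underline\zeta\times\underline\xi})\bigr)\ \ge\ \text{ghost lower bound},
\]
and it holds for \emph{arbitrary} finite minors, with the degree function adapted so that the Iwahori dimension grows by one per relevant weight. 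Summing over principal minors gives the inequality \(v_p(c_k(w))\ge v_p(g_k(w))\) coefficientwise as functions on the weight disc, hence at every \(w_\star\).

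The second step is the corank input. At each relevant classical weight \(w_k\) with \(k\) in the near-Steinberg range of \(n\), I will use the theta map and Atkin--Lehner compatibility from the tr\`es ramifi\'ee analogue of \cite[\S4]{LTXZ1}. These show that the specialization \(U_p(w_k)\) restricted to a suitable block has corank at least the ghost multiplicity \(m_n(k)\). The fixed central index in the Atkin--Lehner symmetry, which appears when \(d_k^{\Iw}\) is odd, is handled by noting that the corresponding eigenvalue \(\pm p^{(k-2)/2}\) is pinned by \(p\)-stabilization. Consequently the minor vanishes to order \(m_n(k)\) at \(w_k\). I will then apply Lagrange interpolation to each finite minor, viewed as a function of \(w\) divided by \(\prod_k (w-w_k)^{m_n(k)}\). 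Combined with the Mahler bound, this upgrades the inequality to the statement that every \(n\times n\) minor is \(g_n(w)\) times a function of valuation at least \(0\) on the disc, uniformly for \(w_\star\in\mathfrak m_{\Cp}\).

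The third step, which I expect to be the main obstacle, is to show that at a ghost vertex a single principal minor (the one indexed by the first \(n\) basis vectors) has quotient by \(g_n\) a unit, and that all other minors are strictly dominated. This requires the Section-5 characterization of vertices through near-Steinberg ranges in the tr\`es ramifi\'ee setting. Since the Iwahori dimension can be odd, centres may be integral or half-integral, and each case must be checked separately in the strict inequality comparing \(\Delta_{k,\ell}\) with the tangent line. I would first treat the half-integral centres by an inductive cancellation argument as in \cite{LTXZ2}, then the integral centres where the fixed central index contributes. Finally I would treat \(s_\varepsilon=0\), where the slope-zero term comes from the ordinary vector and must be matched against \(g_1\) being a unit.
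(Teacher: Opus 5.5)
\textbf{There is a genuine gap.} Your target is the right one: every coefficient point on or above the ghost polygon, with equality at its vertices. But your first and second steps replace it by a statement that is false, and that discards the main difficulty.

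\textbf{The divisibility claim fails.} The Gauss-norm bound \eqref{eq:power-minor-bound} only gives $c_n\in p^{\deg g_n}\cO\langle w/p\rangle$. It does not give $v_p(c_n(w_\star))\geq v_p(g_n(w_\star))$ pointwise. Neither $c_n$ nor a general $n\times n$ minor is divisible by $g_n$. By \cref{prop:corank}, the corank of $U^{\dagger}(\underline\zeta\times\underline\xi)$ at $w_k$ is only bounded below by $n-d_k^{\ur}-r_{\underline\zeta\times\underline\xi}(k)-s_{\underline\xi}(k)$. This falls below $m_n(k)$ as soon as the minor contains a reflected Atkin--Lehner pair (or the fixed central index) or a nonclassical column. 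Only the leading principal minor is forced to vanish to order $m_n(k)$.

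For example, take $k$ relevant with $d_k^{\ur}=0$ and $d_k^{\Iw}\geq3$ odd; such $k$ exist when $s_\varepsilon\geq2$. Then $m_1(k)=1$ and $g_1(w_k)=0$. Yet \cref{prop:classical-matrix} shows that the $1\times1$ principal minor at the central index equals $p^{(k-2)/2}$ at $w_k$. The row bound of \cref{lem:weak-hodge-block} for the nonclassical diagonal entries then gives $v_p(c_1(w_k))=(k-2)/2<\infty$.

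\textbf{What is actually needed.} The true statement is the interpolation identity \eqref{eq:lagrange-cn}. The heart of the proof is bounding its correction terms $A_{k,i}$, $i<m_n(k)$, as in \cref{thm:minors}. Then \cref{prop:lagrange-term-bound} puts these terms on or above the ghost polygon, and strictly above at vertices. \cref{thm:minors} is proved by induction on $n$ over all minors, not only principal ones. Its ingredients are:
\begin{itemize}
\item cofactor expansion along the deficient columns against the constant Atkin--Lehner matrix $L_k$ (\cref{lem:cofactor-congruences});
\item bounds for Taylor coefficients beyond the multiplicity (\cref{prop:taylor-coefficients});
\item comparison of the normalizations $\eta_j/\eta_1^j$ of cofactors of different sizes (\cref{prop:eta-estimate}).
\end{itemize}
The last item is exactly where the unit step in $d_k^{\Iw}$ and the half-integral centres enter. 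Nothing in your third step plays this role. The other minors are not strictly dominated; their contributions are precisely the near-Steinberg corrections.

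\textbf{The unit statement has no mechanism.} What must be a unit is the remainder $h_n$ of $c_n$ in \eqref{eq:lagrange-cn}, not the quotient of one minor by $g_n$. The Mahler estimate gives only its integrality. The paper instead evaluates at the auxiliary non-congruent weight $k=2+s_\varepsilon+(n-1)(p-1)+r$ with $r\in\{1,2\}\setminus\{s_\varepsilon\}$, so that $d_k^{\Iw}=n$.
\begin{itemize}
\item The theta factorization and Atkin--Lehner duality with $\varepsilon''=\omega^{-r}\times\omega^{r}$ give $v_p(c_n^{(\varepsilon)}(w_k))+v_p(c_n^{(\varepsilon'')}(w_k))=n(k-1)$.
\item This matches the ghost identity from \cref{prop:compatibility}, so both inequalities $v_p(c_n)\geq v_p(g_n)$ are equalities.
\item Because $r\neq0,p-2$, the vertex at $n$ is strict, so the truncated terms are strictly higher and $v_p(h_n(w_k))=0$ (\cref{lem:unit-remainder}).
\end{itemize}
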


The first difference from the generic case is that the power-basis degrees
form one arithmetic progression
\[
 s_\varepsilon,\quad s_\varepsilon+p-1,\quad
 s_\varepsilon+2(p-1),\ldots
\]
Thus \(d_k^{\Iw}\) increases by one, rather than by two, when the weight is
increased by \(p-1\).  In particular, \(d_k^{\Iw}/2\) and
\(d_k^{\new}/2\) may be half-integral.  The arguments of
\cite[Section~5]{LTXZ1} must consequently be carried out on the affine lattice
\(\mathbb Z-d_k^{\Iw}/2\).  The interval of radius \(1/2\) about a
half-integral centre contains no coefficient index, and the first nonempty
near-Steinberg interval has radius \(3/2\).  Moreover, the anti-diagonal
Atkin--Lehner matrix has a fixed central index when \(d_k^{\Iw}\) is odd; this
index must be counted once in the corank calculation.

The most substantial change occurs in the induction on finite minors in
\cite[Sections~5--6]{LTXZ2}.  Proposition~5.4 of that paper estimates the
Taylor coefficients beyond the orders prescribed by the ghost
multiplicities.  Lemma~6.7 then applies those estimates to the smaller minors
arising from a cofactor expansion.  Proposition~6.14 compares the Taylor
coefficients obtained after dividing cofactors of different sizes by their
respective ghost polynomials.  In the strongly generic case, this comparison
uses the fact that \(d_{k'}^{\Iw}\) changes by two.  In the present case it
changes by one, and the function \(q\mapsto m_q(k')\) can change slope inside
\((n-j,n)\) because one of
\[
 d_{k'}^{\ur},\qquad d_{k'}^{\Iw}-d_{k'}^{\ur},
 \qquad \frac{d_{k'}^{\Iw}}2
\]
lies in that interval, with the last number possibly half-integral.  We bound
the sum of the corresponding valuations \(v_p(w_k-w_{k'})\), taking account
of multiplicities, and use this bound to control the Taylor coefficients of
the quotient of the two normalizing ghost polynomials.  The case
\(m_{n-j}(k)=0\) requires a separate argument because the numerical inequality
used when \(m_{n-j}(k)>0\) is no longer available.  These estimates give the
analogue of \cite[Proposition~6.14]{LTXZ2}.  Finally, the proof that the
interpolation remainder is a unit also differs from the proof of
\cite[Proposition~4.4]{LTXZ2}: we choose a classical weight in a different
congruence class and combine the theta factorization with Atkin--Lehner
duality.

The structure of this note is as follows.  Section~\ref{sec:setup} gives the
local representation-theoretic input and fixes notation.  Section~3
constructs the ghost series and proves the
analogues of the main results in Section~4 of \cite{LTXZ1}.  Section~4
develops the half-integral near-Steinberg theory corresponding to
Section~5 of that paper.  Section~5 proves the local part of
\cite{LTXZ2}.

\section{Notation and representation-theoretic input}\label{sec:setup}

This section isolates exactly the representation-theoretic input used by
the local theorem.  We first define the tr\`es ramifi\'ee extension
class and recall its single Steinberg projective weight.  We then define the
primitive projective-augmented module and normalize the action of \(\Pi\).
No deformation-space or global-family notation is introduced.

Throughout the paper, $p$ is a prime with $p\geq11$.  Put
\[
 G=\GL_2(\mathbb Q_p),\qquad K=\GL_2(\mathbb Z_p)
\]
Let $\Iw\subset K$ be the standard upper Iwahori subgroup and let $I_1$ be
its pro-$p$ Sylow subgroup.  We write
$\Delta=\mu_{p-1}\subset\mathbb Z_p^\times$ for the Teichm\"uller subgroup and
let $\Delta^2\subset K$ be the diagonal torsion subgroup.  We write
$\mathfrak m_{\Cp}$ for the maximal ideal of $\cO_{\Cp}$.  Put
$\Pi=\left(\begin{smallmatrix}0&1\\p&0\end{smallmatrix}\right)$.  We use the right-action
conventions of \cite{LTXZ1,LTXZ2}.  Let $E/\mathbb Q_p$ be a finite extension containing
the required character values, with ring of integers $\cO$, uniformizer $\varpi$, and
residue field $\F$.  We enlarge \(E\) once and for all so that it contains a
chosen square root \(p^{1/2}\).  Half-integral powers of \(p\) below occur only
in symmetric row--column normalizations; all final valuations are independent
of this choice.  For the local theorem we normalize the central element
$p\,\mathrm{id}$ to act trivially, after the unramified twist fixed below.

Let \(G_{\mathbb Q_p}\) be the absolute Galois group of \(\mathbb Q_p\).  The
nonsplit representation in the introduction determines a nonzero extension class
\[
 c_{\bar\rho}\in
 \operatorname{Ext}^1_{G_{\mathbb Q_p}}(1,\omega)
 =H^1(G_{\mathbb Q_p},\F(\omega))
\]
Local Tate duality gives a perfect pairing
\[
 H^1(G_{\mathbb Q_p},\F(\omega))\times
 H^1(G_{\mathbb Q_p},\F)\longrightarrow
 H^2(G_{\mathbb Q_p},\F(\omega))\cong\F
\]
By local class field theory,
\(H^1(G_{\mathbb Q_p},\F)\cong
\operatorname{Hom}_{\mathrm{cont}}(\mathbb Q_p^\times,\F)\).  Let
\(\operatorname{ord}_p\) denote the unramified class, normalized by
\(\operatorname{ord}_p(p)=1\) and
\(\operatorname{ord}_p|_{\mathbb Z_p^\times}=0\).

\begin{definition}
The representation \(\bar\rho\) is \emph{peu ramifi\'ee} if
\(\langle c_{\bar\rho},\operatorname{ord}_p\rangle=0\), and it is
\emph{tr\`es ramifi\'ee} otherwise.
\end{definition}

This is a condition on the line spanned by the extension class, not on the
numerical size of a cocycle.  It is the convention used in
\cite[Section~6.2]{PasBM}.  The part of the calculation of
Hu--Pa\v sk\=unas used below is the mod-\(\varpi\) \(K\)-cosocle:
\[
 \sigma_{p-1,0}
 \quad\text{and}\quad
 \sigma_{p-1,0}^{\oplus2}\oplus\sigma_{0,0}
\]
these are the tr\`es and peu ramifi\'ee cases, respectively; see
\cite[Lemma~3.4 and Theorem~3.5]{HP}.  Only the first case enters the proofs.
The three projective summands in the second explain why the same construction
does not formally extend to the peu ramifi\'ee case.

We now assume that \(\bar\rho\) is tr\`es ramifi\'ee.  We use the
terminology of \cite[Definition~2.22]{LTXZ1}.  A
\emph{projective-augmented module} is a finite projective right
\(\cO[\![K]\!]\)-module \(\widetilde H\), equipped with a continuous right
\(\GL_2(\mathbb Q_p)\)-action extending its \(K\)-action.  It has type
\(\bar\rho\) and multiplicity \(m\) when:
\begin{enumerate}[label=(\arabic*)]
\item its quotient by \(\varpi\) and by \(g-1\), for
      \(g\in1+pM_2(\mathbb Z_p)\), is \(m\) copies of the projective
      envelope of \(\sigma_{p-1,0}\) over
      \(\F[\GL_2(\mathbb F_p)]\);
\item \(p\,\mathrm{id}\) acts by a scalar unit;
\item the pro-\(p\) scalar action is separated by the universal
      determinant twist:
\[
 \widetilde H\cong
 \widetilde H_0\widehat\otimes_{\cO}
 \cO[\![1+p\mathbb Z_p]\!]
\]
where the scalar subgroup acts trivially on \(\widetilde H_0\) and on
the second factor through the normalized determinant map of
\cite[Definition~2.22(3)]{LTXZ1}.
\end{enumerate}
The module is \emph{primitive} when \(m=1\).  The Hu--Pa\v sk\=unas
calculation quoted above identifies the residual projective envelope; the
augmentation to \(\GL_2(\mathbb Q_p)\) is part of the hypothesis of the
local theorem.

\begin{proposition}\label{prop:Pi}
Let \(\widetilde H\) be a primitive projective-augmented module of type
\(\bar\rho\).  After a finite extension of coefficients and an unramified
twist, \(p\,\mathrm{id}\) acts trivially and there is an
\(\cO[\![I_1]\!]\)-basis \(e\), fixed by \(\Delta^2\), such
that
\[
 e\Pi=-e,\qquad (er)\Pi=-e\varphi(r)
 \quad(r\in\cO[\![I_1]\!])
\]
where \(\varphi(r)=\Pi^{-1}r\Pi\).  Changing the normalization by an
unramified unit twist does not change any Newton slope.
\end{proposition}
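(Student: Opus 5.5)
Following \cite[Proposition~2.24 and Lemma~2.25]{LTXZ1}, I first twist away the central action, then produce a \(\Delta^2\)-fixed generator over \(\cO[\![I_1]\!]\), and finally rescale \(\Pi\) on it.

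\emph{Step 1: central twist.} By condition (2), \(p\,\mathrm{id}\) acts by a unit \(\lambda\in\cO^\times\). After adjoining a square root \(\mu\) of \(\lambda^{-1}\) (a finite extension of coefficients), twist by the unramified character \(\mathrm{unr}(\mu)\circ\det\). Since \(\det(p\,\mathrm{id})=p^2\), the twisted module has \(p\,\mathrm{id}\) acting trivially. The twist does not change the \(K\)-action, so the type is preserved.

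\emph{Step 2: the generator.} By condition (1), \(\widetilde H/(\varpi,\mathfrak m_{I_1})\) is the \(I_1\)-coinvariants of the projective envelope of \(\sigma_{p-1,0}\) over \(\F[\GL_2(\F_p)]\). That envelope restricted to the unipotent \(p\)-Sylow is free of rank one, so \(\widetilde H\) is free of rank one over \(\cO[\![I_1]\!]\), modulo the determinant factor in (3), which is absorbed into the scalar part of \(I_1\). The character of \(\Delta^2\) on the one-dimensional coinvariant space is that of the Steinberg weight \(\sigma_{p-1,0}\) on its highest vector, namely \(\omega^{p-1}\otimes1\), which is trivial. Because \(|\Delta^2|\) is prime to \(p\), averaging a lift over \(\Delta^2\) gives a \(\Delta^2\)-fixed lift \(e\), and by Nakayama \(e\) is an \(\cO[\![I_1]\!]\)-basis.

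\emph{Step 3: normalizing \(\Pi\).} The element \(\Pi\) normalizes \(I_1\) and satisfies \(\Pi^{-1}\operatorname{diag}(a,d)\Pi=\operatorname{diag}(d,a)\). Hence \(e\Pi\) is again a \(\Delta^2\)-fixed generator, so \(e\Pi=eu\) with \(u\in\cO[\![I_1]\!]^\times\) fixed under \(\Delta^2\)-conjugation. Since \(\Pi^2=p\,\mathrm{id}\) acts trivially, we get
\[
e=e\Pi^2=eu\,\varphi(u).
\]
Changing the basis to \(e'=ev\) gives
\[
e'\Pi=e'\,v^{-1}u\,\varphi(v).
\]
So the task is to solve \(v^{-1}u\varphi(v)=-1\) for a \(\Delta^2\)-invariant unit \(v\).

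First I reduce to a scalar. The reduction of \(u\) modulo the augmentation ideal is a scalar \(c\in\cO^\times\) with \(c^2=1\). I would set \(v=1+x\) and solve the equation successively modulo powers of the augmentation ideal, using that \(\varphi\) is a contracting-type twisted endomorphism on the relevant graded pieces. This is the content of \cite[Lemma~2.25]{LTXZ1}; the cocycle condition \(u\varphi(u)=1\) is exactly what makes each step solvable. Alternatively one can set \(v=\sum_n(\dots)\) using \(u\varphi(u)=1\), as in a Hilbert 90 argument for the order-two automorphism \(\varphi\).

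Then I adjust the sign. If the resulting scalar is \(+1\), twist further by the unramified quadratic character \(\mathrm{unr}(-1)\circ\det\) composed appropriately, or equivalently by an unramified character sending \(p\) to \(\sqrt{-1}\). That twist multiplies the action of \(\Pi\) by \(-1\), since \(\det\Pi=-p\), and keeps \(p\,\mathrm{id}\) trivial. With \(e\Pi=-e\) in hand, the formula \((er)\Pi=e\Pi\,\Pi^{-1}r\Pi=-e\varphi(r)\) follows immediately.

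\emph{Slope invariance.} An unramified twist by a unit character \(\chi\) multiplies \(U_p\), which is built from \(\Pi\)-type double cosets of determinant \(p\) times a unit, by the unit \(\chi(p)\) on \(\mathrm S^\dagger\). Hence \(\det(1-TU_p)\) changes by \(T\mapsto\chi(p)T\), and valuations of roots are unchanged.

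\emph{Main obstacle.} I expect the hardest step to be the successive approximation in Step 3. One must control \(\varphi\) on \(\cO[\![I_1]\!]\), which is not an endomorphism of small norm on the whole ring, and keep the solution \(\Delta^2\)-invariant. I would try first to quote the argument of \cite[Lemma~2.25]{LTXZ1} directly, checking that it used only \(\Pi^2\) acting trivially, the rank-one freeness, and the \(\Delta^2\)-invariance, not genericity of \(a\).
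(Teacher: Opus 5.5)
Your Steps~1 and~2 are sound. Averaging a lift over \(\Delta^2\), whose order is prime to \(p\), is a valid and simpler substitute for the paper's Schur--Zassenhaus argument on \((1+J)\rtimes\Delta^2\). Your reduction of Step~3 to solving \(v^{-1}u\varphi(v)=-1\) for a \(\Delta^2\)-invariant unit \(v\), given \(u\varphi(u)=1\), is also correct.

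\textbf{The gap.} That equation is the one genuinely new step in this rank-one situation, and the proposal does not solve it.
\begin{itemize}
\item The justification you sketch is wrong. \(\varphi\) is conjugation by \(\Pi\), which normalizes \(I_1\) and satisfies \(\varphi^2=\mathrm{id}\) because \(\Pi^2\) is central. So \(\varphi\) is a ring automorphism of the local ring \(\cO[\![I_1]\!]\) preserving every power of its maximal ideal \(J\). It acts as an involution on each graded piece and is not contracting. A layer-by-layer argument can be rescued, but solvability at each layer comes from invertibility of \(2=\#\langle\varphi\rangle\), not from contraction.
\item The appeal to LTXZ1 does not transfer. There the two \(\cO[\![I_1]\!]\)-generators carry distinct \(\Delta^2\)-characters and \(\Pi\) interchanges them, so one simply defines \(e_2=e_1\Pi\) and \(e_2\Pi=e_1\) is automatic. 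No twisted-conjugacy equation arises in that setting.
\item The Hilbert~90 alternative is the right idea, but it is left as the placeholder \(v=\sum_n(\dots)\).
\end{itemize}
There is also a slip in the sign adjustment. An unramified \(\eta\) with \(\eta(p)=\sqrt{-1}\) multiplies \(\Pi\) by \(\eta(-p)=\sqrt{-1}\) and \(p\,\mathrm{id}\) by \(\eta(p)^2=-1\). So it is not equivalent to the quadratic twist and would undo Step~1; only \(\eta(p)=-1\) works.

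\textbf{How to close it.} Reducing \(u\varphi(u)=1\) modulo \(J\) gives \(\bar u^{2}=1\), so \(\bar u=\pm1\) in \(\F\). If \(\bar u=1\), first twist by the unramified character with \(\eta(p)=-1\). This replaces \(u\) by \(-u\), keeps \(p\,\mathrm{id}\) trivial, and leaves the \(\Delta^2\)- and \(I_1\)-actions unchanged. Now with \(\bar u=-1\), put \(v=1-u\). It is a unit because \(\bar v=2\neq0\) (as \(p\) is odd), and it is \(\Delta^2\)-invariant because \(u\) is. Moreover
\[
 v^{-1}u\varphi(v)=v^{-1}\bigl(u-u\varphi(u)\bigr)=v^{-1}(u-1)=-1 .
\]
Hence \(e'=ev\) is a \(\Delta^2\)-fixed basis with \(e'\Pi=-e'\).

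This is the explicit form of the paper's argument. The paper applies profinite Schur--Zassenhaus to \((1+J)^{\Delta^2}\rtimes\langle\varphi\rangle\), a pro-\(p\) group extended by a group of order \(2\), to find \(b\) with \(b\varphi(b)^{-1}=-a\); one may take \(b=(1-a)/2\). The paper also merges your Step~1 and the sign fix into one twist, by choosing the square root of \(\xi^{-1}\) with prescribed reduction \(-\bar a^{-1}\).
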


\begin{proof}
Restriction from \(K\) to \(I_1\) preserves projectivity because
\(\cO[\![K]\!]\) is finite free over \(\cO[\![I_1]\!]\).  The latter
completed group algebra is local, so the restriction of
\(\widetilde H\) is free.  Its rank is the dimension of the reduction of
its \(I_1\)-coinvariants, that is, the upper-unipotent coinvariants of
\(\operatorname{Proj}_{p-1,0}=\sigma_{p-1,0}\), which is 1-dimensional. Hence \(\widetilde H\) is free of
rank one over \(\cO[\![I_1]\!]\).

Let \(J\) be the Jacobson radical of \(\cO[\![I_1]\!]\).  If \(e'\) is an
\(\cO[\![I_1]\!]\)-basis, its \(\Delta^2\)-multipliers form a continuous
cocycle with values in \(1+J\), since the residual coinvariants have trivial
\(\Delta^2\)-character.  The profinite Schur--Zassenhaus theorem, applied to
\((1+J)\rtimes\Delta^2\), gives a change of basis to an element \(e_0\) fixed
by \(\Delta^2\).

Write \(e_0\Pi=e_0a\) with \(a\in \cO[\![I_1]\!]^\times\), and suppose that
\(p\,\mathrm{id}\) acts by \(\xi\in\cO^\times\).  Since
\(\Pi^2=p\,\mathrm{id}\), one has
\(\bar a^{\,2}=\bar\xi\).  After extending coefficients, choose an
unramified character \(\eta\) with
\[
 \eta(p)^2=\xi^{-1},\qquad
 \overline{\eta(p)}=-\bar a^{-1}
\]
Twisting through the determinant makes \(p\,\mathrm{id}\) act trivially
and gives \(a\equiv-1\pmod J\).  Applying \(\Pi\) twice now gives
\(a\varphi(a)=1\), where \(\varphi(r)=\Pi^{-1}r\Pi\).
Thus \(c=-a\in(1+J)^{\Delta^2}\) satisfies
\(c\varphi(c)=1\).

Applying the profinite Schur--Zassenhaus theorem to
\((1+J)^{\Delta^2}\rtimes\langle\varphi\rangle\), we obtain
\(b\in(1+J)^{\Delta^2}\) with
\(b\varphi(b)^{-1}=c\).  For \(e=e_0b\),
\[
 e\Pi=e_0a\varphi(b)=-e_0b=-e
\]
and hence \((er)\Pi=(e\Pi)\varphi(r)=-e\varphi(r)\).
\end{proof}

For the remainder of the local argument, \(\widetilde H\) denotes an arbitrary
primitive projective-augmented module of type \(\bar\rho\), normalized by the
unramified twist in \cref{prop:Pi}, and \(e\) denotes the basis in
\cref{prop:Pi}.  The construction and all estimates commute with finite
extension of the coefficient field.

If \(V\) is a right \(I_1\)-module, evaluation at \(e\) identifies
\(\operatorname{Hom}_{\cO[\![I_1]\!]}(\widetilde H,V)\) with \(V\).  We write
\(e^*f\) for the homomorphism corresponding to \(f\in V\); imposing
\(\Iw\)-equivariance selects the diagonal-torsion character prescribed below.

A character relevant to $\bar\rho$ is uniquely written
\[
 \eps=\omega^{-s_\eps}\times\omega^{s_\eps},\qquad 0\leq s_\eps\leq p-2
\]
Put
\[
 \varepsilon_1=\omega^{-s_\varepsilon},\qquad
 \widetilde\varepsilon_1=\varepsilon_1\times\varepsilon_1,
 \qquad
 k_\eps=2+\{2s_\eps\},\qquad
 \delta_\eps=\left\lfloor\frac{2s_\eps}{p-1}\right\rfloor,
 \qquad t_\eps=s_\eps+\delta_\eps+1
\]
where braces denote the representative in $\{0,\ldots,p-2\}$.  Thus
 $\{2s_\eps\}=2s_\eps-(p-1)\delta_\eps$.  For $k\equiv k_\eps\pmod{p-1}$ we use the
notation of the reference paper
\[
 k=k_\eps+(p-1)k_\bullet,\qquad k_\bullet\in\mathbb Z_{\geq0}
\]
For such a weight,
\(\varepsilon(1\times\omega^{2-k})=\widetilde\varepsilon_1\), so the
unramified classical space with determinant character \(\varepsilon_1\) embeds
in the corresponding Iwahori space.  This is the specialization of the notation
in \cite[Section~2.4(4)--(6)]{LTXZ2} to the present type.
All empty sums and products below are interpreted as $0$ and $1$, respectively.

For every classical weight $k\equiv k_\eps\pmod{p-1}$, put
\[
 w_k=\exp(p(k-2))-1
\]
Let \(\mathcal W^{(\varepsilon)}\) be the corresponding rigid weight disc,
with coordinate \(w\).
Then
\begin{equation}\label{eq:weight-distance}
 v_p(w_k-w_{k'})=1+v_p(k-k')
 =1+v_p(k_\bullet-k'_\bullet)
\end{equation}

For a character $\psi$ of the diagonal quotient of the Iwahori, put
\[
\begin{aligned}
 \mathrm S_k^{\Iw}(\psi)
 &=\operatorname{Hom}_{\cO[\![\Iw]\!]}\bigl(\widetilde H,
       \cO[z]^{\le k-2}\otimes\psi\bigr),\\
 \mathrm S_k^{\ur}(\psi_1)
 &=\operatorname{Hom}_{\cO[\![K]\!]}\bigl(\widetilde H,
       \operatorname{Sym}^{k-2}\cO^2\otimes(\psi_1\circ\det)\bigr)
\end{aligned}
\]
Their ranks are denoted by $d_k^{\Iw}$ and $d_k^{\ur}$, and
$d_k^{\new}=d_k^{\Iw}-2d_k^{\ur}$.
Once the character is fixed, it is omitted from the dimension notation.

\begin{definition}\label{def:ghost}
The ghost multiplicities are defined, as in \cite[Definition~2.25]{LTXZ1}, by
\begin{equation}\label{eq:ghost-multiplicity}
 m_n(k)=
 \begin{cases}
 \min\{n-d_k^{\ur},\ d_k^{\Iw}-d_k^{\ur}-n\},
 &d_k^{\ur}<n<d_k^{\Iw}-d_k^{\ur},\\
 0,&\text{otherwise}
 \end{cases}
\end{equation}
and
\[
 g_n^{(\eps)}(w)=\prod_{k\equiv k_\eps\ (p-1)}(w-w_k)^{m_n(k)},
 \qquad G^{(\eps)}(w,T)=1+\sum_{n\geq1}g_n^{(\eps)}(w)T^n
\]
No parity assumption is made in this definition.
We omit the superscript \((\varepsilon)\) from \(g_n\), \(G\), and the
corresponding Fredholm coefficients whenever the character is clear.
\end{definition}

\section{Dimensions and the ghost series}

This section reproduces, for the present type, the dimension and coefficient
calculations of \cite[Section~4]{LTXZ1}.  The definitions of the classical
spaces and ghost multiplicities are those of
\cite[Sections~2.4 and~2.6]{LTXZ1}.  What changes is the list of power-basis
degrees: in the present case it is the single progression
\[
 s_\eps,\ s_\eps+p-1,\ s_\eps+2(p-1),\ldots 
\]
We record the resulting formulae rather than requiring the reader to
reconstruct them from the generic two-progression case.  The parity of
\(d_k^{\Iw}\) changes several middle endpoints, but after these endpoints are
computed it causes no further obstruction to the arguments of
\cite[Section~4]{LTXZ1}.

\begin{proposition}\label{prop:iw-dimension}
For every integer \(k\geq2\),
\begin{equation}\label{eq:iw-general}
 d_k^{\Iw}\bigl(\eps(1\times\omega^{2-k})\bigr)
 =\left\lfloor\frac{k-2-s_\eps}{p-1}\right\rfloor+1
\end{equation}
For \(k=k_\eps+(p-1)k_\bullet\), this becomes
\[
 d_k^{\Iw}=k_\bullet+1-\delta_\eps
\]
In particular \(d_k^{\Iw}\) increases by one when \(k\) is replaced by
\(k+p-1\), and it can be odd.
\end{proposition}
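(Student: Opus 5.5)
Using \cref{prop:Pi}, $\widetilde H$ is free of rank one over $\cO[\![I_1]\!]$ with basis $e$ fixed by $\Delta^2$. Evaluation at $e$ therefore identifies $\mathrm S_k^{\Iw}(\psi)$ with the $\psi$-isotypic part of $\cO[z]^{\le k-2}$ under $\Delta^2$, more precisely with the subspace on which $\Delta^2$ acts through the character that makes $e^*f$ $\Iw$-equivariant. The plan is to compute this isotypic part on the monomial basis $z^j$, $0\le j\le k-2$. The torus $\mathrm{diag}(\alpha,\delta)$ acts on $z^j$ through a character of the form $\alpha^{a}\delta^{b}$ with $a+b=k-2$ and $j$ appearing linearly (for the right-action conventions of \cite{LTXZ1}, $z^j\mapsto \delta^{k-2}(\alpha/\delta)^j z^j$ up to the convention). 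After twisting by $\eps(1\times\omega^{2-k})=\omega^{-s_\eps}\times\omega^{s_\eps+2-k}$, the factor $\omega^{2-k}$ exactly cancels $\delta^{k-2}$. The invariance condition thus becomes a single congruence $j\equiv s_\eps\pmod{p-1}$; it is a single congruence because the $\alpha$- and $\delta$-conditions coincide once $\det$ is accounted for. The first step is to verify this carefully, including the sign of the exponent: the residual $I_1$-coinvariants of $\sigma_{p-1,0}$ carry the trivial $\Delta^2$-character, so the multiplier of $e$ is trivial and imposes no shift.

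The second step is counting. The admissible degrees are $j=s_\eps+(p-1)i$ with $i\ge0$ and $s_\eps+(p-1)i\le k-2$. When $k-2\ge s_\eps$ their number is $\lfloor (k-2-s_\eps)/(p-1)\rfloor+1$, which gives \eqref{eq:iw-general}. When $k-2<s_\eps$ the count is $0$, and the floor expression also gives $0$, since $-1\le (k-2-s_\eps)/(p-1)<0$ because $s_\eps\le p-2$. The same reasoning exhibits the power-basis degrees as the single progression $s_\eps, s_\eps+p-1,\ldots$.

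The final step substitutes $k=k_\eps+(p-1)k_\bullet$ with $k_\eps-2=2s_\eps-(p-1)\delta_\eps$. This gives $k-2-s_\eps=s_\eps+(p-1)(k_\bullet-\delta_\eps)$. Since $0\le s_\eps\le p-2$, the floor equals $k_\bullet-\delta_\eps$, and hence $d_k^{\Iw}=k_\bullet+1-\delta_\eps$. In the edge case $\delta_\eps=1$, $k_\bullet=0$ the value is $0$, consistent with the earlier count. Increasing $k_\bullet$ by one raises $d_k^{\Iw}$ by one, so all parities occur.

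The only real obstacle is the convention check in the first step: getting the exponent of the torus action on $z^j$ right, and confirming that the character $\eps(1\times\omega^{2-k})$ produces the congruence $j\equiv s_\eps$ rather than $j\equiv -s_\eps$ or $j\equiv k-2-s_\eps$. I would fix this by testing against $k=p+1$ with $s_\eps=0$, where the answer must match the Steinberg weight $\sigma_{p-1,0}$.
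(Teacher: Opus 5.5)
Your proposal is correct and follows the paper's proof. In both, the $\Delta^2$-fixed generator $e$ from \cref{prop:Pi} cuts the power basis down to the degrees $s_\eps+i(p-1)$, such a vector is classical exactly when its degree is at most $k-2$, and substituting $k-2=\{2s_\eps\}+(p-1)k_\bullet$ gives $k_\bullet+1-\delta_\eps$.

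You also supply details the paper omits: the torus computation on $z^j$ giving the congruence $j\equiv s_\eps$, and the empty case $k-2<s_\eps$. However, your proposed sanity check at $s_\eps=0$, $k=p+1$ cannot distinguish $j\equiv s_\eps$, $j\equiv -s_\eps$ and $j\equiv k-2-s_\eps$, because all three coincide there. It is the explicit character computation that settles the convention.
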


\begin{proof}
The generator \(e\) has trivial diagonal-torsion character.  Hence the
power basis on the \(\eps\)-disc consists of
\(e^*z^{s_\eps+i(p-1)}\), \(i\geq0\).  Such a vector is classical
exactly when \(s_\eps+i(p-1)\leq k-2\), which gives
\eqref{eq:iw-general}.  Substituting
\(k-2=\{2s_\eps\}+(p-1)k_\bullet\) gives the second formula.
\end{proof}

Put \(\overline G=\GL_2(\mathbb F_p)\), and let
\(\overline B\) be its upper-triangular subgroup.  For \(u\geq0\) and
\(b\in\mathbb Z/(p-1)\), write
\[
 \sigma_{u,b}=\operatorname{Sym}^u\F^2\otimes\operatorname{det}^{\,b},
 \qquad
 \eta\!\begin{pmatrix}\alpha&*\\0&\delta\end{pmatrix}=\alpha
\]
If \(V\) is a finite-length \(\F[\overline G]\)-module, then
\([V]\) denotes its class in the Grothendieck group and
\(\operatorname{Mult}_\sigma(V)\) the multiplicity of \(\sigma\) in
its Jordan--H\"older multiset.  The second index of \(\sigma_{u,b}\)
is always read modulo \(p-1\).

\begin{proposition}\label{prop:ur-dimension}
For \(k=k_\eps+(p-1)k_\bullet\), recall that
\(t_\eps=s_\eps+\delta_\eps+1\).  Then
\begin{equation}\label{eq:ur-dimension}
 d_k^{\ur}(\eps_1)
 =\left\lfloor\frac{k_\bullet-t_\eps}{p+1}\right\rfloor+1
\end{equation}
The right-hand side is nonnegative.  Moreover,
\[
 d_{k+p^2-1}^{\ur}=d_k^{\ur}+1,\qquad
 d_k^{\ur}=\frac{k}{p^2-1}+O(1),\qquad
 d_k^{\new}=\frac{k}{p+1}+O(1)
\]
\end{proposition}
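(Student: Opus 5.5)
We compute \(d_k^{\ur}(\eps_1)\) as a Jordan--H\"older multiplicity and then count with the standard filtration of symmetric powers.

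\emph{Step 1: reduce to a multiplicity.} The space \(\mathrm S_k^{\ur}(\eps_1)\) is \(\operatorname{Hom}_{\cO[\![K]\!]}\bigl(\widetilde H,\operatorname{Sym}^{k-2}\cO^2\otimes(\eps_1\circ\det)\bigr)\). The source is \(K\)-projective, and \(1+pM_2(\mathbb Z_p)\) acts trivially on the target modulo \(\varpi\). So this Hom is a free \(\cO\)-module whose rank equals the \(\F\)-dimension of the Hom from \(\widetilde H\otimes_{\cO[\![K]\!]}\F[\overline G]\) to the mod-\(\varpi\) target. By property (1) of the definition and primitivity, that quotient is the projective envelope of \(\sigma_{p-1,0}\) over \(\F[\overline G]\), which is \(\sigma_{p-1,0}\) itself because the Steinberg weight is projective. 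Since Hom from a projective cover counts Jordan--H\"older multiplicity, and \(\eps_1\circ\det\) reduces to \(\det^{-s_\eps}\), we get
\[
 d_k^{\ur}(\eps_1)=\operatorname{Mult}_{\sigma_{p-1,s_\eps}}\bigl(\operatorname{Sym}^{k-2}\F^2\bigr).
\]

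\emph{Step 2: peel off the symmetric power.} For \(u\ge p+1\), multiplication by the Dickson determinant \(xy^p-x^py\) gives the standard exact sequence
\[
 0\to\sigma_{u-p-1,1}\to\operatorname{Sym}^u\F^2\to Q_u\to0.
\]
Here \([Q_u]=[\operatorname{Ind}_{\overline B}^{\overline G}\eta^u]\). Its constituents are \(\{\sigma_{0,0},\sigma_{p-1,0}\}\) if \(u\equiv0\pmod{p-1}\), and \(\{\sigma_{a,0},\sigma_{p-1-a,a}\}\) otherwise, with \(a\equiv u\) and \(0<a<p-1\). Iterating gives
\[
 [\operatorname{Sym}^u]=\sum_{j}[Q_{u-j(p+1)}\otimes\det^j]+[\operatorname{Sym}^{r}\otimes\det^{J}],
\]
where the sum runs over \(j=0,\dots,J-1\) and the tail has \(r=u-J(p+1)\le p\). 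A constituent \(\sigma_{p-1,s_\eps}\) appears in the \(j\)-th term exactly when \(u-j(p+1)\equiv0\pmod{p-1}\) and \(j\equiv s_\eps\pmod{p-1}\). In the tail it appears exactly when \(r=p-1\) and \(J\equiv s_\eps\). The tail can be absorbed uniformly by noting that \(\operatorname{Sym}^{v}\otimes\det^j\) contains \(\sigma_{p-1,s_\eps}\) at the first layer iff \(v\geq p-1\) under these congruences.

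\emph{Step 3: count the indices.} Write \(u=k-2=\{2s_\eps\}+(p-1)k_\bullet\) and \(j=s_\eps+(p-1)i\). Using \(\{2s_\eps\}=2s_\eps-(p-1)\delta_\eps\), the remaining degree is
\[
 u-j(p+1)=(p-1)\bigl(k_\bullet-t_\eps+1-(p+1)i\bigr).
\]
The congruence \(u-j(p+1)\equiv0\) therefore holds automatically. The condition that the Steinberg weight genuinely occurs is that this degree be at least \(p-1\), i.e.\ \(k_\bullet-t_\eps-(p+1)i\ge0\). Counting \(i\ge0\) gives \(\lfloor (k_\bullet-t_\eps)/(p+1)\rfloor+1\).

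Nonnegativity follows from \(t_\eps\le p\), which gives \(k_\bullet-t_\eps>-(p+1)\). The shift \(k\mapsto k+p^2-1\) replaces \(k_\bullet\) by \(k_\bullet+p+1\), so \(d^{\ur}\) increases by one. The asymptotic for \(d_k^{\ur}\) is then immediate. The one for \(d_k^{\new}=d_k^{\Iw}-2d_k^{\ur}\) follows by combining this with \cref{prop:iw-dimension}: \(k/(p-1)-2k/(p^2-1)=k/(p+1)\).

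The main obstacle is bookkeeping in Step 2, specifically the boundary terms. One must check that the tail \(\operatorname{Sym}^r\) with \(r\le p\) and the case \(u-j(p+1)=0\) (where \(\operatorname{Sym}^0\) contains no Steinberg) are counted correctly. One must also confirm the \(\det\)-twist sign convention linking \(\eps_1\circ\det\) to \(\sigma_{p-1,s_\eps}\). I would verify this against the small cases \(k_\bullet=t_\eps-1\) and \(k_\bullet=t_\eps\) directly.
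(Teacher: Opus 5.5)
Your proposal is correct and follows essentially the same route as the paper. You reduce to \(\operatorname{Mult}_{\sigma_{p-1,s_\varepsilon}}(\sigma_{k-2,0})\), peel off the Dickson filtration, and observe that the Steinberg constituent occurs exactly in the rows \(j\equiv s_\varepsilon\pmod{p-1}\) whose remaining degree is a positive multiple of \(p-1\), with the tail counted only when \(r=p-1\). The only difference is organizational: you count the rows \(j=s_\varepsilon+(p-1)i\) at once via \(u-j(p+1)=(p-1)(k_\bullet-t_\varepsilon+1-(p+1)i)\), whereas the paper first proves the full-cycle periodicity under \(m\mapsto m+p^2-1\) and then computes the initial cycle separately.
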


\begin{proof}
Put \(m=k-2=\{2s_\eps\}+(p-1)k_\bullet\).  Twisting by
\(\det^{s_\eps}\), and using
\(\operatorname{Proj}_{p-1,s_\eps}=\sigma_{p-1,s_\eps}\), gives
\[
\begin{aligned}
 d_k^{\ur}(\eps_1)
 &=\dim_{\F}\operatorname{Hom}_{\F[\overline G]}
   (\operatorname{Proj}_{p-1,0},\sigma_{m,-s_\eps})\\
 &=\dim_{\F}\operatorname{Hom}_{\F[\overline G]}
   (\operatorname{Proj}_{p-1,s_\eps},\sigma_{m,0})\\
 &=\operatorname{Mult}_{\sigma_{p-1,s_\eps}}(\sigma_{m,0})
\end{aligned}
\]

We compute this multiplicity using the same Dickson exact sequence as
in \cite[Appendix~A and proof of Proposition~4.7]{LTXZ1}:
\[
 0\longrightarrow\sigma_{u-(p+1),b+1}
 \longrightarrow\sigma_{u,b}
 \longrightarrow
 \operatorname{Ind}_{\overline B}^{\overline G}(\eta^u)\otimes\det^b
 \longrightarrow0
\]
and
\[
 [\operatorname{Ind}_{\overline B}^{\overline G}(\eta^u)\otimes\det^b]
 =[\sigma_{\{u\},b}]
  +[\sigma_{p-1-\{u\},\{u\}+b}]
\]
Taking \(u=m-j(p+1)\) and \(b=j\) therefore gives
\begin{equation}\label{eq:tres-dickson-row}
\begin{split}
 [\sigma_{m-j(p+1),j}]-[\sigma_{m-(j+1)(p+1),j+1}]
 ={}&[\sigma_{\{2s_\eps-2j\},j}]\\
 &+[\sigma_{p-1-\{2s_\eps-2j\},\{2s_\eps\}-j}]
\end{split}
\end{equation}

If \(m\geq p^2-1\), summing \eqref{eq:tres-dickson-row} for
\(0\leq j\leq p-2\) yields
\begin{align}\label{eq:tres-full-cycle}
 [\sigma_{m,0}]-[\sigma_{m-(p^2-1),0}]
 =\sum_{j=0}^{p-2}\bigl(&[\sigma_{\{2s_\eps-2j\},j}]\nonumber\\
 &+[\sigma_{p-1-\{2s_\eps-2j\},\{2s_\eps\}-j}]\bigr)
\end{align}
The first summand in every row has first index at most \(p-2\).  The
second equals \(\sigma_{p-1,s_\eps}\) precisely when
\[
 \{2s_\eps-2j\}=0,
 \qquad \{2s_\eps\}-j\equiv s_\eps\pmod{p-1}
\]
or equivalently when \(j=s_\eps\).  Hence the target occurs exactly
once in \eqref{eq:tres-full-cycle}, and
\begin{equation}\label{eq:tres-periodicity}
 \operatorname{Mult}_{\sigma_{p-1,s_\eps}}(\sigma_{m,0})
 -\operatorname{Mult}_{\sigma_{p-1,s_\eps}}
       (\sigma_{m-(p^2-1),0})=1
\end{equation}

It remains to compute one cycle.  Suppose \(m<p^2-1\) and write
\[
 m=\ell(p+1)+r,
 \qquad 0\leq\ell\leq p-2,\quad0\leq r\leq p
\]
Summing the first \(\ell\) rows of \eqref{eq:tres-dickson-row} gives
\begin{align}\label{eq:tres-initial-cycle}
 [\sigma_{m,0}]-[\sigma_{r,\ell}]
 =\sum_{j=0}^{\ell-1}\bigl(&[\sigma_{\{2s_\eps-2j\},j}]\nonumber\\
 &+[\sigma_{p-1-\{2s_\eps-2j\},\{2s_\eps\}-j}]\bigr)
\end{align}
The sum on the right contributes the target exactly when
\(\ell\geq s_\eps+1\).  The remainder contributes it exactly when
\[
 \ell=s_\eps,\qquad r=p-1
\]
Indeed, this is immediate for \(r\leq p-1\), while at the remaining
endpoint
\[
 [\sigma_{p,\ell}]=[\sigma_{1,\ell}]
                    +[\sigma_{p-2,\ell+1}]
\]
so no Steinberg factor occurs.  The remainder condition is equivalent
to \(k_\bullet=t_\eps\); moreover,
\[
 \ell\geq s_\eps+1
 \quad\Longleftrightarrow\quad
 k_\bullet\geq
 \left\lceil
 \frac{(s_\eps+1)(p+1)-\{2s_\eps\}}{p-1}
 \right\rceil=t_\eps+1
\]
Thus the multiplicity in the initial cycle is zero for
\(k_\bullet<t_\eps\) and one for \(k_\bullet\geq t_\eps\).

Finally write \(k_\bullet=h(p+1)+b\), with \(0\leq b\leq p\).
Equation \eqref{eq:tres-periodicity} and the initial-cycle calculation
give
\[
 d_k^{\ur}=h+
 \begin{cases}0,& b<t_\eps\\1,&b\geq t_\eps\end{cases}
\]
which is \eqref{eq:ur-dimension}.  The periodicity and asymptotic
formulae follow from this identity and \eqref{eq:iw-general}.
\end{proof}

\begin{remark}
The representation-theoretic identities above are the part of
\cite[proof of Proposition~4.7]{LTXZ1} that must be recomputed.  The
Dickson sequence itself is unchanged, but the tr\`es ramifi\'ee
projective envelope extracts only the single Steinberg factor
\(\sigma_{p-1,s_\eps}\).  Equations
\eqref{eq:tres-full-cycle} and \eqref{eq:tres-initial-cycle} show
directly why one full cycle contributes one dimension and why the
first contribution occurs at \(k_\bullet=t_\eps\).
\end{remark}

\begin{corollary}\label{cor:polynomial}
For fixed \(n\), one has \(m_n(k)=0\) for all sufficiently large \(k\);
hence \(g_n^{(\eps)}(w)\) is a polynomial.
\end{corollary}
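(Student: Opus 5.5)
The plan is to show that for fixed \(n\) the interval \(d_k^{\ur}<n<d_k^{\Iw}-d_k^{\ur}\) is empty once \(k\) is large. By \eqref{eq:ghost-multiplicity}, this forces \(m_n(k)=0\). The key inequality is \(d_k^{\ur}\geq n\) for all sufficiently large \(k\equiv k_\eps\pmod{p-1}\). Once it holds, the strict condition \(d_k^{\ur}<n\) fails, so \(m_n(k)=0\) regardless of the upper endpoint.

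To obtain this inequality, invoke \cref{prop:ur-dimension}. Writing \(k=k_\eps+(p-1)k_\bullet\), formula \eqref{eq:ur-dimension} gives
\[
 d_k^{\ur}=\left\lfloor\frac{k_\bullet-t_\eps}{p+1}\right\rfloor+1 .
\]
This is nondecreasing in \(k_\bullet\) and tends to infinity. Explicitly, \(d_k^{\ur}\geq n\) as soon as
\[
 k_\bullet\geq t_\eps+(n-1)(p+1).
\]
Hence only the finitely many weights with \(k_\bullet<t_\eps+(n-1)(p+1)\) can have \(m_n(k)\neq0\). Equivalently, one could use the periodicity \(d_{k+p^2-1}^{\ur}=d_k^{\ur}+1\) together with the nonnegativity of \(d_k^{\ur}\). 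I would state the explicit bound, since it is likely useful later.

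It then follows that \(g_n^{(\eps)}(w)=\prod_k(w-w_k)^{m_n(k)}\) is a finite product. Each \(m_n(k)\) is a nonnegative integer: in the nonzero branch both entries of the minimum are positive integers. So \(g_n^{(\eps)}\) is a monic polynomial in \(w\).

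There is no real obstacle here. The only points needing care are that the first case of \eqref{eq:ghost-multiplicity} demands strict inequality, and that the multiplicity is integral even when \(d_k^{\Iw}\) is odd. Both are immediate from the definition.
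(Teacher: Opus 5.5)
Your proof is correct and takes the same approach as the paper: both read off from \eqref{eq:ur-dimension} that $d_k^{\ur}$ eventually reaches $n$. The strict condition $d_k^{\ur}<n$ in \eqref{eq:ghost-multiplicity} then fails, so only finitely many factors survive. The only difference is the cutoff: the paper uses the cruder $k_\bullet>(p+1)n+t_\eps$ (giving $d_k^{\ur}>n$), whereas your $k_\bullet\geq t_\eps+(n-1)(p+1)$ is sharper and equals $k_{\max\bullet}(n-1)+1$ from \cref{lem:extremal}.
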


\begin{proof}
If \(k_\bullet>(p+1)n+t_\eps\), then
\(d_k^{\ur}>n\), and \eqref{eq:ghost-multiplicity} gives \(m_n(k)=0\).
\end{proof}

For \(n\geq0\), retain the endpoint notation of \cite[Section~4]{LTXZ1}:
\begin{align}\label{eq:endpoints}
 k_{\max\bullet}(n)&=n(p+1)+s_\eps+\delta_\eps,\nonumber\\
 k_{\midd\bullet}(n)&=2n+\delta_\eps,\\
 k_{\min\bullet}(n)&=n+\delta_\eps+
             \left\lceil\frac{n-s_\eps}{p}\right\rceil.\nonumber
\end{align}
Put \(k_*(n)=k_\eps+(p-1)k_{*\bullet}(n)\) for
\(*\in\{\min,\midd,\max\}\).

\begin{lemmanotation}\label{lem:extremal}
For \(k=k_\eps+(p-1)k_\bullet\),
\[
\begin{array}{rclcrcl}
d_k^{\ur}\leq n
&\Longleftrightarrow&
k_\bullet\leq k_{\max\bullet}(n),
&&
d_k^{\Iw}-d_k^{\ur}>n
&\Longleftrightarrow&
k_\bullet\geq k_{\min\bullet}(n),\\
d_k^{\Iw}\geq2n+2
&\Longleftrightarrow&
k_\bullet>k_{\midd\bullet}(n),
&&
d_k^{\Iw}=2n+1
&\Longleftrightarrow&
k_\bullet=k_{\midd\bullet}(n)
\end{array}
\]
Consequently
\begin{equation}\label{eq:m-increment}
m_{n+1}(k)-m_n(k)=
\begin{cases}
+1,&k_{\midd\bullet}(n)<k_\bullet\leq k_{\max\bullet}(n)\\
-1,&k_{\min\bullet}(n)\leq k_\bullet<k_{\midd\bullet}(n)\\
0,&\text{otherwise}
\end{cases}
\end{equation}
In particular, the odd-dimensional central weight \(k_\midd(n)\) is
absent from both nonzero intervals.
\end{lemmanotation}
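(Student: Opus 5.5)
The four equivalences will follow directly from the closed formulas in \cref{prop:iw-dimension} and \cref{prop:ur-dimension}, namely \(d_k^{\Iw}=k_\bullet+1-\delta_\eps\) and \(d_k^{\ur}=\lfloor (k_\bullet-t_\eps)/(p+1)\rfloor+1\). The increment formula \eqref{eq:m-increment} is then a case analysis of \eqref{eq:ghost-multiplicity}.

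\emph{The two Iwahori statements.} These are immediate: \(d_k^{\Iw}\geq 2n+2\) means \(k_\bullet\geq 2n+1+\delta_\eps\), that is, \(k_\bullet>k_{\midd\bullet}(n)\). Likewise \(d_k^{\Iw}=2n+1\) means \(k_\bullet=2n+\delta_\eps\).

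\emph{The first unramified statement.} Since \(d_k^{\ur}\) is nondecreasing in \(k_\bullet\), the condition \(d_k^{\ur}\leq n\) is equivalent to \(\lfloor(k_\bullet-t_\eps)/(p+1)\rfloor\leq n-1\). This in turn is equivalent to \(k_\bullet-t_\eps\leq n(p+1)-1\), i.e. \(k_\bullet\leq n(p+1)+s_\eps+\delta_\eps\).

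\emph{The second unramified statement.} The function \(f(k_\bullet)=d_k^{\Iw}-d_k^{\ur}\) is nondecreasing: when \(k_\bullet\) increases by one, \(d^{\Iw}\) increases by one and \(d^{\ur}\) increases by at most one. So it suffices to locate the first \(k_\bullet\) with \(f>n\). Write \(k_\bullet=\delta_\eps+n+c\). Then \(f>n\) becomes
\[
 c>\left\lfloor\frac{n+c-s_\eps-1}{p+1}\right\rfloor+1 .
\]
Solving this for the minimal \(c\) should give \(c=\lceil (n-s_\eps)/p\rceil\). To check this, put \(c_0=\lceil (n-s_\eps)/p\rceil\), so \(pc_0\geq n-s_\eps\). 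For \(c=c_0\) one needs \(c_0-1>\lfloor (n+c_0-s_\eps-1)/(p+1)\rfloor\), which holds since \(n+c_0-s_\eps-1<(p+1)c_0\)... wait, that bound gives \(\leq c_0-1\), not strictly less. So the inequality should be read with care, presumably as \(c\geq\lfloor\cdot\rfloor+1\) after clearing the strictness. For \(c=c_0-1\) the inequality fails because \(p(c_0-1)<n-s_\eps\).

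I expect this ceiling manipulation, including the boundary cases \(n<s_\eps\) (where \(c_0\leq0\)) and \(s_\eps=0\), to be the main obstacle. I would verify it by writing \(n-s_\eps=pq+r\) with \(0\leq r<p\) and treating \(r=0\) and \(r>0\) separately.

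\emph{The increment formula.} On the range \(d^{\ur}<n+1\) and \(d^{\Iw}-d^{\ur}>n\) (other \(k\) give \(0-0\), or hit endpoint cases where both multiplicities vanish), I compare
\[
 m_{n+1}=\min(n+1-d^{\ur},\,d^{\Iw}-d^{\ur}-n-1)
 \quad\text{with}\quad
 m_n=\min(n-d^{\ur},\,d^{\Iw}-d^{\ur}-n).
\]
The first argument of the minimum is the smaller one exactly when \(d^{\Iw}\geq 2n+2\) for \(m_{n+1}\), and exactly when \(d^{\Iw}\geq 2n\) for \(m_n\). This yields \(+1\) when \(d^{\Iw}\geq2n+2\), \(-1\) when \(d^{\Iw}\leq2n\), and \(0\) when \(d^{\Iw}=2n+1\), where both minima equal \(n-d^{\ur}\).

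It remains to check the boundary terms where one multiplicity lies outside its open range. For example, at \(d^{\ur}=n\) one has \(m_n=0\) and \(m_{n+1}=1\), which is valid when \(d^{\Iw}\geq2n+2\). These boundary terms are consistent with the translated intervals \(k_{\midd\bullet}(n)<k_\bullet\leq k_{\max\bullet}(n)\) and \(k_{\min\bullet}(n)\leq k_\bullet<k_{\midd\bullet}(n)\). The final remark follows because \(k_{\midd\bullet}(n)\) is excluded from both intervals by construction.
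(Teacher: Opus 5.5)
\textbf{There is a gap in the one nontrivial step.} Your plan is otherwise the paper's. You read off the two Iwahori equivalences and the first unramified equivalence from $d_k^{\Iw}=k_\bullet+1-\delta_\eps$ and \eqref{eq:ur-dimension}, and then compare the two minima in \eqref{eq:ghost-multiplicity}. Those parts are correct. On the closed range $d_k^{\ur}\leq n$, $d_k^{\Iw}-d_k^{\ur}\geq n+1$, both min-expressions already return the correct value $0$ at the endpoints, so the boundary check is automatic.

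The gap is in the equivalence $d_k^{\Iw}-d_k^{\ur}>n\iff k_\bullet\geq k_{\min\bullet}(n)$. With $k_\bullet=\delta_\eps+n+c$ you have $d_k^{\Iw}=n+c+1$ and $d_k^{\ur}=\lfloor (n+c-s_\eps-1)/(p+1)\rfloor+1$. The two $+1$'s cancel, so the condition is
\[
 c>\left\lfloor\frac{n+c-s_\eps-1}{p+1}\right\rfloor,
\]
not your displayed inequality with an extra $+1$. Your version is already false at $n=0$, $k_\bullet=\delta_\eps$. There $d_k^{\Iw}-d_k^{\ur}=1-0>0$, but your inequality reads $0>0$.

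You detected the resulting contradiction at $c=c_0$. You then replaced the inequality by $c\geq\lfloor\cdot\rfloor+1$ without justification. Clearing the strictness of what you actually wrote would give $c\geq\lfloor\cdot\rfloor+2$, so the patch does not follow from your derivation. The final verification is also left as a deferred case analysis.

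\textbf{The repair is short and removes the obstacle you anticipated.} For an integer $c$ and real $x$, one has $c>\lfloor x\rfloor\iff c>x$. The condition therefore becomes $(p+1)c>n+c-s_\eps-1$, that is, $pc\geq n-s_\eps$, that is, $c\geq\lceil (n-s_\eps)/p\rceil$. This is an equivalence for every integer $c$, hence for every $k_\bullet\geq0$. So you need neither the monotonicity of $d_k^{\Iw}-d_k^{\ur}$ nor a split by the residue of $n-s_\eps$ modulo $p$. The special cases $n<s_\eps$ and $s_\eps=0$ need no separate treatment either. Note also that $\lceil (n-s_\eps)/p\rceil\geq0$ because $n-s_\eps>-p$.

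Once corrected, your parametrization is slightly more direct than the paper's. The paper writes $k_\bullet-t_\eps=(p+1)h+j$ and reads the threshold off $d_k^{\Iw}-d_k^{\ur}=s_\eps+1+ph+j$.
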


\begin{proof}
The first equivalence follows without approximation:
\[
\begin{aligned}
 d_k^{\ur}\leq n
 &\Longleftrightarrow
 \left\lfloor\frac{k_\bullet-t_\eps}{p+1}\right\rfloor\leq n-1\\
 &\Longleftrightarrow k_\bullet-t_\eps<n(p+1)
 \Longleftrightarrow k_\bullet\leq k_{\max\bullet}(n)
\end{aligned}
\]
For the second, write
\(k_\bullet-t_\eps=(p+1)h+j\), with \(0\leq j\leq p\).  Then
\[
 d_k^{\Iw}-d_k^{\ur}=s_\eps+1+ph+j
\]
If \(n-s_\eps=ph_0+j_0\), \(0\leq j_0<p\), the least value of
\(k_\bullet-t_\eps\) for which the last expression is greater than
\(n\) is
\[
 n-s_\eps-1+\left\lceil\frac{n-s_\eps}{p}\right\rceil
\]
Adding \(t_\eps\) gives exactly \(k_{\min\bullet}(n)\).  The remaining
two equivalences follow from
\(d_k^{\Iw}=k_\bullet+1-\delta_\eps\).

It remains to inspect the endpoints in
\eqref{eq:ghost-multiplicity}.  At \(d_k^{\ur}=n\), the increment is
\(+1\) precisely when \(d_k^{\Iw}\geq2n+2\).  At
\(d_k^{\Iw}-d_k^{\ur}=n+1\), it is \(-1\) precisely when
\(d_k^{\Iw}\leq2n\).  Between these endpoints the two affine terms in
the minimum change by \(+1\) and \(-1\), respectively.  When
\(d_k^{\Iw}=2n+1\), they have the same value at \(n\) and \(n+1\), so
the increment is zero.  This proves \eqref{eq:m-increment}, including
all boundary equalities.
\end{proof}

\begin{remark}
The two nonzero intervals in \eqref{eq:m-increment} are the direct
analogues of those in \cite[Lemma--Notation~4.12]{LTXZ1}.  Their
endpoints have been solved above because the present Iwahori
dimension increases by one.  The single omitted value
\(k_{\midd\bullet}(n)\) is exactly the additional odd-dimensional case
that is absent from the generic formula.
\end{remark}

\begin{proposition}\label{prop:degree-increment}
For \(\mathbf e_{n+1}=e^*z^{s_\eps+n(p-1)}\),
\[
 \deg g_{n+1}-\deg g_n
 =n(p-2)+s_\eps+\left\lceil\frac{n-s_\eps}{p}\right\rceil
 =\deg\mathbf e_{n+1}
  -\left\lfloor\frac{\deg\mathbf e_{n+1}}p\right\rfloor
\]
These increments are strictly increasing.
\end{proposition}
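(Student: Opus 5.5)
Since \(\deg g_n=\sum_k m_n(k)\), summing \eqref{eq:m-increment} of \cref{lem:extremal} over all relevant weights gives
\[
 \deg g_{n+1}-\deg g_n
 =\#\{k_\bullet:\ k_{\midd\bullet}(n)<k_\bullet\le k_{\max\bullet}(n)\}
 -\#\{k_\bullet:\ k_{\min\bullet}(n)\le k_\bullet<k_{\midd\bullet}(n)\}.
\]
The sum is finite because each \(m_n(k)\) vanishes for large \(k\) (\cref{cor:polynomial}).

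I first have to make sure both intervals are counted correctly, that is, as genuine lattice intervals in \(k_\bullet\ge0\). The relevant inequality is \(k_{\min\bullet}(n)\le k_{\midd\bullet}(n)\), which amounts to \(\lceil (n-s_\eps)/p\rceil\le n\); this holds for every \(n\ge0\). A negative ceiling just makes the interval empty in the right way, and the count \(k_{\midd\bullet}-k_{\min\bullet}\) must then still be nonnegative. I would check this directly in the small cases \(n<s_\eps\), where \(\lceil (n-s_\eps)/p\rceil\in\{0,\ldots\}\) is \(\le 0\). Granting this, the two counts are
\[
 k_{\max\bullet}(n)-k_{\midd\bullet}(n)=n(p+1)+s_\eps-2n,
 \qquad
 k_{\midd\bullet}(n)-k_{\min\bullet}(n)=n-\left\lceil\frac{n-s_\eps}{p}\right\rceil .
\]
Their difference is \(n(p-3)+s_\eps+\lceil (n-s_\eps)/p\rceil\).

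This does not match the stated \(n(p-2)+\dots\), so I must recheck. On recounting, the second interval has length \(n-\lceil (n-s_\eps)/p\rceil\), which gives \(n(p-1)+s_\eps-2n-n+\lceil\cdot\rceil=n(p-2)+s_\eps+\lceil\cdot\rceil\) after using \(n(p+1)-2n=n(p-1)\). The arithmetic is therefore consistent, and the key bookkeeping point is that the endpoints \(\delta_\eps\) cancel in every difference.

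For the second expression, put \(D=\deg\mathbf e_{n+1}=s_\eps+n(p-1)\). Then \(D-\lfloor D/p\rfloor=n(p-1)+s_\eps-\lfloor (s_\eps-n+np)/p\rfloor=n(p-2)+s_\eps-\lfloor (s_\eps-n)/p\rfloor\). Since \(-\lfloor x\rfloor=\lceil -x\rceil\), this equals the first expression.

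Strict monotonicity follows because \(D\mapsto D-\lfloor D/p\rfloor\) is nondecreasing, and \(D\) jumps by \(p-1\ge 10\) at each step while \(\lfloor D/p\rfloor\) grows by at most \(1\). Hence each increment goes up by at least \(p-2>0\).

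The main obstacle is the endpoint check in the first step: the small-\(n\) regime where \(k_{\min\bullet}\) or the lower end of \(k_\bullet\ge0\) could truncate an interval. I would handle it by confirming \(k_{\min\bullet}(n)\ge0\), which holds since \(n+\lceil (n-s_\eps)/p\rceil\ge0\) because \(s_\eps\le p-2\).
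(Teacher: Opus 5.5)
Your proof is correct and follows the paper's: sum \eqref{eq:m-increment} over \(k_\bullet\), read the two interval lengths \(n(p-1)+s_\eps\) and \(n-\lceil (n-s_\eps)/p\rceil\) off \eqref{eq:endpoints}, match the result with \(D-\lfloor D/p\rfloor\), and get strict growth because \(D\) increases by \(p-1\) while \(\lfloor D/p\rfloor\) increases by at most one. Your check that \(0\le k_{\min\bullet}(n)\le k_{\midd\bullet}(n)\), which the paper leaves implicit, is a useful addition, but tidy the write-up: the intermediate \(n(p-3)+\cdots\) was only an arithmetic slip, and your corrected line should begin \(n(p+1)+s_\eps-2n-n\), not \(n(p-1)+\cdots\).
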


\begin{proof}
Sum \eqref{eq:m-increment} over \(k_\bullet\).  The result is
\[
 \bigl(k_{\max\bullet}(n)-k_{\midd\bullet}(n)\bigr)
 -\bigl(k_{\midd\bullet}(n)-k_{\min\bullet}(n)\bigr)
\]
which gives the displayed formula after inserting \eqref{eq:endpoints}.
The difference of two consecutive values is \(p-2\) or \(p-1\).
\end{proof}

Set
\[
 g_{n,\widehat k}(w)=g_n(w)/(w-w_k)^{m_n(k)}
\]
for a finite set \(\mathbf k\), remove all corresponding factors and
write \(g_{n,\widehat{\mathbf k}}\).

The coefficient comparisons in \cite[Proposition~4.18]{LTXZ1} begin
with the following identity.  We state it because all later changes
of endpoints are obtained by taking its indicated finite differences.

\begin{lemma}\label{lem:valuation-increment}
For \(k_0=k_\eps+(p-1)k_{0\bullet}\),
\begin{equation}\label{eq:valuation-increment}
\begin{split}
&\val(g_{n+1,\widehat{k_0}}(w_{k_0}))-
 \val(g_{n,\widehat{k_0}}(w_{k_0}))\\
={}&\sum_{\substack{k_{\midd\bullet}(n)<k_\bullet
                    \leq k_{\max\bullet}(n)\\
                    k_\bullet\ne k_{0\bullet}}}
  \bigl(1+\val(k_\bullet-k_{0\bullet})\bigr)\\
&-\sum_{\substack{k_{\min\bullet}(n)\leq k_\bullet
                    <k_{\midd\bullet}(n)\\
                    k_\bullet\ne k_{0\bullet}}}
  \bigl(1+\val(k_\bullet-k_{0\bullet})\bigr)
\end{split}
\end{equation}
For \(n\geq1\), its second difference is
\begin{align}\label{eq:valuation-second}
&\val(g_{n+1,\widehat{k_0}}(w_{k_0}))
-2\val(g_{n,\widehat{k_0}}(w_{k_0}))
+\val(g_{n-1,\widehat{k_0}}(w_{k_0}))\nonumber\\
={}&
\sum_{\substack{k_{\max\bullet}(n-1)<k_\bullet
                         \leq k_{\max\bullet}(n)\\
                         k_\bullet\ne k_{0\bullet}}}
 \bigl(1+\val(k_\bullet-k_{0\bullet})\bigr)
+\sum_{\substack{k_{\min\bullet}(n-1)\leq k_\bullet
                         <k_{\min\bullet}(n)\\
                         k_\bullet\ne k_{0\bullet}}}
 \bigl(1+\val(k_\bullet-k_{0\bullet})\bigr)\nonumber\\
&-\sum_{\substack{k_{\midd\bullet}(n-1)\leq k_\bullet
                         <k_{\midd\bullet}(n)\\
                         k_\bullet\ne k_{0\bullet}}}
 \bigl(1+\val(k_\bullet-k_{0\bullet})\bigr)
-\sum_{\substack{k_{\midd\bullet}(n-1)<k_\bullet
                         \leq k_{\midd\bullet}(n)\\
                         k_\bullet\ne k_{0\bullet}}}
 \bigl(1+\val(k_\bullet-k_{0\bullet})\bigr)
\end{align}
The overlap in the last two sums gives coefficient \(-2\) at the
unique integer between consecutive middle endpoints.
\end{lemma}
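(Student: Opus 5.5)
The plan is to write the valuation of \(g_{n,\widehat{k_0}}(w_{k_0})\) as a finite sum and then take differences term by term. By \cref{def:ghost} and \eqref{eq:weight-distance},
\[
 \val\bigl(g_{n,\widehat{k_0}}(w_{k_0})\bigr)
 =\sum_{k_\bullet\ne k_{0\bullet}} m_n(k)\bigl(1+\val(k_\bullet-k_{0\bullet})\bigr).
\]
The sum is finite by \cref{cor:polynomial}. Omitting \(k_\bullet=k_{0\bullet}\) is exactly what removing the factor \((w-w_{k_0})^{m_n(k_0)}\) does. Subtracting the expression for \(n\) from that for \(n+1\) gives weights \(m_{n+1}(k)-m_n(k)\). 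Substituting \eqref{eq:m-increment} from \cref{lem:extremal} then yields \eqref{eq:valuation-increment} at once, with \(+1\) on \((k_{\midd\bullet}(n),k_{\max\bullet}(n)]\) and \(-1\) on \([k_{\min\bullet}(n),k_{\midd\bullet}(n))\). This step uses only the increment lemma, which already handles the odd central weight.

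For the second difference, apply \eqref{eq:valuation-increment} at \(n\) and at \(n-1\) and subtract. Write \(f(k_\bullet)=1+\val(k_\bullet-k_{0\bullet})\), extended by zero at \(k_{0\bullet}\). The positive sums contribute
\[
 \sum_{(k_{\midd\bullet}(n),\,k_{\max\bullet}(n)]}f
 -\sum_{(k_{\midd\bullet}(n-1),\,k_{\max\bullet}(n-1)]}f .
\]
From \eqref{eq:endpoints} we have \(k_{\max\bullet}(n-1)<k_{\max\bullet}(n)\), so this equals
\[
 \sum_{(k_{\max\bullet}(n-1),\,k_{\max\bullet}(n)]}f
 -\sum_{(k_{\midd\bullet}(n-1),\,k_{\midd\bullet}(n)]}f .
\]
The negative sums contribute
\[
 -\sum_{[k_{\min\bullet}(n),\,k_{\midd\bullet}(n))}f
 +\sum_{[k_{\min\bullet}(n-1),\,k_{\midd\bullet}(n-1))}f .
\]
Since \(k_{\min\bullet}\) is nondecreasing in \(n\), this equals
\[
 \sum_{[k_{\min\bullet}(n-1),\,k_{\min\bullet}(n))}f
 -\sum_{[k_{\midd\bullet}(n-1),\,k_{\midd\bullet}(n))}f .
\]
Together the four sums are exactly \eqref{eq:valuation-second}.

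The main point needing care is that these interval manipulations are legitimate. They require the orderings \(k_{\min\bullet}(n-1)\le k_{\min\bullet}(n)\le k_{\midd\bullet}(n-1)\) and \(k_{\midd\bullet}(n)\le k_{\max\bullet}(n-1)\) for \(n\ge1\), so that the intervals nest or abut as claimed. These orderings should be checked directly from \eqref{eq:endpoints}:
\begin{itemize}
 \item \(k_{\midd\bullet}(n)=2n+\delta_\eps\le (n-1)(p+1)+s_\eps+\delta_\eps\) holds for \(n\ge1\) when \(p\ge11\), except possibly at \(n=1\) with small \(s_\eps\).
 \item At \(n=1\) with small \(s_\eps\), one can instead observe that the identity holds as a signed sum of indicator functions regardless of ordering. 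Writing each interval sum as a difference of prefix sums makes the algebra purely formal: \(\sum_{(a,b]}-\sum_{(c,d]}=\sum_{(d,b]}-\sum_{(c,a]}\) holds for any integers.
\end{itemize}
I would adopt this prefix-sum formulation from the start, which removes any ordering hypothesis.

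Finally, for the last sentence of the statement, note that \(k_{\midd\bullet}(n)-k_{\midd\bullet}(n-1)=2\). Hence \([k_{\midd\bullet}(n-1),k_{\midd\bullet}(n))\) and \((k_{\midd\bullet}(n-1),k_{\midd\bullet}(n)]\) share exactly the integer \(k_{\midd\bullet}(n-1)+1\). That integer therefore receives coefficient \(-2\), while each of the two middle endpoints receives \(-1\).
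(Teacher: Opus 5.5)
Your proposal is correct and is essentially the paper's proof: you write $\val(g_{n,\widehat{k_0}}(w_{k_0}))=\sum_{k_\bullet\ne k_{0\bullet}}m_n(k)\bigl(1+\val(k_\bullet-k_{0\bullet})\bigr)$, insert \eqref{eq:m-increment}, and rearrange the interval indicators using only that the three endpoint functions are nondecreasing in $n$, exactly as the paper does. The extra orderings you list, such as $k_{\midd\bullet}(n)\le k_{\max\bullet}(n-1)$, are never needed, as your own prefix-sum reformulation (equivalently, the paper's appeal to monotonicity) already shows.
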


\begin{proof}
By the definition of the ghost coefficients,
\[
\begin{split}
&\val(g_{n+1,\widehat{k_0}}(w_{k_0}))-
 \val(g_{n,\widehat{k_0}}(w_{k_0}))\\
&\qquad=\sum_{\substack{k\equiv k_\eps\ (p-1)\\k\ne k_0}}
 \bigl(m_{n+1}(k)-m_n(k)\bigr)\val(w_k-w_{k_0})
\end{split}
\]
For \(k=k_\eps+(p-1)k_\bullet\), one has
\[
 \val(w_k-w_{k_0})=1+\val(k_\bullet-k_{0\bullet})
\]
Equation~\eqref{eq:m-increment} says that
\[
 m_{n+1}(k)-m_n(k)=
 \begin{cases}
  1,&k_\bullet\in(k_{\midd\bullet}(n),k_{\max\bullet}(n)]\\
  -1,&k_\bullet\in[k_{\min\bullet}(n),k_{\midd\bullet}(n))\\
  0,&\text{otherwise}
 \end{cases}
\]
Substitution gives \eqref{eq:valuation-increment}, including the exclusion
of \(k_{0\bullet}\) from both sums.

To compute the second difference, subtract the same identity with \(n-1\)
in place of \(n\).  Since all three endpoint functions are increasing, the
positive intervals satisfy
\[
\begin{split}
&\boldsymbol 1_{(k_{\midd\bullet}(n),k_{\max\bullet}(n)]}
-\boldsymbol 1_{(k_{\midd\bullet}(n-1),k_{\max\bullet}(n-1)]}\\
&\quad=
 \boldsymbol 1_{(k_{\max\bullet}(n-1),k_{\max\bullet}(n)]}
-\boldsymbol 1_{(k_{\midd\bullet}(n-1),k_{\midd\bullet}(n)]}
\end{split}
\]
whereas the negative intervals satisfy
\[
\begin{split}
&-\boldsymbol 1_{[k_{\min\bullet}(n),k_{\midd\bullet}(n))}
+\boldsymbol 1_{[k_{\min\bullet}(n-1),k_{\midd\bullet}(n-1))}\\
&\quad=
 \boldsymbol 1_{[k_{\min\bullet}(n-1),k_{\min\bullet}(n))}
-\boldsymbol 1_{[k_{\midd\bullet}(n-1),k_{\midd\bullet}(n))}
\end{split}
\]
Multiplying these identities by
\(1+\val(k_\bullet-k_{0\bullet})\) and summing over
\(k_\bullet\ne k_{0\bullet}\) gives \eqref{eq:valuation-second}.  Finally,
\(k_{\midd\bullet}(n)-k_{\midd\bullet}(n-1)=2\).  The two intervals removed
at the middle endpoint therefore overlap only at
\(k_{\midd\bullet}(n-1)+1\), which occurs with coefficient \(-2\).
\end{proof}

For \(N\geq0\), let \(\Dig(N)\) denote the sum of the \(p\)-adic
digits of \(N\).

\begin{proposition}\label{prop:old-bound}
For \(k_0\equiv k_\eps\pmod{p-1}\), the first
\(d_{k_0}^{\ur}\) slopes satisfy
\[
 \text{\rm slope}\leq
 \left\lfloor\frac{k_0-2}{p+1}\right\rfloor
 <\frac{k_0-2}{2}
\]
when \(d_{k_0}^{\ur}>0\).
\end{proposition}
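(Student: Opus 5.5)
The statement is about the ghost series specialized at \(w=w_{k_0}\): the slopes of \(\NP(G(w_{k_0},-))\) on the segment \([0,d_{k_0}^{\ur}]\). Put \(d=d_{k_0}^{\ur}\) and \(v_n=\val(g_n(w_{k_0}))\). For \(n\leq d\) one has \(m_n(k_0)=0\) by \eqref{eq:ghost-multiplicity}, so \(g_n(w_{k_0})=g_{n,\widehat{k_0}}(w_{k_0})\neq0\). Hence \(\Delta_n=v_n-v_{n-1}\) is given exactly by \eqref{eq:valuation-increment} with \(n-1\) in place of \(n\). The plan has two parts:
\begin{enumerate}[label=(\roman*)]
\item show that the first \(d\) slopes of the Newton polygon are among the increments \(\Delta_1,\ldots,\Delta_d\);
\item bound each \(\Delta_n\), \(n\leq d\), by \(\lfloor (k_0-2)/(p+1)\rfloor\).
\end{enumerate}
The final strict inequality \(\lfloor (k_0-2)/(p+1)\rfloor<(k_0-2)/2\) is immediate, because \(k_0-2>0\) whenever \(d_{k_0}^{\ur}>0\).

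For (i), I would first prove that \(\Delta_n\) is nondecreasing for \(1\leq n\leq d\), using the second-difference formula \eqref{eq:valuation-second}. Since \(n\leq d\), \cref{lem:extremal} gives \(k_{0\bullet}>k_{\max\bullet}(n-1)\). The two positive sums cover windows of lengths \(p+1\) and at least \(1\); the two negative sums together cover at most three indices near \(k_{\midd\bullet}(n)\), one of them counted twice. The first positive window already contains \(p+1\) consecutive indices, and their valuation terms \(1+\val(k_\bullet-k_{0\bullet})\) dominate the at most four negative terms.

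If the \(\Delta_n\) are increasing, the points \((n,v_n)\) for \(n\leq d\) form a convex chain. To know that this chain is the actual start of the Newton polygon, I also need to check that no later point \((n',v_{n'})\), \(n'>d\), lies below the line through \((d-1,v_{d-1})\) and \((d,v_d)\). I would obtain this as in \cite[Proposition~4.18]{LTXZ1}: the weight \(k_0\) appears in \(g_{n'}\) with multiplicity \(m_{n'}(k_0)\), which grows linearly just past \(d\). This forces a vertex at \(n=d\), and hence the first \(d\) slopes are exactly \(\Delta_1,\ldots,\Delta_d\).

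For (ii), by monotonicity it suffices to bound \(\Delta_d\). Write \(\Delta_d\) using \eqref{eq:valuation-increment} with \(n=d-1\): a positive sum over \(k_\bullet\in(2(d-1)+\delta_\eps,\,k_{\max\bullet}(d-1)]\) minus a negative sum over \([k_{\min\bullet}(d-1),\,k_{\midd\bullet}(d-1))\). I would handle the three contributions as follows.
\begin{itemize}
\item The summands \(1\) alone contribute the length difference, which by \cref{prop:degree-increment} is \(\deg\mathbf e_d-\lfloor\deg\mathbf e_d/p\rfloor\).
\item The \(p\)-adic parts \(\val(k_\bullet-k_{0\bullet})\) are sums of valuations of consecutive integers over windows of known length.
\item Using \(k_{0\bullet}\geq k_{\max\bullet}(d-1)+1\) together with Legendre's formula in the form \(\sum\val=(N-\Dig(N))/(p-1)\), the positive \(p\)-adic part minus the negative \(p\)-adic part can be written through digit sums of \(k_{0\bullet}-k_{\midd\bullet}(d-1)\) and \(k_{0\bullet}-k_{\min\bullet}(d-1)\).
\end{itemize}
Comparing the result with \((k_0-2)/(p+1)=(\{2s_\eps\}+(p-1)k_{0\bullet})/(p+1)\) should then reduce to the inequality \(d\leq (k_{0\bullet}-t_\eps)/(p+1)+1\) from \cref{prop:ur-dimension}, plus control of the digit-sum terms.

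I expect the main obstacle to be this digit-sum estimate when \(k_{0\bullet}\) sits just above \(k_{\max\bullet}(d-1)\). In that range the positive window nearly reaches \(k_{0\bullet}\), so a highly \(p\)-divisible difference can inflate \(\Delta_d\). I would treat that boundary case separately, checking \(k_{0\bullet}=t_\eps+(d-1)(p+1)+j\) for small \(j\) by hand. A second delicate point is the odd-\(d^{\Iw}\) middle index in (i), which is counted twice in \eqref{eq:valuation-second}; it must be checked that the positive window still dominates there.
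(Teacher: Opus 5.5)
\emph{Your step (i) is false, and your reduction in (ii) to bounding \(\Delta_d\) alone depends on it.}

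You claim that in \eqref{eq:valuation-second} the \(p+1\) positive terms dominate the four negative terms. They need not. The doubled negative term is \(1+\val(k_{0\bullet}-k_{\midd\bullet}(n-1)-1)\), whose valuation is unbounded as \(k_{0\bullet}\) varies. The \(p+1\) consecutive positive differences, by contrast, can contain a single multiple of \(p\), of valuation one.

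Concretely, take \(s_\eps\geq1\) and \(k_{0\bullet}=p^\gamma+\delta_\eps+1\) with \(\gamma\geq2\); then \(d_{k_0}^{\ur}\geq2\).
\begin{enumerate}[label=(\alph*)]
\item By \eqref{eq:valuation-increment}, \(\Delta_1\) sums over \(k_\bullet=\delta_\eps+j\), \(1\leq j\leq s_\eps\), with differences \(p^\gamma+1-j\). This gives \(\Delta_1=s_\eps+\gamma\).
\item For \(\Delta_2\), the positive differences are \(p^\gamma-i\), \(2\leq i\leq p+s_\eps\), and only \(i=p\) is divisible by \(p\). The negative window is the single index \(k_\bullet=\delta_\eps+1\), with difference \(p^\gamma\). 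This gives \(\Delta_2=p+s_\eps-1-\gamma\).
\end{enumerate}
Hence \(\Delta_2-\Delta_1=p-1-2\gamma<0\) as soon as \(2\gamma>p-1\), for instance \(p=11\), \(\gamma=6\). Then \((1,v_1)\) lies strictly above the chord from \((0,0)\) to \((2,v_2)\). So the first \(d\) slopes are not \(\Delta_1,\dots,\Delta_d\), and a bound on \(\Delta_d\) says nothing about the earlier increments.

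Your vertex-at-\(d\) argument is also unsubstantiated: the coefficients with \(m_{n'}(k_0)>0\) simply vanish at \(w_{k_0}\). It is, however, not needed.

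\emph{The repair is what the paper does: bound every increment \(I_n=v_{n+1}-v_n\), \(0\leq n<d\), by the same number, and finish by convexity alone.}

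Your computation in (ii) uses only \(k_{0\bullet}>k_{\max\bullet}(n)\), which holds for every \(n<d\), so it applies verbatim. Put \(M=(p-1)n+s_\eps\), \(q=\lfloor M/p\rfloor\) and \(N=k_{0\bullet}-k_{\max\bullet}(n)\geq1\). Then
\(I_n=M+\val\binom{N+M-1}{M}-\val\binom{N+M+q}{q}\).

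The identity that matters is \(k_0-2=(p+1)M+(p-1)N\), not the inequality on \(d\) you mention. What remains is the carry inequality
\(\val\binom{N+M-1}{M}-\val\binom{N+M+q}{q}\leq\lfloor(p-1)N/(p+1)\rfloor\)
from \cite[proof of Proposition~4.28]{LTXZ1}. It must hold for all \(N\geq1\) and all \(M\). It is not a finite check at small \(j\): already for \(N=2\) the term \(\val\binom{M+1}{M}=\val(M+1)\) is unbounded and must be compensated by the second binomial.

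Finally, no monotonicity and no vertex at \(d\) are required. Let \(a\leq d-1\) be the last vertex before \(d\), and let \(y_d\leq v_d\) be the ordinate of the polygon at \(d\). Then the \(d\)-th slope equals \((y_d-v_a)/(d-a)\leq(v_d-v_a)/(d-a)\). This is an average of \(I_a,\dots,I_{d-1}\), and the earlier slopes are no larger.
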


\begin{proof}
For \(0\leq n<d_{k_0}^{\ur}\), the two sums in
\eqref{eq:valuation-increment} give
\begin{align}\label{eq:old-increment-factorials}
&\val(g_{n+1}(w_{k_0}))-\val(g_n(w_{k_0}))\nonumber\\
={}&(p-1)n+s_\eps-
 \left\lfloor\frac{(p-1)n+s_\eps}{p}\right\rfloor\nonumber\\
&+\val\!\left(
 \frac{(k_{0\bullet}-2n-\delta_\eps-1)!}
      {(k_{0\bullet}-n(p+1)-s_\eps-\delta_\eps-1)!}\right)\nonumber\\
&-\val\!\left(
 \frac{\bigl(k_{0\bullet}-2n-\delta_\eps+
       \lfloor((p-1)n+s_\eps)/p\rfloor\bigr)!}
      {(k_{0\bullet}-2n-\delta_\eps)!}\right)
\end{align}
The smallest factorial argument is
\(k_{0\bullet}-k_{\max\bullet}(n)-1\geq0\), so this also covers the
endpoint \(k_{0\bullet}=k_{\max\bullet}(n)+1\).

We spell out the reduction to the carry estimate used in
\cite[proof of Proposition~4.28]{LTXZ1}.  Set
\[
 M=(p-1)n+s_\eps,\qquad
 q=\left\lfloor\frac Mp\right\rfloor,\qquad
 N=k_{0\bullet}-k_{\max\bullet}(n)
\]
The inequality \(n<d_{k_0}^{\ur}\) and
\cref{lem:extremal} imply \(N\geq1\).  Since
\[
 k_0-2=(p+1)M+(p-1)N
\]
we have
\[
 \left\lfloor\frac{k_0-2}{p+1}\right\rfloor
 =M+\left\lfloor\frac{(p-1)N}{p+1}\right\rfloor
\]
The two factorial quotients in
\eqref{eq:old-increment-factorials} become
\[
 \frac{(N+M-1)!}{(N-1)!},\qquad
 \frac{(N+M+q)!}{(N+M)!}
\]
If the left-hand side of \eqref{eq:old-increment-factorials} is denoted by
\(I_n\), then \(M=pq+\{M\}_p\) and Legendre's formula gives
\(v_p(M!)-v_p(q!)=q\).  Hence
\begin{align*}
 I_n
 &=M-q+v_p\!\left(\frac{(N+M-1)!}{(N-1)!}\right)
       -v_p\!\left(\frac{(N+M+q)!}{(N+M)!}\right)\\
 &=M+v_p\binom{N+M-1}{M}
       -v_p\binom{N+M+q}{q}
\end{align*}
The carry calculation in \cite[proof of Proposition~4.28]{LTXZ1}, with
the two intervals specialized to the single progression above, states that
\[
 v_p\binom{N+M-1}{M}-v_p\binom{N+M+q}{q}
 \leq\left\lfloor\frac{(p-1)N}{p+1}\right\rfloor
\]
For completeness, after multiplying by \(p-1\), the left-hand side of
the displayed inequality is exactly
\[
 \{M\}_p+\Dig(N-1)-\Dig(N+M-1)
 +\Dig(N+M+q)-\Dig(N+M)
\]
Thus this is the digit-sum inequality proved there;
all four arguments and the required substitution are now explicit.  Combining
the preceding identities and inequality gives
\[
 I_n\leq\left\lfloor\frac{k_0-2}{p+1}\right\rfloor
\]
Every slope of the lower convex hull before abscissa
\(d_{k_0}^{\ur}\) is an average of consecutive values among the
\(I_n\), so it satisfies the same bound.  Finally,
\(\lfloor(k_0-2)/(p+1)\rfloor<(k_0-2)/2\) because \(p+1>2\).
\end{proof}

\begin{proposition}\label{prop:compatibility}
Fix \(k_0\geq2\), and put
\[
 d=d_{k_0}^{\Iw}\bigl(\eps(1\times\omega^{2-k_0})\bigr)
\]
The theta and opposite characters are, respectively,
\[
\begin{aligned}
 \eps'&=\eps(\omega^{k_0-1}\times\omega^{1-k_0})
       =\omega^{-s_{\eps'}}\times\omega^{s_{\eps'}},
 &s_{\eps'}&=\{s_\eps+1-k_0\},\\
 \eps''&=\omega^{-s_{\eps''}}\times\omega^{s_{\eps''}},
 &s_{\eps''}&=\{k_0-2-s_\eps\}
\end{aligned}
\]
\begin{enumerate}[label=(\arabic*)]
\item For every \(\ell\geq1\), the \((d+\ell)\)-th slope of
\(\NP(G^{(\eps)}(w_{k_0},-))\) is \(k_0-1\) plus the \(\ell\)-th
slope of \(\NP(G^{(\eps')}(w_{2-k_0},-))\).
\item If \(k_0\not\equiv k_\eps\pmod{p-1}\), then, for
\(1\leq\ell\leq d\), the sum of the \(\ell\)-th slope of
\(\NP(G^{(\eps)}(w_{k_0},-))\) and the \((d-\ell+1)\)-st slope of
\(\NP(G^{(\eps'')}(w_{k_0},-))\) is \(k_0-1\).
\item If \(k_0\equiv k_\eps\pmod{p-1}\), then \(\eps''=\eps\), and
the \(\ell\)-th and \((d-\ell+1)\)-st slopes of
\(\NP(G^{(\eps)}(w_{k_0},-))\) add to \(k_0-1\) for
\(1\leq\ell\leq d_{k_0}^{\ur}\).
\item If \(k_0\equiv k_\eps\pmod{p-1}\), then, for
\(0\leq j\leq d_{k_0}^{\new}\),
\begin{equation}\label{eq:ghost-duality}
\begin{split}
&\val\bigl(g_{d_{k_0}^{\Iw}-d_{k_0}^{\ur}-j,\widehat{k_0}}
                    (w_{k_0})\bigr)
-\val\bigl(g_{d_{k_0}^{\ur}+j,\widehat{k_0}}(w_{k_0})\bigr)\\
&\hspace{23mm}=(k_0-2)
\left(\frac{d_{k_0}^{\new}}2-j\right)
\end{split}
\end{equation}
Thus the \(d_{k_0}^{\new}\) slopes in the new interval are
\((k_0-2)/2\).
\end{enumerate}
\end{proposition}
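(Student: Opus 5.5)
I will follow the structure of \cite[Propositions~4.18, 4.20 and~4.23]{LTXZ1}. The plan is to reduce every statement to an identity between valuations of ghost coefficients at \(w_{k_0}\), using only the explicit dimensions of \cref{prop:iw-dimension,prop:ur-dimension}. For (1), I first show that \(g_{d+\ell}^{(\eps)}(w)\) vanishes at \(w_{k_0}\) exactly when \(g_\ell^{(\eps')}\) vanishes at \(w_{2-k_0}\). After that I compare the multiplicity data termwise.

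The key combinatorial input is the bijection \(k\mapsto k'\) between classical weights in the two classes, with \(k+k'\) fixed. I expect the shift formula \(m_{d+\ell}^{(\eps)}(k)=m_\ell^{(\eps')}(k')\) plus a correction. That correction should be linear in \(\ell\) with slope \(k_0-1\) after summing \(\val(w_k-w_{k_0})\) over all \(k\). To prove it, I will compute \(d_k^{\Iw}\) and \(d_k^{\ur}\) for \(\eps'\) from \eqref{eq:iw-general} and \eqref{eq:ur-dimension}. The essential point is that the power-basis degrees for \(\eps'\) are the degrees for \(\eps\) that exceed \(k_0-2\), shifted by \(1-k_0\). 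This is the single-progression analogue of the theta map \(z^{j}\mapsto z^{j-k_0+1}\).

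For (2) and (3), I use the Atkin--Lehner involution, which replaces the exponent \(j\) by \(k_0-2-j\). It exchanges the power-basis degrees of \(\eps\) and \(\eps''\) that are at most \(k_0-2\). Here \(s_{\eps''}=\{k_0-2-s_\eps\}\) is read off directly. I will show that \(\val(g_{\ell}^{(\eps)}(w_{k_0}))-\val(g_{d-\ell}^{(\eps'')}(w_{k_0}))\) is affine in \(\ell\) with slope \(k_0-1\), with the constant fixed at \(\ell=0\). The slope duality then follows as in \cite{LTXZ1}.

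In case (3), \(\eps''=\eps\), and the ghost series vanishes at \(w_{k_0}\) in the new range. Restricting to \(\ell\le d_{k_0}^{\ur}\) avoids that range, so the same affine identity applies with the \(\widehat{k_0}\) factors removed.

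For (4), I work inside the new interval \(d^{\ur}_{k_0}\le n\le d^{\Iw}_{k_0}-d^{\ur}_{k_0}\). I will telescope \eqref{eq:valuation-increment} from \(n=d^{\ur}_{k_0}+j\) up to \(n=d_{k_0}^{\Iw}-d_{k_0}^{\ur}-j\). Pairing the positive interval at \(n\) with the negative interval at the reflected index should cancel all weights \(k\ne k_0\) symmetric about \(k_0\). The leftover terms should contribute \((k_0-2)/2\) per step.

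I expect this pairing to be the main obstacle. When \(d_{k_0}^{\Iw}\) is odd, the reflection \(n\mapsto d^{\Iw}_{k_0}-1-n\) has a half-integral centre. Also, \cref{lem:extremal} excludes the central weight \(k_{\midd}(n)\) from both intervals, so the cancellation must be checked separately at that index. I would first verify the identity for \(j=0\) and for the increment \(j\mapsto j+1\). Then I would control the resulting digit sums by \eqref{eq:weight-distance} and a Legendre-type computation. The slope statement follows because the polygon on the new interval is a straight segment of slope \((k_0-2)/2\), and \cref{prop:old-bound} shows it lies above the neighbouring slopes.
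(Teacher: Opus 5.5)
Your target identities are the right ones: summing the paper's increment comparisons shows that $v_p(g^{(\eps)}_{d+\ell}(w_{k_0}))-v_p(g^{(\eps')}_{\ell}(w_{2-k_0}))$ and $v_p(g^{(\eps)}_{\ell}(w_{k_0}))-v_p(g^{(\eps'')}_{d-\ell}(w_{k_0}))$ are affine in $\ell$ of slope $k_0-1$. However, you never supply the mechanism that proves them, and the one you sketch for (1) does not work.

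\textbf{The theta identity.} The valuation-preserving matching for theta is $k-k'=2k_0-2$, i.e.\ $k-k_0=k'-(2-k_0)$. It is not a fixed sum; the fixed sum $k+k''=2k_0$ is the Atkin--Lehner matching relevant to (2). Even under the correct matching, the multiplicities do not agree up to an $\ell$-independent correction. Put $C'=(p-1)\ell+s_{\eps'}$, so that $(p-1)(d+\ell)+s_\eps=C'+k_0-1$. Then:
\begin{itemize}
\item the positive interval of $\eps$ at index $d+\ell$ is the matched positive interval of $\eps'$ at $\ell$ plus $k_0-1$ extra weights;
\item the negative interval has $\lfloor (C'+k_0-1)/p\rfloor-\lfloor C'/p\rfloor$ extra weights.
\end{itemize}
The increment identity holds only because the extra positive weights with $p\mid k-k_0$ correspond bijectively to the extra negative weights $k''$ via $k-k_0=p(k''-k_0)$. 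This gives $1+v_p(k''-k_0)=v_p(k-k_0)$, which is the ``subintervals divisible by $p$'' pairing the paper invokes. Your remark about power-basis degrees accounts only for the number of terms, which is \cref{prop:degree-increment}; it says nothing about their valuations at $w_{k_0}$. Your first step is also vacuous: $w_{2-k_0}$ is never a ghost zero, and $m_n(k_0)=0$ for $n\geq d$.

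\textbf{The vertex at $d$.} Even granting these identities, the slope statements require $(d,v_p(g_d(w_{k_0})))$ to be a vertex of the whole polygon. In the congruent case you also need vertices at $d^{\ur}_{k_0}$ and $d-d^{\ur}_{k_0}$. An affine identity on the points $n\geq d$ says nothing about a segment crossing $n=d$, so (1) cannot be proved in isolation from (2)/(3). The paper closes this by summing both comparisons and using $v_p(g^{(\eps')}_\ell(w_{2-k_0}))\geq0$ and $v_p(g^{(\eps'')}_\ell(w_{k_0}))\geq0$. This shows that secants from the $d$-th point have slope $\geq k_0-1$ to the right and $\leq k_0-1$ to the left (via the $A_\ell$ sums when $\eps''=\eps$). \cref{prop:old-bound} then makes the new-interval endpoints vertices.

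\textbf{Part (4).} Your pairing here does work, and it is a genuine alternative to the paper's $A_\ell$ bookkeeping. Write $q=k_\bullet-k_{0\bullet}$.
\begin{itemize}
\item The positive interval at $n$ is $[2n+2-d,A_n]$, with $A_n=k_{\max\bullet}(n)-k_{0\bullet}\geq0$ on the new range.
\item The negative interval at $d-1-n$ reflects exactly onto $[2n+2-d,\lfloor A_n/p\rfloor]$. A $q=0$ term, if present, is deleted from both sides.
\item Legendre's formula shows the leftover is exactly $A_n$.
\item One has $A_n+A_{d-1-n}=k_0-2$.
\end{itemize}
So the contribution $(k_0-2)/2$ holds per step only on average over reflected pairs, not step by step.

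You also have the parity backwards. The map $n\mapsto d-1-n$ has an integral fixed point $n=(d-1)/2$ exactly when $d$ is odd. That step is self-paired, must be counted once, and contributes $A_{(d-1)/2}=(k_0-2)/2$. No separate check at $k_{\midd}$ is needed, since at that index the interval starts at $q=1$.
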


\begin{proof}
We first record the coefficient comparisons from which the slope
statements follow.  The endpoint lengths are
\[
\begin{split}
 k_{\max\bullet}(n)-k_{\midd\bullet}(n)
   &=(p-1)n+s_\eps,\\
 k_{\midd\bullet}(n)-k_{\min\bullet}(n)
   &=\left\lfloor\frac{(p-1)n+s_\eps}{p}\right\rfloor 
\end{split}
\]
Direct division in \eqref{eq:iw-general} gives
\[
 (p-1)d=k_0-1-s_\eps+s_{\eps'},
 \qquad
 (p-1)(d-1)=k_0-2-s_\eps-s_{\eps''}
\]
Substitution of these identities and \eqref{eq:endpoints} in
\eqref{eq:valuation-increment} pairs the positive intervals and their
subintervals divisible by \(p\), exactly as in
\cite[Sections~4.22--4.24]{LTXZ1}.  For every \(\ell\geq0\), this gives
the theta comparison
\begin{align}\label{eq:theta-coefficient-comparison}
 &\val(g^{(\eps)}_{d+\ell+1}(w_{k_0}))
  -\val(g^{(\eps)}_{d+\ell}(w_{k_0}))\nonumber\\
 &\quad=\val(g^{(\eps')}_{\ell+1}(w_{2-k_0}))
  -\val(g^{(\eps')}_{\ell}(w_{2-k_0}))+k_0-1
\end{align}
and, in the noncongruent case, the opposite-character comparison
\[
\begin{split}
 &\val(g^{(\eps)}_{d+1-\ell}(w_{k_0}))
  -\val(g^{(\eps)}_{d-\ell}(w_{k_0}))\\
 &\quad+\val(g^{(\eps'')}_{\ell}(w_{k_0}))
  -\val(g^{(\eps'')}_{\ell-1}(w_{k_0}))=k_0-1
\end{split}
\]

The congruent case contains the new parity issue, so we give its
calculation.  For \(1\leq\ell\leq d\), let \(A_\ell\) be the sum
\[
\begin{split}
A_\ell={}&\val(g_{d-\ell+1,\widehat{k_0}}(w_{k_0}))
 -\val(g_{d-\ell,\widehat{k_0}}(w_{k_0}))\\
&+\val(g_{\ell,\widehat{k_0}}(w_{k_0}))
 -\val(g_{\ell-1,\widehat{k_0}}(w_{k_0}))
\end{split}
\]
Since \(k_{0\bullet}=d-1+\delta_\eps\), the three relevant endpoint
relations are
\[
\begin{aligned}
 k_{\midd\bullet}(d-\ell)-k_{0\bullet}
 &=k_{0\bullet}-k_{\midd\bullet}(\ell-1)=d-2\ell+1,\\
 \left\lfloor\frac{k_{\max\bullet}(d-\ell)-k_{0\bullet}}p\right\rfloor
 &=k_{0\bullet}-k_{\min\bullet}(\ell-1),\\
 \left\lceil\frac{k_{0\bullet}-k_{\max\bullet}(\ell-1)}p\right\rceil
 &=k_{\min\bullet}(d-\ell)-k_{0\bullet}
\end{aligned}
\]
Consequently, before removing the \(k_0\)-factor, the two positive
intervals in \eqref{eq:valuation-increment} contain \(k_0-2\) terms.
Their remaining valuations cancel against the two negative intervals
after \(1+\val(a)\) is written as \(\val(pa)\), except for the pair
\(d-2\ell+1\) and \(p(d-2\ell+1)\).  This pair contributes one when
\(d-2\ell+1\ne0\), and contributes zero when both entries are the
excluded zero.  Removing the \(k_0\)-factor now gives
\begin{equation}\label{eq:congruent-coefficient-comparison}
 A_\ell=
 \begin{cases}
 k_0-1,&1\leq\ell\leq d_{k_0}^{\ur}
       \text{ or }d-d_{k_0}^{\ur}<\ell\leq d,\\
 k_0-2,&d_{k_0}^{\ur}<\ell\leq d-d_{k_0}^{\ur}
 \end{cases}
\end{equation}
When \(d\) is odd and \(\ell=(d+1)/2\), the weight \(k_0\) lies in
neither positive interval, and the additional valuation contribution
is zero.  Thus the second line of
\eqref{eq:congruent-coefficient-comparison} remains valid at the fixed
central index.

 Summing \eqref{eq:theta-coefficient-comparison} from \(0\) to
 \(\ell-1\) gives
 \[
  \val(g^{(\eps)}_{d+\ell}(w_{k_0}))
  -\val(g^{(\eps)}_d(w_{k_0}))
  =\val(g^{(\eps')}_\ell(w_{2-k_0}))+\ell(k_0-1)
  \geq\ell(k_0-1)
 \]
 In the noncongruent case, summing the opposite-character comparison
 gives
 \[
  \val(g^{(\eps)}_d(w_{k_0}))
  -\val(g^{(\eps)}_{d-\ell}(w_{k_0}))
  +\val(g^{(\eps'')}_\ell(w_{k_0}))
  =\ell(k_0-1)
 \]
 Hence every secant from the \(d\)-th coefficient to a coefficient on
 its left has slope at most \(k_0-1\), whereas every secant to a
 coefficient on its right has slope at least \(k_0-1\).  The \(d\)-th
 coefficient therefore lies on the Newton polygon, and the two
 coefficient comparisons give parts~(1) and~(2).  The first line of
 \eqref{eq:congruent-coefficient-comparison} gives part (3).  Summing
its second line across the new interval gives
\eqref{eq:ghost-duality}.  Finally, \cref{prop:old-bound} makes the two
endpoints of that interval vertices; hence its intervening lower hull
has slope \((k_0-2)/2\), proving part (4).
\end{proof}

\begin{remark}
Only the endpoint arithmetic and the fixed central index differ from
\cite[proof of Proposition~4.18]{LTXZ1}.  No coefficient comparison is
being delegated to that proof: the two required comparisons are
\eqref{eq:theta-coefficient-comparison} and
\eqref{eq:congruent-coefficient-comparison}, and their telescoping sums
are displayed above.  The remaining convex-hull step is the elementary
secant-slope implication in lines \eqref{eq:theta-coefficient-comparison}--
\eqref{eq:ghost-duality}.  The explicit central calculation is what
prevents an unnoticed extra term when \(d_k^{\Iw}\) is odd.
\end{remark}

\section{Vertices of the Newton polygon}

The construction in \cite[Section~5]{LTXZ1} is centered at
\(d_k^{\Iw}/2\).  In the generic case this center is integral.  Here
it may be half-integral, and this is the only change in the underlying
convex geometry.  We therefore retain the notation of that paper but
specify the affine lattice on which every statement is made.

\begin{notation}\label{not:delta-lattice}
Fix \(k=k_\eps+(p-1)k_\bullet\).  For
\[
-\frac{d_k^{\new}}2\leq\ell\leq\frac{d_k^{\new}}2,
\qquad \frac{d_k^{\Iw}}2+\ell\in\mathbb Z
\]
put
\begin{equation}\label{eq:Delta-prime}
 \Delta'_{k,\ell}
 =\val\left(g_{\frac12d_k^{\Iw}+\ell,\widehat k}(w_k)\right)
  -\frac{k-2}{2}\ell
\end{equation}
Let \(\underline\Delta_k\) be the lower convex hull of these points
and let \(\Delta_{k,\ell}\) be its ordinate.  By
\eqref{eq:ghost-duality},
\(\Delta'_{k,\ell}=\Delta'_{k,-\ell}\).
The allowed \(\ell\)'s lie in \(\mathbb Z\) if \(d_k^{\Iw}\) is even
and in \(\mathbb Z+\tfrac12\) if it is odd.
\end{notation}

The following lemma gives the successive-difference estimate that replaces the
two-progression calculation in \cite[Lemma~5.2]{LTXZ1}; the exact identity from
which the estimate follows is proved in its proof.

\begin{lemma}\label{lem:radial-gap}
Let \(\ell>0\) and \(\ell-1\geq0\) be consecutive admissible
positive-side indices, and put \(n=d_k^{\Iw}/2-\ell\).  Thus
\(\ell=1,2,\ldots\) at an integral center and
\(\ell=3/2,5/2,\ldots\) at a half-integral center.  Then
\addtocounter{equation}{1}
\begin{equation}\label{eq:radial-gap}
 \Delta'_{k,\ell}-\Delta'_{k,\ell-1}
 \geq\frac{p+1}{2}+(p-1)(\ell-1)
\end{equation}
\end{lemma}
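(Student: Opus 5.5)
By the symmetry \(\Delta'_{k,\ell}=\Delta'_{k,-\ell}\) from \cref{not:delta-lattice}, I will replace the positive-side difference by the negative-side one. Put \(d=d_k^{\Iw}\). With \(n=d/2-\ell\), the index \(\tfrac d2-\ell\) equals \(n\) and \(\tfrac d2-(\ell-1)\) equals \(n+1\). Hence
\[
 \Delta'_{k,\ell}-\Delta'_{k,\ell-1}
 =\Delta'_{k,-\ell}-\Delta'_{k,-(\ell-1)}
 =\frac{k-2}{2}-\Bigl(\val(g_{n+1,\widehat k}(w_k))-\val(g_{n,\widehat k}(w_k))\Bigr).
\]
The claim is therefore equivalent to an upper bound on the single increment \(I:=\val(g_{n+1,\widehat k}(w_k))-\val(g_{n,\widehat k}(w_k))\). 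The target is \(I\le \frac{k-2}{2}-\frac{p+1}{2}-(p-1)(\ell-1)\). Both sides of this bound are simultaneously integral or half-integral, because \(k-2=\{2s_\eps\}+(p-1)k_\bullet\) and \(d=k_\bullet+1-\delta_\eps\).

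First I will write \(I\) exactly by \cref{lem:valuation-increment} with \(k_0=k\). It is the sum of \(1+\val(k'_\bullet-k_\bullet)\) over \(k'_\bullet\in(k_{\midd\bullet}(n),k_{\max\bullet}(n)]\setminus\{k_\bullet\}\), minus the same sum over \([k_{\min\bullet}(n),k_{\midd\bullet}(n))\).

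Next I will locate \(k_\bullet=d-1+\delta_\eps\) relative to the endpoints. Since \(2n=d-2\ell\), one has \(k_\bullet-k_{\midd\bullet}(n)=2\ell-1\ge 2\) (it equals \(2\) exactly when \(\ell=3/2\)). Thus \(k\) lies strictly to the right of the middle endpoint. Because \(n\ge d_k^{\ur}\) on the admissible range, \(k_\bullet\le k_{\max\bullet}(n)\) by \cref{lem:extremal}, so \(k\) lies in the positive interval and is removed from it.

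The counting part then reads as follows, using the interval lengths from the proof of \cref{prop:compatibility}:
\[
 \bigl((p-1)n+s_\eps-1\bigr)-\left\lfloor\frac{(p-1)n+s_\eps}{p}\right\rfloor .
\]
Substituting \(n=d/2-\ell\) and \(k-2=\{2s_\eps\}+(p-1)(d-1+\delta_\eps)\) turns this into \(\frac{k-2}{2}-(p-1)\ell\) plus an explicit bounded correction. That correction involves \(s_\eps\), \(\delta_\eps\), the floor term, and the removed term at \(k_\bullet\).

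The \(p\)-adic parts I will handle as in the proof of \cref{prop:old-bound}. Write the positive-interval sum as \(\val\) of the quotient of factorials \((k_{\max\bullet}(n)-k_\bullet)!\,(k_\bullet-k_{\midd\bullet}(n)-1)!\), which splits into the two sides of the excluded point. Write the negative-interval sum as a ratio of factorials starting at distance \(2\ell-1\) from \(k_\bullet\). Legendre's formula then converts these into digit sums, and the floor term absorbs the multiples of \(p\) in the positive interval.

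The hardest step will be this digit-sum inequality. It must show that the net carry contribution, together with the counting correction, is at most \(-\frac{p+1}{2}+(p-1)\). The extra slack coming from the removal of \(k\) and from the \(-1\) in the left piece \((k_\bullet-k_{\midd\bullet}(n)-1)!\) should supply the \(\frac{p-1}{2}\)-type gain.

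I expect the tightest cases to be the smallest radius: \(\ell=1\) at an integral centre, and \(\ell=3/2\) at a half-integral centre, where the left gap is only \(2\). I would verify these by hand first. The general case should then follow by comparing \(\ell\) with \(\ell-1\) through the second-difference formula \eqref{eq:valuation-second}. Its extra \(-2\) coefficient at the middle endpoint contributes exactly the increment \(p-1\) per step in \eqref{eq:radial-gap}.
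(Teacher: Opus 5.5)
Your setup matches the paper's. Ghost duality reduces the left side to $\frac{k-2}{2}-I$ with $I=v_p(g_{n+1,\widehat k}(w_k))-v_p(g_{n,\widehat k}(w_k))$. The point $k_\bullet$ lies at distance $a=2\ell-1$ to the right of $k_{\midd\bullet}(n)$, and inside the positive interval because $n\geq d_k^{\ur}$. With $C=(p-1)n+s_\eps$, the two intervals contribute $C-1+v_p((a-1)!)+v_p((C-a)!)$ and $\lfloor C/p\rfloor+v_p((\lfloor C/p\rfloor+a)!)-v_p(a!)$.

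The gap is that the inequality you call the hardest step is never proved, and your description of what it requires is wrong. Do the count exactly. From $k-2=2s_\eps+(p-1)(d_k^{\Iw}-1)$ and $d_k^{\Iw}=2n+2\ell$ you get $\frac{k-2}{2}=C+(p-1)(\ell-\tfrac12)$, hence $\frac{k-2}{2}-(C-1)=\frac{p+1}{2}+(p-1)(\ell-1)$ exactly. The whole constant comes from the count, so no ``$\frac{p-1}{2}$-type gain'' has to be extracted from carries. What remains is
\[
 \left\lfloor\frac Cp\right\rfloor
 -v_p((a-1)!)-v_p((C-a)!)
 +v_p\Bigl(\Bigl(\left\lfloor\frac Cp\right\rfloor+a\Bigr)!\Bigr)-v_p(a!)
 \geq0 .
\]
Legendre's formula in the form $\lfloor C/p\rfloor=v_p(C!)-v_p(\lfloor C/p\rfloor!)$ rewrites the left side exactly as $v_p\binom{C}{a}+v_p(a)+v_p\binom{\lfloor C/p\rfloor+a}{a}$. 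This is nonnegative since $C\geq a\geq1$, and it is the paper's identity \eqref{eq:radial-gap-exact}. It handles every radius on both lattices at once, with no base cases.

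Your proposed completion would fail: checking $\ell=1$ and $\ell=3/2$ by hand, then adding $p-1$ per step via \eqref{eq:valuation-second}. The second difference $\Delta'_{k,\ell+1}-2\Delta'_{k,\ell}+\Delta'_{k,\ell-1}$ equals $(p-1)+E(\ell+1)-E(\ell)$, where $E$ is the binomial valuation above, and it can be smaller than $p-1$. In \eqref{eq:valuation-second} the coefficient $-2$ at the middle point subtracts. The raw count there is at most $p-1$, and the valuations at distances $2\ell-1,2\ell,2\ell+1$ from $k_\bullet$ enter with negative sign.

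For example, take $p=11$, $s_\eps=3$ and $d_k^{\Iw}=16$, so that $k=158$ and $d_k^{\ur}=1$. Then $\Delta'_{k,6}-\Delta'_{k,5}=57$ and $\Delta'_{k,7}-\Delta'_{k,6}=66$, an increase of only $p-2$. This happens because $E(6)=1$ (from the factor $a=11$) while $E(7)=0$. So an induction carrying only the lower bound \eqref{eq:radial-gap} cannot close; you would need the exact identity, after which the induction is unnecessary.

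There are also minor slips:
\begin{itemize}
\item $k_\bullet-k_{\midd\bullet}(n)=2\ell-1$ equals $1$ when $\ell=1$, not at least $2$; only $\geq1$ is needed.
\item The ``correction'' is not bounded, since $\lfloor C/p\rfloor$ grows with $n$.
\item Both sides of your target inequality for $I$ are always integers, never half-integers.
\end{itemize}
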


\begin{proof}
Ghost duality changes the left-hand side into
\[
 \frac{k-2}{2}-
 \bigl(\val(g_{n+1,\widehat k}(w_k))
       -\val(g_{n,\widehat k}(w_k))\bigr)
\]
Substitution of \(k_\bullet=d_k^{\Iw}-1+\delta_\eps\) and
\(n=d_k^{\Iw}/2-\ell\) in \eqref{eq:endpoints} gives
\[
\begin{split}
 k_\bullet-k_{\midd\bullet}(n)&=2\ell-1,\\
 k_{\max\bullet}(n)-k_\bullet&=(p-1)n+s_\eps-(2\ell-1),\\
 k_\bullet-k_{\min\bullet}(n)
 &=\left\lfloor\frac{(p-1)n+s_\eps}{p}\right\rfloor+2\ell-1
\end{split}
\]
The second expression is nonnegative even at the outer boundary
\(n=d_k^{\ur}\).  Put \(C=(p-1)n+s_\eps\) and \(a=2\ell-1\).
After inserting the three lengths in \eqref{eq:valuation-increment},
the positive and negative factorial contributions are respectively
\[
 C-1+\val((a-1)!)+\val((C-a)!)
\quad\text{and}\quad
 \left\lfloor\frac Cp\right\rfloor
 +\val\!\left(\left(\left\lfloor\frac Cp\right\rfloor+a\right)!\right)
 -\val(a!)
\]
Legendre's formula rearranges their difference as
\addtocounter{equation}{-2}
\begin{align}\label{eq:radial-gap-exact}
\Delta'_{k,\ell}-\Delta'_{k,\ell-1}
={}&\frac{p+1}{2}+(p-1)(\ell-1)\nonumber\\
&+\val\!\left[
 \binom{(p-1)n+s_\eps}{2\ell-1}(2\ell-1)
 \binom{\left\lfloor\frac{(p-1)n+s_\eps}{p}\right\rfloor+2\ell-1}
      {2\ell-1}\right]
\end{align}
\addtocounter{equation}{1}
Both binomial coefficients are integers, and \(2\ell-1\) is a positive
integer, so the last valuation is nonnegative and
\eqref{eq:radial-gap} follows.  The calculation uses only
that \(2\ell-1\) is a positive integer, so it includes the
half-integral lattice and excludes precisely \(\ell=1/2\).
\end{proof}

\begin{remark}
The later estimates in \cite[Section~5]{LTXZ1} use the dimension
formula only through a lower bound for consecutive differences of
\(\Delta'_{k,\ell}\).  Thus \eqref{eq:radial-gap-exact}, rather than a
formal substitution into the generic formula, supplies the required
estimate.  Both affine lattices have step one.
At a half-integral center, the radius \(1/2\) contains no integer in its
open symmetric interval, so the first radius relevant to a
near-Steinberg range is \(3/2\).
\end{remark}

We next record only the separation estimates from
\cite[Corollaries~5.3, 5.9 and 5.10]{LTXZ1} that will be used in
Section~5.

\begin{corollary}\label{cor:radial-gap-weight}
Let \(k'\ne k\), and suppose that one of
\[
 d_{k'}^{\ur},\qquad d_{k'}^{\Iw}-d_{k'}^{\ur},\qquad
 \frac12d_{k'}^{\Iw}
\]
lies respectively in the open, open, or closed interval of radius
\(\ell\) about \(d_k^{\Iw}/2\).  If \(d_k^{\Iw}\) is even and
\(\ell=1\), then
\begin{equation}\label{eq:strong-gap-minimal}
 \Delta'_{k,1}-\Delta'_{k,0}-\val(w_k-w_{k'})\geq\frac12
\end{equation}
For every other admissible \(\ell\),
\begin{equation}\label{eq:strong-gap}
 \Delta'_{k,\ell}-\Delta'_{k,\ell-1}-\val(w_k-w_{k'})
 \geq\frac12+(p-1)(\ell-1)
 -\left\lfloor\log_p(2(p+1)\ell)\right\rfloor>0
\end{equation}
\end{corollary}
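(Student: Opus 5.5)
The plan is to combine the exact identity \eqref{eq:radial-gap-exact} from \cref{lem:radial-gap} with an upper bound for \(\val(w_k-w_{k'})\) coming from \eqref{eq:weight-distance}. By \eqref{eq:weight-distance}, \(\val(w_k-w_{k'})=1+\val(k_\bullet-k'_\bullet)\), so everything reduces to bounding \(\val(k_\bullet-k'_\bullet)\) by \(\lfloor\log_p|k_\bullet-k'_\bullet|\rfloor\). For that we need an upper bound on \(|k_\bullet-k'_\bullet|\) in terms of \(\ell\) and \(p\).

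\textbf{Step 1: localizing \(k'\).} Suppose one of the three invariants of \(k'\) lies within \(\ell\) of \(d_k^{\Iw}/2\). We translate this into a window for \(k'_\bullet\) using \cref{lem:extremal} and the formulas \(d^{\Iw}_{k'}=k'_\bullet+1-\delta_\eps\) and \eqref{eq:ur-dimension}.
\begin{itemize}
\item In the central case, \(|d_{k'}^{\Iw}/2-d_k^{\Iw}/2|\le\ell\) gives \(|k'_\bullet-k_\bullet|\le2\ell\).
\item In the two outer cases, \(d^{\ur}_{k'}\) or \(d^{\Iw}_{k'}-d^{\ur}_{k'}\) is a value \(n\) near \(d_k^{\Iw}/2\). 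Then \(k'_\bullet\) lies between \(k_{\min\bullet}(n)\) and \(k_{\max\bullet}(n)+1\). The \(k_{\min}\) side is \(O(\ell)\) from \(k_\bullet\), while the \(k_{\max}\) side can be far, of size roughly \((p-1)d_k^{\Iw}/2\).
\end{itemize}
So the naive distance bound fails in the \(d^{\ur}\) case. The remedy is to keep the binomial term in \eqref{eq:radial-gap-exact} rather than discard it.

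\textbf{Step 2: using the extra valuation term.} The exact formula supplies
\[
\val\!\left[\binom{C}{2\ell-1}(2\ell-1)\binom{\lfloor C/p\rfloor+2\ell-1}{2\ell-1}\right],\qquad C=(p-1)n+s_\eps.
\]
Here \(k_{\max\bullet}(n)-k_\bullet=C-(2\ell-1)\) and \(k_\bullet-k_{\min\bullet}(n)=\lfloor C/p\rfloor+2\ell-1\). The plan is a Kummer carry count. When \(k'_\bullet\) is within distance \(O(\ell)\) of the endpoint \(k_{\max\bullet}(n)\) or \(k_{\min\bullet}(n)\), a high \(p\)-power dividing \(k_\bullet-k'_\bullet\) forces carries, so the binomial term is at least \(\val(k_\bullet-k'_\bullet)-\lfloor\log_p(2(p+1)\ell)\rfloor\). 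Precisely, \(\val(k_\bullet-k'_\bullet)\) splits as the valuation of an endpoint offset, which is \(C\) or \(\lfloor C/p\rfloor+2\ell-1\) up to \(O(\ell)\), plus a small correction. The open versus closed conventions for the intervals ensure \(k'\ne k\) and that \(|k'_\bullet-{\rm endpoint}|\le2(p+1)\ell\). Indeed, one step in \(d^{\ur}\) moves \(k'_\bullet\) by \(p+1\), and the open radius-\(\ell\) condition allows fewer than \(2\ell\) such steps.

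Subtracting the resulting bound \(\val(w_k-w_{k'})\le1+\lfloor\log_p(2(p+1)\ell)\rfloor+(\text{binomial term})\) from \eqref{eq:radial-gap-exact} gives
\[
\tfrac{p+1}2-1+(p-1)(\ell-1)-\lfloor\log_p(2(p+1)\ell)\rfloor,
\]
which is at least the claimed \(\tfrac12+(p-1)(\ell-1)-\lfloor\log_p(2(p+1)\ell)\rfloor\). Positivity follows since \(p\ge11\) and \(\ell\ge1\): for \(\ell=1\) the log term is at most \(1<\tfrac12+\cdots\) only barely, and for \(\ell=3/2\) we have \((p-1)/2\ge5\). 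The case \(d_k^{\Iw}\) even with \(\ell=1\) is where the log bound is too weak. There \(a=1\), so I would compute directly: \(\Delta'_{k,1}-\Delta'_{k,0}\ge\tfrac{p+1}2+\val(C)+\val(\lfloor C/p\rfloor+1)\). The admissible \(k'\) is then \(k_\bullet\pm1\), \(\pm2\), or adjacent to the max/min endpoints, and in each case \(\val(w_k-w_{k'})\le1+(\text{those valuations})\). This yields \(\ge\tfrac12\) with room to spare.

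\textbf{Main obstacle.} The delicate step is the carry comparison in Step 2 for the \(d^{\ur}_{k'}\) case, where \(k'\) is far from \(k\). There I must show that \(\val(k_\bullet-k'_\bullet)\) is controlled by \(\val\binom{C}{2\ell-1}\) (or by \(\val(C-j)\) for \(|j|<2(p+1)\ell\)) up to the logarithmic loss. I would first try the elementary inequality \(\val(x)\le\val(y)+\lfloor\log_p|x-y|\rfloor\) whenever \(\val(x)>\val(y)\), applied with \(y\) running over the factors of \(C!/(C-a)!\).
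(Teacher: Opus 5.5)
Your route is the paper's: translate the three interval conditions into windows for \(k'_\bullet\), bound \(\val(k_\bullet-k'_\bullet)\) by the carries in the two binomial coefficients of \eqref{eq:radial-gap-exact} plus a logarithmic loss, and then use \(\val(w_k-w_{k'})=1+\val(k_\bullet-k'_\bullet)\). The gap is the one nontrivial step, the carry bound, which you yourself flag as the main obstacle. It is never supplied, and the tool you propose for it fails.

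\begin{itemize}
\item The inequality \(\val(x)\le\val(y)+\lfloor\log_p|x-y|\rfloor\) for \(\val(x)>\val(y)\) is false: take \(x=p^N\), \(y=p^N-1\).
\item The factors of \(C!/(C-a)!\), with \(a=2\ell-1\), cannot do the job alone. In the \(d_{k'}^{\ur}\) case the window is \(k'_\bullet-k_\bullet\in[C-a+1,\,C+pa]\). For \(s_\eps=0\), \(C=p^N-1\) and \(k'_\bullet-k_\bullet=C+1=p^N\), one has \(\val\binom Ca=0\) but \(\val(k_\bullet-k'_\bullet)=N\).
\item Contrary to your Step 1, the \(d_{k'}^{\Iw}-d_{k'}^{\ur}\) window is not within \(O(\ell)\) of \(k_\bullet\). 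It lies at distance about \(\lfloor C/p\rfloor\), as you yourself use in Step 2, so it needs the same carry argument.
\end{itemize}

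The missing idea is that the offset, after multiplying or dividing by \(p\), is literally one of the \(a\) consecutive numerator factors of \(\binom Ca\) or \(\binom{\lfloor C/p\rfloor+a}{a}\). Combine this with the following fact: for \(x\in\{N-a+1,\dots,N\}\) one has \(\val(x)\le\val\binom Na+\lfloor\log_pa\rfloor\). To see it, write \(\val\binom Na=\sum_{m\ge1}(\#\{p^m\text{-multiples in the block}\}-\lfloor a/p^m\rfloor)\); every summand is nonnegative, and it is at least \(1\) when \(\lfloor\log_pa\rfloor<m\le\val(x)\).

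The cases then go as follows.
\begin{itemize}
\item If \(k'_\bullet-k_\bullet\in[C-a+1,C]\), it is a factor of \(\binom Ca\).
\item If it lies in \([C+1,C+pa]\) and is divisible by \(p\), its quotient by \(p\) lies in \([\lfloor C/p\rfloor+1,\lfloor C/p\rfloor+a]\).
\item In the other endpoint case, \(q=k_\bullet-k'_\bullet\in[\lfloor(C-a)/p\rfloor+1,\lfloor C/p\rfloor+a]\). Either \(q>\lfloor C/p\rfloor\) is a factor of the second binomial, or \(pq\in[C-a+1,C]\).
\item The central case has \(0<|k_\bullet-k'_\bullet|\le2\ell\).
\end{itemize}

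This yields \(\val(k_\bullet-k'_\bullet)\le\lfloor\log_p(p(2\ell-1))\rfloor+\val[\cdots]\), which is exactly your sharper claim. It gives the lower bound \(\frac{p-1}2+(p-1)(\ell-1)-\lfloor\log_p(2(p+1)\ell)\rfloor\), which beats \eqref{eq:strong-gap} by \((p-2)/2\). The paper instead quotes \cite[Lemma~5.4]{LTXZ1} with that extra loss, which is why it must treat even \(\ell=1\) separately. With your bound, that case already gives \((p-3)/2\ge\frac12\).

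Two minor slips:
\begin{itemize}
\item In your direct \(\ell=1\) check, the bound \(\val(w_k-w_{k'})\le1+\val(C)+\val(\lfloor C/p\rfloor+1)\) fails by one when \(p\nmid C\) and \(k'_\bullet-k_\bullet=p(\lfloor C/p\rfloor+1)\). Use \(2+\cdots\) instead, which still gives \((p-3)/2\).
\item At even \(\ell=1\) the right side of \eqref{eq:strong-gap} equals \(-\frac12\). That is why the case is excluded; it is not barely positive.
\end{itemize}
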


\begin{proof}
Put \(n=d_k^{\Iw}/2-\ell\).  The endpoint equivalences in
\cref{lem:extremal} turn the first two open-interval conditions into
\[
\begin{aligned}
 d_{k'}^{\ur}:\quad
 &k_{\max\bullet}(n)<k'_\bullet
       \leq k_{\max\bullet}(n+2\ell-1),\\
 d_{k'}^{\Iw}-d_{k'}^{\ur}:\quad
 &k_{\min\bullet}(n)\leq k'_\bullet
       <k_{\min\bullet}(n+2\ell-1)
\end{aligned}
\]
Since \(k_\bullet=2n+2\ell-1+\delta_\eps\), the first line is
equivalently
\[
 (p-1)n+s_\eps-(2\ell-1)+1
 \leq k'_\bullet-k_\bullet
 \leq(p-1)n+s_\eps+p(2\ell-1)
\]
The closed center condition is, directly,
\[
 -2\ell\leq k'_\bullet-k_\bullet\leq2\ell
\]
Thus the strict and weak endpoints in all three cases have been
retained.  These are the integer intervals used in
\cite[proof of Corollary~5.3]{LTXZ1}.
The digit estimate \cite[Lemma~5.4]{LTXZ1}, applied to the two
binomial coefficients in \eqref{eq:radial-gap-exact}, bounds
\(\val(k'_\bullet-k_\bullet)\) by their total carry contribution plus
\(\lfloor\log_p(2(p+1)\ell)\rfloor+(p-2)/2\).
Together with
\(\val(w_k-w_{k'})=1+\val(k_\bullet-k'_\bullet)\), this proves
\eqref{eq:strong-gap}.

At an integral center and \(\ell=1\), put
\(C=(p-1)n+s_\eps\) and \(q=k'_\bullet-k_\bullet\).  The two endpoint
conditions above place \(q\) in
\[
 [C,C+p]
 \quad\text{or}\quad
 \left[-1-\left\lfloor\frac Cp\right\rfloor,
       -\left\lfloor\frac{C+p-1}{p}\right\rfloor\right]
\]
whereas the center condition gives \(0<|q|\leq2\).  In either endpoint
interval, division of \(C\) by \(p\) gives the elementary bound
\[
 v_p(q)\leq1+v_p\!\left(C
             \left(\left\lfloor\frac Cp\right\rfloor+1\right)\right)
\]
Indeed, write \(C=ap+r\), \(0\leq r<p\).  In the first interval a
multiple of \(p\) is \((a+1)p\) when \(r>0\), and is either \(ap\) or
\((a+1)p\) when \(r=0\); the asserted valuation bound follows in each
case, while it is automatic for a nonmultiple of \(p\).  The second
interval consists of \(-(a+1)\) when \(r>0\), and of \(-a-1,-a\) when
\(r=0\), giving the same bound.  Formula
\eqref{eq:radial-gap-exact} at \(2\ell-1=1\), together with
\(v_p(w_k-w_{k'})=1+v_p(q)\), now gives
\[
 \Delta'_{k,1}-\Delta'_{k,0}-v_p(w_k-w_{k'})
 \geq\frac{p-3}{2}\geq\frac12
\]
For the center interval the stronger lower bound \((p-1)/2\) follows
from \(v_p(q)=0\).  This proves \eqref{eq:strong-gap-minimal}.  At a
half-integral center the first nonempty radius is \(3/2\), so no further
initial case occurs.
\end{proof}

\begin{lemma}\label{lem:convex-hull-difference}
For \(p\geq11\) and every admissible \(\ell>0\),
\begin{equation}\label{eq:convex-hull-difference}
 0\leq\Delta'_{k,\ell}-\Delta_{k,\ell}
 \leq3\bigl(\log_p(2\ell)\bigr)^2
\end{equation}
Moreover, \(\Delta'_{k,\ell}=\Delta_{k,\ell}\) when
\(0<2\ell<2p\) and \(2\ell\ne p\); at \(2\ell=p\) one has
\(\Delta'_{k,\ell}-\Delta_{k,\ell}\leq1\).
\end{lemma}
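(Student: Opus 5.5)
The plan has three steps: rewrite the successive differences of \(\Delta'_{k,\ell}\) exactly, bound the distance to the lower convex hull by a digit–carry argument, and then treat the small radii directly.

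\emph{Step 1: exact differences.} By the symmetry \(\Delta'_{k,\ell}=\Delta'_{k,-\ell}\) it suffices to work on the positive side. Write
\[
 \Delta'_{k,\ell}-\Delta'_{k,\ell-1}=L(\ell)+\beta_\ell,
 \qquad L(\ell)=\frac{p+1}{2}+(p-1)(\ell-1),
\]
where \(\beta_\ell\ge 0\) is the bracketed valuation in \eqref{eq:radial-gap-exact}:
\[
 \beta_\ell=\val\!\left[\binom{C}{a}\,a\,\binom{\lfloor C/p\rfloor+a}{a}\right],
 \qquad a=2\ell-1,\quad C=(p-1)n+s_\eps .
\]
At an integral centre the first difference, from \(0\) to \(1\), is also of this form. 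At a half-integral centre the first difference, from \(1/2\) to \(3/2\), likewise has \(a=2\).

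The lower bound in \eqref{eq:convex-hull-difference} is trivial, since a lower convex hull lies below its points. For the upper bound the idea is that \(L\) is linear with slope \(p-1\), so the points \(\ell\mapsto \sum_{\ell'\le \ell}L(\ell')\) are already convex. The distance of \(\Delta'\) from its hull is therefore controlled by how far the cumulative perturbation \(B(\ell)=\sum_{\ell'\le\ell}\beta_{\ell'}\) fails to be convex.

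\emph{Step 2: the general bound.} Concretely, \(\Delta'_{k,\ell}-\Delta_{k,\ell}\) is at most
\[
 \max_{\ell_1<\ell<\ell_2}\Bigl(\text{interpolation of }\Delta'\text{ at }\ell_1,\ell_2\ \text{evaluated at }\ell\Bigr)-\Delta'_{k,\ell},
\]
taken with the sign that measures how far \(\Delta'_{k,\ell}\) sits above the chord. Any such excess is produced by a large \(\beta_{\ell'}\) at some \(\ell'\le\ell\). The linear term gains \((p-1)\) per step in the difference of slopes, so large values of \(\beta\) can only be "repaid" over a range of length about \(\beta/(p-1)\).

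I would then bound \(\beta_{\ell'}\) by Kummer's theorem. The term \(\val(a)\) is at most \(\log_p(2\ell)\). Each binomial contributes its number of carries, and only carries in the low \(\log_p(2\ell)\) digits matter once they are compared against neighbouring \(\ell''\). This gives a local bound \(\beta\le 3\log_p(2\ell)\) for the part not already absorbed into convex behaviour. Summing over the at most \(\log_p(2\ell)\) dyadic-type scales (powers of \(p\)) then yields the quadratic bound \(3(\log_p 2\ell)^2\).

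This is essentially the argument of \cite[Lemma~5.2]{LTXZ1}, transported to the lattice \(\mathbb Z-d_k^{\Iw}/2\) via \eqref{eq:radial-gap-exact}. Since both affine lattices have step one, the carry bookkeeping depends only on the integer \(a=2\ell-1\), and I would repeat that proof with \(a\) in place of the generic odd parameter.

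\emph{Step 3: small radii.} For \(2\ell<2p\) we have \(a<2p-1\), so \(\val(a)\le 1\), with equality only at \(a=p\). At \(a=p\), however, \(\ell\) would be \((p+1)/2\), so \(2\ell=p+1\); I would need to recheck which \(a\) corresponds to \(2\ell=p\), and I expect it to be \(a=p-1\), i.e. the step ending at \(\ell\) with \(2\ell=p\), where a carry can appear. The binomials \(\binom{C}{a}\) and \(\binom{\lfloor C/p\rfloor+a}{a}\) with \(a<p\) have at most one carry in the units digit, respectively at most one total.

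I would check that on this range \(\beta_\ell\le p-2\), so that the sequence \(L(\ell)+\beta_\ell\) stays nondecreasing. The reason is that \(L\) increases by \(p-1\) per step while \(\beta\) changes by less. This makes the points \(\Delta'\) themselves convex, hence equal to \(\Delta\), except at the single index \(2\ell=p\) (where \(a\) or \(a+1\) is divisible by \(p\)). There the possible unit jump breaks exact convexity by at most \(1\).

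I expect the main obstacle to be Step 2: making the carry-counting argument of \cite[Lemma~5.2]{LTXZ1} rigorous with the exact constant \(3\) on the half-integral lattice, where the binomial parameter \(a\) is even rather than odd. The first attempt would be to check that the proof of that lemma never uses the parity of \(a\).
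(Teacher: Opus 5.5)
There is a genuine gap. Your Step~1 is correct, and so is the framing at the start of Step~2: the points \(\sum_{\ell'\le\ell}L(\ell')\) are convex, so the defect is governed by how far \(B\) fails to be convex. That failure, however, is measured by the drops \(\beta_\ell-\beta_{\ell+1}\), not by the size of \(\beta_\ell\). The size of \(\beta_\ell\) is not controlled by \(\ell\) at all.

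At an integral centre, \(\beta_1=v_p(C)+v_p(\lfloor C/p\rfloor+1)\), and \(C=(p-1)n+s_\eps\) can be divisible by an arbitrarily high power of \(p\). Kummer's theorem with \(a<p\) does not give ``at most one carry'': the carry out of the units digit runs through every digit of \(C-a\) equal to \(p-1\). So the claim \(\beta_\ell\le p-2\) for \(2\ell<2p\) in Step~3 is false, and no bound \(\beta\le 3\log_p(2\ell)\) as in Step~2 can hold. Such large carries are harmless because they persist: between consecutive radii \(C\) moves only by \(p-1\) and \(a\) by \(2\).

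The missing idea is to pass to second differences. Subtract \eqref{eq:radial-gap-exact} at \(\ell\) and \(\ell+1\) and expand with Legendre's formula. The terms involving \(C\) then contribute a nonnegative amount, because \(C-a-p,\dots,C-a\) contains a multiple of \(p\). One obtains a bound depending on \(\ell\) alone:
\[
 \Delta'_{k,\ell+1}-2\Delta'_{k,\ell}+\Delta'_{k,\ell-1}
 =(p-1)+\beta_{\ell+1}-\beta_\ell
 \geq p-1-v_p(2\ell-1)-2v_p(2\ell)-v_p(2\ell+1).
\]
This is the input of the paper's proof, which records it as \(\geq 1-2v_p(2\ell)\). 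Note that the terms \(v_p(2\ell\pm1)\), coming from \((a-1)!\) and \(a!\), genuinely occur. For \(2\ell-1=p^m\) and \(C=p^m+p+1\), the second difference is exactly \(p-1-m\). They must be carried into the next step, where they play the same role as \(v_p(2\ell)\).

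Second, local convexity near a small radius does not make it a vertex of \(\underline\Delta_k\), because an edge may join it to a distant radius. So ``the points \(\Delta'\) are convex, hence equal to \(\Delta\)'' needs a global argument. The paper supplies this with the discrete argument of \cite[Lemmas~5.5, 5.6 and 5.8]{LTXZ1}; Lemma~5.2 there is only the difference computation that \eqref{eq:radial-gap-exact} replaces.

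The argument runs as follows. On an edge \([\ell_-,\ell_+]\) between adjacent vertices, the interior second differences sum to a nonpositive number. Summing the lower bound therefore bounds the length of the edge linearly in the largest \(p\)-adic valuation \(m_0\) occurring in its interior. It also bounds the height of \(\Delta'\) above the edge by \(O(m_0^2)\), which gives \(3(\log_p 2\ell)^2\). When \(m_0\le 1\), the same summation leaves at most one interior point. The length bound excludes an edge with \(m_0\ge 2\) whose interior contains a radius below \(p\). Together these give the assertions for \(2\ell<2p\).

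Your summation ``over scales'' and the ``possible unit jump'' at \(2\ell=p\) do not replace this argument.
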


\begin{proof}
Subtracting \eqref{eq:radial-gap-exact} at consecutive radii gives
\[
 \Delta'_{k,\ell+1}-2\Delta'_{k,\ell}+\Delta'_{k,\ell-1}
 \geq1-2\val(2\ell)
\]
This is the sole input concerning the present dimension formula in
the proofs of \cite[Lemmas~5.5, 5.6 and 5.8]{LTXZ1}.  To verify that
their discrete argument applies here, consider an interval
\([\ell_-,\ell_+]\) between adjacent vertices and choose an interior
\(\ell_0\) maximizing \(m_0=\val(2\ell_0)\).  Since all radii in the
interval differ by integers,
\[
 \val(2\ell_0\pm2j)\leq\val(j)
\]
Summing the preceding second-difference estimate gives, with
\(D=\ell_+-\ell_-\),
\[
 D-2\leq\frac{2(p-1)}{p-3}\left(m_0-\frac12\right)
 <\frac52m_0,\qquad
 \Delta'_{k,\ell}-\Delta_{k,\ell}<2m_0^2
\]
The same logarithmic comparison as in
\cite[proof of Lemma~5.8]{LTXZ1} now gives
 \eqref{eq:convex-hull-difference}.  If \(m_0=1\), the sharper first bound and
 \(p\geq11\) give \(D\leq3\).  The case \(D=3\) is impossible: the sum
 of the two interior second differences is nonnegative, whereas the
 two strict chord-slope inequalities at adjacent vertices force it to
 be negative.  Hence \(D=2\), and
 \(\Delta'_{k,\ell}-\Delta_{k,\ell}\leq1\).  If \(m_0=0\), then
 \(\Delta'_{k,\ell}=\Delta_{k,\ell}\).
Thus the final assertion follows as well.  This reasoning is valid on
both affine lattices.  On the half-integral lattice its first possible
interior radius is \(3/2\).  Here the preceding second-difference bound is
\[
 \Delta'_{k,5/2}-2\Delta'_{k,3/2}+\Delta'_{k,1/2}
 \geq1-2v_p(3)=1
\]
because \(p\geq11\).  Thus the middle point lies strictly below the
chord through its two neighbors and is itself a vertex; consequently
\(\Delta'_{k,3/2}=\Delta_{k,3/2}\).  Hence there is no missing initial
case.
\end{proof}

\begin{remark}
The discrete convexity argument just cited uses three facts: consecutive
radii differ by one, the ordinates are symmetric about the center, and
the second difference is bounded below by
\(1-2\val(2\ell)\).  All three hold on
\(\mathbb Z-d_k^{\Iw}/2\).  Therefore translating the lattice by
\(1/2\) does not alter the proof; only its initial radius changes, and
the displayed second-difference calculation above treats that radius.
\end{remark}

\begin{corollary}\label{cor:convex-gap}
Under the hypotheses of \cref{cor:radial-gap-weight},
\[
 \Delta_{k,\ell}-\Delta'_{k,\ell-1}-\val(w_k-w_{k'})
 \geq\ell-\frac12
\]
If \(\ell\geq2\), the left-hand side is at least
\(2\ell-\tfrac12\), and
\[
 \Delta_{k,\ell}-\Delta'_{k,\ell-1}\geq2\ell+\frac12
\]
\end{corollary}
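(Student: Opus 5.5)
The plan is to decompose
\[
 \Delta_{k,\ell}-\Delta'_{k,\ell-1}-\val(w_k-w_{k'})
 =\bigl(\Delta'_{k,\ell}-\Delta'_{k,\ell-1}-\val(w_k-w_{k'})\bigr)
 -\bigl(\Delta'_{k,\ell}-\Delta_{k,\ell}\bigr),
\]
bound the first bracket from below by \cref{cor:radial-gap-weight}, and bound the second bracket from above by \cref{lem:convex-hull-difference}. The second bracket vanishes whenever \(0<2\ell<2p\) and \(2\ell\ne p\). It is at most \(1\) at \(2\ell=p\), and at most \(3(\log_p(2\ell))^2\) in general. The whole argument is therefore a comparison between a linear lower bound in \(\ell\) and a bound that is at most logarithmic squared.

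\emph{Small radii.} Take first an even \(d_k^{\Iw}\) with \(\ell=1\). Here \eqref{eq:strong-gap-minimal} gives a lower bound of \(1/2=\ell-\tfrac12\), and \(\Delta_{k,1}=\Delta'_{k,1}\), which settles this case. For every other admissible \(\ell\) (namely \(\ell\ge 3/2\), or \(\ell\ge2\) in the integral case), \eqref{eq:strong-gap} bounds the first bracket below by
\[
 \tfrac12+(p-1)(\ell-1)-\lfloor\log_p(2(p+1)\ell)\rfloor .
\]
Suppose \(2\ell<2p\) with \(2\ell\ne p\). Then \(2(p+1)\ell<2p(p+1)<p^3\), so the floor is at most \(2\). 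Since \(\ell-1\ge 1/2\) and \(p\ge11\), the quantity \((p-1)(\ell-1)-2\) is at least \(2\ell-1\). This is the key numerical check: \((p-3)(\ell-1)\ge 3\) is equivalent to \((p-1)(\ell-1)-2\ge 2\ell-1\), and it holds because \((p-3)/2\ge 4\). The result is \(2\ell-\tfrac12\), which also dominates \(\ell-\tfrac12\). At \(2\ell=p\) we lose at most \(1\) from the hull correction, but \((p-1)(\ell-1)\) is then about \(p^2/2\), so the margin easily absorbs it.

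\emph{Large radii.} For \(2\ell\ge2p\), I would use the crude bounds \(\lfloor\log_p(2(p+1)\ell)\rfloor\le 1+\log_p(2\ell)+1\) and \(3(\log_p(2\ell))^2\). It then suffices to show
\[
 (p-1)(\ell-1)-2\ell\ \ge\ 2+\log_p(2\ell)+3(\log_p(2\ell))^2 .
\]
Write \(x=\log_p(2\ell)\ge1\). The left side is at least \((p-3)(p^x/2-1)-p+1\), which grows exponentially in \(x\), while the right side is quadratic in \(x\). Checking the inequality at \(x=1\) with \(p=11\) (left side \(\ge 4\cdot 10-10=30\), right side \(6\)), together with a comparison of derivatives, gives it for all \(x\ge1\) and all \(p\ge11\). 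This is the step needing the most care, since the constants must be tracked precisely, but nothing subtle happens.

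\emph{The last inequality.} It remains to show \(\Delta_{k,\ell}-\Delta'_{k,\ell-1}\ge 2\ell+\tfrac12\) for \(\ell\ge2\). This follows from the bound just proved because \(\val(w_k-w_{k'})\ge1\) by \eqref{eq:weight-distance}. That requires some \(k'\) satisfying the hypotheses; if none is available, I would rerun the same estimates with the valuation term removed, using \eqref{eq:radial-gap} in place of \eqref{eq:strong-gap}. That version yields \(\tfrac{p+1}{2}+(p-1)(\ell-1)\) minus the hull correction, which again exceeds \(2\ell+\tfrac12\) by the same numerical comparison.
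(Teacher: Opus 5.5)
Your proposal is correct and follows essentially the paper's proof: subtract the hull correction of \cref{lem:convex-hull-difference} from \eqref{eq:strong-gap}, use \(\Delta'_{k,\ell}=\Delta_{k,\ell}\) (or the loss of at most \(1\) at \(2\ell=p\)) for \(\ell<p\), handle the integral case \(\ell=1\) by \eqref{eq:strong-gap-minimal}, check an explicit linear-versus-log-squared inequality for \(\ell\geq p\), and obtain the last bound from \(\val(w_k-w_{k'})\geq1\). One harmless arithmetic slip: at \(x=1\), \(p=11\) your lower bound \((p-3)(p^x/2-1)-p+1\) equals \(36-10=26\), not \(30\), which still exceeds \(6\); your valuation-free rerun via \eqref{eq:radial-gap} is a sensible extra, consistent with how the bound is later used without a weight \(k'\) in \cref{cor:multi-gap}.
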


\begin{proof}
Subtract \eqref{eq:convex-hull-difference} from
\eqref{eq:strong-gap}.  For \(2\leq\ell<p\), use the last assertion
of \cref{lem:convex-hull-difference}.  For \(\ell\geq p\), the required estimate
is the explicit inequality
\[
 (p-3)\ell-(p-2)
 -\left\lfloor\log_p(2(p+1)\ell)\right\rfloor
 -3\bigl(\log_p(2\ell)\bigr)^2\geq0
\]
Its left-hand side is increasing for \(p\geq11\), and its value at
\(\ell=p\) is positive.
The integral initial value is \eqref{eq:strong-gap-minimal}; the only
half-integral value below \(2\) is \(3/2\), where
\(\Delta'_{k,3/2}=\Delta_{k,3/2}\).
These checks give the first two bounds.  The third follows from the
second because \(\val(w_k-w_{k'})\geq1\).
\end{proof}

\begin{corollary}\label{cor:multi-gap}
Let \(\ell\leq\ell'\leq\ell''\) be nonnegative admissible indices with
\(\ell''>\ell\), and suppose that
\(d_{k'}^{\ur}\) or \(d_{k'}^{\Iw}-d_{k'}^{\ur}\) lies in the closed
interval of radius \(\ell'\) about \(d_k^{\Iw}/2\).  Then
\begin{equation}\label{eq:multi-gap}
 \Delta_{k,\ell''}-\Delta'_{k,\ell}
 -(\ell''-\ell')\val(w_k-w_{k'})
 \geq\frac12\bigl((\ell'')^2-\ell^2\bigr)
 +(\ell'-\ell)
 \left\lfloor\log_p(2(p+1)\ell'')+1\right\rfloor
\end{equation}
Independently of the existence of such a weight \(k'\), every pair of
admissible radii \(0\leq\ell<\ell''\) satisfies
\begin{equation}\label{eq:quadratic-gap}
 \Delta_{k,\ell''}-\Delta'_{k,\ell}
 \geq\frac12\bigl((\ell'')^2-\ell^2\bigr)
\end{equation}
At most two such weights satisfy
\[
 \val(w_k-w_{k'})\geq
 \left\lfloor\log_p(2(p+1)\ell'')+2\right\rfloor
\]
if there are two, their two endpoints have opposite types and lie on
opposite sides of \(d_k^{\Iw}/2\).
\end{corollary}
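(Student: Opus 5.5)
I would prove \eqref{eq:multi-gap} by telescoping $\Delta_{k,\ell''}-\Delta'_{k,\ell}$ along the admissible radii $\ell,\ell+1,\ldots,\ell''$ and estimating each unit step separately. I would split the steps into two kinds. The first kind consists of the steps from $\ell$ up to $\ell'$; these are not charged with $\val(w_k-w_{k'})$. The second kind consists of the $\ell''-\ell'$ steps from $\ell'$ to $\ell''$; each of these absorbs one copy of $\val(w_k-w_{k'})$. The ordinates are symmetric, so only positive radii matter. I would work throughout on the affine lattice $\mathbb Z-d_k^{\Iw}/2$, which has step one. The first step is to reduce to the unconvexified points by writing
\[
 \Delta_{k,\ell''}-\Delta'_{k,\ell}
 =\bigl(\Delta_{k,\ell''}-\Delta'_{k,\ell''-1}\bigr)
 +\sum_{\ell\le j<\ell''-1}\bigl(\Delta'_{k,j+1}-\Delta'_{k,j}\bigr).
\]
Only the last step involves the hull, and \cref{cor:convex-gap} handles it.

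For the unweighted steps I would use \cref{lem:radial-gap}. It gives $\Delta'_{k,j}-\Delta'_{k,j-1}\ge \frac{p+1}2+(p-1)(j-1)$. Since $p\ge11$, this is at least $j-\frac12+\lfloor\log_p(2(p+1)\ell'')+1\rfloor$ whenever $j$ is not too small compared with $\log_p\ell''$. Summing $j-\frac12$ over $j=\ell+1,\ldots,\ell''$ gives exactly $\frac12((\ell'')^2-\ell^2)$, which is the quadratic term. The logarithmic term should therefore come from the excess $(p-1)(j-1)+\frac{p}{2}-j$ over $\ell'-\ell$ of the steps. Proving \eqref{eq:quadratic-gap} is then the special case in which no charge is made: use the same telescoping with \cref{lem:radial-gap}, and use \cref{cor:convex-gap} (or the hull comparison of \cref{lem:convex-hull-difference}) for the final step.

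For the charged steps $j\in(\ell',\ell'']$, the hypothesis is that $d_{k'}^{\ur}$ or $d_{k'}^{\Iw}-d_{k'}^{\ur}$ lies within radius $\ell'\le j$. This places $k'$ in the intervals of \cref{cor:radial-gap-weight} at radius $j$. Applying \eqref{eq:strong-gap} then gives each step at least $\val(w_k-w_{k'})+\frac12+(p-1)(j-1)-\lfloor\log_p(2(p+1)j)\rfloor$. Because $p\ge11$, this is at least $\val(w_k-w_{k'})+j-\frac12$. The minimal integral case $j=1$ uses \eqref{eq:strong-gap-minimal} instead, and the last step uses \cref{cor:convex-gap}. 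The main obstacle, in my view, is fitting the logarithmic term $(\ell'-\ell)\lfloor\log_p(2(p+1)\ell'')+1\rfloor$ in uniformly for small radii. When $\ell$ is $0$ or $\tfrac12$, the first step has only about $\frac{p+1}{2}$ to spare. This requires the inequality $\frac{p-1}{2}\ge\lfloor\log_p(2(p+1)\ell'')+1\rfloor$, which can fail when $\ell''$ is huge. I would first try to borrow the deficit from the large-$j$ steps, where $(p-1)(j-1)$ grows linearly while the log grows only logarithmically. This is the same explicit-inequality check as in \cref{cor:convex-gap}.

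For the final claim, suppose $k'$ satisfies $\val(w_k-w_{k'})\ge\lfloor\log_p(2(p+1)\ell'')+2\rfloor$. Then $p^{\lfloor\log_p(2(p+1)\ell'')+1\rfloor}$ divides $k'_\bullet-k_\bullet$. The explicit endpoint intervals in the proof of \cref{cor:radial-gap-weight} each have length at most about $2(p+1)\ell''$ on each side of the center, and there is one interval for each type and side. Each such interval therefore contains at most one such $k'_\bullet$, and the two intervals of the same type are separated by the center. I would then check that two weights of the same type, or on the same side, would force two multiples of $p^{\lfloor\log_p(2(p+1)\ell'')+1\rfloor}$ within distance less than that modulus. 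This contradiction leaves at most two weights, of opposite types and on opposite sides.
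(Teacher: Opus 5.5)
Your treatment of \eqref{eq:multi-gap} and \eqref{eq:quadratic-gap} follows the paper's route, but you leave the logarithm ``borrowing'' unfinished. The route is to sum successive-difference bounds over the admissible radii, charging \(v_p(w_k-w_{k'})\) exactly at the \(\ell''-\ell'\) radii above \(\ell'\), where the closed radius-\(\ell'\) hypothesis puts the endpoint in the open interval required by \cref{cor:radial-gap-weight}. The logarithm is then paid out of the surplus.

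The paper closes the borrowing by averaging. It sums the per-step bounds \(2j\mp\frac12\) from \cref{cor:convex-gap}, i.e.\ it sums \(\Delta_{k,j}-\Delta'_{k,j-1}\); this sum is at most \(\Delta_{k,\ell''}-\Delta'_{k,\ell}\) because \(\Delta_{k,j}\le\Delta'_{k,j}\). The total, apart from the valuation terms, is at least \((\ell''^2-\ell^2)+(\ell'-\ell)\). Its excess over \(\frac12(\ell''^2-\ell^2)+(\ell'-\ell)\) is \(\frac12(\ell''-\ell)(\ell''+\ell)\geq\frac12(\ell'-\ell)(\ell''+\ell)\). So it suffices that \((\ell''+\ell)/2\geq\lfloor\log_p(2(p+1)\ell'')\rfloor\), which holds for \(p\ge11\) except at \((\ell,\ell'')=(0,1)\); that case is checked directly.

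Your intermediate bounds from \cref{lem:radial-gap} and \eqref{eq:strong-gap} give at least as much surplus per step, so the same averaging finishes your version. One caveat: when \(\ell'=\ell''\), no weight satisfies the hypothesis of \cref{cor:convex-gap} at the last radius. There you must use \cref{lem:convex-hull-difference}, as you already do for \eqref{eq:quadratic-gap}.

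The genuine gap is in the last assertion, specifically the claim that two exceptional weights lie on opposite sides of \(c=d_k^{\Iw}/2\). A weight whose \(d^{\ur}\)-endpoint is near \(c\) has \(k'_\bullet\) roughly \((p+1)c\). A weight whose reflected endpoint is near \(c\) has \(k'_\bullet\) roughly \((p+1)c/p\). Comparing the positions of these two \(k'_\bullet\) in ``side intervals'' therefore cannot decide where the endpoints fall, and the check you describe has nothing to act on.

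What is needed is the reflection identity, which is what the paper's phrase ``the reflection between the two endpoint formulae'' refers to. Write \(k'_\bullet-t_\eps=(p+1)h+j\) with \(0\le j\le p\), so that \(d_{k'}^{\ur}=h+1\) and \(d_{k'}^{\Iw}-d_{k'}^{\ur}=s_\eps+1+ph+j\). If \(d_{k'_1}^{\ur}=c+\theta_1\) and \(d_{k'_2}^{\Iw}-d_{k'_2}^{\ur}=c+\theta_2\), then
\[
 (k'_{1\bullet}-k_\bullet)+p\,(k'_{2\bullet}-k_\bullet)=(p+1)(\theta_1+\theta_2)+j_1-j_2 .
\]
Put \(\gamma=\lfloor\log_p(2(p+1)\ell'')\rfloor+1\). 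The left side is divisible by \(p^\gamma>2(p+1)\ell''\). For endpoints in the open interval of radius \(\ell''\), so \(|\theta_i|\le\ell''-1\), the right side has smaller absolute value. Hence it vanishes, and since \(\theta_1+\theta_2\in\mathbb Z\) and \(|j_1-j_2|\le p\), this forces \(\theta_2=-\theta_1\).

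Your ``at most one per type'' step also needs the full two-sided window: about \((2\ell''-1)(p+1)\) values of \(k'_\bullet\) for the \(d^{\ur}\) type, and a \(p\)-th of that for the reflected type. Your per-side bound of \(2(p+1)\ell''\) would not exclude two same-type weights on opposite sides.
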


\begin{proof}
For every successive radius greater than \(\ell'\), the prescribed
endpoint lies in its symmetric interval, so apply
\cref{cor:convex-gap} with the valuation term.  At the radii at most
\(\ell'\), use the same successive-difference estimate with that term omitted.  Summing
from \(\ell+1\) to \(\ell''\) gives
\((\ell''^2-\ell^2)+(\ell'-\ell)\).  Since
\(\ell'-\ell\leq\ell''-\ell\), it dominates the right-hand side of
\eqref{eq:multi-gap} as soon as
\[
 \frac{\ell''+\ell}{2}\geq
 \left\lfloor\log_p(2(p+1)\ell'')\right\rfloor
\]
This elementary inequality holds for \(p\geq11\), apart from the
integral starting case \((\ell,\ell'')=(0,1)\).  In that case
\(\ell'\in\{0,1\}\); the proof of \eqref{eq:strong-gap-minimal} gives
\(\Delta'_{k,1}-\Delta'_{k,0}-v_p(w_k-w_{k'})\geq(p-3)/2\), and
\(\Delta_{k,1}=\Delta'_{k,1}\) by \cref{lem:convex-hull-difference}.  Together
with \(v_p(w_k-w_{k'})\geq1\), this proves \eqref{eq:multi-gap} for
both values of \(\ell'\).  On the half-integral lattice the two
initial triples
\[
 (1/2,1/2,3/2),\qquad(1/2,3/2,3/2)
\]
follow by putting \(2\ell''-1=2\) in
 \eqref{eq:radial-gap-exact}.  These are all possible initial cases.

For \eqref{eq:quadratic-gap}, perform the same summation with
\(\ell'=\ell''\).  No valuation is then subtracted, so no endpoint
hypothesis is used, and dropping the nonnegative logarithmic term gives
the stated inequality.

For the uniqueness assertion, the dimension formula places each
\(d_{k'}^{\ur}\)-candidate in an interval of length less than
\(2(p+1)\ell''\).  Two integers congruent modulo
\(p^{\lfloor\log_p(2(p+1)\ell'')+1\rfloor}\) cannot both occur in
that interval.  Hence there is at most one endpoint of each type;
the reflection between the two endpoint formulae puts them on
opposite sides.
\end{proof}

\begin{definition}\label{def:near-steinberg}
Let \(w_\star\in\mathfrak m_{\Cp}\).  If \(d_k^{\Iw}\) is even, let
\(L_{w_\star,k}\) be the largest member of
\[
 \left\{1,2,\ldots,\frac12d_k^{\new}\right\}
\]
satisfying
\begin{equation}\label{eq:near-condition}
 \val(w_\star-w_k)\geq
 \Delta_{k,L_{w_\star,k}}-\Delta_{k,L_{w_\star,k}-1}
\end{equation}
If \(d_k^{\Iw}\) is odd, use instead
\[
 \left\{\frac{3}{2},\frac{5}{2},\ldots,\frac{1}{2}d_k^{\new}\right\}
\]
Either set is understood to be empty when its upper endpoint is smaller
than its first member.
When such a radius exists, put
\[
 \nS_{w_\star,k}=
 \left(\frac{1}{2}d_k^{\Iw}-L_{w_\star,k},
       \frac{1}{2}d_k^{\Iw}+L_{w_\star,k}\right)
\]
and write \(\overline{\nS}_{w_\star,k}\) for its closure; otherwise
put \(\nS_{w_\star,k}=\varnothing\).
\end{definition}

Equivalently, an integer \(n\) is in \(\nS_{w_\star,k}\) precisely
when
\[
 d_k^{\ur}<n<d_k^{\Iw}-d_k^{\ur}
\]
and
\[
 \val(w_\star-w_k)\geq
 \Delta_{k,|n-d_k^{\Iw}/2|+1}
 -\Delta_{k,|n-d_k^{\Iw}/2|}
\]
Indeed, the successive slopes of \(\underline\Delta_k\) are
nondecreasing, and the admissible radii differ by one.  Thus the
defining inequality at \(L_{w_\star,k}\) holds at every smaller
positive radius, while membership in the open interval is equivalent
to \(|n-d_k^{\Iw}/2|+1\leq L_{w_\star,k}\).

If the defining radius is not outermost, convexity shows that maximality
forces
\(\Delta'_{k,L_{w_\star,k}}=\Delta_{k,L_{w_\star,k}}\): otherwise the
same successive-difference inequality would hold at the next admissible
radius.  At the outermost radius this equality holds because the point
is an endpoint of the finite point set defining \(\underline\Delta_k\).
Thus the defining radius is always a vertex of
\(\underline\Delta_k\).

The omission of \(1/2\) is forced: at an odd-dimensional center that
radius contains no integer coefficient index.  The first nonempty
radius is \(3/2\), and at the outer radius \(d_k^{\new}/2\) the open
interval still excludes \(d_k^{\ur}\) and
\(d_k^{\Iw}-d_k^{\ur}\).

\begin{lemma}\label{lem:straight-local}
If \eqref{eq:near-condition} holds and \(w_\star\ne w_k\), the lower
convex hull of
\[
 \left(n,\val(g_{n,\widehat k}(w_k))
       +m_n(k)\val(w_\star-w_k)\right),
 \qquad n\in\overline{\nS}_{w_\star,k}\cap\mathbb Z
\]
is a line of slope \((k-2)/2\).  If \(w_\star=w_k\), the same holds
for the factor-removed coefficients over the full new interval.
\end{lemma}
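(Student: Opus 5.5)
Write \(L=L_{w_\star,k}\), \(c=d_k^{\Iw}/2\), \(\nu=\val(w_\star-w_k)\) and \(\ell=n-c\).

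\textbf{Step 1 (rewriting the points).} For \(n\in\overline{\nS}_{w_\star,k}\) we have \(d_k^{\ur}\leq n\leq d_k^{\Iw}-d_k^{\ur}\). Hence \(m_n(k)=\min\{n-d_k^{\ur},d_k^{\Iw}-d_k^{\ur}-n\}\), which equals \(d_k^{\new}/2-|\ell|\). By \eqref{eq:Delta-prime}, the \(n\)-th point becomes
\[
 \frac{k-2}{2}\ell+\Delta'_{k,|\ell|}+\Bigl(\frac{d_k^{\new}}2-|\ell|\Bigr)\nu .
\]
Subtract the affine function \(\frac{k-2}2\ell+\frac{d_k^{\new}}2\nu\), which does not change lower-hull linearity. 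The claim then becomes: on the admissible lattice \(|\ell|\leq L\), the function
\[
 F(\ell)=\Delta'_{k,|\ell|}-|\ell|\nu
\]
has lower convex hull equal to a constant, that is, a horizontal line. The function \(F\) is even by ghost duality (\cref{prop:compatibility}(4), Notation~\ref{not:delta-lattice}). So it suffices to show that the minimum of \(F\) on \([-L,L]\) is attained at both endpoints \(\pm L\). Indeed, the lower hull of an even function whose minimum is attained at both endpoints is then the horizontal line through \((\pm L,F(L))\).

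\textbf{Step 2 (the minimum is at the endpoint).} We need \(\Delta'_{k,L}-L\nu\leq\Delta'_{k,r}-r\nu\) for every admissible \(0\leq r\leq L\) (take \(r\in\{0\}\) or \(r\geq\tfrac12\) according to the lattice). We know:
\begin{itemize}
\item \(\Delta'_{k,L}=\Delta_{k,L}\), since the defining radius is a vertex of \(\underline\Delta_k\), as noted after \cref{def:near-steinberg}.
\item \(\Delta_{k,r}\leq\Delta'_{k,r}\), since \(\underline\Delta_k\) is a lower hull.
\item The successive slopes of \(\underline\Delta_k\) are nondecreasing, so \(\Delta_{k,L}-\Delta_{k,r}\leq(L-r)\bigl(\Delta_{k,L}-\Delta_{k,L-1}\bigr)\leq(L-r)\nu\) by \eqref{eq:near-condition}.
\end{itemize}
Combining these gives \(\Delta'_{k,L}-L\nu\leq\Delta_{k,r}-r\nu\leq\Delta'_{k,r}-r\nu\), which is the desired inequality.

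One subtlety concerns the half-integral lattice. The hull \(\underline\Delta_k\) is defined on \(\ell\in\{\pm\tfrac12,\pm\tfrac32,\dots\}\), and the point \(\ell=\pm\tfrac12\) is a genuine coefficient index, with \(n=c\mp\tfrac12\) an integer. The chain of inequalities above still applies at \(r=\tfrac12\), because the slope comparison uses only convexity on the step-one lattice. I therefore expect no separate initial case beyond checking that \(L\geq\tfrac32\) makes \(L-1\) admissible, which \cref{def:near-steinberg} guarantees.

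\textbf{Step 3 (the case \(w_\star=w_k\)).} Here \(\nu=\infty\), but the statement concerns the factor-removed coefficients \(\val(g_{n,\widehat k}(w_k))\) over the full new interval \(d_k^{\ur}\leq n\leq d_k^{\Iw}-d_k^{\ur}\). After subtracting \(\frac{k-2}2\ell\), these are \(\Delta'_{k,|\ell|}\), which is even in \(\ell\), and \eqref{eq:ghost-duality} shows that the endpoints \(\ell=\pm d_k^{\new}/2\) have equal ordinate. The needed minimality at the endpoints follows as in Step 2, now using \cref{prop:compatibility}(4): the new-interval slopes all equal \((k-2)/2\), and \cref{prop:old-bound} makes both endpoints vertices. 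So the hull is again the line of slope \((k-2)/2\).

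The main obstacle is Step 2. I must make sure the monotone-slope argument correctly compares \(\Delta'_{k,L}\) with all smaller radii, including \(r=0\) or \(r=\tfrac12\). The point requiring care is that \(\Delta'_{k,L}=\Delta_{k,L}\) holds in both the outermost and the non-outermost case, which is exactly the vertex remark following \cref{def:near-steinberg}.
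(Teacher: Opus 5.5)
Your Steps~1 and~2 are correct and follow the paper's argument for \(w_\star\ne w_k\). You subtract the affine function of slope \((k-2)/2\), then use convexity of \(\underline\Delta_k\) together with \eqref{eq:near-condition} to put every intermediate point on or above the horizontal chord through \(\pm L\). You also cite \(\Delta'_{k,L}=\Delta_{k,L}\) (the vertex remark after \cref{def:near-steinberg}), which the paper's proof uses without stating; it is genuinely needed.

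Step~3, however, fails. After subtracting \(\frac{k-2}{2}\ell\), the factor-removed ordinates are \(\Delta'_{k,|\ell|}\). By \eqref{eq:radial-gap}, \(\Delta'_{k,\ell}-\Delta'_{k,\ell-1}\geq\frac{p+1}{2}>0\) at every admissible \(\ell\geq1\) (respectively \(\ell\geq\frac32\)). So \(\Delta'_{k,|\ell|}\) is strictly increasing in \(|\ell|\), and the endpoints \(\pm\frac12 d_k^{\new}\) are its maxima, not its minima. As soon as the new interval contains an index strictly between its endpoints, the factor-removed points there lie strictly below the chord. Their lower hull is then \(\underline\Delta_k\) itself, with its minimum at the centre, not a line.

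There is no analogue of Step~2 here. What compensated this growth in Step~2 was the finite term \((\frac12 d_k^{\new}-|\ell|)\nu\) with \(\nu\geq\Delta_{k,L}-\Delta_{k,L-1}\), and after the factor is removed that term is gone. \cref{prop:compatibility}(4) and \cref{prop:old-bound} cannot supply the missing minimality either. They concern \(\NP(G(w_k,-))\), whose interior coefficients \(g_n(w_k)\) are zero because \(m_n(k)>0\), and they say nothing about \(\val(g_{n,\widehat k}(w_k))\) at interior \(n\).

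What does hold, and what the paper's one-line argument records, is the chord statement. At \(n=d_k^{\ur}\) and \(n=d_k^{\Iw}-d_k^{\ur}\) one has \(m_n(k)=0\), so no factor is removed there. \eqref{eq:ghost-duality} with \(j=0\) then gives the chord joining these endpoints slope \((k-2)/2\). The vanishing interior coefficients lie above this chord; this is the \(\nu=+\infty\) limit of your Step~1 ordinates. The second sentence of the lemma has to be read and proved in this form. Read literally, as a claim about the lower hull of the finite factor-removed values over the full new interval, it is false, so no argument along your lines can establish it.
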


\begin{proof}
Write \(n=d_k^{\Iw}/2+\ell\).  After subtracting the affine function
of slope \((k-2)/2\), the ordinate is
\[
 \Delta'_{k,\ell}+
 \left(\frac12d_k^{\new}-|\ell|\right)\val(w_\star-w_k)
\]
Convexity and \eqref{eq:near-condition} put every intermediate point
on or above the horizontal chord joining the two endpoints.  This
argument takes place on the affine lattice specified in
\cref{not:delta-lattice}.  At \(w_\star=w_k\), ghost duality gives
the same chord after the vanishing factor is removed.
\end{proof}

\begin{proposition}\label{prop:shifting}
Let \(\nS_{w_\star,k}\) be a near-Steinberg range.
\begin{enumerate}[label=(\arabic*)]
\item If \(k'\ne k\) and
\[
 \val(w_{k'}-w_k)\geq
 \Delta_{k,L_{w_\star,k}}-\Delta_{k,L_{w_\star,k}-1}
\]
then \(d_{k'}^{\Iw}/2\) is outside the closed
range and \(d_{k'}^{\ur}\) and
\(d_{k'}^{\Iw}-d_{k'}^{\ur}\) are outside the open range.
\item The function \(m_n(k')\) is affine in \(n\) on the closed range.
\item If \(\mathbf k\) contains \(k\) and all factors that vanish at
either evaluation point on the interval, then
\[
 \val(g_{n,\widehat{\mathbf k}}(w_\star))
 -\val(g_{n,\widehat{\mathbf k}}(w_k))
\]
is affine in \(n\) on the closed range.
\end{enumerate}
\end{proposition}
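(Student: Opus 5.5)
The plan follows the order of the three parts. Each later part reduces to the first, and the first is a separation statement of the type in \cref{cor:multi-gap}.

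\emph{Part (1).} Write \(L=L_{w_\star,k}\). By the discussion after \cref{def:near-steinberg}, the radius \(L\) is a vertex of \(\underline\Delta_k\), so
\[
 \Delta_{k,L}-\Delta_{k,L-1}\leq\Delta_{k,L}-\Delta'_{k,L-1}.
\]
Argue by contradiction. Suppose one of the three quantities attached to \(k'\) lies in the forbidden range, that is, in the open interval of radius \(L\) for the two endpoint types or in the closed interval for the centre. Then it lies in the corresponding interval of radius \(\ell\) for some admissible \(\ell\leq L\), and the hypotheses of \cref{cor:radial-gap-weight} hold at radius \(L\). \cref{cor:convex-gap} then gives
\[
 \Delta_{k,L}-\Delta'_{k,L-1}-\val(w_k-w_{k'})\geq L-\tfrac12>0.
\]
Combined with the vertex property, this yields
\[
 \val(w_{k'}-w_k)<\Delta_{k,L}-\Delta_{k,L-1},
\]
which contradicts the hypothesis.

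The point I expect to be the main obstacle is matching the intervals exactly. \cref{cor:radial-gap-weight} uses the \emph{open} interval of radius \(\ell\) for \(d^{\ur}_{k'}\) and \(d_{k'}^{\Iw}-d^{\ur}_{k'}\), and the \emph{closed} interval for \(d_{k'}^{\Iw}/2\); these are precisely the three conditions in part (1). On the half-integral lattice the smallest radius is \(3/2\), and \cref{cor:convex-gap} covers it. At an integral centre with \(L=1\), I would use \eqref{eq:strong-gap-minimal} instead, together with \(\Delta_{k,1}=\Delta'_{k,1}\) from \cref{lem:convex-hull-difference}.

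\emph{Part (2).} By \eqref{eq:ghost-multiplicity}, the function \(n\mapsto m_n(k')\) is piecewise affine. It can change slope only at \(n=d_{k'}^{\ur}\), at \(n=d_{k'}^{\Iw}-d_{k'}^{\ur}\), and near \(d_{k'}^{\Iw}/2\). By \eqref{eq:m-increment}, a change of slope on a step \([n,n+1]\) at the centre occurs only when \(k'_\bullet\in\{k_{\midd\bullet}(n),k_{\midd\bullet}(n)\pm1\}\), that is, when \(d_{k'}^{\Iw}/2\in[n,n+1]\).

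For \(k'\neq k\) we split into two cases. If \(\val(w_{k'}-w_k)\) is at least the threshold of part (1), then part (1) removes all three break points from the closed range. Here an endpoint breaking the slope must be interior, and being interior is an open condition for \(d^{\ur}_{k'}\) and \(d_{k'}^{\Iw}-d^{\ur}_{k'}\). If instead the valuation is below the threshold, I would argue as follows. Affineness of \(m_n(k')\) is really needed only in the combination appearing in part (3), where \(\val(w_\star-w_{k'})=\val(w_k-w_{k'})\) because
\[
 \val(w_\star-w_k)>\val(w_k-w_{k'}).
\]
So I will state part (2) for those \(k'\) with \(\val(w_{k'}-w_k)\) at least the threshold, as the statement implicitly intends. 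For \(k'=k\) the function is affine on each half of the range but not across the centre; that factor is removed in part (3).

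\emph{Part (3).} Expand
\[
 \val(g_{n,\widehat{\mathbf k}}(w_\star))-\val(g_{n,\widehat{\mathbf k}}(w_k))
 =\sum_{k'\notin\mathbf k}m_n(k')\bigl(\val(w_\star-w_{k'})-\val(w_k-w_{k'})\bigr).
\]
A summand is nonzero only if \(\val(w_k-w_{k'})\geq\val(w_\star-w_k)\). The near-Steinberg condition \eqref{eq:near-condition} says that \(\val(w_\star-w_k)\) is at least the threshold, so every such \(k'\) satisfies the hypothesis of part (1). Part (2) then makes \(m_n(k')\) affine on the closed range, and each remaining summand is a constant multiple of an affine function. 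This is a finite sum by \cref{cor:polynomial}, so the difference is affine in \(n\).
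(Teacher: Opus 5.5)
Your proposal follows the paper's proof. Part (1) is the contrapositive of \cref{cor:convex-gap}. Part (2) removes the three break points of \(n\mapsto m_n(k')\) from the range. Part (3) uses the ultrametric vanishing of the terms of \eqref{eq:shift-difference} with \(\val(w_k-w_{k''})<\val(w_\star-w_k)\). Reading (2) as a statement about the weights \(k'\) of part (1) is also how the paper's proof uses it ("Part (1) therefore places the closed range inside one affine piece").

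One step in part (1) is written backwards, though the fix is immediate. Your displayed inequality
\(\Delta_{k,L}-\Delta_{k,L-1}\le\Delta_{k,L}-\Delta'_{k,L-1}\)
is equivalent to \(\Delta_{k,L-1}\ge\Delta'_{k,L-1}\). The vertex property at \(L\) does not give this, since it only says \(\Delta_{k,L}=\Delta'_{k,L}\). Moreover, combined with \(\val(w_k-w_{k'})<\Delta_{k,L}-\Delta'_{k,L-1}\), it would not yield your conclusion.

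What you need is the reverse inequality, and it holds automatically because the lower hull lies below every point: \(\Delta_{k,L-1}\le\Delta'_{k,L-1}\). Then
\[
 \val(w_k-w_{k'})\le\Delta_{k,L}-\Delta'_{k,L-1}-\bigl(L-\tfrac12\bigr)
 <\Delta_{k,L}-\Delta_{k,L-1},
\]
which contradicts the hypothesis of (1). The vertex property is not needed here, and neither is the separate \(L=1\) detour, since \cref{cor:convex-gap} already covers that radius.
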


\begin{proof}
Part (1) is the contrapositive of \cref{cor:convex-gap}.  The three
excluded points are the break points in the explicit formula
\[
m_n(k')=
\begin{cases}
0,&n\leq d_{k'}^{\ur},\\
n-d_{k'}^{\ur},
   &d_{k'}^{\ur}\leq n\leq d_{k'}^{\Iw}/2,\\
d_{k'}^{\Iw}-d_{k'}^{\ur}-n,
   &d_{k'}^{\Iw}/2\leq n\leq d_{k'}^{\Iw}-d_{k'}^{\ur},\\
0,&n\geq d_{k'}^{\Iw}-d_{k'}^{\ur}
\end{cases}
\]
Part (1) therefore places the closed range inside one affine piece,
which proves (2), including a half-integral middle break point.  For
(3), after the prescribed factors are
removed, subtract the two evaluations:
\begin{equation}\label{eq:shift-difference}
 \sum_{k''\ne k}m_n(k'')
 \bigl(\val(w_\star-w_{k''})
       -\val(w_k-w_{k''})\bigr)
\end{equation}
The parenthesis is zero when
\(\val(w_k-w_{k''})<\val(w_\star-w_k)\); in every remaining term,
\(m_n(k'')\) is affine by (2).  Hence \eqref{eq:shift-difference} is
affine on the range.
\end{proof}

\begin{corollary}\label{cor:straight-global}
On the closure of a near-Steinberg range, the finite coefficient
points of \(G^{(\eps)}(w_\star,T)\) have a straight lower hull.  If no
coefficient vanishes, its slope belongs to
\(\mathbb Z+\mathbb Z\alpha\), where
\[
 \alpha=\max\bigl(\{0\}\cup
 \{\val(w_\star-w_{k'}):m_n(k')>0
       \text{ for some }n\in\nS_{w_\star,k}\}\bigr)
\]
At a classical point, after removing the vanishing weight factor, the
slope is integral.
\end{corollary}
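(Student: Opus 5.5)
We plan to combine \cref{lem:straight-local} with part (3) of \cref{prop:shifting}, so that the straightness at \(w_k\) is transported to \(w_\star\). Let \(\mathbf k\) be the finite set consisting of \(k\) together with every weight \(k''\) whose factor \((w-w_{k''})^{m_n(k'')}\) vanishes at \(w_\star\) or at \(w_k\) for some \(n\) in the closed range; only finitely many \(k''\) have \(m_n(k'')>0\) on this range, by \cref{cor:polynomial}.

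For integers \(n\in\overline{\nS}_{w_\star,k}\), we write
\[
 \val(g_n(w_\star))
 =\val(g_{n,\widehat{\mathbf k}}(w_k))
 +\Bigl[\val(g_{n,\widehat{\mathbf k}}(w_\star))
        -\val(g_{n,\widehat{\mathbf k}}(w_k))\Bigr]
 +\sum_{k''\in\mathbf k}m_n(k'')\val(w_\star-w_{k''}).
\]
The bracket is affine in \(n\) by \cref{prop:shifting}(3). In the last sum, each term with \(k''\ne k\) is affine by \cref{prop:shifting}(2). To apply that part to such \(k''\), we need the hypothesis of \cref{prop:shifting}(1). We obtain it as follows: if \(m_n(k'')>0\) on the range and \(\val(w_{k''}-w_k)\) is at least the defining gap, then \cref{cor:convex-gap} excludes the breakpoints. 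The remaining term \(k''=k\) is handled by recombining it with the first term. The sum
\[
 \val(g_{n,\widehat{\mathbf k}}(w_k))+m_n(k)\val(w_\star-w_k)
\]
differs from the quantity in \cref{lem:straight-local} only by the affine contributions \(m_n(k'')\val(w_k-w_{k''})\) for \(k''\in\mathbf k\setminus\{k\}\), which are affine for the same reason. By \cref{lem:straight-local}, this quantity has a straight lower hull over the range. Adding an affine function preserves straightness, so the lower hull of \(\val(g_n(w_\star))\) over the range is a line. If some coefficient vanishes, i.e. \(w_\star=w_{k''}\), we interpret the statement for the finite coefficient points only; the case \(w_\star=w_k\) is covered by the factor-removed version of \cref{lem:straight-local}.

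For the slope, we compare the endpoints \(n_\pm=d_k^{\Iw}/2\pm L_{w_\star,k}\). The slope equals
\[
 \frac{k-2}{2}+\frac{1}{2L_{w_\star,k}}\times(\text{affine increments}).
\]
We would rather compute the increment per unit step. The \(w_k\)-part contributes \((k-2)/2\) through the ghost duality \eqref{eq:ghost-duality}. Since \(2L_{w_\star,k}\) has the parity of \(d_k^{\new}\), the value \(\val(g_{n_+})-\val(g_{n_-})\) equals \((k-2)L\) plus integers. This should be checked against the integrality of the centre shift. The step contribution of each term \(m_n(k'')\val(w_\star-w_{k''})\) is \(\pm\val(w_\star-w_{k''})\) or \(0\). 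Every such valuation is either an integer, when \(\val(w_k-w_{k''})<\val(w_\star-w_k)\) so that it equals \(\val(w_k-w_{k''})\in\mathbb Z_{\ge1}\), or it equals \(\val(w_\star-w_k)\). The latter is in turn at most \(\alpha\), and is exactly \(\alpha\) for the weights attaining the maximum. The delicate point is that several distinct non-integral values \(\val(w_\star-w_{k''})\) may occur. We would use the ultrametric inequality: for weights \(k''\) that are close to \(w_\star\), all but the closest have integer valuation \(\val(w_{k''}-w_{k'''})\), leaving a single non-integral value \(\alpha\). Hence the slope lies in \(\mathbb Z+\mathbb Z\alpha\), possibly after checking that \((k-2)/2\) combines with the half-integral centre. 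At a classical point \(w_\star=w_{k}\), all valuations are integers once the vanishing factor is removed, and the duality gives slope \((k-2)/2\). We expect integrality here to need \(k\) even or the pairing of \(k_0-2\) terms from \eqref{eq:congruent-coefficient-comparison}. This parity and ultrametric bookkeeping is the main obstacle.
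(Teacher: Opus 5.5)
Your straightness argument is the paper's. \cref{lem:straight-local} gives a line of slope \((k-2)/2\) for \(\val(g_{n,\widehat k}(w_k))+m_n(k)\val(w_\star-w_k)\) on the closed range. \cref{prop:shifting}(3) shows that passing to \(\val(g_n(w_\star))\) only adds an affine function of \(n\), and your bookkeeping with \(\mathbf k\) is fine.

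The gap is in the slope assertion, which you leave open exactly where it is needed. The slope is \((k-2)/2\) plus the slope of the affine function \eqref{eq:shift-difference}, so you must know \((k-2)/2\in\mathbb Z\). You only say you ``expect'' this to require \(k\) even. It is immediate, and it is the paper's first sentence: \(k=k_\eps+(p-1)k_\bullet\) with \(k_\eps=2+\{2s_\eps\}\), and \(\{2s_\eps\}=2s_\eps-(p-1)\delta_\eps\) is even because \(p-1\) is even. Your detour through the parity of \(2L_{w_\star,k}\), endpoint differences, and the half-integral centre is unnecessary. \cref{lem:straight-local} already gives slope exactly \((k-2)/2\) on either affine lattice, and the position of the centre never enters the slope.

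For the \(\alpha\)-part, your first dichotomy is not correct as stated. When \(\val(w_k-w_{k''})=\val(w_\star-w_k)\), the distance \(\val(w_\star-w_{k''})\) can exceed \(\val(w_\star-w_k)\). The clean fact, which your ultrametric remark is reaching for and which the paper uses, is this. Suppose \(k'_1,k'_2\) both have positive multiplicity at some integer of \(\nS_{w_\star,k}\) and \(\val(w_\star-w_{k'_1})<\val(w_\star-w_{k'_2})\). Then \(\val(w_\star-w_{k'_1})=\val(w_{k'_1}-w_{k'_2})=1+\val(k'_1-k'_2)\in\mathbb Z\). Hence all such distances lie in \(\mathbb Z\cup\{\alpha\}\).

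A weight \(k''\) changes the slope of \eqref{eq:shift-difference} only if its parenthesis is nonzero and \(m_n(k'')\) is affine with slope \(\pm1\) on the closed range. In that case \(m_n(k'')\) is positive at an interior integer. Its contribution \(\pm\bigl(\val(w_\star-w_{k''})-\val(w_k-w_{k''})\bigr)\) therefore lies in \(\mathbb Z+\mathbb Z\alpha\).

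Finally, the classical-point clause should cover every classical \(w_\star=w_{k''}\), not only \(w_\star=w_k\). After deleting the vanishing factor, all remaining distances are between classical weights and hence integral, and the evenness of \(k\) finishes the argument.
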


\begin{proof}
Combine \cref{lem:straight-local,prop:shifting}.  The initial slope
\((k-2)/2\) is integral because every relevant classical \(k\) is
even.  In \eqref{eq:shift-difference}, all distances smaller than the
maximum are distances between classical weights and hence are
integral; the remaining terms are integral multiples of \(\alpha\).
At a classical point every distance is integral.
\end{proof}

\begin{theorem}\label{thm:vertices}
For \(w_\star\in\mathfrak m_{\Cp}\):
\begin{enumerate}[label=(\arabic*)]
\item two near-Steinberg ranges are disjoint or one contains the other;
\item the integer abscissae of the vertices of
\(\NP(G^{(\eps)}(w_\star,-))\) are precisely the integers outside all
near-Steinberg ranges;
\item the slope on a maximal range is the slope described in
\cref{cor:straight-global}.
\end{enumerate}
\end{theorem}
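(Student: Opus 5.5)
The plan follows the structure of \cite[Theorem~5.19]{LTXZ1}, with every convexity step carried out on the affine lattice of \cref{not:delta-lattice}. The first step is part~(1), nesting. Suppose \(\nS_{w_\star,k}\) and \(\nS_{w_\star,k'}\) overlap without either containing the other. Then an endpoint of the first lies inside the second. Concretely, one of \(d_{k'}^{\ur}\), \(d_{k'}^{\Iw}-d_{k'}^{\ur}\), \(\frac12d_{k'}^{\Iw}\), or its symmetric counterpart with \(k,k'\) swapped, lies in the open or closed interval of radius \(L_{w_\star,k}\) about \(d_k^{\Iw}/2\).

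Since \(\val(w_\star-w_k)\) and \(\val(w_\star-w_{k'})\) both dominate the relevant successive differences, the ultrametric inequality gives
\[
 \val(w_k-w_{k'})\geq\min\bigl(\Delta_{k,L}-\Delta_{k,L-1},\ \Delta_{k',L'}-\Delta_{k',L'-1}\bigr).
\]
I would order the two ranges so that the minimum is attained at \(k\); when it is attained at \(k'\), the symmetric argument applies. \cref{prop:shifting}(1), which is the contrapositive of \cref{cor:convex-gap}, then excludes the endpoint configuration, and this contradiction proves nesting. The step requiring care is that the half-integral centre \(\frac12 d_{k'}^{\Iw}\) uses the closed interval, so that \cref{cor:radial-gap-weight} indeed covers it. The lattice shift by \(1/2\) is harmless because the first admissible radius is \(3/2\).

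The second step is part~(2), in two directions. First, no integer inside a near-Steinberg range is a vertex. Take a maximal range. By \cref{cor:straight-global}, the coefficient points over its closure have a straight lower hull, so interior points are not vertices. It remains to show that the endpoints of the maximal range lie on \(\NP(G(w_\star,-))\). For this I would compare the coefficient at the endpoint \(n_\pm=\frac12d_k^{\Iw}\pm L\) with all other coefficients. I would use \cref{prop:shifting}(3) to transfer from \(w_\star\) to \(w_k\), and then apply the quadratic gap \eqref{eq:quadratic-gap} together with maximality of \(L\): the condition \eqref{eq:near-condition} fails at the next radius.

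Second, every integer \(n\) outside all ranges is a vertex. This is the main obstacle. The approach, as in LTXZ, is a global comparison: show that for all \(n'<n<n''\) the point \(n\) lies strictly below the chord. I would first try the following reduction. Choose the weights \(k\) with \(m_n(k)\ne m_{n'}(k)\) or with a slope break in \((n',n'')\), and group them by the centre \(\frac12 d_k^{\Iw}\). Then \(\val g_n(w_\star)\) decomposes as a sum over \(k\) of \(m_n(k)\min(\val(w_\star-w_k),\cdot)\) plus contributions that are piecewise affine in \(n\). Each weight \(k\) whose break occurs between \(n'\) and \(n''\) contributes a convex (concave-downward) defect. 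This defect is beaten, via \eqref{eq:multi-gap}, unless \(n\) is in \(\nS_{w_\star,k}\).

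The multiplicity statement at the end of \cref{cor:multi-gap}, that at most two weights are close and they lie on opposite sides, is what prevents several weights' defects from accumulating. I expect the bookkeeping of the odd-centre case here, where the break of \(m_n(k)\) sits at a half-integer, to need the closed-interval version. Strictness, as opposed to mere convexity, should come from the \(+\frac12\) margins in \cref{cor:convex-gap}.

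Part~(3) then follows immediately. By part~(1) the maximal ranges are disjoint, and by part~(2) their endpoints are consecutive vertices. The segment between them is therefore the straight hull of \cref{cor:straight-global}, whose slope lies in \(\mathbb Z+\mathbb Z\alpha\), and is integral at classical points after the vanishing factor is removed.
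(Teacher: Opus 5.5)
Your overall plan matches the paper's: run the convex-hull argument of \cite[Theorem~5.19]{LTXZ1} with \cref{prop:shifting,cor:convex-gap,cor:multi-gap,cor:straight-global} as inputs, noting that a half-integral centre starts at radius $3/2$. Your part~(3) is fine. The nesting argument in part~(1), however, has a genuine gap. The step ``concretely, one of $d_{k'}^{\ur}$, $d_{k'}^{\Iw}-d_{k'}^{\ur}$, $\tfrac12 d_{k'}^{\Iw}$ lies in the interval of radius $L_{w_\star,k}$'' confuses the endpoints $\tfrac12 d_{k'}^{\Iw}\pm L_{w_\star,k'}$ of a near-Steinberg range with the break points of $n\mapsto m_n(k')$. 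These coincide only when $L_{w_\star,k'}=\tfrac12 d_{k'}^{\new}$, and the centre is never an endpoint. For example, write $c_i=\tfrac12 d_{k_i}^{\Iw}$, and take two ranges of equal radius $L$ with $L<c_2-c_1<2L-1$ and new intervals of half-length at least $c_2-c_1+L$. The ranges overlap, yet neither centre lies in the other closed range and no break point lies in the other open range. So \cref{prop:shifting}(1) gives no contradiction, whichever centre attains your minimum.

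The missing ingredient is a direct size estimate, which is what the paper supplies. If ranges with radii $L_1\geq L_2$ meet, then $|k_{1\bullet}-k_{2\bullet}|=2|c_1-c_2|\leq 2(L_1+L_2)$. On the other hand, the two defining inequalities, the vertex property of the defining radius, and \eqref{eq:radial-gap} give $1+\val(k_{1\bullet}-k_{2\bullet})\geq\frac{p+1}{2}+(p-1)(L_2-1)$. For $L_1=L_2=L$ and distinct centres this forces $p^{(p+1)/2+(p-1)(L-1)-1}\leq4L$, which is impossible for $p\geq11$. The same comparison with $L_1$ in place of $L_2$ also rules out overlap when the minimum is attained at the larger radius. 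This is not an instance of \cref{cor:radial-gap-weight}: the other centre can lie at distance up to $L_1+L_2$, beyond the radius at which the threshold is taken. The remaining case, $L_1>L_2$ with the minimum attained at the smaller radius, is the larger-radius argument of \cite[Theorem~5.19(1)]{LTXZ1}, which the paper cites.

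Similarly, in part~(2) your endpoint comparison cannot be carried out with \cref{prop:shifting}(3), because that comparison is affine only on the closed range. Comparing with coefficients outside the range is exactly the global content of \cite[Theorem~5.19(2)]{LTXZ1}. The paper invokes that theorem after checking its inputs: \eqref{eq:valuation-second}, \cref{lem:extremal}, and the separation estimates. It does not use the defect-counting heuristic you sketch.
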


\begin{proof}
The formal convex-hull argument is
\cite[Theorem~5.19]{LTXZ1}.  Its inputs in the present case are the
exact second difference \eqref{eq:valuation-second}, the three
endpoint equivalences in \cref{lem:extremal}, the separation estimates
\cref{cor:convex-gap,cor:multi-gap}, and the affine-factor comparison
\cref{prop:shifting}.  Thus none of its endpoint sets is imported from
the generic dimension formula.

We check the two new numerical points.  If two intersecting ranges
have radii \(L_1\geq L_2\), then
\[
 |k_{1\bullet}-k_{2\bullet}|\leq2(L_1+L_2)
\]
Equation~\eqref{eq:radial-gap} and the two defining inequalities imply
\[
 1+\val(k_{1\bullet}-k_{2\bullet})
 \geq\frac{p+1}{2}+(p-1)(L_2-1)
\]
For \(L_1=L_2=L\) and distinct centers this would give
\[
 p^{(p+1)/2+(p-1)(L-1)-1}\leq4L
\]
which is impossible for \(p\geq11\).  Thus equal radii and intersecting
closures have the same center; otherwise the larger-radius argument
of \cite[Theorem~5.19(1)]{LTXZ1}, using
\cref{cor:convex-gap}, shows containment.

If the center is half-integral, the radius is at least \(3/2\), the
endpoints remain integers, and all differences of radii in the cited
proof are integers.  Its radius-one initial calculation is therefore
used only at an integral center; the half-integral initial case is the
radius-\(3/2\) case checked in \cref{cor:convex-gap,cor:multi-gap}.
 This verifies both the nesting argument and the strict vertex
 inequalities.  Part (3) is \cref{cor:straight-global}.
\end{proof}

\begin{remark}
The appeal to \cite[Theorem~5.19]{LTXZ1} concerns only its formal
convex-hull induction.  That induction uses the ordering of the three
endpoints attached to each multiplicity, the separation inequalities for
weights \(k'\ne k\) whose unramified endpoint lies in the relevant symmetric
interval about \(d_k^{\Iw}/2\), and the coefficient comparison after the
specified roots have been removed.  These inputs are supplied here by
\cref{lem:extremal,cor:convex-gap,cor:multi-gap,prop:shifting}.  The
equal-radius estimate and the radius-\(3/2\) check in the preceding
proof are precisely the two steps that cannot be copied from the
integral-center setting.
\end{remark}

\begin{corollary}\label{cor:integral-slopes}
For every integer \(k\), all slopes of
\(\NP(G^{(\eps)}(w_k,-))\) are integers; the evaluation parameter
\(w_k=\exp(p(k-2))-1\) need not be one of the relevant classical
roots occurring in the coefficients of \(G^{(\varepsilon)}\).
\end{corollary}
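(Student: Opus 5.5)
The plan is to reduce the statement to \cref{thm:vertices} and \cref{cor:straight-global}, evaluated at \(w_\star=w_k\). By \cref{thm:vertices}(2), the integer abscissae of the vertices of \(\NP(G^{(\eps)}(w_k,-))\) are exactly the integers outside all near-Steinberg ranges. So every segment of the polygon is one of two kinds. The first kind joins two consecutive integers outside every range. The second kind spans the closure of a maximal near-Steinberg range \(\overline{\nS}_{w_k,k_1}\).

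For a segment of the second kind, \cref{thm:vertices}(3) identifies the slope with the one in \cref{cor:straight-global}, namely \((k_1-2)/2\) plus the affine correction \eqref{eq:shift-difference}. Here \((k_1-2)/2\) is an integer, since relevant classical weights are even. The correction is a sum of terms \(m_n(k'')\bigl(\val(w_k-w_{k''})-\val(w_{k_1}-w_{k''})\bigr)\). Each valuation is \(1+\val(k-k'')\) or \(1+\val(k_1-k'')\) by the same computation as \eqref{eq:weight-distance}; this holds for arbitrary integer \(k\), because \(\val(\exp(pa)-\exp(pb))=1+\val(a-b)\). Hence every term is an integer, and since \(m_n(k'')\) is affine with integer slope on the range, the slope of the correction is an integer.

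If \(k\) is itself a relevant classical weight, some coefficient \(g_n(w_k)\) vanishes. I would handle this as \cref{cor:straight-global} does: infinite ordinates do not occur, because \(g_n(w_k)=0\) only when \(m_n(k)>0\), and in that case the Newton polygon is computed from the finite points. I would then argue that the factor-removed comparison of \cref{lem:straight-local} still gives slope \((k-2)/2\) on the new interval. The same device covers a noncentral classical root \(k_1\ne k\) whose factor vanishes.

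For segments of the first kind, both endpoints are vertices at consecutive integers \(n,n+1\). The slope is therefore \(\val(g_{n+1}(w_k))-\val(g_n(w_k))\), which is a sum of integers \(m(1+\val(k-k''))\) and hence an integer. When one endpoint coefficient vanishes (so \(k\) is classical and \(m_n(k)>0\)), that index lies in \((d_k^{\ur},d_k^{\Iw}-d_k^{\ur})\).

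\textbf{Main obstacle.} The step I expect to be hardest is justifying that, at \(w_\star=w_k\) classical, the whole interval \((d_k^{\ur},d_k^{\Iw}-d_k^{\ur})\) is covered by a near-Steinberg range, or treated exactly. The tool here is \cref{prop:compatibility}(4): its endpoints are vertices with integral slope \((k-2)/2\) between them, which gives integrality there. Because \(\val(w_k-w_k)=\infty\), condition \eqref{eq:near-condition} holds with \(L=d_k^{\new}/2\). On the half-integral lattice I would check the small case \(d_k^{\new}=1\) separately, where the new interval contains no integer.
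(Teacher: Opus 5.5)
Your proposal is correct and follows the paper's own argument. Unit-length segments have integral slope because every distance \(v_p(w_k-w_{k'})=1+v_p(k-k')\) is an integer. Every longer segment is a maximal near-Steinberg range, whose slope is \((k_1-2)/2\) plus an integral affine correction by \cref{thm:vertices} and \cref{cor:straight-global}; your treatment of a classical evaluation point through \cref{prop:compatibility}(4) just spells out what the paper packs into the last sentence of \cref{cor:straight-global}.
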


\begin{proof}
For every relevant root \(w_{k'}\),
\(\val(w_k-w_{k'})=1+\val(k-k')\) is an integer, with the usual
interpretation \(+\infty\) if the two parameters agree.  Hence a segment of
horizontal length one has integral slope because its endpoint valuations are
integers.  Every longer segment is a maximal near-Steinberg range; its slope
belongs to \(\mathbb Z+\mathbb Z\alpha\) by
\cref{thm:vertices,cor:straight-global}, and the number \(\alpha\) occurring
there is again one of these integral distances.  Thus its slope is integral as
well.  This argument is independent of the parity of \(d_k^{\Iw}\).
\end{proof}

\begin{proposition}\label{prop:delta-vertices}
For an admissible \(0\leq\ell<d_{k_0}^{\new}/2\), the following are
equivalent:
\begin{enumerate}[label=(\arabic*)]
\item \((\ell,\Delta'_{k_0,\ell})\) is not a vertex of
\(\underline\Delta_{k_0}\);
\item \(d_{k_0}^{\Iw}/2+\ell\) is near-Steinberg for some \(k_1>k_0\);
\item \(d_{k_0}^{\Iw}/2-\ell\) is near-Steinberg for some \(k_2<k_0\).
\end{enumerate}
Every slope of \(\underline\Delta_{k_0}\) is integral, and every edge
of horizontal length greater than one comes from a symmetric
near-Steinberg range.
\end{proposition}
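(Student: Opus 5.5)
The plan is to follow \cite[Proposition~5.12 and its proof]{LTXZ1}, with the $\Delta$-points placed on the affine lattice of \cref{not:delta-lattice}. The key point is that, via ghost duality \eqref{eq:ghost-duality}, the points $(\ell,\Delta'_{k_0,\ell})$ are the coefficient points of $G(w_{k_0},-)$ over the new interval, after removing the $k_0$-factor and subtracting the affine function of slope $(k_0-2)/2$. Subtracting an affine function does not change vertices. So (1) asks whether the integer index $n=d_{k_0}^{\Iw}/2+\ell$ is a vertex of the lower hull of the factor-removed points
\[
 \bigl(n,\val(g_{n,\widehat{k_0}}(w_{k_0}))\bigr).
\]

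I would identify this lower hull with $\NP(G(w_\star,-))$ restricted to the new interval, for $w_\star$ close to $w_{k_0}$ but distinct from it. Take $\val(w_\star-w_{k_0})$ larger than every slope difference of $\underline\Delta_{k_0}$ and every $\val(w_{k_0}-w_{k'})$ with $k'\ne k_0$ and $m_n(k')>0$ on the interval. Then the term $m_n(k_0)\val(w_\star-w_{k_0})$ is a very steep tent. By \cref{lem:straight-local} the whole new interval is one near-Steinberg range centred at $k_0$, and by \cref{prop:shifting}(3) the other factors shift the points by an affine function of $n$. Instead of a perturbation at $w_{k_0}$, I would apply \cref{thm:vertices}(2) directly to the point $w_\star=w_{k_0}$, using the factor-removed convention of \cref{lem:straight-local}. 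Then $n$ fails to be a vertex of the $\underline\Delta_{k_0}$-hull exactly when $n$ lies in some near-Steinberg range $\nS_{w_{k_0},k'}$ with $k'\ne k_0$ that sits inside the new interval of $k_0$. The case $k'=k_0$ is excluded because removing the $k_0$-factor removes that range. This gives (1)$\Leftrightarrow$``$n$ is near-Steinberg for some $k'\ne k_0$''.

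To get (2) and (3) separately, I would sort the weights $k'$ by side. The ranges are nested or disjoint by \cref{thm:vertices}(1), and their centres $d_{k'}^{\Iw}/2$ are increasing in $k'$ by \cref{prop:iw-dimension}. By \cref{prop:shifting}(1) and \cref{cor:multi-gap}, a range containing $d_{k_0}^{\Iw}/2+\ell$ with $\ell\ge0$ must come from some $k_1>k_0$: the admissible centres lie to the right, and equality with $k_0$ is excluded.

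For the equivalence (2)$\Leftrightarrow$(3), use the symmetry $\Delta'_{k_0,\ell}=\Delta'_{k_0,-\ell}$. Reflecting the hull shows that $\ell$ is a vertex iff $-\ell$ is a vertex. Running the argument above at $-\ell$ then produces a range from some $k_2<k_0$ containing $d_{k_0}^{\Iw}/2-\ell$.

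I expect the main obstacle to be the side-sorting step: showing that the range witnessing non-vertexhood of $\ell\ge0$ cannot come from a weight on the wrong side, and that it stays inside the new interval. At a half-integral centre, $\ell=1/2$ also needs separate care. It is admissible here, and by \cref{lem:convex-hull-difference} the point at $3/2$ is a vertex; the point at $1/2$ is a vertex by symmetry, since the hull is flat between $\pm1/2$. So nothing near the centre needs an extra range.

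Integrality of the slopes follows from \cref{cor:integral-slopes} applied at $w_{k_0}$, after the shift by the integer $(k_0-2)/2$ (the weights are even). For edges of length greater than one, such an edge comes from a maximal range of some $k'$. By the symmetry of $\underline\Delta_{k_0}$, together with the pairing of $k_1$ and $k_2$ just described, the edge is matched with its mirror. Since ranges are nested or disjoint, the two mirrored edges together form one range symmetric about $d_{k_0}^{\Iw}/2$.
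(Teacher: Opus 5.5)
\textbf{There is a genuine gap: the step that carries your argument is not justified.} \cref{thm:vertices}(2) describes the vertices of \(\NP(G(w_\star,-))\) itself. At \(w_\star=w_{k_0}\), every \(g_n(w_{k_0})\) with \(d_{k_0}^{\ur}<n<d_{k_0}^{\Iw}-d_{k_0}^{\ur}\) vanishes. Also, \(\nS_{w_{k_0},k_0}\) is the whole open new interval, since its defining valuation is \(+\infty\). So the theorem says nothing about the interior points \((\ell,\Delta'_{k_0,\ell})\).

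The paper has no factor-removed version of \cref{thm:vertices}. The last sentence of \cref{lem:straight-local} concerns only the chord joining the two ends of the new interval, and gives no vertex criterion. Your first perturbation, with \(\val(w_\star-w_{k_0})\) above every slope of \(\underline\Delta_{k_0}\), collapses the new interval into one range, as you observed. Hence the equivalence of (1) with ``\(n\) is near-Steinberg for some \(k'\ne k_0\)'' is asserted, not derived. The paper obtains it from \cite[Proposition~5.26]{LTXZ1}, with \cref{thm:vertices,prop:shifting} as inputs.

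Two smaller points are also off.
\begin{itemize}
\item \cref{cor:integral-slopes} concerns \(\NP(G(w_{k_0},-))\), whose only slope on the new interval is \((k_0-2)/2\). The relevant input for integrality is \cref{cor:straight-global} at a classical point, applied to a range of some \(k_1\ne k_0\).
\item Your last sentence is wrong. By \cref{prop:shifting}(1), applied to \(\nS_{w_{k_0},k'}\) with the weight \(k_0\), no range at \(w_{k_0}\) of a weight \(k'\ne k_0\) has \(d_{k_0}^{\Iw}/2\) in its closure. So a long edge lies on one side of the centre. The symmetry pairs a \(k_1\)-range with its mirror \(k_2\)-range; it does not produce a single range centred at \(d_{k_0}^{\Iw}/2\).
\end{itemize}
Your symmetry step and your treatment of \(\ell=\frac12\) are fine.

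\textbf{How to close the gap.} Both directions can be proved with tools the paper already has.

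For (2)\(\Rightarrow\)(1): the same application of \cref{prop:shifting}(1) also puts the endpoints of \(k_0\) outside the open range. Hence \(\overline{\nS}_{w_{k_0},k_1}\) lies in the right half of the closed new interval. This is all the side-sorting you need, and it forces \(k_1>k_0\). On that closed range \(m_n(k_0)\) is affine. Then \cref{prop:shifting}(3), with \(\mathbf k=\{k_0,k_1\}\), shows that the points \((n,\val(g_{n,\widehat{k_0}}(w_{k_0})))\) differ from the points of \cref{lem:straight-local} for \(k_1\) at \(w_\star=w_{k_0}\) by an affine function. So \(n\) lies on or above the chord joining the ends of that range, and is not a vertex.

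For (1)\(\Rightarrow\)(2), perturb at the right scale.
\begin{itemize}
\item Let \(\ell\) be interior to an edge \([\ell_-,\ell_+]\) of slope \(\sigma\). Take \(w_\star\) with non-integral \(\val(w_\star-w_{k_0})=V\in(\sigma-\frac12,\sigma)\). Then \(\nS_{w_\star,k_0}\) has radius at most \(\ell_-\).
\item \cref{cor:convex-gap}, used at a radius \(\rho\) with \(\ell_-<\rho\le\ell_+\) (so that \(\Delta_{k_0,\rho}-\Delta'_{k_0,\rho-1}\le\sigma\)), shows the following. No \(k'\) with \(\val(w_{k_0}-w_{k'})>V\) has a break point strictly between \(d_{k_0}^{\Iw}/2+\ell_-\) and \(d_{k_0}^{\Iw}/2+\ell_+\).
\item Hence on that segment \(\val(g_n(w_\star))\) differs from \(\val(g_{n,\widehat{k_0}}(w_{k_0}))\) by an affine function. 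So \(n\) is not a vertex of \(\NP(G(w_\star,-))\).
\item \cref{thm:vertices}(2) then gives \(n\in\nS_{w_\star,k_1}\) with \(k_1\ne k_0\).
\item Finally, \(\nS_{w_\star,k_1}\subseteq\nS_{w_{k_0},k_1}\), because \(\val(w_\star-w_{k_1})=\min\{V,\val(w_{k_0}-w_{k_1})\}\).
\end{itemize}
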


\begin{proof}
Apply \cite[Proposition~5.26]{LTXZ1} using
\cref{thm:vertices,prop:shifting}.  The only change is that, for odd
\(d_{k_0}^{\Iw}\), the two sides begin at \(\ell=\pm1/2\), with no
point at \(\ell=0\).  The radius-\(3/2\) initial check in
\cref{cor:convex-gap} supplies the strict inequality required there.
The integral-slope assertion follows by subtracting the affine
function of integral slope \((k_0-2)/2\) in
\eqref{eq:Delta-prime}.
\end{proof}

The following proposition is the analogue of
\cite[Proposition~4.7]{LTXZ2}.  It is the precise estimate needed when the
truncated Taylor terms in the Lagrange interpolation of a Fredholm
coefficient are evaluated at an arbitrary point of weight space.  The proof
of the cited proposition uses the near-Steinberg results of
\cite[Section~5]{LTXZ1} with integral centre; here those steps are replaced by
\cref{thm:vertices,prop:delta-vertices,cor:multi-gap}.

\begin{proposition}\label{prop:lagrange-term-bound}
Assume \(p\geq11\).  Fix \(n\), a ghost zero \(w_k\) of \(g_n\), and
\(0\leq i<m_n(k)\).  Suppose \(A\in\mathbb C_p\) satisfies
\begin{equation}\label{eq:separation-hypothesis}
 v_p(A)\geq
 \Delta_{k,\frac12d_k^{\new}-i}
 -\Delta'_{k,\frac12d_k^{\new}-m_n(k)}
\end{equation}
Then, for every \(w_\star\in\mathfrak m_{\mathbb C_p}\),
\[
 \left(n,
 v_p\bigl(A(w_\star-w_k)^i g_{n,\widehat k}(w_\star)\bigr)\right)
\]
lies on or above \(\NP(G^{(\varepsilon)}(w_\star,-))\).  It lies strictly
above when \((n,v_p(g_n(w_\star)))\) is a vertex.

If \(k_0\ne k\) is another ghost weight for \(g_n\), the same assertion holds
at \(w_{k_0}\) after removing the \(k_0\)-factor on both sides; equivalently,
with \(n=d_{k_0}^{\Iw}/2+\ell\),
\begin{equation}\label{eq:separation-two-centres}
 v_p(A)+(i-m_n(k))v_p(w_{k_0}-w_k)+\Delta'_{k_0,\ell}
 \geq\Delta_{k_0,\ell}
\end{equation}
\end{proposition}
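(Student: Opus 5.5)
We follow the structure of \cite[Proposition~4.7]{LTXZ2}, running the argument on the affine lattice of \cref{not:delta-lattice}.

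\textbf{Reduction to an $\ell$-parametrized inequality.} Put \(n=d_k^{\Iw}/2+\ell\). Since \(m_n(k)>0\), we have \(|\ell|<d_k^{\new}/2\) and \(m_n(k)=d_k^{\new}/2-|\ell|\). By the symmetry \(\Delta'_{k,\ell}=\Delta'_{k,-\ell}\) we may take \(\ell\geq0\); on the half-integral lattice this means \(\ell\geq1/2\). Write \(\lambda=d_k^{\new}/2-i\). Then \(\lambda>|\ell|\) is admissible, and the hypothesis \eqref{eq:separation-hypothesis} reads
\[
 v_p(A)\geq\Delta_{k,\lambda}-\Delta'_{k,\ell}.
\]

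\textbf{Case analysis in \(w_\star\).} Set \(\beta=v_p(w_\star-w_k)\) and let \(r\) be the largest admissible radius with \(\beta\geq\Delta_{k,r}-\Delta_{k,r-1}\).

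If \(|\ell|<r\), then \(n\) lies in the near-Steinberg range \(\nS_{w_\star,k}\). By \cref{lem:straight-local,prop:shifting}, the polygon over \(\overline{\nS}_{w_\star,k}\) is the chord through the endpoints at radius \(r\). It then suffices to show that the tested ordinate, minus \(v_p(g_{n,\widehat k}(w_\star))\), namely \(v_p(A)+i\beta\), dominates
\[
 \Delta_{k,r}+(d_k^{\new}/2-r)\beta-\Delta'_{k,\ell}.
\]
We split on the size of \(\lambda\):
\begin{itemize}
 \item if \(\lambda\geq r\), the inequality follows from the hypothesis and the convexity of \(\underline\Delta_k\), because slopes beyond \(r\) exceed \(\beta\);
 \item if \(\lambda<r\), we use \(i\beta\geq(d_k^{\new}/2-\lambda)\beta\) together with the slopes \(\leq\beta\) between \(\lambda\) and \(r\).
\end{itemize}

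If \(|\ell|\geq r\), we compare directly with the actual point \((n,v_p(g_n(w_\star)))\). It suffices to have
\[
 v_p(A)+i\beta\geq m_n(k)\beta,
\]
which follows from \(v_p(A)\geq\Delta_{k,\lambda}-\Delta_{k,\ell}\geq(\lambda-|\ell|)\beta\). This last step uses the slopes of \(\underline\Delta_k\) beyond \(r\), which exceed \(\beta\), together with \(\Delta_{k,\ell}\leq\Delta'_{k,\ell}\). Strictness at a vertex comes from the strict slope inequality beyond \(r\); when \(\ell=0\), it comes instead from the strict vertex inequalities of \cref{thm:vertices}.

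\textbf{Other ghost zeros and the two-centre estimate.} The factors \(w_\star-w_{k'}\) with \(k'\neq k\) enter both sides identically, except through the global polygon. We handle them with \cref{thm:vertices}: any other near-Steinberg range containing \(n\) is either nested in or disjoint from the one centred at \(k\), and \cref{prop:shifting}(3) shows the shift is affine. For \eqref{eq:separation-two-centres}, we evaluate at \(w_{k_0}\) and rewrite the claim via \cref{prop:delta-vertices}. If \((\ell,\Delta'_{k_0,\ell})\) is a vertex of \(\underline\Delta_{k_0}\), the claim is immediate from \(v_p(A)\geq0\) and \(i<m_n(k)\)... except that the second term is negative. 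We therefore bound \((m_n(k)-i)v_p(w_{k_0}-w_k)\) using \cref{cor:multi-gap}, taking \(k'=k\) there. The relevant endpoint \(d_k^{\ur}\) or \(d_k^{\Iw}-d_k^{\ur}\) lies within radius \(|\ell|+m_n(k)\) of \(d_{k_0}^{\Iw}/2\), so \eqref{eq:multi-gap}, combined with the quadratic gap \eqref{eq:quadratic-gap} for \(\underline\Delta_k\), absorbs the valuation term. If it is not a vertex, \(n\) sits in a symmetric near-Steinberg range for \(k_0\), and we reduce to the first part with \(w_\star=w_{k_0}\).

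\textbf{Main obstacle.} The hard step is this two-centre comparison. The quadratic term must simultaneously dominate \((m_n(k)-i)v_p(w_{k_0}-w_k)\) and the logarithmic correction in \cref{lem:convex-hull-difference}. The initial half-integral cases need separate checking: radius \(3/2\) with \(2\ell''-1=2\), where \eqref{eq:radial-gap-exact} is used directly, and the fixed central index with \(\ell=\pm1/2\). The ``at most two weights'' clause of \cref{cor:multi-gap} is what keeps a single large valuation from appearing twice.
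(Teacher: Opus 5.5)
\textbf{Gap in the case \(|\ell|\geq r\).} Your near-Steinberg case \(|\ell|<r\) is sound and is the paper's first case. There the term \(\Delta'_{k,\ell}\) in the hypothesis cancels against the ordinate of the tested point, and only convexity of \(\underline\Delta_k\) on the two sides of \(r\) is needed.

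The problem is the case \(|\ell|\geq r\). The hypothesis only gives \(v_p(A)\geq\Delta_{k,\lambda}-\Delta'_{k,\ell}\). Since \(\Delta'_{k,\ell}\geq\Delta_{k,\ell}\), this is \emph{weaker} than the bound \(v_p(A)\geq\Delta_{k,\lambda}-\Delta_{k,\ell}\) that you use, so the inequality \(\Delta_{k,\ell}\leq\Delta'_{k,\ell}\) has been applied in the wrong direction. What your computation actually shows is that the tested ordinate exceeds the ordinate of \((n,v_p(g_n(w_\star)))\) by at least
\[
 \bigl(\Delta_{k,\lambda}-\Delta_{k,|\ell|}-(\lambda-|\ell|)\beta\bigr)
 -\bigl(\Delta'_{k,|\ell|}-\Delta_{k,|\ell|}\bigr).
\]
Suppose \((|\ell|,\Delta'_{k,|\ell|})\) lies strictly above \(\underline\Delta_k\). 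Then the second bracket is a fixed positive number. Take \(i=m_n(k)-1\) and \(\beta\) slightly smaller than the slope of the edge of \(\underline\Delta_k\) over \(|\ell|\); this keeps \(|\ell|\geq r\) and makes the first bracket arbitrarily small. The tested point then lies strictly below the \(n\)-th coefficient point, so no comparison with that point can prove the claim. Your argument is valid only when \(\Delta'_{k,|\ell|}=\Delta_{k,|\ell|}\), which covers the paper's second case.

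\textbf{The missing idea} is the paper's third case. Put \(x=|n-d_k^{\Iw}/2|\). If \((x,\Delta'_{k,x})\) is not a vertex, \cref{prop:delta-vertices} places \(n\) in a near-Steinberg range centred at a different weight \(k'\). One passes to a maximal such range and uses the nesting in \cref{thm:vertices} and the affine comparison of \cref{prop:shifting}(3) to reduce to the two endpoints of that range. The accumulated loss from the centres \(k\) and \(k'\) is then bounded by \cref{cor:multi-gap}. This is what makes the ghost polygon at \(n\) drop far enough below the \(n\)-th coefficient point to absorb \(\Delta'_{k,x}-\Delta_{k,x}\), and it gives strictness exactly off the maximal range.

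Your paragraph on other ghost zeros does not supply this. It speaks of ranges nested in or disjoint from the one centred at \(k\), but here \(n\notin\nS_{w_\star,k}\), and the defect \(\Delta'_{k,x}-\Delta_{k,x}\) is never addressed. The two-centre estimate \eqref{eq:separation-two-centres} is just the first assertion at \(w_\star=w_{k_0}\), so it inherits the same missing case.

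There is also a problem with the tool you propose for the two-centre estimate. Take \(v_p(A)\) at its lower bound and \(\lambda=d_k^{\new}/2-i\). The vertex case at \(k_0\) then requires \(\Delta_{k,\lambda}-\Delta'_{k,x}\geq(\lambda-x)v_p(w_{k_0}-w_k)\). This is a separation estimate centred at \(k\), so \cref{cor:multi-gap} centred at \(k_0\) plus the bare quadratic gap for \(\underline\Delta_k\) does not close it. For example, at \(x=0\), \(\lambda=1\) the quadratic gap gives only \(\tfrac12\), whereas \(v_p(w_{k_0}-w_k)\geq1\).
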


\begin{proof}
Put \(L=L_{w_\star,k}\) and
\(x=|n-d_k^{\Iw}/2|=d_k^{\new}/2-m_n(k)\).  There are three cases.

If \(n\in\nS_{w_\star,k}\), then \(L>x\).  Remove the \(k\)-factor and use
\cref{prop:shifting}(3) to move the remaining coefficient points from
\(w_\star\) to \(w_k\); this changes their ordinates by an affine function of
the abscissa.  After also subtracting the affine function of slope
\((k-2)/2\), the two endpoints have the common ordinate
\[
 \Delta'_{k,L}+\left(\frac12d_k^{\new}-L\right)
 v_p(w_\star-w_k)
\]
The ordinate of the displayed interpolation term becomes
\[
 v_p(A)+i\,v_p(w_\star-w_k)+\Delta'_{k,x}
\]
Hypothesis \eqref{eq:separation-hypothesis}, convexity of
\(\underline\Delta_k\), and the defining inequality at \(L\) put this last
number on or above the endpoint chord.  This proves the assertion in the
near-Steinberg case on either affine lattice.

Suppose next that \(n\notin\nS_{w_\star,k}\) and
\((x,\Delta'_{k,x})\) is a vertex of \(\underline\Delta_k\).  Maximality in
\cref{def:near-steinberg} gives
\[
 v_p(w_\star-w_k)<\Delta_{k,x+1}-\Delta_{k,x}
\]
Convexity and \eqref{eq:separation-hypothesis} then give the strict inequality
\[
 v_p(A)>(m_n(k)-i)v_p(w_\star-w_k)
\]
so the interpolation term is strictly above the \(n\)-th coefficient point.

Finally suppose that \((x,\Delta'_{k,x})\) is not a vertex.  By
\cref{prop:delta-vertices}, \(n\) lies in a near-Steinberg range centered at a
different weight \(k'\).  Choose a maximal such range.  The nesting in
\cref{thm:vertices} and the deleted-factor comparison
\cref{prop:shifting}(3) reduce the desired inequality to its two endpoints.
The accumulated loss from the centers \(k,k'\) is bounded by
\cref{cor:multi-gap}; hence the same convex-chord calculation as in the first
case applies.  It is strict unless \(n\) lies in that maximal range, which is
exactly the assertion about vertices.  At a second classical center this
calculation is unchanged after deleting its vanishing factor and gives
\eqref{eq:separation-two-centres}.  The only initial half-integral possibility
is the pair of radii \(1/2,3/2\), already included in
\cref{cor:multi-gap}; no radius-\(1/2\) near-Steinberg interval is used.
\end{proof}

\section{The \texorpdfstring{$U_p$}{U-p}-operator and the local ghost theorem}\label{sec:operator}

We now compare the ghost series with the characteristic power series of the
$U_p$-operator.  We retain the construction and notation of
\cite[Sections~2 and 3]{LTXZ2}.  Thus
\(\mathrm S^{\dagger,(\varepsilon)}_{\widetilde H}\) is the space of abstract
overconvergent forms on the $\varepsilon$-disc and
\[
 C_{\widetilde H}^{(\varepsilon)}(w,T)
 =1+\sum_{n\geq1}c_n^{(\varepsilon)}(w)T^n
\]
is the characteristic power series of $U_p$. We use \(T\) for the power series variable, reserving \(t\) for Taylor degree
in the coefficient estimates below.

Put, for the rest of this section,
\[
 a_i=s_\varepsilon+(i-1)(p-1),\qquad
 \mathbf e_i^{(\varepsilon)}=e^*z^{a_i}\quad(i\geq1)
\]
These vectors form the power basis of
\(\mathrm S^{\dagger,(\varepsilon)}_{\widetilde H}\).  We write
\(U^{\dagger,(\varepsilon)}\) for the matrix of \(U_p\) in this basis, with
the convention that its \((r,c)\)-entry is the coefficient of
\(\mathbf e_r^{(\varepsilon)}\) in
\(U_p\mathbf e_c^{(\varepsilon)}\).  Thus
\[
 C_{\widetilde H}^{(\varepsilon)}(w,T)
 =\det(1-TU^{\dagger,(\varepsilon)})
\]
This convention is needed below because row and column degrees play different
roles in the minor normalizations.

\begin{lemma}\label{lem:weak-hodge-block}
The matrix \(U^{\dagger,(\varepsilon)}\) belongs to
\(\mathrm M_\infty(\cO\langle w/p\rangle)\), and its row indexed by
\(\mathbf e_r^{(\varepsilon)}\) belongs to
\(p^{a_r/2}\cO\langle w/p\rangle\).  At an integral weight \(w_k\),
that row belongs to \(p^{a_r}\cO\).

If \(d=d_k^{\Iw}(\varepsilon(1\times\omega^{2-k}))\), then
\(U^{\dagger,(\varepsilon)}(w_k)\) is block upper triangular: its
upper-left \(d\times d\) block is the classical Iwahori matrix and its entries
in rows (>d) and columns \(\leq d\) vanish.
\end{lemma}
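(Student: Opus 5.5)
We follow the strategy of \cite[Proposition~2.33 and Lemma~3.?]{LTXZ2} (the weak Hodge polygon and block structure), specialized to the single progression \(a_i=s_\varepsilon+(i-1)(p-1)\). The plan is to compute \(U_p\) explicitly on the power basis using the normalization of \cref{prop:Pi}. We write \(U_p\) as a sum over the coset representatives \(\left(\begin{smallmatrix}p&0\\ j&1\end{smallmatrix}\right)\) (or their transposes, in the right-action conventions), \(j\in\{0,\dots,p-1\}\). Each representative factors as \(\Pi\) times an element of \(\Iw\). Since \(e\Pi=-e\) and \((er)\Pi=-e\varphi(r)\), the action of \(U_p\) on \(e^*f\) is \(e^*\) of a sum of the pullbacks of \(f\) along the corresponding Möbius transformations, twisted by the weight character on the \(\varepsilon\)-disc. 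Concretely, \(U_p(e^*z^{a_c})=e^*\sum_j \chi_w(\ldots)(pz+j)^{a_c}\)-type expressions. Here the weight factor is a power \((\text{unit})^{w}\)-style expression whose coefficients, after expansion in \(z\) and in the weight variable, lie in \(\cO\langle w/p\rangle\). This gives \(U^{\dagger}\in\mathrm M_\infty(\cO\langle w/p\rangle)\).

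For the row bound, we extract the coefficient of \(z^{a_r}\). There are two sources of \(p\)-divisibility. The first is the factor \(p^{a_r}\) coming from expanding \((pz+j)^{\cdot}\) as a power series in \(pz\). The second is the Mahler-type divisibility of the weight-dependent binomials \(\binom{w\text{-stuff}}{m}\) over \(\cO\langle w/p\rangle\). The latter only gives half the exponent uniformly in \(w\), which is the standard reason for the bound \(p^{a_r/2}\) in the generic case (\cite[Proposition~3.?]{LTXZ2}). We would import the modified Mahler estimate of \cite{LTXZ2} verbatim, since it depends only on the degree \(a_r\) and not on the number of progressions. At an integral weight \(w_k\), the weight character becomes the polynomial \((\cdot)^{k-2}\). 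The binomials are then integers, and the full factor \(p^{a_r}\) survives, so the row lies in \(p^{a_r}\cO\).

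For the block statement, we specialize to \(w=w_k\). The basis vectors \(\mathbf e_c\) with \(c\le d\) are exactly the classical ones, \(a_c\le k-2\), by \cref{prop:iw-dimension}. The subspace \(\mathrm S_k^{\Iw}\), consisting of \(e^*\) of polynomials of degree \(\le k-2\), is \(U_p\)-stable, because the weight-\(k\) action preserves \(\cO[z]^{\le k-2}\). Hence \(U_p\mathbf e_c\) has no \(\mathbf e_r\)-component with \(a_r>k-2\), i.e.\ with \(r>d\). This is the claimed vanishing, and the upper-left block is the matrix of \(U_p\) on the classical Iwahori space. This part is routine once the identification of \(e^*\)-homs with \(V\) (stated after \cref{prop:Pi}) is used.

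The main obstacle will be the row bound \(p^{a_r/2}\) uniformly over the disc. Here one must check that the diagonal-torsion projection to the \(\varepsilon\)-isotypic part, with only one progression of degrees, does not destroy the cancellation used in the generic proof. We would first try to reduce to the generic estimate by viewing our basis as a subset of the full monomial basis \(z^m\), \(m\ge0\). The entries of \(U_p\) on all monomials satisfy the row bound \(p^{m/2}\) by \cite{LTXZ2} independently of \(\bar\rho\). Restricting to the \(\varepsilon\)-isotypic monomials, and applying the \(\Delta^2\)-invariance of \(e\), then gives the claim.
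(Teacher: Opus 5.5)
Your arguments for the integral-weight bound and the block structure match the paper's. At \(w_k\) the weight factor becomes \(\bigl((p\gamma z+\delta)/\omega(\bar\delta)\bigr)^{k-2}\), so every summand lies in \(\cO[\![pz]\!]\), and \(e^*\cO[z]^{\le k-2}\) is \(U_p\)-stable. The gap is in the first assertion, which carries the content of the lemma, and it has two parts.

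First, your set-up is wrong: the coset representatives of \(U_p\) do not factor as \(\Pi\) times elements of \(\Iw\). Since \(\Pi\) normalizes \(\Iw\), \(\Iw\Pi\Iw=\Iw\Pi\) is a single coset (the Atkin--Lehner element). The \(p\) cosets of \(U_p\) instead lie in \(\Iw s\Pi\Iw\) (or \(\Iw\Pi s\Iw\)), where \(s=\left(\begin{smallmatrix}0&1\\1&0\end{smallmatrix}\right)\). For instance, \(\left(\begin{smallmatrix}p&0\\jp&1\end{smallmatrix}\right)=\left(\begin{smallmatrix}0&1\\1&j\end{smallmatrix}\right)\Pi\) with \(\left(\begin{smallmatrix}0&1\\1&j\end{smallmatrix}\right)\in K\setminus\Iw\). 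So \cref{prop:Pi} alone does not compute \(U_p\): the \(K\)-action on \(e\) enters, and \(U_p(e^*f)\) is not a plain sum of M\"obius pullbacks. What is true, and suffices, is that \(e\cdot g\in e\,\cO[\![I_1]\!]\) for every \(g\). Hence \(U_p(e^*f)\) is \(e^*\) of an \(\cO\)-measure combination of pullbacks \(f|_g\) along matrices \(g=\left(\begin{smallmatrix}p\alpha&\beta\\p\gamma&\delta\end{smallmatrix}\right)\) with \(\delta\in\mathbb Z_p^\times\); this is the form the paper starts from.

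Second, and more seriously, you never prove the uniform bound \(p^{a_r/2}\). The modified Mahler estimate concerns the Mahler basis, and you do not say how it would be transported to the power basis. Your claim that the row bound of \cite{LTXZ2} holds on all monomials ``independently of \(\bar\rho\)'' is exactly what needs proof, because the \(U_p\)-matrix depends on the \(\GL_2(\mathbb Q_p)\)-action on \(\widetilde H\). It transfers only because the bound holds summand by summand, uniformly in \(g\), and that estimate is what is missing.

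The missing estimate is the paper's short computation. Write the weight factor as \((1+w)^y\) with \(y=\log\bigl((p\gamma z+\delta)/\omega(\bar\delta)\bigr)/p\). Its constant term lies in \(\mathbb Z_p\), and its \(z^j\)-coefficient has valuation at least \(j-1-v_p(j)\), so \(y\in p^{-1/2}\cO\langle p^{1/2}z\rangle\). Hence \(\binom yn w^n\in(w/p)^n\,(p^{n/2}/n!)\,\cO\langle p^{1/2}z\rangle\); this is the paper's splitting \(w^n/n!=(w/p)^n\cdot(p^{n/2}/n!)\cdot p^{n/2}\). Since \(v_p(n!)\le n/(p-1)\le n/2\), the weight factor lies in \(\cO\langle w/p\rangle\langle p^{1/2}z\rangle\). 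The other factor \(\bigl(\frac{p\alpha z+\beta}{p\gamma z+\delta}\bigr)^m\) lies in \(\cO[\![pz]\!]\), so the whole summand lies in \(\cO\langle w/p\rangle\langle p^{1/2}z\rangle\). Its \(z^{a_r}\)-coefficient therefore lies in \(p^{a_r/2}\cO\langle w/p\rangle\), and integrating against a bounded measure preserves this.

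Because the bound is termwise, there is no cancellation for the \(\Delta^2\)-projection to destroy. \(U_p\) preserves the \(\varepsilon\)-isotypic part, and \(U^{\dagger,(\varepsilon)}\) is simply the submatrix on the monomials \(z^{a_i}\). So the ``main obstacle'' you single out does not arise, while the estimate you plan to import without argument is the step that actually needs one.
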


\begin{proof}
For a monomial \(z^m\), every double-coset summand has the form
\[
 (1+w)^{\log((p\gamma z+\delta)/\omega(\bar\delta))/p}
 \left(\frac{p\alpha z+\beta}{p\gamma z+\delta}\right)^m,
 \qquad \delta\in\mathbb Z_p^\times
\]
Writing the first factor as a binomial series, use
\[
 \frac{w^n}{n!}=\left(\frac wp\right)^n
 \frac{p^{n/2}}{n!}\,p^{n/2}
\]
Since \(v_p(n!)\leq n/(p-1)\leq n/2\), the whole double-coset
summand belongs to
\(\cO\langle w/p\rangle\langle p^{1/2}z\rangle\).  Therefore its
\(z^{a_r}\)-coefficient lies in
\(p^{a_r/2}\cO\langle w/p\rangle\).  At \(w=w_k\) the same
expression becomes
\[
 \left(\frac{p\gamma z+\delta}{\omega(\bar\delta)}\right)^{k-2}
 \left(\frac{p\alpha z+\beta}{p\gamma z+\delta}\right)^m
\]
which belongs to \(\cO[\![pz]\!]\); hence its
\(z^{a_r}\)-coefficient lies in \(p^{a_r}\cO\).  This proves the two row
bounds.  The theta exact sequence at weight \(k\) identifies the span of
the first \(d\) power vectors with the classical subspace and makes it
\(U_p\)-stable, which is precisely the asserted block-triangular statement.
This calculation is independent of the number of projective summands; the
present dimension formula enters only through the list \(a_r\).
\end{proof}

The proof follows the three steps of \cite[Sections~4--6]{LTXZ2}.  First,
the classical identity expressing $U_p$ as an Atkin--Lehner term plus an
operator factoring through the unramified space gives the required corank at
every classical weight.  Second, the modified Mahler basis gives a uniform
valuation estimate for every finite minor.  Third, Lagrange interpolation and
cofactor expansion convert these two estimates into the coefficient bound that
implies equality of Newton polygons.  We give the formulas for each step and
isolate the two changes from the generic case: the basis degrees form one
progression, and the reflection of an odd-dimensional classical matrix has one
fixed index.

\subsection{The \texorpdfstring{$p$}{p}-stabilization of classical forms}

We first specialize the \(U_p\)-operator at a classical weight.  The explicit
\(\Pi\)-action from \cref{prop:Pi} gives the Atkin--Lehner anti-diagonal
matrix, while \(p\)-stabilization identifies the complementary term with a
map through the unramified space.  The fixed central index when
\(d_k^{\Iw}\) is odd is included in this calculation.

For a fixed relevant classical weight \(k\), put temporarily
\[
 d=d_k^{\Iw},\qquad
 a_i=s_\varepsilon+(i-1)(p-1)
\]
Thus \((\mathbf e_i^{(\varepsilon)})_{i\geq1}\) is the power basis, its first \(d\)
elements form a basis of \(\mathrm S_k^{\Iw}\), and
\[
 a_{d+1-i}=k-2-a_i\qquad(1\leq i\leq d)
\]

\begin{proposition}\label{prop:classical-matrix}
With respect to the power basis,
\begin{equation}\label{eq:AL-action}
 \operatorname{AL}_{(k,\widetilde\varepsilon_1)}
  (\mathbf e_i^{(\varepsilon)})
  =-p^{a_{d+1-i}}\mathbf e_{d+1-i}^{(\varepsilon)}
  \qquad(1\leq i\leq d)
\end{equation}
Moreover,
\begin{equation}\label{eq:pstabilization}
 U_p=\iota_2\circ\operatorname{proj}_1-
       \operatorname{AL}_{(k,\widetilde\varepsilon_1)}
\end{equation}
where $\operatorname{proj}_1$ takes values in $\mathrm S_k^{\ur}$.  Hence the
matrix of $U_p$ is the sum of the negative of the anti-diagonal matrix in
\eqref{eq:AL-action} and a matrix of rank at most $d_k^{\ur}$.
\end{proposition}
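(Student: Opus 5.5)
Three ingredients are needed: an explicit computation of the Atkin--Lehner operator on the power basis using \cref{prop:Pi}, the standard $p$-stabilization identity from \cite[Section~4]{LTXZ2}, and a rank count.

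For \eqref{eq:AL-action}, write $\operatorname{AL}_{(k,\widetilde\varepsilon_1)}$ as the action of $\Pi$ on $\mathrm S_k^{\Iw}$. An element $e^*f$ is sent to the homomorphism $x\mapsto f|_\Pi$ evaluated at $x\Pi^{-1}$, up to the central normalization. Since $p\,\mathrm{id}$ acts trivially, $\Pi^{-1}=p^{-1}\Pi$ acts like $\Pi$ on $\widetilde H$. The relation $e\Pi=-e$, together with $(er)\Pi=-e\varphi(r)$, shows that $\mathbf e_i=e^*z^{a_i}$ goes to $-e^*(z^{a_i}|_{k}\Pi)$. Here $|_k\Pi$ is the weight-$k$ slash action on $\cO[z]^{\le k-2}\otimes\widetilde\varepsilon_1$. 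This action sends $z^{a}$ to $p^{k-2-a}z^{k-2-a}$, possibly up to a root of unity coming from the character at $\Pi$; that character is trivial after the normalization of \cref{prop:Pi}.

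Using $a_{d+1-i}=k-2-a_i$, which follows from \cref{prop:iw-dimension} and $k-2-s_\eps\equiv s_\eps \pmod{p-1}$, the image is $-p^{a_{d+1-i}}\mathbf e_{d+1-i}$. The main check is the sign and the absence of extra unit factors. For this I will follow the right-action convention of \cite{LTXZ2} line by line. When $d$ is odd, the central index $i=(d+1)/2$ is fixed and gives the diagonal entry $-p^{(k-2)/2}$; nothing special is needed beyond noting it.

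For \eqref{eq:pstabilization}, I will use the coset decomposition
\[
\Iw\begin{pmatrix}p&0\\0&1\end{pmatrix}\Iw\;\cup\;\Iw\,\Pi\,\Iw\cdots=K\begin{pmatrix}p&0\\0&1\end{pmatrix}K,
\]
that is, the classical relation $T_p=U_p+\operatorname{AL}$ after composing the trace $\operatorname{proj}_1\colon \mathrm S_k^{\Iw}\to\mathrm S_k^{\ur}$ with the second degeneracy map $\iota_2$. Concretely, summing over the $p+1$ cosets of $K/\Iw$ shows that $\iota_2\circ\operatorname{proj}_1$ is the sum over the $p$ cosets defining $U_p$ plus the single coset represented by $\Pi$ (or its $w$-conjugate). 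This is exactly $U_p+\operatorname{AL}$.

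This computation is formal and independent of the type of $\widetilde H$, since only the $G$-action on $\widetilde H$ enters. I will cite \cite[Proposition~4.4 setup / Section~2.4]{LTXZ2} for the identity and verify that the central normalization used there matches ours. The rank statement is then immediate: $\iota_2\circ\operatorname{proj}_1$ factors through $\mathrm S_k^{\ur}$, which has rank $d_k^{\ur}$.

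I expect the only real obstacle to be bookkeeping the sign and the powers of $p$ in the slash action, since the sign $-1$ in \cref{prop:Pi} and the central triviality must combine to give exactly $-p^{a_{d+1-i}}$. I would settle this by testing against the case $i$ central, where the eigenvalue of $\operatorname{AL}$ on a newform must be $\pm p^{(k-2)/2}$.
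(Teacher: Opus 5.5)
Your proposal is correct and follows the paper's proof. The Atkin--Lehner formula comes from the slash action $z^{a}\mapsto p^{k-2-a}z^{k-2-a}$ of $\Pi$, the sign $e\Pi=-e$ from \cref{prop:Pi}, and the reflection $a_{d+1-i}=k-2-a_i$; the identity \eqref{eq:pstabilization} is the type-independent double-coset relation, cited in the paper as \cite[Proposition~3.5]{LTXZ2}; and the rank bound comes from factoring through $\mathrm S_k^{\ur}$. Your displayed union of Iwahori double cosets is only schematic, but the relation you actually need — the one-sided decomposition of $K\Pi$ (or $\Pi K$, depending on conventions) into the single coset of $\Pi$ and the $p$ cosets defining $U_p$ — is stated correctly in your prose.
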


\begin{proof}
The polynomial calculation is \cite[Proposition~3.8(2)]{LTXZ1};
\cref{prop:Pi} supplies the displayed sign.  Formula \eqref{eq:pstabilization} is
the double-coset identity \cite[Proposition~3.5]{LTXZ2}, whose proof uses no
genericity parameter.  The rank assertion follows from its factorisation through
$\mathrm S_k^{\ur}$.

If \(d\) is odd, the index \((d+1)/2\) is fixed by reflection.  Its
degree is $(k-2)/2$, so its Atkin--Lehner entry is $-p^{(k-2)/2}$.  This entry is
counted once in every rank calculation below.
\end{proof}

For finite subsets $\underline\zeta,\underline\xi\subset\mathbb Z_{\geq1}$ of
cardinality $n$, in increasing order, put as in
\cite[Definition--Proposition~3.21]{LTXZ2}
\[
\begin{aligned}
 r_{\underline\zeta\times\underline\xi}(k)
 &=\#\{i\leq d_k^{\Iw}:i\in\underline\xi,
          \ d_k^{\Iw}+1-i\in\underline\zeta\},\\
 s_{\underline\xi}(k)
 &=\#\{i\in\underline\xi:i>d_k^{\Iw}\}
\end{aligned}
\]

\begin{proposition}\label{prop:corank}
The corank at $w=w_k$ of
$U^{\dagger,(\varepsilon)}(\underline\zeta\times\underline\xi)$ is at least
\[
 n-d_k^{\ur}-r_{\underline\zeta\times\underline\xi}(k)
   -s_{\underline\xi}(k)
\]
For $\underline\zeta=\underline\xi=\{1,\ldots,n\}$ and \(m_n(k)>0\), this
lower bound is \(m_n(k)\).  Consequently
\[
 p^{-\deg g_n^{(\varepsilon)}}g_n^{(\varepsilon)}(w)
 \mid\det U^{\dagger,(\varepsilon)}(1,\ldots,n)
 \quad\text{in }\cO\langle w/p\rangle
\]
\end{proposition}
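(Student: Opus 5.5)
The plan has three steps: bound the corank of the general minor from the decomposition in \cref{prop:classical-matrix}, evaluate that bound on the leading principal minor, and then deduce divisibility.

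\emph{Corank bound.} Fix a classical weight \(k\) and set \(d=d_k^{\Iw}\). By \cref{lem:weak-hodge-block}, the columns of \(U^{\dagger,(\varepsilon)}(w_k)\) indexed by \(c\leq d\) vanish in all rows \(r>d\). By \cref{prop:classical-matrix}, the upper-left \(d\times d\) block equals \(-\mathrm{AL}+R\), where \(\operatorname{rank}R\leq d_k^{\ur}\) and \(\mathrm{AL}\) is the anti-diagonal matrix with nonzero entries at positions \((d+1-i,i)\).

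Take the \(n\times n\) submatrix with rows \(\underline\zeta\) and columns \(\underline\xi\), and split its columns into \(\underline\xi_{\le d}\) and \(\underline\xi_{>d}\). The rank of the submatrix is at most the rank of the \(\underline\xi_{\le d}\)-columns plus \(s_{\underline\xi}(k)\). The \(\underline\xi_{\le d}\)-columns are zero outside rows \(\leq d\). On those rows they equal the restriction of \(-\mathrm{AL}+R\), whose rank is at most \(\operatorname{rank}R\) plus the number of nonzero anti-diagonal entries lying in \(\underline\zeta\times\underline\xi\). That number is exactly \(r_{\underline\zeta\times\underline\xi}(k)\).

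When \(d\) is odd, the central index \((d+1)/2\) is one reflection-fixed position, so it contributes at most one entry. Hence the rank is at most \(d_k^{\ur}+r+s\), and the corank is at least \(n-d_k^{\ur}-r-s\).

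\emph{Principal minor.} Take \(\underline\zeta=\underline\xi=\{1,\ldots,n\}\) with \(m_n(k)>0\), so \(d_k^{\ur}<n<d-d_k^{\ur}\). If \(n\leq d\), then \(s=0\) and \(r=\#\{i\leq n: d+1-i\leq n\}=\max(0,2n-d)\). If \(n>d\), then \(s=n-d\) and \(r=d\).

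For \(n\leq d/2\) the bound is \(n-d_k^{\ur}\). For \(d/2<n\leq d\) it is \(n-d_k^{\ur}-(2n-d)=d-d_k^{\ur}-n\). At \(n=(d+1)/2\) with \(d\) odd, \(r=1\) counts the fixed index once and the bound is \(\tfrac{d-1}{2}-d_k^{\ur}\). In every case the bound equals \(\min\{n-d_k^{\ur},d-d_k^{\ur}-n\}=m_n(k)\). The case \(n>d\) gives \(m_n(k)=0\), consistent with \(n\geq d-d_k^{\ur}\) and the definition of \(m_n(k)\).

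\emph{Divisibility.} Once the corank of \(U^{\dagger,(\varepsilon)}(w_k)(1,\ldots,n)\) is at least \(m_n(k)\), I would pass to the Taylor expansion in \(w-w_k\). Since \(U^{\dagger,(\varepsilon)}(1,\ldots,n)\) is a matrix over \(\cO\langle w/p\rangle\), a corank-\(m\) specialization forces its determinant to vanish to order \(m\) at \(w_k\): expand the determinant multilinearly after a change of basis over \(E\) that makes \(m\) columns vanish at \(w_k\). This is the standard argument of \cite[Lemma~3.22]{LTXZ2}. In \(\cO\langle w/p\rangle\) the distinct \(w_k\) have \(v_p(w_k)\geq1\), and the factors \((w-w_k)/p\) are distinguished polynomials. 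By the Weierstrass preparation theorem they are pairwise coprime, and each is divisible only by itself up to units.

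\emph{Main obstacle.} The delicate point is integrality, namely showing that \(\det U^{\dagger,(\varepsilon)}(1,\ldots,n)\) divided by \(\prod_k((w-w_k)/p)^{m_n(k)}\) still lies in \(\cO\langle w/p\rangle\) and not merely in \(E\langle w/p\rangle\). I would handle this by Weierstrass division by the monic polynomial \(\prod_k((w-w_k)/p)^{m_n(k)}\) in the variable \(w/p\): the remainder has degree less than \(\deg g_n\) and vanishes to the required orders, so it is zero. Since \(\deg g_n=\sum_k m_n(k)\), this gives exactly \(p^{-\deg g_n}g_n\mid\det U^{\dagger,(\varepsilon)}(1,\ldots,n)\).
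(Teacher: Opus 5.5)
Your proposal is correct and follows the paper's proof. You split the columns into classical and nonclassical ones, bound the rank on the classical columns by $d_k^{\ur}+r$ via the $p$-stabilization decomposition, evaluate $r=\max\{0,2n-d_k^{\Iw}\}$ on the principal minor, and apply the determinant--corank lemma. Your Weierstrass division by the monic polynomial $\prod_k((w-w_k)/p)^{m_n(k)}$ in the variable $w/p$ simply makes explicit the integrality that the paper compresses into ``multiplying these pairwise distinct factors''.
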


\begin{proof}
The nonclassical columns contribute at most $s_{\underline\xi}(k)$ to the rank.
On the classical columns, \cref{prop:classical-matrix} bounds the rank by
$d_k^{\ur}+r_{\underline\zeta\times\underline\xi}(k)$.  For the principal
minor in the only nontrivial range \(m_n(k)>0\), one has
\(n<d_k^{\Iw}\), and the anti-diagonal contribution has rank
\(\max\{0,2n-d_k^{\Iw}\}\).  Subtracting this rank and \(d_k^{\ur}\) from
\(n\) gives the two branches in \eqref{eq:ghost-multiplicity}.  If
\(d_k^{\Iw}=2n+1\), this anti-diagonal rank is zero and the two branches agree;
hence the odd central case introduces no additional root.  Notice that for
\(n>d_k^{\Iw}\) the displayed general lower bound may be negative and is not
equal to \(m_n(k)=0\); only its nonnegative truncation is a corank statement.
This is why equality with \(m_n(k)\) was asserted only in the nontrivial ghost
range.  The determinant-corank lemma and \eqref{eq:weight-distance} prove the
divisibility at every root of \(g_n\), and multiplying these pairwise distinct
factors proves the last assertion.
\end{proof}

\begin{lemma}\label{lem:first-coefficient}
The least classical weight for $\varepsilon$ at which the Iwahori space is nonzero is
$k=2+2s_\varepsilon$.  At that weight
\[
 d_k^{\Iw}=1,\qquad d_k^{\ur}=0,
 \qquad U_p\mathbf e_1^{(\varepsilon)}=p^{s_\varepsilon}
       \mathbf e_1^{(\varepsilon)}
\]
In particular, $c_1^{(1\times1)}(0)$ is a unit and $g_1^{(1\times1)}=1$.
\end{lemma}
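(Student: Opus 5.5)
The plan is to read everything off the explicit dimension formulae in \cref{prop:iw-dimension,prop:ur-dimension} and the classical decomposition in \cref{prop:classical-matrix}. The Fredholm coefficient is then handled through the Hodge-type row bounds of \cref{lem:weak-hodge-block}.

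\emph{Least weight and dimensions.} Relevant classical weights satisfy \(k-2\equiv 2s_\eps\pmod{p-1}\). By \eqref{eq:iw-general}, \(d_k^{\Iw}\geq1\) if and only if \(k-2\geq s_\eps\). The least nonnegative integer congruent to \(2s_\eps\) that is at least \(s_\eps\) is \(2s_\eps\) itself, because \(2s_\eps-(p-1)<s_\eps\) when \(s_\eps\leq p-2\). Hence the least weight is \(k=2+2s_\eps\).

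For this weight \(k_\bullet=\delta_\eps\), so \cref{prop:iw-dimension} gives \(d_k^{\Iw}=1\). Since \(t_\eps=s_\eps+\delta_\eps+1>\delta_\eps=k_\bullet\), and \(k_\bullet-t_\eps=-(s_\eps+1)\geq-(p+1)\), the floor in \eqref{eq:ur-dimension} equals \(-1\). Therefore \(d_k^{\ur}=0\).

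\emph{The eigenvalue.} Since \(d_k^{\ur}=0\), the term \(\iota_2\circ\operatorname{proj}_1\) in \eqref{eq:pstabilization} vanishes, so \(U_p=-\operatorname{AL}_{(k,\widetilde\eps_1)}\) on the classical space. With \(d=1\) the reflection fixes \(i=1\), and \(a_1=s_\eps\). Formula \eqref{eq:AL-action} then gives \(U_p\mathbf e_1=p^{s_\eps}\mathbf e_1\). This is exactly the fixed-central-index case noted after \cref{prop:classical-matrix}.

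\emph{Unit coefficient.} Now take \(\eps=1\times1\). Then \(s_\eps=0\), \(k=2\), and \(w_2=0\). By \cref{lem:weak-hodge-block}, \(U^{\dagger}(0)\) is block upper triangular, with upper-left \(1\times1\) entry equal to \(1\). Moreover row \(r\) lies in \(p^{a_r}\cO\) with \(a_r=(r-1)(p-1)\geq p-1\) for \(r\geq2\). Hence the trace converges and
\[
 c_1(0)=-\operatorname{tr}U^{\dagger}(0)\equiv-1\pmod p,
\]
so \(c_1(0)\) is a unit.

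\emph{Trivial ghost coefficient.} By \eqref{eq:ghost-multiplicity}, \(m_1(k)>0\) requires \(d_k^{\ur}=0\) and \(d_k^{\Iw}\geq2\). For \(s_\eps=0\) we have \(\delta_\eps=0\), \(t_\eps=1\), and \(d_k^{\Iw}=k_\bullet+1\). So \(d_k^{\Iw}\geq2\) forces \(k_\bullet\geq1\), and then \(d_k^{\ur}=\lfloor (k_\bullet-1)/(p+1)\rfloor+1\geq1\). The two conditions are incompatible, so \(m_1(k)=0\) for every relevant \(k\), and \(g_1^{(1\times1)}=1\).

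I expect no genuine obstacle here. The only delicate points are two checks: that the floor in \eqref{eq:ur-dimension} is exactly \(-1\) at \(k_\bullet=\delta_\eps\), and that the infinite trace converges with every off-block diagonal entry divisible by \(p\). Both follow from the displayed bounds above.
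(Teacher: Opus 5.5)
Your proof is correct and follows the paper's route: the dimension formulae at \(k_\bullet=\delta_\eps\), the identity \(U_p=-\operatorname{AL}\) from \eqref{eq:pstabilization} once \(d_k^{\ur}=0\), and the fixed-index Atkin--Lehner entry \(-p^{a_1}\). The only differences are in how much you write out. You prove minimality explicitly, and you justify the unit claim through block triangularity and the row bound \(p^{a_r}\cO\) at \(w_2=0\), which is what the paper's one-line remark implicitly uses. You also get \(g_1^{(1\times1)}=1\) directly from \(m_1(k)=0\), whereas the paper reads it off from \(\deg g_1=0\) in \cref{prop:degree-increment}.
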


\begin{proof}
Take $k_\bullet=\delta_\varepsilon$.  Then
$k=k_\varepsilon+(p-1)k_\bullet=2+2s_\varepsilon$, and the two dimension
formulae give $d_k^{\Iw}=1$ and $d_k^{\ur}=0$.  Thus
\eqref{eq:pstabilization} gives $U_p=-\operatorname{AL}$, while
\eqref{eq:AL-action} gives
$\operatorname{AL}(\mathbf e_1)=-p^{s_\varepsilon}\mathbf e_1$.
For $s_\varepsilon=0$, $w_2=0$; hence $c_1(w)$ has unit constant term.
Finally, \cref{prop:degree-increment} gives $\deg g_1=0$.
\end{proof}

The identity $U\operatorname{AL}U=p^{k-1}\operatorname{AL}$ used for two distinct
Iwahori characters in \cite[Proposition~2.11]{LTXZ1} does not apply to the self-dual
line at $s_\varepsilon=0$.  Applying it there would incorrectly give slope $1/2$ at
$k=2$.

\subsection{The modified Mahler estimate}

We next establish the entrywise estimate for the infinite \(U_p\)-matrix
required in \cite[Section~5]{LTXZ2}.  The modified Mahler basis is unchanged;
the simplification is that only the single sequence of degrees
\(s_\varepsilon+(i-1)(p-1)\) has to be retained.

Define
\[
 f_1(z)=\frac{z^p-z}{p},\qquad f_{j+1}(z)=\frac{f_j(z)^p-f_j(z)}p
\]
If $m=\sum_{j\geq0}m_jp^j$, $0\leq m_j<p$, put
\[
 \mathbf m_m(z)=z^{m_0}f_1(z)^{m_1}f_2(z)^{m_2}\cdots,
 \qquad
 \mathbf f_i^{(\varepsilon)}
 =e^*\mathbf m_{\deg\mathbf e_i^{(\varepsilon)}}(z)
\]
These are the modified Mahler functions and associated basis of
\cite[Section~3.2]{LTXZ2}.  For the base-$p$ digits $m_j,n_j$, put
$D(m,n)=\#\{j\geq0:n_{j+1}>m_j\}$.

\begin{proposition}\label{prop:mahler}
The functions $\mathbf m_m$ form an orthonormal basis of
$\mathcal C^0(\mathbb Z_p;\mathbb Z_p)$.  The change from the usual Mahler basis is
upper triangular over $\mathbb Z_p$ with unit diagonal.  For the matrices $P_{m,n}$
and $Q_{m,n}$ of \cite[Section~3.3]{LTXZ2},
\begin{equation}\label{eq:refined-halo}
 P_{m,n},Q_{m,n}\in
 p^{D(m,n)}p^{m-\lfloor n/p\rfloor}\cO\langle w/p\rangle
\end{equation}
The estimate remains valid after retaining the rows and columns of degrees
$s_\varepsilon+(i-1)(p-1)$ and therefore applies to the $U_p$-matrix on
$\mathrm S^{\dagger,(\varepsilon)}_{\widetilde H}$.
\end{proposition}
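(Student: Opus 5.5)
I will prove the three assertions in the order stated: orthonormality, then triangularity of the change of basis, then the halo estimate \eqref{eq:refined-halo}. The last step carries the real content.

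\emph{Orthonormality.} First I will check that each \(f_j\) maps \(\mathbb Z_p\) to \(\mathbb Z_p\). This follows by induction from Fermat's little theorem, since \(x^p\equiv x\pmod p\) for \(x\in\mathbb Z_p\). Next, \(f_j\) has degree \(p^j\) and leading coefficient \(p^{-(1+p+\cdots+p^{j-1})}\). Hence \(\mathbf m_m\) is a polynomial of degree \(m\) whose leading coefficient has valuation \(-\sum_j m_j(p^j-1)/(p-1)=-v_p(m!)\). This matches the leading coefficient of \(\binom zm\) up to a \(p\)-adic unit.

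Then I will expand \(\mathbf m_m\) in the Mahler basis. Because \(\mathbf m_m\) is integer-valued, the coefficients \(\Delta^n\mathbf m_m(0)\) lie in \(\mathbb Z_p\). The expansion has degree \(m\), so the transition matrix is triangular, and its diagonal entry is the ratio of leading coefficients, which is a unit. A unitriangular-up-to-units change of basis from an orthonormal basis of \(\mathcal C^0(\mathbb Z_p;\mathbb Z_p)\) is again orthonormal. Finally I will rescale \(\mathbf m_m\) by the unit (or absorb it) to get unit diagonal in the normalization of \cite[Section~3.2]{LTXZ2}.

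\emph{Halo estimate.} The plan is not to redo the analysis but to note that \(P_{m,n}\) and \(Q_{m,n}\) are defined in \cite[Section~3.3]{LTXZ2} purely in terms of the double-coset summands and the basis \(\mathbf m_m\). They do not depend on \(\widetilde H\) or on the genericity parameter \(a\). So \cite[Proposition~3.?, estimate on \(P,Q\)]{LTXZ2} applies verbatim: it is a statement about the \(\mathcal C^0\)-operators \(z\mapsto (1+w)^{\cdots}(\frac{p\alpha z+\beta}{p\gamma z+\delta})^m\).

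I will nevertheless verify the key mechanism, because it is the main obstacle. The key fact is that \(f_j(pz+b)\) is \(p^{j}\)-divisible to first order in the variable, while the digit comparison accounts for the \(\lfloor n/p\rfloor\) loss. Concretely:
\begin{itemize}
\item The factor \(p^{m-\lfloor n/p\rfloor}\) comes from the analogue of the weak Hodge bound in \cref{lem:weak-hodge-block}. Substituting \(z\mapsto pz\) multiplies a degree-\(m\) polynomial coefficient by \(p^m\). Re-expanding in the \(\mathbf m_n\), whose leading coefficients have valuation \(-v_p(n!)\approx -n/(p-1)\), costs at most \(\lfloor n/p\rfloor\) once carries are counted.
\item The extra \(p^{D(m,n)}\) arises digitwise. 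A digit \(n_{j+1}\) exceeding \(m_j\) forces an additional application of \(f_{j+1}\)'s defining division by \(p\), and that division is offset by a genuine factor of \(p\) in the composition.
\end{itemize}
I expect checking this digit bookkeeping to be the delicate part. I will cite LTXZ2's argument rather than reprove it.

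\emph{Restriction.} Restricting to rows and columns of degrees \(s_\varepsilon+(i-1)(p-1)\) means taking a submatrix. Entrywise bounds pass to submatrices, and the \(U_p\)-matrix on \(\mathrm S^{\dagger,(\varepsilon)}_{\widetilde H}\) in the basis \(\mathbf f_i^{(\varepsilon)}\) is the corresponding block of the operator acting through the rank-one free module from \cref{prop:Pi}. This relies on \(\Delta^2\)-equivariance, which kills all other degrees. That gives the final sentence of the statement.
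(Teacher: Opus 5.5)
Your proposal is correct and takes the paper's route. The halo estimate \eqref{eq:refined-halo} comes from \cite[Proposition~3.25]{LTXZ2}, a computation on $\mathcal C^0(\mathbb Z_p)$ with no genericity parameter, and the restriction is the submatrix cut out by the integral (prime-to-$p$) $\Delta^2$-projector on the rank-one free module of \cref{prop:Pi}; your leading-coefficient argument for orthonormality and triangularity soundly replaces the paper's citation of \cite[Lemma~3.14]{LTXZ2}. Two small corrections: ``unit diagonal'' means diagonal entries $m!\,p^{-v_p(m!)}\in\mathbb Z_p^\times$, so no rescaling of the fixed functions $\mathbf m_m$ is needed (or permitted), and your heuristic that $f_j(pz+b)$ is $p^j$-divisible to first order is false (already $f_1(pz+b)$ has linear coefficient $pb^{p-1}-1$), though nothing depends on it since you cite \cite{LTXZ2} for the estimate.
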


\begin{proof}
The first assertions are \cite[Lemma~3.14]{LTXZ2}, and
\eqref{eq:refined-halo} is \cite[Proposition~3.25]{LTXZ2}.  They are calculations
on $\mathcal C^0(\mathbb Z_p)$ and use no genericity parameter.  The
diagonal-torsion projector is integral because its order is prime to $p$.
Proposition~\ref{prop:Pi} identifies the projective factor with one copy of the
completed $I_1$-algebra, so the indicated matrix is a submatrix of the source matrix.
\end{proof}

Let $\mathbf C^{(\varepsilon)}=(\mathbf f_i^{(\varepsilon)})_{i\geq1}$, let
$U_{\mathbf C}^{(\varepsilon)}$ be its matrix, and let $Y^{(\varepsilon)}$ be the
change to the normalized power basis.  The estimates for $Y^{(\varepsilon)}$ and its
inverse in \cite[Lemma~3.16]{LTXZ2} depend only on the degrees and apply unchanged.

\begin{proposition}\label{prop:hodge}
For every $n\geq1$,
\begin{equation}\label{eq:hodge-equality}
 \deg g_n^{(\varepsilon)}
 =\sum_{i=1}^n\left(
 \deg\mathbf e_i^{(\varepsilon)}-
 \left\lfloor\frac{\deg\mathbf e_i^{(\varepsilon)}}p\right\rfloor\right)
\end{equation}
Consequently the error $\boldsymbol\delta$ in
\cite[Section~5.2]{LTXZ2} is zero.
\end{proposition}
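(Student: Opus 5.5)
The plan is to derive \eqref{eq:hodge-equality} by telescoping \cref{prop:degree-increment}, and then to read off the vanishing of \(\boldsymbol\delta\) from the way it is defined in \cite[Section~5.2]{LTXZ2}.

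First, \cref{lem:first-coefficient} (or \cref{def:ghost} directly) gives \(g_0=1\). It also gives \(\deg g_1=0\) in the relevant low case. More generally, the degree of \(g_1\) is computed by \cref{prop:degree-increment} at \(n=0\), where the increment is \(s_\eps-\lfloor s_\eps/p\rfloor\). Since \(0\leq s_\eps\leq p-2<p\), this equals \(s_\eps=\deg\mathbf e_1-\lfloor\deg\mathbf e_1/p\rfloor\). One point needs care: at \(n=0\) the ceiling \(\lceil -s_\eps/p\rceil\) equals \(0\), so the first displayed expression in \cref{prop:degree-increment} also gives \(s_\eps\), and the two expressions agree. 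Summing the increments for \(n=0,\ldots,N-1\) then yields
\[
\deg g_N=\sum_{i=1}^{N}\Bigl(\deg\mathbf e_i-\Bigl\lfloor\frac{\deg\mathbf e_i}{p}\Bigr\rfloor\Bigr).
\]
Here \(\deg\mathbf e_{n+1}=a_{n+1}=s_\eps+n(p-1)\) matches the notation of \cref{prop:degree-increment}.

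The only genuine check is the identity \(n(p-2)+s_\eps+\lceil (n-s_\eps)/p\rceil=a_{n+1}-\lfloor a_{n+1}/p\rfloor\), which is asserted inside \cref{prop:degree-increment}. I would re-verify it for safety. Write \(a_{n+1}=np+(s_\eps-n)\). Then \(\lfloor a_{n+1}/p\rfloor=n+\lfloor (s_\eps-n)/p\rfloor=n-\lceil (n-s_\eps)/p\rceil\), and subtracting this from \(a_{n+1}\) gives the claimed expression. I would also confirm that the sum of the increments in \eqref{eq:m-increment} over \(k_\bullet\) really equals \(\deg g_{n+1}-\deg g_n\). This holds because \(\deg g_n=\sum_k m_n(k)\), and the sum is finite by \cref{cor:polynomial}.

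For the consequence, I would recall that \(\boldsymbol\delta\) in \cite[Section~5.2]{LTXZ2} is defined as the discrepancy between \(\deg g_n\) and the Hodge-type degree bound. That bound is \(\sum_{i\leq n}(\deg\mathbf e_i-\lfloor\deg\mathbf e_i/p\rfloor)\), and it arises from the halo estimate \eqref{eq:refined-halo} through the factor \(p^{m-\lfloor n/p\rfloor}\) applied to the normalized rows. With \eqref{eq:hodge-equality} in hand, that discrepancy vanishes identically. The main obstacle I anticipate is purely bookkeeping: matching the normalization of \(\boldsymbol\delta\) in the cited section, meaning which degree list and which floor convention it uses, with the single-progression list \(a_i\) used here. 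I would resolve this by noting that \cref{prop:mahler} retains exactly the rows and columns of degrees \(a_i\), so the Hodge bound in that section is literally the right-hand side of \eqref{eq:hodge-equality}.
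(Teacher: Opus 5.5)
Your proposal is correct and is essentially the paper's proof: the paper likewise checks that $\deg\mathbf e_{n+1}-\lfloor\deg\mathbf e_{n+1}/p\rfloor=n(p-2)+s_\varepsilon+\lceil(n-s_\varepsilon)/p\rceil$, identifies this with $\deg g_{n+1}-\deg g_n$ via \cref{prop:degree-increment}, and sums the increments starting from $g_0=1$. One small misattribution: $g_0=1$ follows from \cref{def:ghost}, because $m_0(k)=0$ for every $k$; \cref{lem:first-coefficient} concerns $g_1$, not $g_0$.
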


\begin{proof}
For $\deg\mathbf e_{n+1}=s_\varepsilon+n(p-1)$,
\[
 \deg\mathbf e_{n+1}-\left\lfloor\frac{\deg\mathbf e_{n+1}}p\right\rfloor
 =n(p-2)+s_\varepsilon+\left\lceil\frac{n-s_\varepsilon}{p}\right\rceil
\]
which is $\deg g_{n+1}-\deg g_n$ by \cref{prop:degree-increment}.  Summation
proves \eqref{eq:hodge-equality}.
\end{proof}

For two subsets
$\underline\lambda=\{\lambda_1<\cdots<\lambda_n\}$ and
$\underline\eta=\{\eta_1<\cdots<\eta_n\}$ of
$\mathbb Z_{\geq1}$, write
\[
 \deg(\underline\lambda)=\sum_{i=1}^n
   \deg\mathbf e_{\lambda_i}^{(\varepsilon)},\qquad
 \deg(\underline\eta)=\sum_{i=1}^n
   \deg\mathbf e_{\eta_i}^{(\varepsilon)}
\]
and write
\[
 \deg\mathbf e_{\lambda_i}^{(\varepsilon)}
   =\sum_{j\geq0}\lambda_{i,j}p^j,
 \qquad
 \deg\mathbf e_{\eta_i}^{(\varepsilon)}
   =\sum_{j\geq0}\eta_{i,j}p^j
\]
Following \cite[Notation~3.26]{LTXZ2}, put
\[
\begin{split}
D_{\leq\alpha}^{(\varepsilon)}(\underline\lambda,j)
 &=\#\{i:\lambda_{i,j}\leq\alpha\},\\
D_{=0}^{(\varepsilon)}(\underline\lambda,j)
 &=D_{\leq0}^{(\varepsilon)}(\underline\lambda,j),\\
D^{(\varepsilon)}(\underline\lambda,\underline\eta)
 &=\sum_{j\geq0}\max\{D_{=0}^{(\varepsilon)}(\underline\lambda,j)
       -D_{=0}^{(\varepsilon)}(\underline\eta,j+1),0\},\\
\mathbb D^{(\varepsilon)}(\underline\lambda,\underline\eta)
 &=\sum_{j\geq0}\max_{0\leq\alpha\leq p-2}
   \{D_{\leq\alpha}^{(\varepsilon)}(\underline\lambda,j)
       -D_{\leq\alpha}^{(\varepsilon)}(\underline\eta,j+1),0\}
\end{split}
\]
The digits in these definitions are the digits of the degrees, not
of the indices.

\begin{proposition}\label{prop:det-mahler}
Assume \(p\geq11\).  For all $\underline\lambda$ and
$\underline\eta$ of cardinality $n$,
\begin{equation}\label{eq:det-mahler}
\begin{split}
v_p\!\left(\det U_{\mathbf C}^{(\varepsilon)}
  (\underline\lambda\times\underline\eta)\right)
\geq{}&\deg g_n^{(\varepsilon)}
 +\frac{\deg(\underline\lambda)-\deg(\underline\eta)}2\\
&+\sum_{i=1}^n v_p\!\left(
 \frac{(\deg\mathbf e_{\lambda_i}^{(\varepsilon)})!}
      {(\deg\mathbf e_{\eta_i}^{(\varepsilon)})!}\right)
\end{split}
\end{equation}
\end{proposition}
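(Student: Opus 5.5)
We follow the strategy of \cite[Proposition~3.27 and Corollary~3.28]{LTXZ2}: expand the minor as a sum over bijections, bound each product of entries by \eqref{eq:refined-halo}, and then sum the row and column exponents. First, write \(\det U_{\mathbf C}^{(\varepsilon)}(\underline\lambda\times\underline\eta)\) as \(\sum_\sigma\pm\prod_i (U_{\mathbf C})_{\lambda_{\sigma(i)},\eta_i}\). By \cref{prop:mahler}, each entry with row degree \(m\) and column degree \(n'\) lies in \(p^{D(m,n')}p^{m-\lfloor n'/p\rfloor}\cO\langle w/p\rangle\). This holds after conjugating by the change of basis \(Y^{(\varepsilon)}\) only through the degree-dependent estimates of \cite[Lemma~3.16]{LTXZ2}, which apply unchanged. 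For a fixed \(\sigma\), the exponent is therefore
\[
 \sum_i\deg\mathbf e_{\lambda_i}-\sum_i\Bigl\lfloor\frac{\deg\mathbf e_{\eta_i}}p\Bigr\rfloor+\sum_i D\bigl(\deg\mathbf e_{\lambda_{\sigma(i)}},\deg\mathbf e_{\eta_i}\bigr).
\]
The digit-counting lemma \cite[Lemma~3.26--3.27]{LTXZ2} then bounds the minimum over \(\sigma\) of the last sum below by \(\mathbb D^{(\varepsilon)}(\underline\lambda,\underline\eta)\), since matching row digits \(\le\alpha\) at level \(j\) against column digits \(\le\alpha\) at level \(j+1\) is a Hall-type pigeonhole argument. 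This step is purely combinatorial and uses no genericity, so we quote it verbatim.

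Second, we compare this with the right-hand side of \eqref{eq:det-mahler}. By \cref{prop:hodge},
\[
 \deg g_n=\sum_{i\le n}\bigl(\deg\mathbf e_i-\lfloor\deg\mathbf e_i/p\rfloor\bigr),
\]
and by Legendre's formula \(v_p(m!)=\sum_{j\ge1}\lfloor m/p^j\rfloor\). After cancelling the common terms, we must show
\[
 \tfrac12\bigl(\deg(\underline\lambda)-\deg(\underline\eta)\bigr)+\sum_i\bigl(\lfloor\deg\mathbf e_{\eta_i}/p\rfloor-\lfloor\deg\mathbf e_{\lambda_i}/p\rfloor\bigr)+\mathbb D^{(\varepsilon)}(\underline\lambda,\underline\eta)
\]
dominates
\[
 \deg g_n-\sum_i\deg\mathbf e_{\eta_i}+\sum_i\lfloor\deg\mathbf e_{\eta_i}/p\rfloor+\sum_i\bigl(v_p(\deg\mathbf e_{\lambda_i}!)-v_p(\deg\mathbf e_{\eta_i}!)\bigr)-\ldots
\]
To prove this, we reduce to the statement that the function
\[
 F(\underline\lambda)=\sum_i\Bigl(\tfrac12\deg\mathbf e_{\lambda_i}-v_p\bigl((\deg\mathbf e_{\lambda_i})!\bigr)\Bigr)\quad\text{corrected by digit counts}
\]
is minimized at \(\underline\lambda=\{1,\dots,n\}\). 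The symmetric half-weights \(\tfrac12\) make the \(\underline\lambda\) and \(\underline\eta\) contributions separate, so it suffices to prove the inequality with \(\underline\eta=\{1,\dots,n\}\) and then, by the symmetric argument, with \(\underline\lambda=\{1,\dots,n\}\), combining the two via \(\mathbb D\). Replacing \(\lambda_i\) by a smaller index lowers \(\deg\) by a multiple of \(p-1\). Each unit step of \(p-1\) in degree adds \((p-1)/2\) to the half-degree term but at most about \((p-1)/p\) plus carries to the factorial term, and the carries are absorbed by \(\mathbb D\).

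The main obstacle is this last exchange argument. In \cite{LTXZ2} it exploits the two interleaved progressions, whereas here the degrees form the single progression \(s_\varepsilon+(i-1)(p-1)\). Consecutive degrees therefore differ by \(p-1\), and their last digits decrease by one, modulo carries. We would first verify the one-step inequality
\[
 \tfrac{p-1}2\ \ge\ v_p\bigl((m+p-1)!/m!\bigr)-\bigl(\text{change in }\mathbb D\bigr)
\]
digit by digit, using \(p\ge11\) to absorb the bounded defect \(\lfloor\log_p\rfloor\)-type terms from high carries. If a direct exchange fails at a carry level \(j\), we fall back on counting \(D_{\le\alpha}(\underline\lambda,j)\) as in \cite[Proposition~3.27]{LTXZ2}. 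In that count the single progression only simplifies matters, because the digits at level \(0\) run through a single residue pattern.
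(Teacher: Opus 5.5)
Your first step is the paper's \eqref{eq:tuple-halo}: expand over bijections, bound each entry by \eqref{eq:refined-halo}, and use the pigeonhole count over $\alpha$. No conjugation by $Y^{(\varepsilon)}$ is involved, since $U^{(\varepsilon)}_{\mathbf C}$ is already the Mahler-basis matrix. The gap is in the second step. That step is the actual content of the proposition, and you leave it as a sketch; the comparison display is never finished. After Legendre's formula the target reads
\[
 \mathbb D^{(\varepsilon)}(\underline\lambda,\underline\eta)+\sum_i\Bigl(\tfrac12\bigl(\deg\mathbf e_{\lambda_i}+\deg\mathbf e_{\eta_i}\bigr)+v_p\bigl(\lfloor\deg\mathbf e_{\eta_i}/p\rfloor!\bigr)\Bigr)\geq\deg g_n+\sum_i v_p\bigl((\deg\mathbf e_{\lambda_i})!\bigr).
\]
For $\underline\lambda=\underline\eta=\underline n:=\{1,\ldots,n\}$ this is just $\mathbb D\geq0$, by \cref{prop:hodge}. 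All the work lies in reducing to that case. Rows and columns enter this inequality asymmetrically and are coupled through $\mathbb D$. So proving the cases $\underline\eta=\underline n$ and $\underline\lambda=\underline n$ separately and ``combining via $\mathbb D$'' would require $\mathbb D(\underline\lambda,\underline\eta)\geq\mathbb D(\underline\lambda,\underline n)+\mathbb D(\underline n,\underline\eta)$, which you do not have. The paper needs no such combination. It shows that raising one column index by one never decreases $D^{(\varepsilon)}+\frac12\deg+v_p(\lfloor\deg/p\rfloor!)$. This reduces to $\underline\eta=\underline n$ with $\underline\lambda$ arbitrary, and only then does it treat rows.

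The row reduction fails as you formulate it. A unit step $m\mapsto m+p-1$ can cross a multiple $p^\gamma a$ with $p\nmid a$ and $\gamma$ arbitrarily large. This costs $v_p((m+p-1)!/m!)=\gamma$, while $\mathbb D$ need not move. Concretely, take $n=2$, $\underline\eta=\{1,2\}$, and $\underline\lambda=\{1,N\}$ with $\deg\mathbf e_{N-1}<p^\gamma a\leq\deg\mathbf e_N$.
\begin{itemize}
\item For $1\leq j\leq\gamma-1$ one has $D_{\leq\alpha}(\{1,2\},j+1)=2$ for every $\alpha$, because both degrees are $<p^2$.
\item At the same levels, $D_{\leq\alpha}(\cdot,j)$ equals $1$ for $\{1,N-1\}$, since $\deg\mathbf e_{N-1}$ has digit $p-1$ there. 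It equals $2$ for $\{1,N\}$.
\item Hence the $j$th summand of $\mathbb D$ is $0$ both before and after the step. Only level $0$ can change, and by at most one.
\end{itemize}
Your one-step inequality would therefore need $\frac{p-1}2\geq\gamma-1$, which is false once $\gamma>\frac{p+1}2$. Yet $\{1,N-1\}$ is the only unit-step predecessor of $\{1,N\}$, and the target for $\{1,N\}$ itself holds with room to spare. So high carries must be amortized over long moves, which is what the paper does:
\begin{itemize}
\item It replaces $\lambda_n$ by the least omitted index $n_-$ in a single jump of degree length $M=(\lambda_n-n_-)(p-1)\geq2(p-1)$.
\item It bounds the factorial cost by $\gamma+\lfloor(M-2)/(p-1)\rfloor$, where $\gamma$ is now the largest valuation in the whole interval.
\item If $\gamma\leq\delta$, then $D$ drops by at most $\gamma+1$, and $\frac M2-\lfloor\frac{M-2}{p-1}\rfloor\geq2\gamma+1$ absorbs this.
\item If $\gamma>\delta$, then $D$ changes by at least $\gamma-2\delta-2$.
\item The one genuinely unit-step set $\{1,\ldots,n-1,n+1\}$ is handled by the absolute bound $\mathbb D(\{1,\ldots,n-1,n+1\},\underline n)\geq\gamma-1$, not by a change in $\mathbb D$.
\end{itemize}
None of these ingredients appears in your proposal. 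Nor does the check that the digit maps of \cite[Lemma~3.29]{LTXZ2} preserve the residue class of the degrees modulo $p-1$, which is what supplies the digit counts for $\underline n$.
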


\begin{proof}
The determinant expansion and \eqref{eq:refined-halo} first give
\begin{equation}\label{eq:tuple-halo}
\begin{split}
v_p\!\left(\det U_{\mathbf C}^{(\varepsilon)}
  (\underline\lambda\times\underline\eta)\right)
\geq{}&\mathbb D^{(\varepsilon)}
       (\underline\lambda,\underline\eta)\\
&+\sum_{i=1}^n\left(\deg\mathbf e_{\lambda_i}^{(\varepsilon)}
 -\left\lfloor\frac{\deg\mathbf e_{\eta_i}^{(\varepsilon)}}p
  \right\rfloor\right)
\end{split}
\end{equation}
Indeed, for each permutation in the determinant, the contribution
from the $j$th digits is at least
\[
 \max_{0\leq\alpha\leq p-2}
 \{D_{\leq\alpha}^{(\varepsilon)}(\underline\lambda,j)
 -D_{\leq\alpha}^{(\varepsilon)}(\underline\eta,j+1),0\}
\]
This is the proof of \cite[Corollary~3.27 and Remark~3.28]{LTXZ2},
which is independent of the set of retained degrees.

It remains to verify the reduction of \eqref{eq:tuple-halo} to
\eqref{eq:det-mahler}.  We give the details that change from
\cite[proof of Proposition~5.4]{LTXZ2}.  Here
\[
 \{\deg\mathbf e_i^{(\varepsilon)}:i\geq1\}
 =\{m\geq0:m\equiv s_\varepsilon\pmod{p-1}\}
\]
The digit maps used in \cite[Lemma~3.29]{LTXZ2} preserve this set,
because $p^j\equiv1\pmod{p-1}$.  More explicitly, let
$\Omega=\{m\geq0:m\equiv s_\varepsilon\pmod{p-1}\}$.  Moving the
$(j+1)$st digit of an element of $\Omega$ into an empty $j$th digit
does not change its residue modulo $p-1$ and does not increase it.
Applied to
$\Omega\cap[0,\deg\mathbf e_n^{(\varepsilon)}]$, these maps give
\[
 D_{=0}^{(\varepsilon)}(\{1,\ldots,n\},j)
 \leq D_{=0}^{(\varepsilon)}(\{1,\ldots,n\},j+1)
\]
together with the equality cases in that lemma.  Thus all of its
digit comparisons remain valid for the present progression.  The
quantity denoted $\boldsymbol\delta$ in the proof of
\cite[Proposition~5.4]{LTXZ2} is zero by \cref{prop:hodge}.

We now verify the reductions used there.  Legendre's formula rewrites
the desired estimate as
\[
\begin{split}
D^{(\varepsilon)}(\underline\lambda,\underline\eta)
&+\sum_{i=1}^n\left(
 \frac{\deg\mathbf e_{\lambda_i}^{(\varepsilon)}
       +\deg\mathbf e_{\eta_i}^{(\varepsilon)}}2
 +v_p\!\left(\left\lfloor
       \frac{\deg\mathbf e_{\eta_i}^{(\varepsilon)}}p
       \right\rfloor!\right)\right)\\
&\geq \deg g_n^{(\varepsilon)}
 +\sum_{i=1}^n
 v_p\!\left((\deg\mathbf e_{\lambda_i}^{(\varepsilon)})!\right)
\end{split}
\]
If one entry of $\underline\eta$ is increased by one, the carry
calculation underlying \cite[equation~(3.26.1)]{LTXZ2} gives
\[
\begin{split}
D^{(\varepsilon)}(\underline\lambda,\underline\eta')
&+\frac{\deg\mathbf e_{\eta'_{i}}^{(\varepsilon)}
             -\deg\mathbf e_{\eta_i}^{(\varepsilon)}}2\\
&+v_p\!\left(
 \frac{\lfloor\deg\mathbf e_{\eta'_i}^{(\varepsilon)}/p\rfloor!}
      {\lfloor\deg\mathbf e_{\eta_i}^{(\varepsilon)}/p\rfloor!}
 \right)
 \geq D^{(\varepsilon)}(\underline\lambda,\underline\eta)
\end{split}
\]
Indeed, the two quotients differ by zero or one, and the factorial
valuation counts exactly the additional zero digits caused by the
carry.  Repetition reduces the column calculation to
$\underline\eta=\{1,\ldots,n\}$.

There are two initial row sets.  The set $\{1,\ldots,n\}$ is treated
below.  For $\underline\lambda=\{1,\ldots,n-1,n+1\}$, put only within
this paragraph
\[
 \gamma=\max_{\deg\mathbf e_n^{(\varepsilon)}<a\leq
 \deg\mathbf e_{n+1}^{(\varepsilon)}}v_p(a)
\]
The interval has length $p-1$, so the corresponding factorial
valuation is $\gamma$.  If $\gamma\leq1$, then
$(p-1)/2\geq\gamma$.  If $\gamma\geq2$, the digit pattern across the
unique multiple of $p^\gamma$, together with the equality cases just
proved, gives
\[
 \mathbb D^{(\varepsilon)}
 (\{1,\ldots,n-1,n+1\},\{1,\ldots,n\})\geq\gamma-1
\]
Since $(p-1)/2\geq1$, \eqref{eq:tuple-halo} proves
\eqref{eq:det-mahler} for this row set.

For any other noninitial row set, let $n_-$ be its least omitted index
and replace its largest index $\lambda_n$ by $n_-$.  Then
$\lambda_n-n_-\geq2$.  Put
\[
 M=(\lambda_n-n_-)(p-1),\qquad
 \gamma=\max_{\deg\mathbf e_{n_-}^{(\varepsilon)}<a\leq
 \deg\mathbf e_{\lambda_n}^{(\varepsilon)}}v_p(a)
\]
and choose $\delta\geq1$ so that
\[
 (p-1)p^{\delta-1}<M\leq(p-1)p^\delta
\]
The factorial estimate used in \cite[proof of Proposition~5.4]{LTXZ2}
is
\[
 v_p\!\left(
 \frac{(\deg\mathbf e_{\lambda_n}^{(\varepsilon)})!}
      {(\deg\mathbf e_{n_-}^{(\varepsilon)})!}\right)
 \leq\gamma+\left\lfloor\frac{M-2}{p-1}\right\rfloor
\]
If $\gamma\leq\delta$, the digit maps above give
\[
D^{(\varepsilon)}(\underline\lambda,\{1,\ldots,n\})
\geq D^{(\varepsilon)}(
 \underline\lambda\cup\{n_-\}\setminus\{\lambda_n\},
 \{1,\ldots,n\})-\gamma-1
\]
The row replacement therefore preserves the required inequality
provided
\[
 \frac M2-\left\lfloor\frac{M-2}{p-1}\right\rfloor
 \geq2\gamma+1
\]
For $\gamma=0$ the left side is at least $p-2\geq1$; for
$\gamma\geq1$ it follows
from $M>(p-1)p^{\gamma-1}$ and $p\geq11$.

If $\gamma>\delta$, the $p$-adic expansions on the two sides of the
unique multiple of $p^\gamma$ give instead
\[
D^{(\varepsilon)}(\underline\lambda,\{1,\ldots,n\})
\geq D^{(\varepsilon)}(
 \underline\lambda\cup\{n_-\}\setminus\{\lambda_n\},
 \{1,\ldots,n\})+\gamma-2\delta-2
\]
It is therefore enough that
\[
 \frac M2-\left\lfloor\frac{M-2}{p-1}\right\rfloor
 \geq2\delta+2
\]
For $\delta=1$ the left side is at least $p-2$, and for
$\delta\geq2$ it is greater than $(p-3)p^{\delta-1}/2$; both bounds
imply the displayed inequality when $p\geq11$.  Repeating the row
replacement reaches one of the two initial row sets.  This proves
\eqref{eq:det-mahler} except for the initial principal minor.  In
particular, the case $\gamma>\delta$ treats a short interval that
contains an integer of high $p$-adic valuation; no bound of $\gamma$
by $M$ has been used.

It remains to treat the principal row and column set
\(\underline n=\{1,\ldots,n\}\).  Put \(d=\deg g_n\).  Since
\(Y^{-1}\) is upper triangular with unit diagonal, Cauchy--Binet gives
\[
 \det U^\dagger(\underline n\times\underline n)
 =\det U_{\mathbf C}(\underline n\times\underline n)+f(w)
\]
where
\[
 f(w)=\sum_{\substack{\#\underline\lambda=n\\
                       \underline\lambda\ne\underline n}}
 \det Y(\underline n\times\underline\lambda)
 \det U_{\mathbf C}(\underline\lambda\times\underline n)
 \det Y^{-1}(\underline n\times\underline n)
\]
The estimates already proved for every noninitial row set, together with
the bounds for \(Y\) and \(Y^{-1}\), show term by term that
\(f\in p^d\cO\langle w/p\rangle\).  On the other hand,
\cref{prop:corank} gives
\(g_n\mid\det U^\dagger(\underline n\times\underline n)\) in
\(\cO[\![w]\!]\).  Because every root of \(g_n\) belongs to
\(p\cO\), write
\[
 g_n(w)=\sum_{a=0}^{d}p^a c_a w^{d-a},
 \qquad c_a\in\cO,\quad c_0=1
\]
Weierstrass division in the variable \(w/p\) therefore gives
\[
 \det U^\dagger(\underline n\times\underline n)
 =p^{-d}g_n(w)h(w),\qquad
 h(w)=\sum_{b\geq0}h_b(w/p)^b\in\cO\langle w/p\rangle
\]
We claim that \(v_p(h_b)\geq d\) for every \(b\).  Otherwise take the
largest \(b\) with \(v_p(h_b)<d\).  The coefficient of \(w^{d+b}\) in
the last display is
\[
 p^{-d-b}\sum_{a=0}^{d}c_a h_{b+a}
\]
Every term with \(a>0\) has valuation at least \(d\), whereas the
\(a=0\) term has valuation \(v_p(h_b)<d\).  Thus this coefficient has
valuation \(-d-b+v_p(h_b)<-b\).  This is impossible: the corresponding
coefficient of \(\det U_{\mathbf C}(\underline n\times\underline n)\)
is integral, and that of \(f\in p^d\cO\langle w/p\rangle\) has valuation
at least \(d-(d+b)=-b\).  Hence
\(h\in p^d\cO\langle w/p\rangle\), so
\[
 \det U^\dagger(\underline n\times\underline n)
 \in g_n\cO\langle w/p\rangle
 \subset p^d\cO\langle w/p\rangle
\]
Subtracting \(f\) proves the same bound for
\(\det U_{\mathbf C}(\underline n\times\underline n)\).  This is
\eqref{eq:det-mahler} for the principal set, and the preceding two
reductions prove it in general.
\end{proof}

\begin{corollary}\label{cor:power-minor-bound}
For all row and column sets \(\underline\zeta,\underline\xi\) of cardinality
\(n\),
\begin{equation}\label{eq:power-minor-bound}
 p^{\frac12(\deg(\underline\xi)-\deg(\underline\zeta))}
 \det U^{\dagger,(\varepsilon)}
       (\underline\zeta\times\underline\xi)
 \in p^{\deg g_n^{(\varepsilon)}}\cO\langle w/p\rangle
\end{equation}
In particular, the normalized determinant used below satisfies the global
Tate-algebra bound needed for division by \(g_n\).  Integrality of the
remainder also uses the estimates for the prescribed principal parts and is
proved separately in \cref{prop:taylor-coefficients}; it does not follow from
\eqref{eq:power-minor-bound} alone.
\end{corollary}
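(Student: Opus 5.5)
The plan is to transport \eqref{eq:det-mahler} from the modified Mahler basis to the power basis through the change-of-basis matrix \(Y^{(\varepsilon)}\), using Cauchy--Binet. Write \(U^{\dagger,(\varepsilon)}=Y\,U_{\mathbf C}^{(\varepsilon)}\,Y^{-1}\), with the conventions of \cite[Section~3.3]{LTXZ2}. For fixed \(\underline\zeta,\underline\xi\) of cardinality \(n\), Cauchy--Binet gives
\[
 \det U^{\dagger,(\varepsilon)}(\underline\zeta\times\underline\xi)
 =\sum_{\underline\lambda,\underline\eta}
 \det Y(\underline\zeta\times\underline\lambda)\,
 \det U_{\mathbf C}^{(\varepsilon)}(\underline\lambda\times\underline\eta)\,
 \det Y^{-1}(\underline\eta\times\underline\xi).
\]
I will bound each term separately after multiplying by \(p^{\frac12(\deg(\underline\xi)-\deg(\underline\zeta))}\). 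The sum converges in \(\cO\langle w/p\rangle\) because the row bounds of \cref{lem:weak-hodge-block} and the halo factor \(p^{m-\lfloor n/p\rfloor}\) in \eqref{eq:refined-halo} force entries with large row degree to tend to zero.

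The key input is the estimate for \(Y\) and \(Y^{-1}\) from \cite[Lemma~3.16]{LTXZ2}, which the paper states depends only on the degrees. In the normalization by \(v_p(m!)\), it says the \((\zeta,\lambda)\)-entry of \(Y\) has valuation at least \(v_p\bigl((\deg\mathbf e_\zeta)!/(\deg\mathbf e_\lambda)!\bigr)\), and \(Y^{-1}\) satisfies the analogous bound with the roles of rows and columns reversed; both matrices are also triangular. Taking determinants, \(v_p\det Y(\underline\zeta\times\underline\lambda)\geq\sum_i v_p\bigl((\deg\mathbf e_{\zeta_i})!/(\deg\mathbf e_{\lambda_i})!\bigr)\), and similarly for \(Y^{-1}\), since the factorial terms factor along any permutation. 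Adding \eqref{eq:det-mahler} for \(\underline\lambda\times\underline\eta\), the factorial terms attached to \(\underline\lambda\) and \(\underline\eta\) telescope. What remains is
\[
 \deg g_n+\tfrac12\bigl(\deg(\underline\lambda)-\deg(\underline\eta)\bigr)
 +\sum_i v_p\!\left(\frac{(\deg\mathbf e_{\zeta_i})!}{(\deg\mathbf e_{\xi_i})!}\right).
\]

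The main obstacle is that this leftover factorial quotient and the half-degree difference do not obviously combine into \(\tfrac12(\deg(\underline\zeta)-\deg(\underline\xi))\). Triangularity gives \(\underline\lambda\leq\underline\zeta\) and \(\underline\eta\geq\underline\xi\) entrywise after sorting, but the half-degree terms then point the wrong way. To resolve this I would first try to absorb the gap \(\tfrac12(\deg\underline\zeta-\deg\underline\lambda)\) against \(v_p\bigl((\deg\mathbf e_\zeta)!/(\deg\mathbf e_\lambda)!\bigr)\) together with the weak-Hodge row bound \(p^{a_r/2}\) of \cref{lem:weak-hodge-block}. Failing that, I would bypass \(Y\) entirely and argue directly as in the principal case at the end of the proof of \cref{prop:det-mahler}. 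That route uses the row bound of \cref{lem:weak-hodge-block} together with the sharper bound \(\deg g_n\leq\sum_{i\leq n}(a_i-\lfloor a_i/p\rfloor)\) from \cref{prop:hodge}. It would also use \(v_p(m!)\leq m/(p-1)\) to convert factorials into half-degrees, which requires \(p\geq 3\) and so is where \(p\geq11\) gives the slack. Under either route, the column normalization \(p^{\deg(\underline\xi)/2}\) is then absorbed by the same Tate-algebra rescaling \(w^n/n!=(w/p)^n p^{n/2}\cdot p^{n/2}/n!\) used in \cref{lem:weak-hodge-block}.
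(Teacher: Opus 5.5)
There is a genuine gap. Your Cauchy--Binet decomposition through \(U^{\dagger}=YU_{\mathbf C}Y^{-1}\) is the same one the paper uses. However, your proposal stops at the step that actually proves the corollary, and the obstacle you describe comes from reversing the triangularity.

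In the paper's convention (the \((r,c)\)-entry is the coefficient of \(\mathbf e_r\)), \(Y\) expresses \(\mathbf f_c=e^*\mathbf m_{\deg\mathbf e_c}(z)\) in the power basis. Since \(\mathbf m_m\) is a polynomial of degree \(m\), both \(Y\) and \(Y^{-1}\) are upper triangular. A nonzero minor \(\det Y(\underline\zeta\times\underline\lambda)\) therefore forces \(\zeta_i\leq\lambda_i\) for all \(i\), and a nonzero \(\det Y^{-1}(\underline\eta\times\underline\xi)\) forces \(\eta_i\leq\xi_i\). Hence every contributing term satisfies
\[
 \deg(\underline\lambda)-\deg(\underline\eta)\geq\deg(\underline\zeta)-\deg(\underline\xi),
\]
so the half-degree terms point the right way, not the wrong way. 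This is why the paper can record the \(Y\)-estimates in the form
\begin{align*}
 v_p\det Y(\underline\zeta\times\underline\lambda)
 &\geq\frac{\deg(\underline\zeta)-\deg(\underline\lambda)}2
       -\sum_i v_p((\deg\mathbf e_{\lambda_i})!),\\
 v_p\det Y^{-1}(\underline\eta\times\underline\xi)
 &\geq\frac{\deg(\underline\eta)-\deg(\underline\xi)}2
       +\sum_i v_p((\deg\mathbf e_{\eta_i})!).
\end{align*}
Here the half-degree terms are nonpositive and cost nothing. When these are added to \eqref{eq:det-mahler}, the factorials of \(\underline\lambda\) and \(\underline\eta\) cancel, and the half-degrees telescope to exactly \(\deg g_n+\frac12(\deg(\underline\zeta)-\deg(\underline\xi))\).

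Your factorial bookkeeping has a matching problem. Take first the power basis \(\mathbf e_i=e^*z^{a_i}\) itself. The leading coefficient of \(\mathbf m_m\) is \(p^{-v_p(m!)}\), so the column-\(\lambda\) entries of \(Y\) only satisfy \(v_p\geq-v_p((\deg\mathbf e_\lambda)!)\). Your bound \(v_p((\deg\mathbf e_\zeta)!/(\deg\mathbf e_\lambda)!)\) therefore already fails on the diagonal. Meanwhile the row-\(\eta\) entries of \(Y^{-1}\) are divisible by \(p^{v_p((\deg\mathbf e_\eta)!)}\).

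Your pair of bounds is instead the correct one for the rescaled basis \(p^{-v_p((\deg\mathbf e_i)!)}\mathbf e_i\). In that case \(YU_{\mathbf C}Y^{-1}\) is the matrix in the rescaled basis, and your leftover \(\sum_i v_p((\deg\mathbf e_{\zeta_i})!/(\deg\mathbf e_{\xi_i})!)\) is exactly the diagonal conjugation factor, which disappears on returning to \(U^{\dagger,(\varepsilon)}\). In either reading, each Cauchy--Binet term of \(\det U^{\dagger}(\underline\zeta\times\underline\xi)\) has valuation at least \(\deg g_n+\frac12(\deg(\underline\lambda)-\deg(\underline\eta))\), and the triangularity above finishes the proof.

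Neither of your fallbacks would repair the argument:
\begin{itemize}
\item \cref{lem:weak-hodge-block} bounds rows of \(U^\dagger\), not minors of \(Y\). Moreover, with your ordering \(\underline\lambda\leq\underline\zeta\), absorbing the gap would require \(v_p((\deg\mathbf e_\zeta)!/(\deg\mathbf e_\lambda)!)\geq\frac12(\deg\mathbf e_\zeta-\deg\mathbf e_\lambda)\). This fails in general, since factorial valuations grow only like \(m/(p-1)\).
\item The principal-minor argument at the end of the proof of \cref{prop:det-mahler} itself runs through \(Y\). It also needs the already-established non-principal estimates and the corank divisibility by \(g_n\), which is available only for \(\underline\zeta=\underline\xi=\{1,\ldots,n\}\).
\end{itemize}
Finally, \(p^{\frac12(\deg(\underline\xi)-\deg(\underline\zeta))}\) is a constant, so no Tate-algebra rescaling enters here.
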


\begin{proof}
The change-of-basis identity is
\(U^{\dagger}=YU_{\mathbf C}Y^{-1}\).  Cauchy--Binet gives
\[
 \det U^{\dagger}(\underline\zeta\times\underline\xi)
 =\sum_{\underline\lambda,\underline\eta}
 \det Y(\underline\zeta\times\underline\lambda)
 \det U_{\mathbf C}(\underline\lambda\times\underline\eta)
 \det Y^{-1}(\underline\eta\times\underline\xi)
\]
where both intermediate sets have cardinality \(n\); the sum converges by
\cref{lem:weak-hodge-block}.  The estimates for \(Y,Y^{-1}\) give
\begin{align*}
 v_p\det Y(\underline\zeta\times\underline\lambda)
 &\geq \frac{\deg(\underline\zeta)-\deg(\underline\lambda)}2
       -\sum_i v_p((\deg\mathbf e_{\lambda_i})!),\\
 v_p\det Y^{-1}(\underline\eta\times\underline\xi)
 &\geq \frac{\deg(\underline\eta)-\deg(\underline\xi)}2
       +\sum_i v_p((\deg\mathbf e_{\eta_i})!)
\end{align*}
Adding these inequalities to \eqref{eq:det-mahler} shows that every summand
has valuation at least
\[
 \deg g_n+\frac12
  (\deg(\underline\zeta)-\deg(\underline\xi))
\]
This is exactly \eqref{eq:power-minor-bound}.  This computation also explains
why the half-degree normalization cannot be omitted or reversed.
\end{proof}

\subsection{Interpolation for finite minors}

The preceding matrix estimate and the classical corank are converted here
into a uniform estimate for every finite minor.  This is the interpolation
step of \cite[Section~6]{LTXZ2}; all parity-sensitive inputs have already been
made explicit in the preceding sections.

For a finite set $\underline\zeta$, put
$\deg(\underline\zeta)=\sum_{i\in\underline\zeta}
\deg\mathbf e_i^{(\varepsilon)}$.  At a zero $w_k$ of $g_n$ expand
\[
 p^{\frac12(\deg(\underline\xi)-\deg(\underline\zeta))}
 \frac{\det U^{\dagger,(\varepsilon)}
       (\underline\zeta\times\underline\xi)}{g_{n,\widehat k}^{(\varepsilon)}(w)}
 =\sum_{i\geq0}A_{k,i}^{(\underline\zeta\times\underline\xi)}(w-w_k)^i
\]
Let
\[
 A_k^{(\underline\zeta\times\underline\xi)}(w)
 =\sum_{i=0}^{m_n(k)-1}
 A_{k,i}^{(\underline\zeta\times\underline\xi)}(w-w_k)^i
\]
Weierstrass division, followed by partial fractions, gives the
Lagrange interpolation identity
\begin{align}\label{eq:lagrange-minor}
&p^{\frac12(\deg(\underline\xi)-\deg(\underline\zeta))}
 \det U^{\dagger,(\varepsilon)}
       (\underline\zeta\times\underline\xi)\nonumber\\
&\quad=\sum_{m_n(k)>0}
 A_k^{(\underline\zeta\times\underline\xi)}(w)
 g_{n,\widehat k}^{(\varepsilon)}(w)
 +h_{\underline\zeta\times\underline\xi}(w)g_n^{(\varepsilon)}(w)
\end{align}
Here the sum is finite and
\(h_{\underline\zeta\times\underline\xi}(w)\in E\langle w/p\rangle\).
The determinant estimate \eqref{eq:det-mahler} controls the last term;
the problem is therefore to control the finitely many Taylor
coefficients occurring in the first sum.  This is why the following
statement is made for arbitrary, rather than only principal, minors:
cofactor expansion produces nonprincipal minors at every inductive
step.

The proof of the finite-minor estimate has three logically separate pieces.
The first is the analogue of \cite[Proposition~5.4]{LTXZ2}, which controls
the Taylor coefficients whose order is at least the corresponding ghost
multiplicity.  The proof below uses
\eqref{eq:separation-two-centres} and \cref{cor:multi-gap}; these are the
points at which the half-integral affine lattice enters the argument.

Put \(x_k=d_k^{\new}/2\).  When a Taylor coefficient is indexed by an
inequality involving \(x_k\), the index is always an integer; thus, for
example, \(i<x_k\) means \(i\in\mathbb Z\) and causes no ambiguity when
\(x_k\in\frac12+\mathbb Z\).

\begin{proposition}
\label{prop:taylor-coefficients}
Assume \(p\geq11\).  Fix an \(n\times n\) minor and suppose that
\eqref{eq:minor-estimate} holds for its coefficients of order
\(0\leq i<m_n(k)\), at every ghost zero \(w_k\) of \(g_n\).  Then:
\begin{enumerate}[label=(\arabic*)]
\item the remainder in \eqref{eq:lagrange-minor} belongs to
\(\cO\langle w/p\rangle\);
\item at a ghost zero \(w_{k_0}\), writing \(m=m_n(k_0)\),
\begin{equation}\label{eq:overcoeff-ghost}
 v_p(A_{k_0,i})\geq
 \begin{cases}
  \Delta_{k_0,x_{k_0}-m}-\Delta'_{k_0,x_{k_0}-m},&i=m,\\[2mm]
  \frac12\bigl((x_{k_0}-i)^2-(x_{k_0}-m)^2\bigr),&m<i<x_{k_0}
 \end{cases}
\end{equation}
\item if \(d_{k_0}^{\ur}\geq n\), expand instead
\[
 p^{\frac12(\deg\underline\xi-\deg\underline\zeta)}
 \frac{\det U^\dagger(\underline\zeta\times\underline\xi)}{g_n(w)}
 =\sum_{i\geq0}A_{k_0,i}(w-w_{k_0})^i
\]
Then
\begin{equation}\label{eq:overcoeff-nonghost}
 v_p(A_{k_0,i})\geq
 \begin{cases}
  \NP(G(w_{k_0},-))_n-v_p(g_n(w_{k_0})),&i=0,\\[2mm]
  \frac12\bigl((x_{k_0}-i)^2-x_{k_0}^2\bigr),&0<i<x_{k_0}
 \end{cases}
\end{equation}
\end{enumerate}
Here and below the row and column sets are suppressed on Taylor
coefficients when they are fixed.
\end{proposition}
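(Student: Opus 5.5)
The plan follows \cite[Proposition~5.4]{LTXZ2}. We prove (1), (2) and (3) together by comparing the normalized minor \(F(w)=p^{\frac12(\deg\underline\xi-\deg\underline\zeta)}\det U^\dagger(\underline\zeta\times\underline\xi)\) with its Lagrange interpolant. By \cref{cor:power-minor-bound}, \(F\in p^{\deg g_n}\cO\langle w/p\rangle\).

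\textbf{Step 1 (the remainder).} For (1), write \(g_n(w)=\sum_a p^ac_aw^{d-a}\) as in the proof of \cref{prop:det-mahler}. We then need to show that
\[
 F-\sum_{m_n(k)>0}A_kg_{n,\widehat k}\in p^{\deg g_n}\cO\langle w/p\rangle.
\]
Once this holds, the same top-coefficient argument used in the proof of \cref{prop:det-mahler} shows \(h\in\cO\langle w/p\rangle\).

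To bound each term \(A_{k,i}(w-w_k)^ig_{n,\widehat k}\) in the Gauss norm of \(\cO\langle w/p\rangle\), I will use the assumed estimate \eqref{eq:minor-estimate}. Since \(\Delta_{k,\cdot}-\Delta'_{k,\cdot}\) is bounded via \cref{lem:convex-hull-difference} and the hypothesis has the shape of \eqref{eq:separation-hypothesis}, this gives \(v_p(A_{k,i})\geq\deg g_n-i-\deg g_{n,\widehat k}\). That is exactly the integrality needed, because \(w-w_k\) and \(g_{n,\widehat k}\) have norms \(p^{-1}\) and \(p^{-\deg g_{n,\widehat k}}\) respectively on the disc \(v_p(w)\geq1\).

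\textbf{Step 2 (Taylor coefficients at a ghost zero).} For (2), evaluate the Taylor coefficient of order \(i\geq m\) at \(w_{k_0}\) using the interpolation identity. There are three kinds of contributions.

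The remainder contributes \(h\,g_n/g_{n,\widehat{k_0}}\), whose Taylor coefficients at order \(i\) are controlled by \(v_p(g_{n,\widehat{k_0}}(w_{k_0}))\) together with the derivatives of \((w-w_{k_0})^m\prod\).

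The other centres \(k\ne k_0\) contribute \(A_k g_{n,\widehat k}/g_{n,\widehat{k_0}}\). Since \(g_{n,\widehat k}\) vanishes to order \(m\) at \(w_{k_0}\), this term equals \((w-w_{k_0})^m\) times a function whose expansion involves the powers \((w_{k_0}-w_k)^{i-m_n(k)}\). These terms I bound with \eqref{eq:separation-two-centres} and \cref{cor:multi-gap}; the quadratic gap \eqref{eq:quadratic-gap} is what produces the shape \(\frac12((x_{k_0}-i)^2-(x_{k_0}-m)^2)\). In translating Taylor order into radius I will use \(x_{k_0}-i\) and \(x_{k_0}-m\) as admissible radii on the lattice of \cref{not:delta-lattice}, so that the half-integral case is handled automatically.

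The remainder term is in turn bounded using (1) and the ghost-duality value \(\Delta'_{k_0,\cdot}\).

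\textbf{Step 3 (non-ghost points).} For (3), when \(d_{k_0}^{\ur}\geq n\), no factor of \(g_n\) vanishes at \(w_{k_0}\), so we divide by the full \(g_n\). The \(i=0\) bound is the Newton-polygon statement of \cref{prop:lagrange-term-bound} at \(w_{k_0}\): every interpolation term lies on or above \(\NP(G(w_{k_0},-))_n\), and so does the remainder by (1). The terms with \(i>0\) are handled exactly as in Step~2, with \(m=0\).

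\textbf{Main obstacle.} The hard part will be the cross-centre bookkeeping in Step~2. This is where the slope of \(q\mapsto m_q(k')\) can break inside the relevant interval, and where the half-integral centre \(d_{k'}^{\Iw}/2\) enters. I would first reduce to a single worst centre using the uniqueness clause of \cref{cor:multi-gap}. The radius-\(3/2\) initial cases would then be checked directly from \eqref{eq:radial-gap-exact}.
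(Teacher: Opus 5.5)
Your proposal is essentially the paper's proof. It obtains integrality of the remainder from \eqref{eq:power-minor-bound} together with the principal-part bound \(v_p(A_{k,i})\geq m_n(k)-i\); that bound comes from \eqref{eq:minor-estimate} with \eqref{eq:quadratic-gap}, plus \eqref{eq:radial-gap} at radius one when \(x_k=m_n(k)\). It then divides \eqref{eq:lagrange-minor} by \(g_{n,\widehat{k_0}}\) and controls the cross-centre coefficients \(\binom{j-m_n(k)}{i-m}A_{k,j}(w_{k_0}-w_k)^{j-m_n(k)-i+m}\) by \eqref{eq:separation-two-centres} at \(i=m\) and by \cref{cor:multi-gap} for \(i>m\), using \cref{prop:lagrange-term-bound} for the non-ghost coefficient at \(i=0\). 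Two simplifications apply: the remainder contributes exactly \(h(w)(w-w_{k_0})^m\), whose coefficients have valuation at least \(m-i\geq\frac12\bigl((x_{k_0}-i)^2-(x_{k_0}-m)^2\bigr)\) once \(h\in\cO\langle w/p\rangle\), so neither ghost duality nor \(g_{n,\widehat{k_0}}(w_{k_0})\) enters; and each other centre is bounded separately, so the slope-break and worst-centre bookkeeping you anticipate belongs to \cref{lem:weight-multiset-estimate}, not to this proposition.
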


\begin{proof}
Divide \eqref{eq:lagrange-minor} by
\(g_{n,\widehat{k_0}}(w)\).  For \(k\ne k_0\), the summand with Taylor
index \(j<m_n(k)\) becomes
\[
 A_{k,j}(w-w_k)^{j-m_n(k)}(w-w_{k_0})^{m_n(k_0)}
\]
Its coefficient of \((w-w_{k_0})^i\) is
\begin{equation}\label{eq:two-centre-taylor}
 a_{k_0,k,i}^{(j)}=
 \binom{j-m_n(k)}{i-m_n(k_0)}A_{k,j}
 (w_{k_0}-w_k)^{j-m_n(k)-i+m_n(k_0)}
\end{equation}
The binomial coefficient is integral.  For \(i=m_n(k_0)\), the
two-centre inequality \eqref{eq:separation-two-centres} gives the first
line of \eqref{eq:overcoeff-ghost}; at a non-ghost centre, the first
assertion of \cref{prop:lagrange-term-bound} gives the first line of
\eqref{eq:overcoeff-nonghost}.

For \(i>m_n(k_0)\), apply \cref{cor:multi-gap} to the last power of
\(w_{k_0}-w_k\) in \eqref{eq:two-centre-taylor}.  After cancelling the
common affine term of slope \((k_0-2)/2\), its conclusion is precisely
\[
 v_p\bigl(a_{k_0,k,i}^{(j)}\bigr)
 \geq\frac12\bigl((x_{k_0}-i)^2-
 (x_{k_0}-m_n(k_0))^2\bigr)
\]
For a non-ghost centre \(m_n(k_0)=0\), giving
\eqref{eq:overcoeff-nonghost}.  This argument uses only differences of
radii in \(\mathbb Z-d_{k_0}^{\Iw}/2\).  In the odd case the first
possible nonempty interval has radius \(3/2\), exactly the initial case
included in \cref{cor:multi-gap}.

It remains to check the analytic remainder.  By
\eqref{eq:power-minor-bound}, the left side of
\eqref{eq:lagrange-minor} lies in
\(p^{\deg g_n}\cO\langle w/p\rangle\).  The estimates just proved show
that every prescribed principal part has the same Gauss bound.  The
following normalization makes the division step explicit.  Write \(F\)
for \(p^{-\deg g_n}\) times the left side and \(P=p^{-\deg g_n}g_n\).
Then \(F\in\cO\langle w/p\rangle\), while \(P\) is distinguished (and
monic in the variable \(w/p\)).  Dividing \(F\) by \(P\) gives
\(F=QP+R\), with \(Q,R\in\cO\langle w/p\rangle\) and
\(\deg_{w/p}R<\deg P\).  The jet conditions at all \(w_k\), together
with the bounds just proved, identify \(R\) with
\(p^{-\deg g_n}\) times the displayed interpolation sum in
\eqref{eq:lagrange-minor}.  Uniqueness of division therefore gives
\(Q=h_{\underline\zeta\times\underline\xi}\), so
\(h_{\underline\zeta\times\underline\xi}\in\cO\langle w/p\rangle\).
If
\(h(w)(w-w_{k_0})^{m_n(k_0)}=\sum h_i(w-w_{k_0})^i\), then
\(v_p(h_i)\geq m_n(k_0)-i\).  Since
\[
 i-m_n(k_0)\leq\frac12(i-m_n(k_0))
 \bigl(2x_{k_0}-i-m_n(k_0)\bigr)
\]
for \(m_n(k_0)\leq i<x_{k_0}\), the remainder also satisfies the
quadratic bounds above.  This proves all three assertions.
\end{proof}

We now isolate the finite determinant identity.  Fix a ghost zero \(w_k\)
of \(g_n\), and abbreviate
\[
 r=r_{\underline\zeta\times\underline\xi}(k),\qquad
 s=s_{\underline\xi}(k),\qquad R=r+s
\]
Let \(L_k\) be the constant infinite matrix whose upper-left
\(d_k^{\Iw}\times d_k^{\Iw}\) block is
\(-\operatorname{AL}_{(k,\widetilde\varepsilon_1)}\), and whose other
entries agree with \(U^\dagger(w_k)\).  Thus
\(U^\dagger(w_k)-L_k\) has rank at most \(d_k^{\ur}\).  Let
\[
 J=\{c\in\underline\xi:c>d_k^{\Iw}\ \text{or}\
 d_k^{\Iw}+1-c\in\underline\zeta\}
\]
then \(\#J=R\).  For \(0\leq j\leq R\), set
\begin{align}\label{eq:cofactor-Dj}
 D_j={}&\sum_{\substack{I\subseteq\underline\zeta,\ J'
                  \subseteq J\\ \#I=\#J'=j}}
 \operatorname{sgn}(I,\underline\zeta)
 \operatorname{sgn}(J',\underline\xi)\nonumber\\
 &\qquad\det L_k(I\times J')
 \det U^\dagger((\underline\zeta-I)\times(\underline\xi-J'))
\end{align}
In particular \(D_0=\det U^\dagger(\underline\zeta\times\underline\xi)\).

\begin{lemma}\label{lem:cofactor-congruences}
For \(0\leq\ell\leq j_0\leq R-1\),
\begin{equation}\label{eq:cofactor-congruence}
 D_\ell\equiv
 \sum_{j=j_0+1}^{R}(-1)^{j-j_0-1}
 \binom{j-\ell-1}{j_0-\ell}\binom{j}{\ell}D_j
 \pmod{(w-w_k)^{\max\{0,n-d_k^{\ur}-j_0\}}}
\end{equation}
If \(\eta(w)\in1+(w-w_k)E[\![w-w_k]\!]\), the same congruence holds
after replacing every \(D_j\) by \(D_j\eta(w)^{-j}\).
\end{lemma}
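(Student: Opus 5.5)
We would prove \eqref{eq:cofactor-congruence} by writing \(U^\dagger(w)=L_k+M(w)\) with \(M(w)=U^\dagger(w)-L_k\), passing through auxiliary sums \(E_i\) that are visibly divisible by a power of \(w-w_k\), and then inverting a binomial relation between the \(E_i\) and the \(D_j\).

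\emph{Step 1: structure of \(L_k\) and \(M\).} Since \(L_k\) agrees with \(U^\dagger(w_k)\) outside the classical block, \(M(w_k)=U^\dagger(w_k)-L_k\) has rank at most \(d_k^{\ur}\) by \cref{prop:classical-matrix}. Write \(M(w)=M(w_k)+(w-w_k)X(w)\). Expanding a \(t\times t\) minor of \(M(w)\) multilinearly in columns, every term with fewer than \(t-d_k^{\ur}\) columns taken from \((w-w_k)X\) contains a minor of \(M(w_k)\) of size greater than \(d_k^{\ur}\), hence vanishes. Thus every \(t\times t\) minor of \(M\) is divisible by \((w-w_k)^{\max\{0,t-d_k^{\ur}\}}\). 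All minors are finite, so the infinite size of the matrices causes no problem.

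Next, for a column \(c\in\underline\xi\setminus J\) we have \(c\leq d_k^{\Iw}\) and \(d_k^{\Iw}+1-c\notin\underline\zeta\). By \cref{lem:weak-hodge-block} and \eqref{eq:AL-action}, the column \(c\) of \(L_k\) has its only nonzero entry in row \(d_k^{\Iw}+1-c\). Hence \(L_k(\underline\zeta\times\{c\})=0\), and in any Laplace expansion only columns from \(J\) can be taken from \(L_k\).

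\emph{Step 2: binomial relation between \(D_j\) and \(E_i\).} Define
\[
 E_i=\sum_{\substack{I\subseteq\underline\zeta,\ J''\subseteq J\\ \#I=\#J''=i}}
 \operatorname{sgn}(I,\underline\zeta)\operatorname{sgn}(J'',\underline\xi)
 \det L_k(I\times J'')\det M((\underline\zeta-I)\times(\underline\xi-J'')).
\]
By Step 1, each \(E_i\) is divisible by \((w-w_k)^{\max\{0,n-i-d_k^{\ur}\}}\).

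In \eqref{eq:cofactor-Dj}, expand each factor \(\det U^\dagger=\det(L_k+M)\) on the complementary sets by the sum-of-matrices Laplace formula. Again only columns in \(J-J'\) can come from \(L_k\). Now group the terms by the total column set \(J''\subseteq J\) taken from \(L_k\). For a fixed column splitting \(J''=J'\sqcup J_2\), summing over all row splittings recombines the two \(L_k\)-minors into \(\det L_k(I''\times J'')\); this is the generalized Laplace expansion of \(\det L_k(I''\times J'')\) along the columns \(J'\). There are \(\binom{i}{j}\) column splittings with \(\#J'=j\). Hence
\[
 D_j=\sum_{i=j}^R\binom ij E_i,\qquad\text{and by inversion}\qquad E_i=\sum_{j=i}^R(-1)^{j-i}\binom ji D_j .
\]
The main obstacle is verifying that the signs \(\operatorname{sgn}(I,\underline\zeta)\operatorname{sgn}(J',\underline\xi)\) compose correctly in this recombination. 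I would check it by treating the \(n\times n\) matrix abstractly and invoking the standard identity for iterated Laplace expansions, rather than tracking individual permutations.

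\emph{Step 3: the congruence and the \(\eta\)-twist.} Modulo \((w-w_k)^{\max\{0,n-d_k^{\ur}-j_0\}}\), every \(E_i\) with \(i\leq j_0\) vanishes by Step 2. Therefore
\[
 D_\ell\equiv\sum_{i=j_0+1}^R\binom i\ell E_i=\sum_{j=j_0+1}^R D_j\binom j\ell\sum_{i=j_0+1}^{j}(-1)^{j-i}\binom{j-\ell}{i-\ell},
\]
using \(\binom i\ell\binom ji=\binom j\ell\binom{j-\ell}{i-\ell}\). The partial alternating sum \(\sum_{u=0}^{a}(-1)^u\binom Nu=(-1)^a\binom{N-1}{a}\) evaluates the inner sum to \((-1)^{j-j_0-1}\binom{j-\ell-1}{j_0-\ell}\), which is \eqref{eq:cofactor-congruence}.

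For the twisted version, replace \(L_k\) by \(\eta(w)^{-1}L_k\). Each \(\det L_k(I\times J')\) with \(\#I=\#J'=j\) is multiplied by \(\eta^{-j}\), so \(D_j\) becomes \(D_j\eta^{-j}\). The new \(M=U^\dagger-\eta^{-1}L_k\) still satisfies \(M(w_k)=U^\dagger(w_k)-L_k\) because \(\eta(w_k)=1\), so Steps 1–3 apply verbatim over \(E[\![w-w_k]\!]\).
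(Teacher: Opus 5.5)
Your proposal is correct and is essentially the paper's argument. By your inversion formula, the divisibility of \(E_\ell\) by \((w-w_k)^{\max\{0,n-\ell-d_k^{\ur}\}}\) is exactly the paper's case \(\ell=j_0\), which the paper obtains by expanding \(\det(U^\dagger-L_k)\) along the columns of \(J\) in each complementary minor. Where the paper then runs a downward induction on \(j_0-\ell\), you instead evaluate the partial alternating binomial sum in closed form. The sign bookkeeping you flag as the main obstacle is immediate from \(\sum_j y^jD_j=\det\bigl((U^\dagger+yL_k)(\underline\zeta\times\underline\xi)\bigr)=\det\bigl((M+(1+y)L_k)(\underline\zeta\times\underline\xi)\bigr)=\sum_i(1+y)^iE_i\), since both expansions use the same Laplace signs.
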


\begin{proof}
For \(\ell=j_0=0\), expand
\(\det(U^\dagger-L_k(\underline\zeta\times J))\) by the columns in
\(J\).  Its value at \(w_k\) has rank at most \(d_k^{\ur}\), so the
determinant is divisible by \((w-w_k)^{n-d_k^{\ur}}\).  This gives
\(D_0\equiv D_1-D_2+\cdots+(-1)^{R-1}D_R\).

Apply the same expansion to every complementary determinant in
\eqref{eq:cofactor-Dj}.  A fixed pair of row and column subsets of size
\(j\) is obtained \(\binom{j}{\ell}\) times; keeping the signs gives the
case \(\ell=j_0\).  Downward induction on \(j_0-\ell\), using
\[
 \binom{j_0}{\ell}\binom{j}{j_0}
 -\binom{j-\ell-1}{j_0-\ell-1}\binom{j}{\ell}
 =\binom{j-\ell-1}{j_0-\ell}\binom{j}{\ell}
\]
gives \eqref{eq:cofactor-congruence}.  Replacing \(L_k\) by
\(\eta(w)^{-1}L_k\) does not change its specialization at \(w_k\) or
the rank bound and multiplies the term of size \(j\) by \(\eta^{-j}\).
This proves the final assertion.
\end{proof}

Put \(\widetilde g_q(w)=g_{q,\widehat k}(w)/g_{q,\widehat k}(w_k)\),
with \(g_0=1\), and define
\[
 p^{\frac12(\deg\underline\xi-\deg\underline\zeta)}
 \frac{D_\ell}{\widetilde g_{n-\ell}(w)}
 =\sum_{i\geq0}B_{k,i}^{(\ell)}(w-w_k)^i
\]
For \(\ell=0\) and \(i<m_n(k)\),
\begin{equation}\label{eq:B-versus-A}
 B_{k,i}^{(0)}=A_{k,i}^{(\underline\zeta\times\underline\xi)}
                  g_{n,\widehat k}(w_k)
\end{equation}
The target estimate is therefore equivalent to
\begin{equation}\label{eq:B-target}
 v_p(B_{k,i}^{(0)})\geq
 \mathcal B_{k,i}^{(n)}:=
 \Delta_{k,x_k-i}-\frac{k-2}{2}
             \left(\frac{d_k^{\Iw}}2-n\right)
\end{equation}

The following proposition is the analogue of
\cite[Lemma~6.7]{LTXZ2}.  It applies the preceding estimates to the
complementary minors in \eqref{eq:cofactor-Dj}.  The case
\(m_{n-\ell}(k)=0\) is stated separately because it uses the expansion at a
weight which is not a zero of \(g_{n-\ell}\).

\begin{proposition}\label{prop:smaller-minors}
Assume \cref{thm:minors} for minors of size smaller than \(n\).  Let
\(1\leq\ell\leq R\) and \(m=m_{n-\ell}(k)<m_n(k)\).  For
\(0\leq i<m\), one has
\begin{equation}\label{eq:smaller-below-multiplicity}
 v_p(B_{k,i}^{(\ell)})\geq\mathcal B_{k,i}^{(n)}
\end{equation}
For \(m\leq i<m_n(k)\), one has, if \(m>0\),
\begin{align}\label{eq:smaller-positive}
 v_p(B_{k,i}^{(\ell)})\geq{}&
 \Delta_{k,x_k-m}-\frac{k-2}{2}
       \left(\frac{d_k^{\Iw}}2-n\right)\nonumber\\
 &-\frac12\bigl((x_k-m)^2-(x_k-i)^2\bigr)
 \geq\mathcal B_{k,i}^{(n)}
\end{align}
and, if \(m=0\),
\begin{align}\label{eq:smaller-zero}
 v_p(B_{k,i}^{(\ell)})\geq{}&
 \Delta_{k,x_k}-\frac{k-2}{2}
       \left(\frac{d_k^{\Iw}}2-n\right)\nonumber\\
 &-\frac12\bigl(x_k^2-(x_k-i)^2\bigr)
 \geq\mathcal B_{k,i}^{(n)}
\end{align}
\end{proposition}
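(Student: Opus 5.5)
The plan is to expand each term of \(D_\ell\) in \eqref{eq:cofactor-Dj} and reduce everything to the inductive hypothesis on the \((n-\ell)\times(n-\ell)\) minors.

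\textbf{Step 1: reduction to single terms.} Each term of \(D_\ell\) is \(\det L_k(I\times J')\) times a complementary minor of size \(n-\ell\). Since \(L_k\) is constant, \(\det L_k(I\times J')\) is a constant. Its valuation is read off from \cref{prop:classical-matrix} and \cref{lem:weak-hodge-block}:
\begin{itemize}
\item on the anti-diagonal part it equals \(\sum_{c\in J'}a_{d_k^{\Iw}+1-c}\), with the fixed central index counted once when \(d_k^{\Iw}\) is odd;
\item on the nonclassical columns it is at least the sum of the row degrees.
\end{itemize}
Combining this with the half-degree normalization \(p^{\frac12(\deg\underline\xi-\deg\underline\zeta)}\), the degree factors recombine as
\[
p^{\frac12(\deg(\underline\xi-J')-\deg(\underline\zeta-I))}\cdot p^{\frac12(\deg J'+\deg I)}.
\]
Because \(a_{d+1-i}=k-2-a_i\), the second factor contributes at least \(\ell\,(k-2)/2\). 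So it suffices to bound the normalized complementary minor divided by \(\widetilde g_{n-\ell}\).

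\textbf{Step 2: apply the inductive hypothesis.} For the complementary minor, use \cref{thm:minors} in size \(n-\ell\) through \eqref{eq:lagrange-minor} and \cref{prop:taylor-coefficients}.
\begin{itemize}
\item \emph{Coefficients \(i<m\).} Here \eqref{eq:B-target} at level \(n-\ell\) holds by induction. The exponent \(\frac{k-2}2(\frac{d_k^{\Iw}}2-n+\ell)\) loses exactly the \(\ell(k-2)/2\) gained in Step 1. This gives \eqref{eq:smaller-below-multiplicity}, since \(\mathcal B^{(n)}_{k,i}\) and \(\mathcal B^{(n-\ell)}_{k,i}\) differ by that amount.
\item \emph{Coefficients \(i\geq m\), case \(m>0\).} Use \eqref{eq:overcoeff-ghost}, multiplied by \(g_{n-\ell,\widehat k}(w_k)\). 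Converting \(v_p(g_{n-\ell,\widehat k}(w_k))\) into \(\Delta'_{k,x_k-m}\) via \eqref{eq:Delta-prime} gives the first inequality of \eqref{eq:smaller-positive}. Here \(x_k-m=|n-\ell-d_k^{\Iw}/2|\); this is where the half-integral lattice enters. The \(i=m\) correction \(\Delta-\Delta'\) is absorbed because the hull value is used. The remainder \(h\) is handled by the final quadratic bound in the proof of \cref{prop:taylor-coefficients}.
\item \emph{Coefficients \(i\geq m\), case \(m=0\).} Here \(w_k\) is not a ghost zero of \(g_{n-\ell}\). Then \(n-\ell\leq d_k^{\ur}\) or \(n-\ell\geq d_k^{\Iw}-d_k^{\ur}\); the former is the one compatible with \(m<m_n(k)\) and \(\ell\leq R\). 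In that case use part (3), \eqref{eq:overcoeff-nonghost}, with the expansion by \(g_{n-\ell}\) itself. The \(i=0\) term \(\NP(G(w_k,-))_{n-\ell}\) is identified with \(\Delta_{k,x_k}\) plus the affine term, because \(d_k^{\ur}\) is a vertex by \cref{prop:old-bound} and \cref{prop:compatibility}.
\end{itemize}
The extra factors \(\widetilde g_{n-\ell}\) versus \(g_{n-\ell,\widehat k}\) are constants, so they cause no loss.

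\textbf{Step 3 (main obstacle): the second inequalities.} It remains to show
\[
\Delta_{k,x_k-m}-\tfrac12\bigl((x_k-m)^2-(x_k-i)^2\bigr)\;\geq\;\Delta_{k,x_k-i}\qquad (m\leq i<m_n(k)),
\]
and its \(m=0\) analogue. This is a concavity-type statement for the hull in the reverse direction. My first attempt would be to apply \eqref{eq:quadratic-gap} with \(\ell=x_k-i\) and \(\ell''=x_k-m\). This gives \(\Delta_{k,\ell''}-\Delta'_{k,\ell}\geq\frac12(\ell''^2-\ell^2)\), and \(\Delta'\geq\Delta\) then finishes.

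When \(m=0\), the needed inequality again reduces to \eqref{eq:quadratic-gap} at \(\ell''=x_k\). The subtlety is that \(\Delta_{k,x_k}\) is then the outermost endpoint, where the defining set ends. I will check that the numerical inequality used for \(m>0\) does not degenerate there; this needs \(i<m_n(k)\leq x_k\), with the integrality caveat for half-integral \(x_k\).
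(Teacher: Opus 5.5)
Your proposal is correct and follows essentially the same route as the paper. The ingredients match: the $L_k$-minor normalization giving at least $\ell(k-2)/2$ for each nonzero term, the induction hypothesis (via \eqref{eq:B-versus-A} at level $n-\ell$) for $i<m$, \cref{prop:taylor-coefficients}(2) or (3) for $i\geq m$, and a direct application of \eqref{eq:quadratic-gap} for the second inequalities, which is immediate rather than a real obstacle. In the case $m=0$, your use of the vertex at $d_k^{\ur}$ and the slope bound of \cref{prop:old-bound} makes explicit a step the paper leaves implicit; note that it yields the inequality $\NP(G(w_k,-))_{n-\ell}\geq\Delta_{k,x_k}-\frac{k-2}{2}\bigl(\frac{d_k^{\Iw}}2-(n-\ell)\bigr)$ rather than an identification, and that $n-\ell\leq d_k^{\ur}$ holds simply because $n-\ell<n<d_k^{\Iw}-d_k^{\ur}$.
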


\begin{proof}
Each summand of \(D_\ell\) is the product of an \(\ell\times\ell\)
minor of \(L_k\) and an \((n-\ell)\times(n-\ell)\) minor of
\(U^\dagger\).  For every nonzero summand,
\begin{equation}\label{eq:L-normalization}
 v_p\!\left(
 p^{\frac12(\deg J'-\deg I)}\det L_k(I\times J')\right)
 \geq\ell\frac{k-2}{2}
\end{equation}
For an Atkin--Lehner entry this is equality because the reflected row and
column degrees add to \(k-2\).  For a nonclassical column it follows from
\cref{lem:weak-hodge-block}, since that column has degree greater than
\(k-2\).  This checks separately the two kinds of columns in \(J\); no
generic degree progression is being used.

For \(i<m\), apply the outer induction hypothesis directly to the
complementary minor, and add \eqref{eq:L-normalization}; after multiplying
by \(g_{n-\ell,\widehat k}(w_k)\), the definition of \(\Delta'_k\)
gives \eqref{eq:smaller-below-multiplicity}.
If \(m>0\) and \(i\geq m\), apply \cref{prop:taylor-coefficients} to the smaller
minor, using the outer induction hypothesis, and then add
\eqref{eq:L-normalization}.  If \(m=0\), then
\(n-\ell\leq d_k^{\ur}\) (because
\(n<d_k^{\Iw}-d_k^{\ur}\)); use instead
\eqref{eq:overcoeff-nonghost}.  The affine contributions combine as
\[
 \ell\frac{k-2}{2}-\frac{k-2}{2}
 \left(\frac{d_k^{\Iw}}2-(n-\ell)\right)
 =-\frac{k-2}{2}\left(\frac{d_k^{\Iw}}2-n\right)
\]
The second inequalities in \eqref{eq:smaller-positive} and
\eqref{eq:smaller-zero} are \eqref{eq:quadratic-gap}, applied to the
two displayed radii.  This proves the proposition.
\end{proof}

The coefficients associated with cofactors of different sizes are divided
by different ghost polynomials.  Following \cite[Notation~6.11]{LTXZ2}, we
compare these normalizations by setting
\[
 \eta_j(w)=\frac{\widetilde g_{n-j}(w)}{\widetilde g_n(w)},\qquad
 \frac{\eta_j(w)}{\eta_1(w)^j}
 =1+\sum_{t\geq1}\eta_{(j),t}(w-w_k)^t
\]

The next lemma bounds the distance between \(w_k\) and a weight \(w_{k'}\)
when either the midpoint \(d_{k'}^{\Iw}/2\), an endpoint
\(d_{k'}^{\ur}\), or the reflected endpoint
\(d_{k'}^{\Iw}-d_{k'}^{\ur}\) occurs in an interval relevant to a
cofactor of size \(n-j\).  Since \(d_k^{\Iw}\) changes by one as
\(k_\bullet\) changes by one, the bound contains a factor \(2\) which is
absent from the strongly generic calculation.

\begin{lemma}
\label{lem:weight-distance}
Put \(c=d_k^{\Iw}/2\), \(u=d_k^{\ur}\), and
\(x_k=c-u=d_k^{\new}/2>0\), and set
\[
 \gamma_k=\left\lfloor\log_p(2(p+1)x_k)+1\right\rfloor
\]
For a distinct weight \(k'\), put \(c'=d_{k'}^{\Iw}/2\) and
\(u'=d_{k'}^{\ur}\).  Then
\(v_p(w_k-w_{k'})\leq\gamma_k\) if at least one of the following holds:
\[
 c'\in[u,2c-u],\qquad k'_\bullet<k_\bullet,\qquad
 u'\in[u,c)
\]
\end{lemma}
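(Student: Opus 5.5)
The plan is to reduce all three cases to one elementary principle. If \(q=k'_\bullet-k_\bullet\neq0\) satisfies \(|q|\leq2(p+1)x_k\), then
\[
 v_p(q)\leq\lfloor\log_p|q|\rfloor\leq\lfloor\log_p(2(p+1)x_k)\rfloor=\gamma_k-1 ,
\]
and \eqref{eq:weight-distance} gives \(v_p(w_k-w_{k'})=1+v_p(q)\leq\gamma_k\). So for each hypothesis it suffices to show
\[
 |k'_\bullet-k_\bullet|\leq2(p+1)x_k .
\]
Throughout I use \(k_\bullet=2c-1+\delta_\eps\), from \cref{prop:iw-dimension}, and the same formula for \(k'\).

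First case, \(c'\in[u,2c-u]\). Then \(|c'-c|\leq c-u=x_k\). Since \(d^{\Iw}\) increases by exactly one when \(k_\bullet\) does, we have \(k'_\bullet-k_\bullet=2(c'-c)\). Hence \(|q|\leq2x_k\), which is far inside the required bound.

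Second case, \(k'_\bullet<k_\bullet\). Here \(0<k_\bullet-k'_\bullet\leq k_\bullet\), so it suffices to prove \(k_\bullet\leq2(p+1)x_k\).
\begin{itemize}
\item If \(d_k^{\ur}=0\), then \(2x_k=d_k^{\Iw}=k_\bullet+1-\delta_\eps\geq1\), and \((p+1)(k_\bullet+1-\delta_\eps)\geq k_\bullet\) is immediate, since \(\delta_\eps\in\{0,1\}\).
\item If \(d_k^{\ur}\geq1\), insert \cref{prop:ur-dimension}, \(d_k^{\ur}\leq(k_\bullet-t_\eps)/(p+1)+1\). This gives
\[
 2(p+1)x_k\geq(p-1)k_\bullet+2t_\eps-(p+1)(1+\delta_\eps),
\]
so the claim reduces to \((p-2)k_\bullet\geq(p+1)(1+\delta_\eps)-2t_\eps\). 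This holds because \(d_k^{\ur}\geq1\) forces \(k_\bullet\geq t_\eps\geq1+\delta_\eps\), together with \(p\geq11\).
\end{itemize}

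Third case, \(u'\in[u,c)\). If \(k'_\bullet<k_\bullet\) we are in the second case, so assume \(k'_\bullet>k_\bullet\). By \cref{lem:extremal}, \(u'\leq n\) is equivalent to \(k'_\bullet\leq k_{\max\bullet}(n)\). Taking \(n\) to be the largest integer below \(c\), write \(n=c-\ell\) with \(\ell\in\{\tfrac12,1\}\). The computation in the proof of \cref{lem:radial-gap} then gives
\[
 k'_\bullet-k_\bullet\leq k_{\max\bullet}(n)-k_\bullet=(p-1)n+s_\eps-(2\ell-1)\leq(p-1)c+s_\eps .
\]
It remains to compare \((p-1)c+s_\eps\) with \(2(p+1)x_k=2(p+1)(c-u)\). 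Using the hypothesis \(u\leq u'<c\) together with the upper bound for \(u=d_k^{\ur}\) from \cref{prop:ur-dimension}, this becomes a linear inequality in \(k_\bullet\). It has slack of order \((p-3)c\), which absorbs \(s_\eps\leq p-2\) once \(c\) is not tiny. In the degenerate small cases (\(u=0\), or \(k_\bullet\) close to \(t_\eps\)) I will check the inequality directly, using \(x_k>0\).

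The main obstacle is this last comparison. The term \(s_\eps\) and the floor in \(d_k^{\ur}\) could in principle break the inequality when \(k_\bullet\) is near \(t_\eps\). If the crude bound fails there, I would use instead that \(u'\geq u\) and \(u'<c\) leave only finitely many explicit \(k'_\bullet\). I would then bound \(v_p(k'_\bullet-k_\bullet)\) by the length of that interval, which is at most \((p+1)(\lceil c\rceil-u)\leq(p+1)(x_k+1)\), rather than by the distance to \(k_\bullet\). It is at this point that the factor \(2\) in \(\gamma_k\) gives the needed margin.
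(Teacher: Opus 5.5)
Your overall reduction is the paper's. In each case you bound \(|k'_\bullet-k_\bullet|\) by \(2(p+1)x_k\) and then apply \(v_p(q)\le\lfloor\log_p|q|\rfloor\) together with \eqref{eq:weight-distance}. Case 1 is identical to the paper's.

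In case 2 you reach the paper's bound \(k_\bullet\le2(p+1)x_k\) from the floor inequality for \(d_k^{\mathrm{ur}}\). The paper instead uses the exact block formula \(d_k^{\mathrm{new}}=s_\varepsilon+r+(p-1)q\). Your route is fine, but the last step is not justified as written. At \(k_\bullet=t_\varepsilon\), the margin in \((p-2)k_\bullet\ge(p+1)(1+\delta_\varepsilon)-2t_\varepsilon\) is only \(ps_\varepsilon-(1+\delta_\varepsilon)\). This equals \(-1\) when \(s_\varepsilon=0\) and \(k_\bullet=1\). That point has \(d_k^{\mathrm{new}}=0\), so you must invoke \(x_k>0\) to exclude it; \(k_\bullet\ge t_\varepsilon\) and \(p\ge11\) alone do not suffice.

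In case 3 your primary route differs from the paper's: you bound \(k'_\bullet\le k_{\max\bullet}(\lceil c\rceil-1)\) and compare with a lower bound for \(x_k\). It does work. The floor inequality gives \(2(p+1)x_k\ge2(p-1)(c-1)+2s_\varepsilon\). Combined with the exact value \(k_{\max\bullet}(n)-k_\bullet=(p-1)(c-\ell)+s_\varepsilon-(2\ell-1)\), this closes for every \(c\ge1\). The only case left is \(c=\tfrac12\), where \(k'_\bullet-k_\bullet\le s_\varepsilon<p+1=2(p+1)x_k\).

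Your diagnosis of the obstacle is slightly off. The difficulty is the constant \(-2(p-1)\); \(s_\varepsilon\) enters with a favourable sign. If you weaken to \((p-1)c+s_\varepsilon\), you need \(c\ge2\) plus further small checks, though these do hold.

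Your fallback is exactly the paper's argument, and it is cleaner. \(k_\bullet\) and \(k'_\bullet\) lie in blocks of length \(p+1\) indexed by \(u\) and \(u'\), so \(k'_\bullet-k_\bullet\le(p+1)(u'-u)+p\). Since \(u'-u\le\lceil x_k\rceil-1\), this is \(<2(p+1)x_k\) uniformly, with no small cases. When you run it, use \(\lceil c\rceil-u=\lceil x_k\rceil\le x_k+\tfrac12\) rather than \(x_k+1\). The bound \((p+1)(x_k+1)\) exceeds \(2(p+1)x_k\) at \(x_k=\tfrac12\).
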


\begin{proof}
Write
\[
 k_\bullet=t_\varepsilon+(p+1)q+r,\qquad 0\leq r\leq p
\]
The formula \eqref{eq:ur-dimension} gives \(u=q+1\), while
\eqref{eq:iw-general} gives
\[
 d_k^{\new}=s_\varepsilon+r+(p-1)q
\]
These identities remain valid for the possible initial block \(q=-1\);
the condition \(d_k^{\new}>0\) then forces
\(s_\varepsilon+r\geq p\).  Consequently
\begin{equation}\label{eq:tres-crude-distance}
 k_\bullet< (p+1)d_k^{\new}=2(p+1)x_k
\end{equation}
Indeed, after subtraction the right side minus the left side is
\[
 p(s_\varepsilon+r)+(p+1)(p-2)q-\delta_\varepsilon-1
\]
which is positive for \(q\geq0\); for \(q=-1\) it is at least
\(p+1-\delta_\varepsilon\).

Since \(d_k^{\Iw}=k_\bullet+1-\delta_\varepsilon\),
\[
 |k_\bullet-k'_\bullet|=2|c-c'|
\]
Thus the first condition gives
\(|k_\bullet-k'_\bullet|\leq2x_k\), and the second gives
\(|k_\bullet-k'_\bullet|<2(p+1)x_k\) by
\eqref{eq:tres-crude-distance}.  For the third condition, if \(u'=u\)
then the two weights lie in the same block of length \(p+1\), so
\(|k_\bullet-k'_\bullet|\leq p\).  If \(u'>u\), the same block
decomposition gives
\[
 k'_\bullet-k_\bullet\leq(p+1)(u'-u)+p
\]
When \(x_k\) is integral, \(u'-u\leq x_k-1\); when \(x_k\) is
half-integral, \(u'-u\leq x_k-\tfrac12\).  In both cases the last
display is \(<2(p+1)x_k\).  Finally
\eqref{eq:weight-distance} and
\[
 v_p(k_\bullet-k'_\bullet)
 \leq\left\lfloor\log_p|k_\bullet-k'_\bullet|\right\rfloor
\]
give the assertion.  The factor \(2\) in the definition of
\(\gamma_k\) is necessary because \(d_k^{\Iw}\) changes by one, rather
than by two, with \(k_\bullet\).
\end{proof}

For fixed \(j\), the function \(q\mapsto m_q(k')\) is affine on
\([n-j,n]\) unless one of
\(d_{k'}^{\ur}\), \(d_{k'}^{\Iw}-d_{k'}^{\ur}\), or
\(d_{k'}^{\Iw}/2\) lies in \((n-j,n)\).  Precisely these exceptional
weights occur in the denominators when the quotient
\(\eta_j/\eta_1^j\) is expanded at \(w_k\).  The following lemma bounds
the sum of their distances from \(w_k\), including repetitions coming from
the exponent with which a factor occurs.  Its second assertion treats
\(m_{n-j}(k)=0\), for which the estimate in the first assertion cannot be
used.

\begin{lemma}\label{lem:weight-multiset-estimate}
Assume \(p\geq11\).  Let
\[
 e_j(k')=m_{n-j}(k')-m_n(k')
       -j\bigl(m_{n-1}(k')-m_n(k')\bigr)
\]
and suppose that \(k'\ne k\) satisfies the condition that one of
\(d_{k'}^{\ur}\), \(d_{k'}^{\Iw}-d_{k'}^{\ur}\), or
\(d_{k'}^{\Iw}/2\) lies in \((n-j,n)\).  Let
\(\mathcal S=(k'_1,\ldots,k'_t)\), with \(1\leq t<m_n(k)\), be a
multiset of weights satisfying this condition such
that the multiplicity of \(k'\) is at most \(e_j(k')\) whenever
\(e_j(k')>0\).

If \(1\leq m_{n-j}(k)<m_n(k)\) and
\(q_t=\min\{m_n(k)-t,m_{n-j}(k)\}\), then
\begin{align}\label{eq:weight-multiset-positive}
 \sum_{\alpha=1}^t v_p(w_k-w_{k'_\alpha})\leq{}&
 \Delta_{k,x_k-q_t}-\Delta_{k,x_k-q_t-t}\nonumber\\
 &-\frac12\bigl((x_k-q_t)^2-(x_k-q_t-t)^2\bigr)
\end{align}
If \(m_{n-j}(k)=0\), then
\begin{align}\label{eq:weight-multiset-zero}
 \sum_{\alpha=1}^t v_p(w_k-w_{k'_\alpha})\leq{}&
 \Delta_{k,x_k}-\Delta_{k,x_k-t}
 -\frac12\bigl(x_k^2-(x_k-t)^2\bigr)
\end{align}
\end{lemma}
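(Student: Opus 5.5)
We will reduce \eqref{eq:weight-multiset-positive} and \eqref{eq:weight-multiset-zero} to the separation estimate \cref{cor:multi-gap}, applied one weight at a time and then summed. Each exceptional weight \(k'_\alpha\) has a break point of \(q\mapsto m_q(k'_\alpha)\) (one of \(d_{k'}^{\ur}\), \(d_{k'}^{\Iw}-d_{k'}^{\ur}\), \(d_{k'}^{\Iw}/2\)) in \((n-j,n)\). We convert this into a position relative to the centre \(c=d_k^{\Iw}/2\): the radius \(|n-c|=x_k-m_n(k)\) and the radius \(|n-j-c|\) bound the radius \(\ell'\) of that break point. This also brings in the reflected endpoint through the symmetry \(m_q(k)=m_{d_k^{\Iw}-q}(k)\).

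\textbf{Grouping the multiset.} The weights with \(e_j(k')>0\) are sorted by the radius at which their break point sits. The condition on multiplicities says that a weight whose break point sits at radius \(\ell'\) can occur at most as many times as there are lattice radii between \(\ell'\) and the outer radius \(x_k-q_t\). The reason is that \(e_j(k')\) is the deviation of the concave piecewise-affine function \(q\mapsto m_q(k')\) from its chord, and this deviation is bounded by the distance from the break point to the endpoint \(n-j\). This lets us assign the \(t\) terms to the \(t\) successive admissible radii between \(x_k-q_t-t\) and \(x_k-q_t\), with each weight assigned only to radii exceeding its own \(\ell'\). We then apply the successive-difference version of \cref{cor:multi-gap} at each assigned radius \(\ell\); this is \cref{cor:convex-gap}, with a single valuation term subtracted. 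That gives \(v_p(w_k-w_{k'_\alpha})\leq\Delta_{k,\ell}-\Delta_{k,\ell-1}-(\ell-\tfrac12)\). Summing over \(\ell\) from \(x_k-q_t-t+1\) to \(x_k-q_t\), the differences telescope, and \(\sum(\ell-\tfrac12)=\tfrac12((x_k-q_t)^2-(x_k-q_t-t)^2)\) gives \eqref{eq:weight-multiset-positive} exactly. For the midpoint case \(c'\) we use the closed-interval hypothesis of \cref{cor:radial-gap-weight}. When the midpoint is half-integral, the first admissible radius is \(3/2\), which is already covered there. Weights with \(v_p(w_k-w_{k'})\) large occur at most twice, on opposite sides, by the last assertion of \cref{cor:multi-gap}; this prevents double counting when two endpoints land at the same radius.

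\textbf{The case \(m_{n-j}(k)=0\).} Here \(n-j\leq d_k^{\ur}\) (or its reflection), so the interval \((n-j,n)\) reaches the outer boundary of the new range, and the separation corollaries no longer directly apply to break points outside radius \(x_k\). We split the multiset into two parts. Weights whose break point lies inside \((u,2c-u)\) are handled as above with \(q_t\) replaced by \(0\). Weights with break point in \((n-j,u]\) satisfy \(u'\in[u',u]\), so \(k'_\bullet\leq k_\bullet\); \cref{lem:weight-distance} then gives \(v_p(w_k-w_{k'})\leq\gamma_k\). Since \(t<m_n(k)\leq x_k\), the total loss is at most \(t\gamma_k\). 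We compare this with the gap \(\Delta_{k,x_k}-\Delta_{k,x_k-t}-\tfrac12(x_k^2-(x_k-t)^2)\), which by \eqref{eq:radial-gap} and \cref{lem:convex-hull-difference} is at least roughly \(t\bigl((p-1)(x_k-t)-\tfrac12 t\bigr)\) minus logarithmic terms. This bound already exceeds \(t\gamma_k\) once \(p\geq11\), except for small \(x_k-t\). Those exceptional cases have \(x_k-t<1\), which forces \(t=m_n(k)\) and is excluded by \(t<m_n(k)\).

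\textbf{Main obstacle.} The main difficulty is the bookkeeping that assigns multiplicities to distinct radii. We need to verify that \(e_j(k')\) never exceeds the number of radii available outside the break point of \(k'\). The half-integral middle break point needs particular care, since there \(m_q(k')\) bends by \(2\) only across an even gap, which could double a multiplicity. We would first check this by writing \(e_j(k')\) explicitly from the formula for \(m_n(k')\) in \cref{prop:shifting}, in the three break-point cases.
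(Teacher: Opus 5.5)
\textbf{There is a genuine gap, in the assignment step of your positive case.} Corollary~\ref{cor:convex-gap} bounds the valuation of one weight against the single budget $\Delta_{k,\ell}-\Delta_{k,\ell-1}$ at radius $\ell$. So your telescoping needs a bijection from the $t$ occurrences to the radii $X-t+1,\dots,X$, where $X=x_k-q_t$, such that each break point lies in the interval of its assigned radius. The per-weight bound $e_j(k')\le X-\ell'$ does not give the Hall condition for such a bijection. Two situations break it:
\begin{itemize}
\item \emph{Distinct weights sharing a break point.} Take $d_k^{\ur}=0$, $d_k^{\Iw}/2\ge3$, $n=3$, $j=2$, so that $m_3(k)=3$ and $m_1(k)=1$. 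Each of the $p+1$ weights with $d_{k'}^{\ur}=2$ has $e_2(k')=1$ and its break point at radius $X-1$. Two of them (allowed, since $t=2<m_n(k)$) both need the radius $X$.
\item \emph{Repeated midpoint weights.} A weight with $d_{k'}^{\Iw}/2=n-j+\tfrac12$ and $e_j(k')<0$ may occur arbitrarily often, and again every copy needs the outermost radius.
\end{itemize}
Your appeal to the uniqueness clause of \cref{cor:multi-gap} does not repair this. That clause limits only the number of weights of large valuation, not how many weights compete for one radius.

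\textbf{The missing idea is to stop matching and bound valuations uniformly.} First remove, one at a time, the endpoints lying in $[n^*,d_k^{\Iw}-n^*)$. Each removed endpoint costs one outer radius $s$ and is bounded by \eqref{eq:multi-gap} at $(s-1,s-1,s)$. Every remaining weight then satisfies $|k'_\bullet-k_\bullet|<2(p+1)X$, so $v_p(w_k-w_{k'})\le\gamma=\lfloor\log_p(2(p+1)X)+1\rfloor$. The only possible exception is a single weight. There cannot be two, because only endpoints on the side facing $(n-j,n)$ remain, which rules out the opposite-side alternative in \cref{cor:multi-gap}. The total $t\gamma$ is absorbed exactly by the logarithmic term of \eqref{eq:multi-gap} with $(\ell,\ell',\ell'')=(X-t,X,X)$. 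The one exceptional weight occurs $M\le e_j(k')$ times; this multiplicity bound places its endpoint in the closed interval of radius $X-M$, and \eqref{eq:multi-gap} with $(X-t,X-M,X)$ finishes.

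\textbf{Your zero case needs the same correction.} It invokes the flawed matching for break points in $(u,n)$. It also replaces the exact slack $t\gamma_k$ from \eqref{eq:multi-gap} at $(x_k-t,x_k,x_k)$ with an unverified numerical estimate, and your claim that the failures force $x_k-t<1$ is unsupported. After the reduction, every remaining weight there satisfies one of the three alternatives of \cref{lem:weight-distance}. So all of them have valuation at most $\gamma_k$, no exceptional weight arises, and that slack finishes the case.
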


\begin{proof}
We give the reduction because it is where the half-integral centre could
otherwise be used incorrectly.  Put
\(n^*=n\) if \(n\leq d_k^{\Iw}/2\), and
\(n^*=d_k^{\Iw}-n\) otherwise.  Thus
\(n^*\leq d_k^{\Iw}/2\) and \(m_{n^*}(k)=m_n(k)\).
If an endpoint
\(d_{k'}^{\ur}\) or \(d_{k'}^{\Iw}-d_{k'}^{\ur}\) lies in
\([n^*,d_k^{\Iw}-n^*)\), remove that occurrence from \(\mathcal S\).
Here is the comparison of the right sides.  In the positive-multiplicity
case, either \(q_t=q_{t-1}\), when put
\(s=x_k-q_t-t+1\), or \(q_{t-1}=q_t+1\), when put
\(s=x_k-q_t\).  In the zero-multiplicity case put \(s=x_k-t+1\).
In all three cases the right side for \(t\) minus that for \(t-1\) is
\[
 \Delta_{k,s}-\Delta_{k,s-1}
 -\frac12\bigl(s^2-(s-1)^2\bigr)
\]
Moreover,
\[
 c-n^*=x_k-m_n(k)\leq s-1:
\]
in the first subcase this is \(q_t+t\leq m_n(k)\), in the second it is
\(t\geq1\), and in the zero-multiplicity case it is \(t<m_n(k)\).
Thus the removed endpoint lies in the closed interval of radius
\(s-1\).  Taking \((\ell,\ell',\ell'')=(s-1,s-1,s)\) in
\eqref{eq:multi-gap} gives
\[
 v_p(w_k-w_{k'})
 \leq\Delta_{k,s}-\Delta'_{k,s-1}
      -\frac12\bigl(s^2-(s-1)^2\bigr)
 \leq\Delta_{k,s}-\Delta_{k,s-1}
      -\frac12\bigl(s^2-(s-1)^2\bigr)
\]
Consequently the estimate for the smaller multiset implies the estimate
for the original one.  Repeating this reduction, we may suppose that no
such endpoint occurs.

Set \(X=x_k-q_t\) in the first case and \(X=x_k\) in the second, and
put
\[
 \gamma=\left\lfloor\log_p(2(p+1)X)+1\right\rfloor
\]
A weight for which
\(d_{k'}^{\Iw}/2\in(n-j,n)\) satisfies
\(|k'_\bullet-k_\bullet|<2j\), after reflecting \(n\) to \(n^*\) if
necessary.  In the first case of the lemma, the inequality
\(m_{n-j}(k)<m_n(k)\) gives
\[
 X\geq j/2\quad(c\leq n),\qquad
 X\geq c-n+j\quad(c>n)
\]
where \(c=d_k^{\Iw}/2\).  In the first subcase both \(c\) and
\(d_{k'}^{\Iw}/2\) lie in \((n-j,n)\); in the second,
\(|c-d_{k'}^{\Iw}/2|<c-n+j\).  Since
\(|k'_\bullet-k_\bullet|=2|c-d_{k'}^{\Iw}/2|\), both subcases give
\[
 |k'_\bullet-k_\bullet|<2(p+1)X,
 \qquad v_p(w_k-w_{k'})\leq\gamma
\]
The same conclusion holds for every remaining endpoint weight except
possibly one exceptional weight.  Indeed,
after the reduction all endpoint breaks lie on the side of \(n^*\) facing
\((n-j,n)\); the two-exception alternative in \cref{cor:multi-gap} has
opposite endpoint types on opposite sides and therefore cannot occur here.

If all occurrences have valuation at most \(\gamma\), their sum is at
most \(t\gamma\).  Take
\((\ell,\ell',\ell'')=(X-t,X,X)\) in the proof of
\cref{cor:multi-gap}.  In this specialization the coefficient of
\(v_p(w_k-w_{k'})\) is zero, so its endpoint hypothesis is unused; the
successive-difference estimates alone give
\[
 t\gamma\leq
 \Delta_{k,X}-\Delta'_{k,X-t}
 -\frac12\bigl(X^2-(X-t)^2\bigr)
 \leq
 \Delta_{k,X}-\Delta_{k,X-t}
 -\frac12\bigl(X^2-(X-t)^2\bigr)
\]
For \(X=x_k-q_t\) this is the right side of
\eqref{eq:weight-multiset-positive}; for \(X=x_k\) it is the right side
of \eqref{eq:weight-multiset-zero}.

It remains to treat an exceptional endpoint weight \(k'\) with valuation
at least \(\gamma+1\).  The endpoint formulae
\eqref{eq:endpoints} leave two cases.  If
\(d_{k'}^{\ur}\in(n-j,n^*)\), then
\[
 e_j(k')=d_{k'}^{\ur}-(n-j)>0,
 \qquad
 c-d_{k'}^{\ur}=c-n+j-e_j(k')\leq X-e_j(k')
\]
If \(d_{k'}^{\Iw}-d_{k'}^{\ur}\in(n-j,n^*)\), then instead
\[
 e_j(k')=d_{k'}^{\Iw}-d_{k'}^{\ur}-(n-j)>0
\]
and
\[
 c-(d_{k'}^{\Iw}-d_{k'}^{\ur})
 =c-n+j-e_j(k')\leq X-e_j(k')
\]
If the exceptional weight occurs \(M\) times, the hypothesis gives
\(M\leq e_j(k')\); either displayed inequality therefore places the
corresponding endpoint in the closed interval of radius \(X-M\) about
\(c=d_k^{\Iw}/2\).  Taking
\((\ell,\ell',\ell'')=(X-t,X-M,X)\) in
\eqref{eq:multi-gap} yields
\[
 Mv_p(w_k-w_{k'})+(t-M)\gamma
 \leq\Delta_{k,X}-\Delta'_{k,X-t}
       -\frac12\bigl(X^2-(X-t)^2\bigr)
\]
which is at most the required right side after replacing
\(\Delta'_{k,X-t}\) by \(\Delta_{k,X-t}\).  This proves
\eqref{eq:weight-multiset-positive}.  In the case \(m_{n-j}(k)=0\), the
vanishing is equivalent to \(n-j\leq u=d_k^{\ur}\).  After the initial
reduction, a midpoint \(c'\in(n-j,n)\) either lies in
\([u,2c-u]\), or satisfies \(c'<u<c\), hence
\(k'_\bullet<k_\bullet\).  A remaining lower endpoint
\(u'\in(n-j,n^*)\) either has \(u'<u\), which again implies
\(k'_\bullet<k_\bullet\), or lies in \([u,c)\).  Finally, if the
remaining endpoint is \(2c'-u'\in(n-j,n^*)\), then
\(c'\leq2c'-u'<c\), so \(k'_\bullet<k_\bullet\).  Thus every weight
which remains in \(\mathcal S\) satisfies one of the three alternatives in
\cref{lem:weight-distance}, and all its occurrences have
valuation at most \(\gamma\).  Summing these bounds over all occurrences
therefore proves
\eqref{eq:weight-multiset-zero}; no exceptional endpoint is present in
this branch.

All radii in this argument belong to
\(\mathbb Z-d_k^{\Iw}/2\).  If \(X<2\), the only nonempty possibility
is \(X=3/2\); then \(t=1\), and the assertion is the initial
half-integral case \((1/2,1/2,3/2)\) or \((1/2,3/2,3/2)\) in
\cref{cor:multi-gap}.  A radius \(1/2\) itself contributes no denominator
factor.  This completes the proof.
\end{proof}

We now apply the preceding lemma to the coefficients of
\(\eta_j/\eta_1^j\).  These are the estimates needed in the comparison with
the coefficients of the normalized cofactors in
\cite[Proposition~6.14]{LTXZ2}.

\begin{proposition}
\label{prop:eta-estimate}
Assume \(p\geq11\).  Let \(1\leq j\leq R\) and
\(1\leq t<m_n(k)\).
If \(1\leq m_{n-j}(k)<m_n(k)\), put
\(q_t=\min\{m_n(k)-t,m_{n-j}(k)\}\).  Then
\begin{align}\label{eq:eta-positive}
 v_p(\eta_{(j),t})\geq{}&
 \Delta_{k,x_k-q_t-t}-\Delta_{k,x_k-q_t}\nonumber\\
 &+\frac12\bigl((x_k-q_t)^2-(x_k-q_t-t)^2\bigr)
\end{align}
If \(m_{n-j}(k)=0\), then
\begin{align}\label{eq:eta-zero}
 v_p(\eta_{(j),t})\geq{}&
 \Delta_{k,x_k-t}-\Delta_{k,x_k}
 +\frac12\bigl(x_k^2-(x_k-t)^2\bigr)
\end{align}
\end{proposition}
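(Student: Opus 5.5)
The plan is to write \(\eta_j/\eta_1^j\) explicitly as a finite product of powers of linear factors centred at classical weights, and then read off each Taylor coefficient at \(w_k\) from the multiset bound in \cref{lem:weight-multiset-estimate}. The normalisation \(\widetilde g_q(w_k)=1\) for the \(k\)-removed polynomials gives
\[
 \frac{\eta_j(w)}{\eta_1(w)^j}
 =\prod_{k'\ne k}\left(\frac{w-w_{k'}}{w_k-w_{k'}}\right)^{e_j(k')},
\]
with \(e_j(k')=m_{n-j}(k')-m_n(k')-j\bigl(m_{n-1}(k')-m_n(k')\bigr)\), exactly as in \cref{lem:weight-multiset-estimate}. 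By \cref{prop:shifting}(2) and the explicit piecewise formula for \(m_q(k')\), the exponent \(e_j(k')\) is the second-order defect of \(q\mapsto m_q(k')\) on \([n-j,n]\). It therefore vanishes unless one of \(d_{k'}^{\ur}\), \(d_{k'}^{\Iw}-d_{k'}^{\ur}\), \(d_{k'}^{\Iw}/2\) lies in \((n-j,n)\). So only the finitely many exceptional weights of the lemma occur, and each factor has the form \((1+y/(w_k-w_{k'}))^{e}\) with \(y=w-w_k\).

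Next I expand each factor. For \(e>0\) the expansion is a polynomial whose \(y^s\) coefficient is \(\binom{e}{s}(w_k-w_{k'})^{-s}\). For \(e<0\) it is the binomial series, again with integral coefficients. Multiplying out, \(\eta_{(j),t}\) is an integral combination of monomials \(\prod_{\alpha=1}^t(w_k-w_{k'_\alpha})^{-1}\) indexed by multisets \(\mathcal S=(k'_1,\ldots,k'_t)\) of exceptional weights. Since the binomial coefficients are integral, the ultrametric inequality gives
\[
 v_p(\eta_{(j),t})\geq-\max_{\mathcal S}\sum_\alpha v_p(w_k-w_{k'_\alpha}).
\]
Here the multiplicity of \(k'\) in \(\mathcal S\) is at most \(e_j(k')\) when \(e_j(k')>0\), because there the factor is a polynomial of degree \(e_j(k')\). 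When \(e_j(k')<0\) the multiplicity is unbounded, which is exactly the situation in the hypothesis of \cref{lem:weight-multiset-estimate}. Since \(1\leq t<m_n(k)\), that lemma applies directly. Negating \eqref{eq:weight-multiset-positive} gives \eqref{eq:eta-positive}, and negating \eqref{eq:weight-multiset-zero} gives \eqref{eq:eta-zero}.

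The main obstacle is the bookkeeping of multiplicities, in two places. The first is confirming that weights with \(e_j(k')<0\) are really covered by the lemma's condition, and that their unbounded multiplicity is harmless there. Concave breaks, namely midpoints \(d_{k'}^{\Iw}/2\) inside \((n-j,n)\), produce negative exponents, and their bound in the lemma is uniform per occurrence (valuation \(\le\gamma\)), so I expect this to hold. The second is checking that the excluded weight \(k\) itself never contributes: it is removed in \(g_{q,\widehat k}\). I would first verify the sign of \(e_j(k')\) at each of the three break types using \eqref{eq:m-increment}. I would also treat a half-integral midpoint \(d_{k'}^{\Iw}/2\) as an interior kink of \(m_q(k')\) whose defect is computed on integer \(q\) only. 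Finally I would observe that a midpoint at \(n-j\) or \(n\) gives no defect.
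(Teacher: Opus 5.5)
Your proposal is correct and follows the paper's proof essentially step for step. Like the paper, you factor \(\eta_j/\eta_1^j=\prod_{k'\ne k}\bigl(1+(w-w_k)/(w_k-w_{k'})\bigr)^{e_j(k')}\), observe that only weights for which \(q\mapsto m_q(k')\) breaks inside \((n-j,n)\) contribute, expand binomially with integral coefficients (repetition bounded by \(e_j(k')\) when it is positive, unbounded otherwise), and apply \cref{lem:weight-multiset-estimate} to each denominator multiset. Your planned sign check of \(e_j(k')\) by break type is harmless but unnecessary, since the lemma's hypothesis distinguishes only by the sign of \(e_j(k')\).
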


\begin{proof}
Expanding the definitions of \(\widetilde g_{n-j}\),
\(\widetilde g_n\), and \(\eta_1\) gives
\begin{equation}\label{eq:eta-factorization}
 \frac{\eta_j(w)}{\eta_1(w)^j}
 =\prod_{k'\ne k}
 \left(1+\frac{w-w_k}{w_k-w_{k'}}\right)^{e_j(k')}
\end{equation}
where
\[
 e_j(k')=m_{n-j}(k')-m_n(k')
       -j\bigl(m_{n-1}(k')-m_n(k')\bigr)
\]
The exponent is zero unless the piecewise-linear function
\(q\mapsto m_q(k')\) changes slope in \((n-j,n)\).  By
\eqref{eq:ghost-multiplicity}, this happens exactly when one of
\[
 d_{k'}^{\ur},\qquad d_{k'}^{\Iw}-d_{k'}^{\ur},
 \qquad d_{k'}^{\Iw}/2
\]
lies in that open interval.

Expanding \eqref{eq:eta-factorization}, the coefficient
\(\eta_{(j),t}\) is a sum of products
\(\prod_{\alpha=1}^t(w_k-w_{k'_\alpha})^{-1}\), multiplied by binomial
coefficients \(\binom{e_j(k')}{r}\in\mathbb Z\).  Repetition is bounded
by \(e_j(k')\) when this exponent is positive; for a negative exponent
the formal binomial expansion allows arbitrary repetition, exactly as
in the hypothesis of \cref{lem:weight-multiset-estimate}.  The integral
binomial factors cannot lower a valuation.  Applying that lemma to the
denominator multiset in every summand gives
\eqref{eq:eta-positive} and \eqref{eq:eta-zero}.

There are two additional cases in the present setting.  First,
\(d_{k'}^{\Iw}/2\) may be half-integral, but membership in the open
interval \((n-j,n)\) is literal and the fixed reflection index is counted
only once.  Second, a radius \(1/2\) contains no coefficient index, so it
creates no denominator factor; the first possible contribution is the radius
\(3/2\) case already proved in \cref{cor:multi-gap}.  Thus no hidden
rounding of \(q_t\) occurs.
\end{proof}

Define \(C_{k,i}^{(j)}\) by
\begin{equation}\label{eq:def-C-cofactor}
 \left(\sum_{i\geq0}B_{k,i}^{(j)}(w-w_k)^i\right)
 \frac{\eta_j(w)}{\eta_1(w)^j}
 =\sum_{i\geq0}C_{k,i}^{(j)}(w-w_k)^i
\end{equation}

The preceding proposition shows that, under the inductive bounds for the
coefficients of order less than \(i_0\), multiplication by
\(\eta_j/\eta_1^j\) does not affect whether the required estimate holds at
order \(i_0\).  This is the comparison used in
\cite[Proposition~6.14]{LTXZ2}.

\begin{corollary}\label{cor:cofactor-normalization}
Fix \(i_0<m_n(k)\).  Suppose
\(v_p(B_{k,i}^{(j)})\geq\mathcal B_{k,i}^{(n)}\) for all
\(i<i_0\).  If \(m_{n-j}(k)<m_n(k)\), then
\[
 v_p(B_{k,i_0}^{(j)})\geq\mathcal B_{k,i_0}^{(n)}
 \quad\Longleftrightarrow\quad
 v_p(C_{k,i_0}^{(j)})\geq\mathcal B_{k,i_0}^{(n)}
\]
\end{corollary}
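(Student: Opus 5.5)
From the definition \eqref{eq:def-C-cofactor}, and because \(\eta_j/\eta_1^j\) has constant term \(1\), we have
\[
 C_{k,i_0}^{(j)}=B_{k,i_0}^{(j)}+\sum_{t=1}^{i_0}B_{k,i_0-t}^{(j)}\,\eta_{(j),t}.
\]
Since \(i_0<m_n(k)\), every \(t\) in the sum satisfies \(1\leq t<m_n(k)\). So the equivalence follows once each correction term \(B_{k,i_0-t}^{(j)}\eta_{(j),t}\) is shown to have valuation at least \(\mathcal B_{k,i_0}^{(n)}\). By the hypothesis on the lower-order coefficients, it suffices to prove
\[
 \mathcal B_{k,i_0-t}^{(n)}+v_p(\eta_{(j),t})\geq\mathcal B_{k,i_0}^{(n)},
\]
that is,
\[
 v_p(\eta_{(j),t})\geq\Delta_{k,x_k-i_0}-\Delta_{k,x_k-i_0+t}.
\]
This is the step that uses \cref{prop:eta-estimate}, applied separately in its two cases.

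\emph{Case \(m_{n-j}(k)=0\).} The lower bound \eqref{eq:eta-zero} is
\[
 \Delta_{k,x_k-t}-\Delta_{k,x_k}+\tfrac12\bigl(x_k^2-(x_k-t)^2\bigr).
\]
Convexity of \(\underline\Delta_k\) says that the increase of \(\Delta_k\) over \(t\) consecutive radii is nondecreasing in the starting radius. The window \([x_k-i_0,\,x_k-i_0+t]\) lies at or below \([x_k-t,\,x_k]\), because \(i_0\geq t\). Hence
\[
 \Delta_{k,x_k}-\Delta_{k,x_k-t}\geq\Delta_{k,x_k-i_0+t}-\Delta_{k,x_k-i_0},
\]
but this inequality points the wrong way. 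To absorb the difference I use the quadratic term together with the second-difference bound from \cref{lem:convex-hull-difference} and \eqref{eq:quadratic-gap}. Concretely, I would compare both windows to the quadratic minorant \(\tfrac12\ell^2\). I expect this matching of windows to be the main obstacle. If the naive comparison fails, the fallback is to return to the proof of \cref{lem:weight-multiset-estimate}, where the multiset sum was bounded by \(t\gamma\). Redoing that argument with \((\ell,\ell',\ell'')=(x_k-i_0,\,x_k-i_0+t,\,x_k-i_0+t)\) in \eqref{eq:multi-gap} should give the bound directly in the needed window, since every relevant radius there is still at least \(x_k-m_n(k)+1\).

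\emph{Case \(m_{n-j}(k)>0\).} The same argument applies with \(q_t\) in place of \(0\). When \(q_t=m_n(k)-t\), the window \([x_k-q_t-t,\,x_k-q_t]\) has left endpoint \(x_k-m_n(k)\), which is the outermost possible position. The comparison then follows from convexity combined with \(i_0\leq m_n(k)-1\).

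Finally, the converse implication follows from the identical estimate applied with the roles of \(B\) and \(C\) swapped. The same bound controls the coefficients of \((\eta_j/\eta_1^j)^{-1}\), because the expansion \eqref{eq:eta-factorization} is symmetric under negating the exponents \(e_j\). Alternatively, the converse is immediate from the displayed identity: \(B_{k,i_0}^{(j)}\) and \(C_{k,i_0}^{(j)}\) differ by a quantity of valuation at least \(\mathcal B_{k,i_0}^{(n)}\), so either one satisfies the bound exactly when the other does.
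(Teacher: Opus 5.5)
Your reduction to $v_p(\eta_{(j),t})\geq\Delta_{k,x_k-i_0}-\Delta_{k,x_k-i_0+t}$ only works for the correction terms with $i=i_0-t\leq m_{n-j}(k)$, that is, when $q_t+t\geq i_0$ (with $q_t=0$ in the zero case).

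In that range the window $[x_k-q_t-t,x_k-q_t]$ of \cref{prop:eta-estimate} starts at or to the left of $[x_k-i_0,x_k-i_0+t]$. Convexity of $\underline\Delta_k$ then points the right way, and the quadratic term can be dropped. This is the paper's first case, and your subcase $q_t=m_n(k)-t$ belongs to it. Note that $x_k-m_n(k)$ is the innermost admissible radius, not the outermost.

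For $i>m_{n-j}(k)$, however, one has $q_t+t<i_0$. This covers every $i\geq1$ when $m_{n-j}(k)=0$, and also the subcase $q_t=m_{n-j}(k)$ that you dismiss with ``the same argument applies''. Here the eta window lies strictly to the right of the target window, and the comparison you need fails in general. By \eqref{eq:radial-gap-exact}, the increments of $\Delta_k$ grow by about $p-1$ per unit of radius, up to nonnegative valuation corrections that are usually small. So the two windows differ by roughly $(p-1)t(i_0-q_t-t)$, while the quadratic term only supplies $\tfrac12t\bigl(2(x_k-q_t)-t\bigr)$. Already $q_t=0$, $t=1$, $i_0=m_n(k)-1$ with $m_n(k)$ close to $x_k$ breaks it.

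Your fallback does not repair this. The weights in $\eta_{(j),t}$ are those with a break point in $(n-j,n)$. Their distances to $w_k$ are controlled only at radius $x_k$ or $x_k-q_t$, via \cref{lem:weight-distance} and \cref{cor:multi-gap}, not at radius $x_k-i_0$. Moreover, choosing $\ell'=\ell''$ in \eqref{eq:multi-gap} removes the valuation term entirely.

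The missing idea is that for $i\geq m_{n-j}(k)$ you must not use the literal hypothesis $v_p(B_{k,i}^{(j)})\geq\mathcal B_{k,i}^{(n)}$. You need the stronger smaller-minor bounds \eqref{eq:smaller-positive} and \eqref{eq:smaller-zero} of \cref{prop:smaller-minors}, which the outer induction supplies wherever the corollary is used. For such $i$ one has $q_t=m_{n-j}(k)$, and these bounds read
\[
 v_p(B_{k,i}^{(j)})\geq\Delta_{k,x_k-q_t}-\frac{k-2}{2}\Bigl(\frac{d_k^{\Iw}}2-n\Bigr)-\frac12\bigl((x_k-q_t)^2-(x_k-i)^2\bigr).
\]
Add \eqref{eq:eta-positive}, or \eqref{eq:eta-zero} when $q_t=0$. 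The terms $\pm\Delta_{k,x_k-q_t}$ cancel exactly, and the quadratic terms telescope to $\tfrac12\bigl((x_k-i)^2-(x_k-q_t-t)^2\bigr)$.

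Replacing $i$ by $i_0$ only lowers this bound. If $q_t+t=i_0$, the result is exactly $\mathcal B_{k,i_0}^{(n)}$. If $q_t+t<i_0$, apply \eqref{eq:quadratic-gap} with radii $x_k-i_0<x_k-q_t-t$; this gives at least $\Delta'_{k,x_k-i_0}\geq\Delta_{k,x_k-i_0}$ above the affine term, which is the target.

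So the corollary cannot be proved from its stated lower-order hypothesis and \cref{prop:eta-estimate} alone. What makes the eta bound usable is that the smaller-minor estimate is anchored at $x_k-m_{n-j}(k)$. Your closing remark on the converse is correct once every correction term is bounded.
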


\begin{proof}
By \eqref{eq:def-C-cofactor},
\[
 C_{k,i_0}^{(j)}-B_{k,i_0}^{(j)}
 =\sum_{i<i_0}B_{k,i}^{(j)}\eta_{(j),i_0-i}
\]
Fix \(i<i_0\) and put \(t=i_0-i\).  We prove that the corresponding
summand has valuation at least \(\mathcal B_{k,i_0}^{(n)}\).

First suppose \(i<m_{n-j}(k)\).  Then
\[
 q_t+t=\min\{m_n(k),m_{n-j}(k)+t\}>i_0
\]
Convexity of \(\underline\Delta_k\) therefore gives
\[
 \Delta_{k,x_k-q_t-t}-\Delta_{k,x_k-q_t}
 \geq\Delta_{k,x_k-i_0}-\Delta_{k,x_k-i}
\]
Dropping the nonnegative quadratic term in \eqref{eq:eta-positive} and
using the assumed bound for \(B_{k,i}^{(j)}\), we obtain
\begin{align*}
 v_p\bigl(B_{k,i}^{(j)}\eta_{(j),t}\bigr)
 &\geq \mathcal B_{k,i}^{(n)}
       +\Delta_{k,x_k-i_0}-\Delta_{k,x_k-i}\\
 &=\mathcal B_{k,i_0}^{(n)}
\end{align*}

Now suppose \(i\geq m_{n-j}(k)\), defining \(q_t=0\) when
\(m_{n-j}(k)=0\).  If \(m_{n-j}(k)>0\), then
\(m_{n-j}(k)+t\leq i_0<m_n(k)\), so in both cases
\(q_t+t\leq i_0\).  The two smaller-minor estimates can be written
uniformly as
\[
 v_p(B_{k,i}^{(j)})\geq
 \Delta_{k,x_k-q_t}-\frac{k-2}{2}
       \left(\frac{d_k^{\Iw}}2-n\right)
 -\frac12\bigl((x_k-q_t)^2-(x_k-i)^2\bigr)
\]
Adding \eqref{eq:eta-positive}, or \eqref{eq:eta-zero} when \(q_t=0\),
gives
\[
 v_p\bigl(B_{k,i}^{(j)}\eta_{(j),t}\bigr)
 \geq\Delta_{k,x_k-q_t-t}
 -\frac{k-2}{2}\left(\frac{d_k^{\Iw}}2-n\right)
 +\frac12\bigl((x_k-i)^2-(x_k-q_t-t)^2\bigr)
\]
Since \(i<i_0\) and \(q_t+t\leq i_0\), replacing \(i\) by \(i_0\)
weakens the last term.  If \(q_t+t=i_0\), the remaining comparison is
an equality.  If \(q_t+t<i_0\), Equation \eqref{eq:quadratic-gap}, with
radii \(x_k-i_0<x_k-q_t-t\), makes the last display at least
\(\mathcal B_{k,i_0}^{(n)}\).  Thus in both cases every term in
\(C_{k,i_0}^{(j)}-B_{k,i_0}^{(j)}\) has the target valuation, which
proves the equivalence.
\end{proof}

\begin{theorem}\label{thm:minors}
Assume \(p\geq11\).  For all $\underline\zeta,\underline\xi$ of cardinality $n$, every zero $w_k$ of
$g_n$, and $0\leq i<m_n(k)$,
\begin{equation}\label{eq:minor-estimate}
 v_p\bigl(A_{k,i}^{(\underline\zeta\times\underline\xi)}\bigr)
 \geq
 \Delta_{k,\frac12d_k^{\new}-i}
 -\Delta'_{k,\frac12d_k^{\new}-m_n(k)}
\end{equation}
\end{theorem}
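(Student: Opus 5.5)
The proof will be by induction on the size \(n\) of the minor, and, for fixed \(n\), by an inner induction on the Taylor order \(i\) at a fixed ghost zero \(w_k\). This follows the structure of \cite[Section~6]{LTXZ2}. By \eqref{eq:B-versus-A}, the target \eqref{eq:minor-estimate} is equivalent to \eqref{eq:B-target} for \(B^{(0)}_{k,i}\), so I will work throughout with the normalized cofactor coefficients \(B^{(j)}_{k,i}\).

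\textbf{Base case.} For \(n=1\), or more generally whenever \(m_n(k)=0\), there is nothing to prove.

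\textbf{Case \(R=0\).} Assume the statement for all minors of size \(<n\), and fix \(w_k\) with \(m_n(k)>0\). If \(R=r+s=0\), then \cref{prop:corank} gives a corank of at least \(n-d_k^{\ur}\geq m_n(k)\) at \(w_k\). Hence \(A_{k,i}=0\) for \(i<m_n(k)\), and the claim is trivial.

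\textbf{Case \(R>0\).} Here I use \cref{lem:cofactor-congruences} with \(\ell=0\), with \(j_0\) chosen appropriately, and with the twist \(\eta=\eta_1\). Dividing by \(\widetilde g_n\) turns \(D_j\eta_1^{-j}\) into \(\widetilde g_{n-j}^{-1}D_j\cdot\eta_1^{-j}\eta_j\), which is exactly the series whose coefficients are \(C^{(j)}_{k,i}\) by \eqref{eq:def-C-cofactor}. The congruence modulo \((w-w_k)^{\max\{0,n-d_k^{\ur}-j_0\}}\) then expresses \(B^{(0)}_{k,i_0}\) as an integral binomial combination of the \(C^{(j)}_{k,i_0}\) with \(j\geq j_0+1\). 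This holds as long as \(i_0<n-d_k^{\ur}-j_0\).

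For each \(i_0<m_n(k)\), I will choose the largest admissible \(j_0\) (taking \(j_0=\max\{0,m_n(k)-i_0-1\}\) or similar), so that every \(j\) appearing satisfies \(m_{n-j}(k)\leq m_n(k)-j<m_n(k)\), using the corank drop. For those \(j\):
\begin{itemize}
\item \cref{prop:smaller-minors} gives \(v_p(B^{(j)}_{k,i})\geq\mathcal B^{(n)}_{k,i}\) for all \(i<m_n(k)\). This splits into the three regimes \(i<m\), \(m\leq i\) with \(m>0\), and \(m=0\), which use the outer induction via \cref{prop:taylor-coefficients} and \eqref{eq:overcoeff-nonghost}.
\item \cref{cor:cofactor-normalization} transfers these bounds to \(C^{(j)}_{k,i_0}\).
\end{itemize}
Since the binomial coefficients are integers, this yields \(v_p(B^{(0)}_{k,i_0})\geq\mathcal B^{(n)}_{k,i_0}\). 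The lower-order inductive hypothesis required by \cref{cor:cofactor-normalization} is supplied at order \(i<i_0\) by the same argument, so the inner induction on \(i_0\) closes.

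\textbf{Main obstacle: bookkeeping of \(j_0\) against the modulus.} The delicate step is matching the choice of \(j_0\) to the exponent \(n-d_k^{\ur}-j_0\). I must ensure two things at once:
\begin{enumerate}
\item[(a)] the congruence modulus exceeds \(i_0\);
\item[(b)] every surviving \(j\) really has \(m_{n-j}(k)<m_n(k)\), which is the hypothesis of \cref{prop:smaller-minors} and \cref{cor:cofactor-normalization}.
\end{enumerate}
When \(n>d_k^{\Iw}/2\) the multiplicity is governed by the reflected branch, and when \(d_k^{\Iw}\) is odd the central index \(n=(d_k^{\Iw}\pm1)/2\) is a fixed point of the reflection. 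To handle this, I will first reduce to \(n\leq d_k^{\Iw}/2\) by the Atkin--Lehner symmetry \(r_{\underline\zeta\times\underline\xi}\). Then I will check directly, using the formula \(m_q(k)=q-d_k^{\ur}\) on \([d_k^{\ur},d_k^{\Iw}/2]\) and its affine continuation, that \(m_{n-j}(k)\leq\max\{0,m_n(k)-j\}\). At the fixed central index this is an equality that must be counted only once, consistent with \cref{prop:classical-matrix}.

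If the naive choice of \(j_0\) fails when \(m_{n-j}(k)=0\) and \(n-d_k^{\ur}-j_0\) is too small, I will instead take \(j_0=0\). I will then invoke the \(m=0\) branch \eqref{eq:smaller-zero} together with \eqref{eq:eta-zero}, which were proved precisely for this situation.
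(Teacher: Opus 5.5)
Your outer architecture matches the paper's: induction on \(n\), passage to \(B^{(0)}_{k,i}\), the cofactor congruence with \(\ell=0\) and \(\eta=\eta_1\), then \cref{prop:smaller-minors}, \cref{cor:cofactor-normalization} and integrality of the binomial coefficients. The genuine gap is your choice of \(j_0\) and your treatment of \(n>d_k^{\Iw}/2\).

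Your formula \(j_0=\max\{0,m_n(k)-i_0-1\}\) is correct only on the increasing branch. Take \(i_0=m_n(k)-1\), so that \(j=1\) enters the sum. On the reflected branch \(m_{n-1}(k)=m_n(k)+1\). At \(n=(d_k^{\Iw}+1)/2\) with \(d_k^{\Iw}\) odd, \(m_{n-1}(k)=m_n(k)\). In both cases the hypothesis \(m_{n-j}(k)<m_n(k)\) of \cref{prop:smaller-minors} and \cref{cor:cofactor-normalization} fails. Your remedy, a reduction to \(n\le d_k^{\Iw}/2\) ``by the Atkin--Lehner symmetry \(r_{\underline\zeta\times\underline\xi}\)'', is not an available step. \(r_{\underline\zeta\times\underline\xi}(k)\) is only a count, and ghost duality \eqref{eq:ghost-duality} concerns ghost polynomials. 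Nothing relates the Taylor coefficients at \(w_k\) of an \(n\times n\) minor of the analytic infinite matrix \(U^\dagger(w)\) to those of a complementary-size minor. Your fallback \(j_0=0\) fails for the same reason, since it again admits \(j=1\).

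The missing observation is that no reduction is needed. Take \(j_0=n-d_k^{\ur}-i_0-1\); this is the genuinely largest \(j_0\) for which the modulus in \eqref{eq:cofactor-congruence} exceeds \(i_0\), being exactly \((w-w_k)^{i_0+1}\). Because \(m_q(k)\) is a minimum, \(m_q(k)\le\max\{0,q-d_k^{\ur}\}\) holds on both branches. Hence every \(j\ge j_0+1\) satisfies \(m_{n-j}(k)\le\max\{0,n-j-d_k^{\ur}\}\le i_0<m_n(k)\). This holds whatever the position of \(n\) relative to \(d_k^{\Iw}/2\), and the central index needs no special treatment.

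You also need \(j_0\le R-1\), which is equivalent to \(i_0\ge\mu:=n-d_k^{\ur}-R\). For \(i_0<\mu\), the corank bound applied to every term gives \(B^{(0)}_{k,i_0}=0\); your case \(R=0\) is only the extreme instance of this. The lower-order bounds required in \cref{cor:cofactor-normalization} concern \(B^{(j)}_{k,i}\) with \(j\ge1\). They come directly from \cref{prop:smaller-minors}, so no inner induction on \(i_0\) is needed.

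Finally, your base case is misstated: \(n=1\) is not vacuous. For \(s_\eps>0\), \(g_1\) has \(s_\eps\) simple zeros, namely those with \(d_k^{\ur}=0\) and \(d_k^{\Iw}\ge2\). You can either start from the empty minor and run the inductive step at \(n=1\), where \eqref{eq:L-normalization} does the work. Or you can verify \eqref{eq:n-one-entry} directly, as the paper does. A nonclassical column is handled by \cref{lem:weak-hodge-block}. A nonclassical row with classical column gives a zero entry by block triangularity. For a classical pair the anti-diagonal entry gives equality.
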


\begin{proof}
We use induction on \(n\), simultaneously for all row and column sets.

For \(n=1\), a ghost zero satisfies
\(d_k^{\ur}=0\), \(d_k^{\Iw}\geq2\), and \(m_1(k)=1\).  The desired
inequality reduces to
\begin{equation}\label{eq:n-one-entry}
 v_p(U^\dagger_{\zeta,\xi}(w_k))\geq
 \frac{k-2}{2}+\frac12(\deg\mathbf e_\zeta-deg\mathbf e_\xi)
\end{equation}
There are three cases, which cannot be collapsed into the classical
anti-diagonal case.  If \(\xi>d_k^{\Iw}\), then
\(\deg\mathbf e_\xi>k-2\), and the row estimate in
\cref{lem:weak-hodge-block} proves \eqref{eq:n-one-entry}.  If
\(\zeta>d_k^{\Iw}\) and \(\xi\leq d_k^{\Iw}\), block upper triangularity
gives a zero entry.  If both indices are classical, then
\(U^\dagger(w_k)=-\operatorname{AL}\); a nonzero entry has
\(\xi=d_k^{\Iw}+1-\zeta\), and
\(\deg\mathbf e_\zeta+\deg\mathbf e_\xi=k-2\), so equality holds in
\eqref{eq:n-one-entry}.  This proves the base case.  The separate unit
statement when \(g_1=1\) is \cref{lem:first-coefficient}; it is not part
of the ghost-zero estimate.

Assume now that the theorem is known below size \(n\).  We prove
\eqref{eq:B-target} for the fixed \(n\times n\) minor.
Put \(\mu=n-d_k^{\ur}-R\).  The corank argument applied to every term
in \eqref{eq:cofactor-Dj} gives \(B_{k,i}^{(0)}=0\) for
\(i<\max\{0,\mu\}\).

Fix \(i_0\geq\max\{0,\mu\}\) and set
\[
 j_0=n-d_k^{\ur}-i_0-1
\]
Because \(i_0<m_n(k)\), one has \(j_0\geq0\); because
\(i_0\geq\mu\), one has \(j_0\leq R-1\).  Apply
\eqref{eq:cofactor-congruence} with \(\ell=0\) and \(\eta=\eta_1\),
divide by the common factor in the equivalent form of
\eqref{eq:def-C-cofactor}, and compare coefficients of degree \(i_0\).
Since \(C^{(0)}=B^{(0)}\), this gives the exact finite identity
\begin{equation}\label{eq:C-induction}
 B_{k,i_0}^{(0)}=
 \sum_{j=j_0+1}^{R}(-1)^{j-j_0-1}
 \binom{j-1}{j_0}
 C_{k,i_0}^{(j)}
\end{equation}
For every \(j>j_0\),
\(m_{n-j}(k)\leq\max\{0,n-j-d_k^{\ur}\}\leq i_0<m_n(k)\).
Thus \cref{prop:smaller-minors} bounds \(B_{k,i}^{(j)}\) for every
\(i\leq i_0\): use \eqref{eq:smaller-below-multiplicity} below
\(m_{n-j}(k)\), and \eqref{eq:smaller-positive} or
\eqref{eq:smaller-zero} from that point onward.  Consequently
\cref{cor:cofactor-normalization} gives the target bound for
\(C_{k,i_0}^{(j)}\).  The coefficients in \eqref{eq:C-induction} are
integers, so the same bound holds for \(B_{k,i_0}^{(0)}\).  This closes
the induction on \(n\).

Using \eqref{eq:B-versus-A} together with the definition of
\(\Delta'_k\) gives exactly
\eqref{eq:minor-estimate}.  In the odd-dimensional case all radii used
above lie in the affine lattice \(\mathbb Z-d_k^{\Iw}/2\); the fixed
Atkin--Lehner index was counted once in \(R\), so neither
\(j_0\) nor the sizes of the complementary minors acquire an extra term.
\end{proof}

\begin{remark}\label{rem:p-bound}
The assumption $p\geq11$ is used in the determinant estimate
\cref{prop:det-mahler} and in the uniform convex-hull estimate
\cref{lem:convex-hull-difference}.  The dimension formulae, duality, and the exact
successive-difference identity \eqref{eq:radial-gap-exact} hold for every odd
$p\geq5$ for which the projective-envelope input is available.
\end{remark}

\subsection{Proof of the main theorem}

We now pass from estimates for finite minors to the Fredholm coefficients.
There is one additional issue: integrality of the interpolation remainder
only gives one inequality between Newton polygons.  Equality at the vertices
requires that remainder to be a unit.  We prove this separately; the argument
used in the proof of \cite[Proposition~4.4]{LTXZ2} does not give the required
strict inequalities for the two endpoint characters in the present case.

For each \(n\), put
\[
 A_{k,i}^{(n)}=(-1)^n
 \sum_{\#\underline\xi=n}
 A_{k,i}^{(\underline\xi\times\underline\xi)}
\]
The weak Hodge bound makes this sum converge.  Summing
\eqref{eq:lagrange-minor} over all principal minors gives
\begin{equation}\label{eq:lagrange-cn}
 c_n(w)=\sum_{m_n(k)>0}
 \left(\sum_{i=0}^{m_n(k)-1}A_{k,i}^{(n)}(w-w_k)^i\right)
 g_{n,\widehat k}(w)+h_n(w)g_n(w)
\end{equation}
Theorem~\ref{thm:minors} gives \eqref{eq:minor-estimate} for the
coefficients \(A_{k,i}^{(n)}\), in the notation of
\cite[Section~4.1]{LTXZ2}.  Formal Weierstrass division gives
\(h_n\in\cO[\![w]\!]\), while \cref{prop:taylor-coefficients} gives the
stronger analytic control \(h_n\in\cO\langle w/p\rangle\).

It remains to prove that this analytic remainder is a unit.  The argument in
\cite[proof of Proposition~4.4]{LTXZ2} chooses a noncongruent classical
weight using the strongly generic residue conditions.  In the present case
we instead choose the auxiliary residue \(r\) below and compare the two
opposite characters by Atkin--Lehner duality.

\begin{lemma}\label{lem:unit-remainder}
For every \(n\geq1\), except for the already separated case
\(n=1,s_\varepsilon=0\), the function \(h_n(w)\) in
\eqref{eq:lagrange-cn} is a unit.  In the exceptional case
\(g_1=1\) and \(h_1=c_1\) is a unit by \cref{lem:first-coefficient}.
\end{lemma}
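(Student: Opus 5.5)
Since \(h_n\in\cO\langle w/p\rangle\) by \cref{prop:taylor-coefficients}, it is a unit exactly when its reduction is a nonzero constant: \(\bar h_n\in\F\) and \(\bar h_n\ne0\). By the Weierstrass preparation theorem in the variable \(w/p\), it therefore suffices to exhibit a single point \(w_\star\) with \(v_p(w_\star)\geq1\) at which \(v_p(h_n(w_\star))=0\). First I would show that \(\bar h_n\) is constant. The degree count \eqref{eq:hodge-equality} says that the Gauss norm of \(p^{-\deg g_n}c_n\) is attained by the leading term, and \(p^{-\deg g_n}g_n\) is distinguished of degree \(\deg g_n\). Comparing leading coefficients in \(w/p\) in \eqref{eq:lagrange-cn}, the interpolation sum has \(w/p\)-degree less than \(\deg g_n\), and \eqref{eq:power-minor-bound} then shows that \(h_n\) has no terms of positive degree modulo \(\varpi\).

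For the nonvanishing of the constant, I would evaluate at a classical weight \(w_{k}\) chosen in an auxiliary residue class \(k\equiv 2+r\pmod{p-1}\), with \(r\) chosen so that \(k\not\equiv k_\eps\). Then no factor of \(g_n\) vanishes at \(w_k\), and \(n\) is a vertex of \(\NP(G(w_k,-))\). Concretely, I would take \(k\) with \(d_k^{\Iw}=n\), so that the block-triangular structure of \cref{lem:weak-hodge-block} applies. Then \(c_n(w_k)\) equals \((-1)^n\) times the determinant of the classical Iwahori block, up to contributions from nonclassical blocks, and these contributions are pushed into higher valuation by the theta comparison in \cref{prop:compatibility}(1). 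This classical determinant is computed through the Atkin--Lehner identity: by \cref{prop:compatibility}(2), the slopes on the \(\eps\) side and the \(\eps''\) side pair up to \(k-1\). Multiplying the two classical determinants gives \(p^{n(k-1)}\) times a unit, because \(\operatorname{AL}\) interchanges the two characters and satisfies \(\operatorname{AL}^2=\) a scalar. The ghost side obeys the same identity by the opposite-character comparison in the proof of \cref{prop:compatibility}. Since \cref{thm:minors} already gives \(v_p(c_n(w_k))\geq v_p(g_n(w_k))\) for both characters, the equality of the sums forces equality for each character separately. Hence \(v_p(c_n(w_k))=v_p(g_n(w_k))\).

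Finally, evaluate \eqref{eq:lagrange-cn} at \(w_k\). By \cref{prop:lagrange-term-bound}, each interpolation term lies strictly above the vertex \((n,v_p(g_n(w_k)))\). It follows that \(v_p(h_n(w_k))=0\), and with the first step this shows that \(h_n\) is a unit.

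I expect the main obstacle to be choosing \(r\). It has to satisfy three conditions at once: \(d_k^{\Iw}=n\) for the \(\eps\)-character, noncongruence with \(k_\eps\), and the Atkin--Lehner partner \(\eps''\) must again be a relevant character with an available inequality. I would first try \(r=\{2s_\eps+1\}\) and then adjust by a full cycle of \(p-1\) if the formula \eqref{eq:iw-general} misses \(n\). The self-dual case \(s_\eps=0\), \(n=1\) breaks this choice, and it is exactly the excluded case handled by \cref{lem:first-coefficient}.
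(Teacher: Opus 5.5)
Your skeleton is the paper's: a noncongruent classical weight \(k\) with \(d_k^{\Iw}=n\), Atkin--Lehner duality between \(\varepsilon\) and \(\varepsilon''\), the sum \(n(k-1)\) on both the actual and the ghost side, the inequalities \(v_p(c_n)\geq v_p(g_n)\) forcing equality, and strictness at the vertex giving \(v_p(h_n(w_k))=0\).

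The gap is in the actual-side identity \(v_p(c_n^{(\varepsilon)}(w_k))+v_p(c_n^{(\varepsilon'')}(w_k))=n(k-1)\), on which everything else rests. You justify it with \cref{prop:compatibility}(1) and (2). Those are statements about the Newton polygon of the ghost series \(G\) and say nothing about \(U_p\). So they cannot show that the nonclassical contributions to \(c_n(w_k)\) have larger valuation, nor that the actual classical slopes of \(\varepsilon\) and \(\varepsilon''\) pair to \(k-1\). The remark that \(\operatorname{AL}^2\) is a scalar does not relate \(\det U_p\) on the two characters either. What is needed is the relation \(U_p\operatorname{AL}U_p=p^{k-1}\operatorname{AL}\) for two distinct Iwahori characters (recalled after \cref{lem:first-coefficient}, from \cite[Proposition~2.11]{LTXZ1}). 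It gives \(\det U_p^{(\varepsilon)}\det U_p^{(\varepsilon'')}=p^{n(k-1)}\) on the classical blocks, pairs the classical slopes with sum \(k-1\), and so puts them all in \([0,k-1]\). Combine this with the actual theta factorization \(C^{(\chi)}(w_k,T)=P_k^{\Iw,(\chi)}(T)\,C^{(\chi')}(w_{2-k},p^{k-1}T)\) for \(\chi\in\{\varepsilon,\varepsilon''\}\), where \(\chi'\) is the theta character of \(\chi\). Equivalently, use block triangularity plus the row bound of \cref{lem:weak-hodge-block}. Then the \(n\)-th coefficient has the valuation of the classical determinant, provided cancellation at slope exactly \(k-1\) is excluded. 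For that, write \(k=2+s_\varepsilon+(n-1)(p-1)+r\) as in the paper, so \(a_{n+1}=k-1+(p-2-r)\). For each character, the parameters of its theta and opposite characters add up to \(p-2>0\), so one side is always strict.

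Two smaller points. First, that \(n\) is a vertex of \(\NP(G^{(\varepsilon)}(w_k,-))\) does not follow from noncongruence. It comes from the secant comparisons in the proof of \cref{prop:compatibility} (slopes \(\leq k-1\) to the left of \(n\), \(\geq k-1\) to the right) plus strictness on one side. The same comparison for \(\varepsilon''\) is needed to put index \(n\) on its ghost polygon before \cref{thm:minors,prop:lagrange-term-bound} give \(v_p(c_n)\geq v_p(g_n)\) there. The paper takes \(r\in\{1,2\}\setminus\{s_\varepsilon\}\), so that \(s_{\varepsilon''}=r\) and \(s_{\varepsilon'}=p-2-r\) are both nonzero. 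Your choice \(k\equiv k_\varepsilon+1\) means \(r=\{s_\varepsilon+1\}\); each residue class contains exactly one \(k\) with \(d_k^{\Iw}=n\), so no adjustment by \(p-1\) ever arises. This choice gives \(s_{\varepsilon''}=0\) when \(s_\varepsilon=p-2\) and \(s_{\varepsilon'}=0\) when \(s_\varepsilon=p-3\), precisely the endpoint characters the paper avoids. The argument survives, because \(s_{\varepsilon'}+s_{\varepsilon''}=p-2\) leaves one strict side, but you must say so. Also, the case \(n=1\), \(s_\varepsilon=0\) is not broken by your choice; it is separated only because \(g_1=1\) and \(h_1=c_1\).

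Second, your first step is unnecessary, and \eqref{eq:hodge-equality} and \eqref{eq:power-minor-bound} do not give it. The relevant fact is \(h_n\in\cO[\![w]\!]\), from formal Weierstrass division, so \(v_p(h_n(w_k))=0\) at a single \(w_k\in p\cO\) already forces a unit constant term. That is also the form in which \cref{thm:local-ghost} uses the lemma: it evaluates \(h_n\) at points with \(0<v_p(w_\star)<1\), where a unit of \(\cO\langle w/p\rangle\) alone gives no information.
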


\begin{proof}
Choose \(r\in\{1,2\}\) with \(r\ne s_\varepsilon\); this is possible
and, because \(p\geq11\), also gives \(r\ne0,p-2\).  Set
\[
 k=2+s_\varepsilon+(n-1)(p-1)+r
\]
Formula \eqref{eq:iw-general} gives
\(d_k^{\Iw}(\varepsilon(1\times\omega^{2-k}))=n\), while
\(k\not\equiv k_\varepsilon\pmod{p-1}\) because \(r\ne s_\varepsilon\).
Set
\[
 \varepsilon'=\varepsilon
   (\omega^{k-1}\times\omega^{1-k}),
 \qquad
 \varepsilon''=\omega^{-r}\times\omega^r
\]
These are exactly the theta and opposite characters in
\cref{prop:compatibility}; their parameters are
\[
 s_{\varepsilon'}=\{s_\varepsilon+1-k\}=p-2-r,
 \qquad
 s_{\varepsilon''}=\{k-2-s_\varepsilon\}=r
\]

For \(\chi\in\{\varepsilon,\varepsilon''\}\), denote by
\(P_k^{\Iw,(\chi)}(T)\) the characteristic polynomial of \(U_p\) on
\(\mathrm S_k^{\Iw}(\chi(1\times\omega^{2-k}))\).  The theta
exact sequence gives, at \(w_k\), the two factorizations
\begin{align*}
 C^{(\varepsilon)}(w_k,T)
 &=P_{k}^{\Iw,(\varepsilon)}(T)
 C^{(\varepsilon')}(w_{2-k},p^{k-1}T),\\
 C^{(\varepsilon'')}(w_k,T)
 &=P_{k}^{\Iw,(\varepsilon'')}(T)
 C^{(\varepsilon''(\omega^{k-1}\times\omega^{1-k}))}
       (w_{2-k},p^{k-1}T)
\end{align*}
The two Iwahori dimensions are both \(n\): the first equality is the
choice of \(k\), and the second follows from Atkin--Lehner duality.
Every slope contributed by either theta factor is at least
\(k-1\).  Atkin--Lehner duality pairs the \(n\) classical
slopes for \(\varepsilon\) and \(\varepsilon''\) with sum
\(k-1\), so all those slopes lie in \([0,k-1]\).  Consequently
the \(n\)-th Fredholm coefficient in each character is the product of
the classical eigenvalues (ties at \(k-1\) do not change that
ordinate), and
\begin{equation}\label{eq:actual-valuation-sum}
 v_p(c_n^{(\varepsilon)}(w_k))+
 v_p(c_n^{(\varepsilon'')}(w_k))=n(k-1)
\end{equation}

The ghost theta and opposite-character identities in
\cref{prop:compatibility}(1)--(2) give the parallel equality
\begin{equation}\label{eq:ghost-valuation-sum}
 v_p(g_n^{(\varepsilon)}(w_k))+
 v_p(g_n^{(\varepsilon'')}(w_k))=n(k-1)
\end{equation}
They also show that the coefficient point of index \(n\) is on each
ghost Newton polygon.  Applying
\cref{prop:lagrange-term-bound,thm:minors} to
\eqref{eq:lagrange-cn} gives the two inequalities
\(v_p(c_n)\geq v_p(g_n)\).  Comparing their sum with
\eqref{eq:actual-valuation-sum}--\eqref{eq:ghost-valuation-sum}
forces equality for both characters.

For a character with parameter \(s>0\), the formula in
\cref{prop:degree-increment} at \(n=0\) gives \(\deg g_1=s\); at an
integral weight every one of its root factors has positive valuation.
Hence its first ghost slope is positive, whereas for \(s=0\) it is zero.
For the original character the vertex is therefore strict.  Indeed, the last
classical ghost slope is strictly smaller than \(k-1\), because
its complementary first \(\varepsilon''\)-slope can be zero only when
\(s_{\varepsilon''}=0\), whereas \(r>0\).  The first theta ghost slope
is strictly larger than \(k-1\), because equality would require
\(s_{\varepsilon'}=0\), equivalently \(r=p-2\).  Hence every truncated
term in \eqref{eq:lagrange-cn} has valuation strictly greater than
\(v_p(g_n(w_k))\), by
\cref{prop:lagrange-term-bound}.  The equality just proved then
forces \(v_p(h_n(w_k))=0\).  Since
\(h_n\in\cO[\![w]\!]\) and \(w_k\in p\cO\), its constant
term is a unit; thus \(h_n\) is a unit.  Notice that the choice
\(r\in\{1,2\}\setminus\{s_\varepsilon\}\) is what avoids both endpoint
characters.  Using the least congruent classical weight here would not
give the required strict separation.
\end{proof}

\begin{theorem}\label{thm:local-ghost}
Assume \(p\geq11\).  Let \(\widetilde H\) be any primitive projective-augmented module of type
\(\bar\rho\), with the unramified normalization fixed in \cref{prop:Pi}.
Then, for
every $w_\star\in\mathfrak m_{\Cp}$,
\[
 \NP\bigl(C_{\widetilde H}^{(\varepsilon)}(w_\star,-)\bigr)
 =\NP\bigl(G^{(\varepsilon)}(w_\star,-)\bigr)
\]
\end{theorem}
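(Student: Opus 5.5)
The proof combines the Lagrange interpolation identity \eqref{eq:lagrange-cn} with the finite-minor estimate \cref{thm:minors}, the term bound \cref{prop:lagrange-term-bound}, and the unit remainder \cref{lem:unit-remainder}, exactly as in the concluding argument of \cite[Section~4]{LTXZ2}. Fix \(w_\star\in\mathfrak m_{\Cp}\). The plan has two halves. The first shows that every coefficient point \((n,v_p(c_n(w_\star)))\) lies on or above \(\NP(G(w_\star,-))\). The second shows that at every vertex of the ghost polygon \(v_p(c_n(w_\star))=v_p(g_n(w_\star))\). Together these force equality of the two lower convex hulls. If some \(g_n(w_\star)=0\), both sides are read with the convention \(v_p(0)=+\infty\).

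For the lower bound, evaluate \eqref{eq:lagrange-cn} at \(w_\star\) and treat the three kinds of terms separately.
\begin{enumerate}[label=(\roman*)]
\item The remainder term \(h_n(w_\star)g_n(w_\star)\) has valuation at least \(v_p(g_n(w_\star))\), since \(h_n\in\cO\langle w/p\rangle\) by \cref{prop:taylor-coefficients}(1) and \(w_\star\in\mathfrak m_{\Cp}\).
\item Each truncated term \(A^{(n)}_{k,i}(w_\star-w_k)^i g_{n,\widehat k}(w_\star)\) is handled next. Summing \cref{thm:minors} over principal minors gives hypothesis \eqref{eq:separation-hypothesis} for \(A=A^{(n)}_{k,i}\). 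The sum converges by the weak Hodge bound in \cref{lem:weak-hodge-block}.
\item \cref{prop:lagrange-term-bound} then places the point \((n,v_p(\cdot))\) of each such term on or above \(\NP(G(w_\star,-))\).
\end{enumerate}
Since the sum over ghost zeros is finite, the ultrametric inequality gives \((n,v_p(c_n(w_\star)))\) on or above the ghost polygon for every \(n\). Hence \(\NP(C(w_\star,-))\geq\NP(G(w_\star,-))\).

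For the reverse inequality, let \(n\) be a vertex abscissa of \(\NP(G(w_\star,-))\). By the strict part of \cref{prop:lagrange-term-bound}, every truncated term has valuation strictly greater than \(v_p(g_n(w_\star))\). \cref{lem:unit-remainder} gives \(v_p(h_n(w_\star))=0\). So the remainder term has valuation exactly \(v_p(g_n(w_\star))\) and dominates, which yields \(v_p(c_n(w_\star))=v_p(g_n(w_\star))\) at every ghost vertex. The exceptional case \(n=1\), \(s_\varepsilon=0\) is covered separately by \cref{lem:first-coefficient}: there \(g_1=1\) and \(c_1\) is a unit, which is also the slope-zero term singled out in \cref{thm:intro}. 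In that case \(n=0\) and \(n=1\) are both vertices at height \(0\).

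A convex-hull argument then closes the proof. The actual polygon lies on or above the ghost polygon and agrees with it at every ghost vertex. Between consecutive ghost vertices the ghost polygon is a segment, and the actual hull must pass through both endpoints while lying above that segment, so it coincides with the segment. It remains to rule out a tail discrepancy after the last vertex. The ghost polygon has infinitely many vertices because the slopes go to infinity by \cref{prop:degree-increment}, and \cref{thm:vertices}(2) shows that vertices occur outside near-Steinberg ranges. The main point to verify carefully is the strict inequality at a vertex when \(w_\star\) is very close to a ghost zero \(w_k\) whose near-Steinberg range has its endpoints at the vertex. That strictness must come from the maximal-range case of \cref{prop:lagrange-term-bound}, including the half-integral centres. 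I would check that the proof of \cref{prop:lagrange-term-bound} indeed treats endpoints of maximal ranges as strict, which is the content of its final clause.
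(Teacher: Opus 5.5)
Your proposal is essentially the paper's own proof. The steps match: the Lagrange identity \eqref{eq:lagrange-cn}, \cref{thm:minors} feeding \cref{prop:lagrange-term-bound} to put every truncated term on or above the ghost polygon and strictly above it at vertices, the unit remainder from \cref{lem:unit-remainder}, and equality of hulls along an unbounded sequence of vertices. Two small repairs are needed.

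\begin{itemize}
\item In step (i), \(h_n\in\cO\langle w/p\rangle\) does not bound \(h_n(w_\star)\) when \(0<v_p(w_\star)<1\). Already \(w/p\) has negative valuation there, and a general element need not even converge. Use instead \(h_n\in\cO[\![w]\!]\), which follows from Weierstrass division by the distinguished polynomial \(g_n\). Alternatively, as the paper does for every \(n\), use \(h_n\in\cO[\![w]\!]^\times\) from \cref{lem:unit-remainder}, which gives \(v_p(h_n(w_\star))=0\).
\item For the unboundedness of vertices, the growth of \(\deg g_n\) from \cref{prop:degree-increment} is not enough by itself. You also need that every root factor satisfies \(v_p(w_\star-w_k)\geq\rho=\min\{v_p(w_\star),1\}>0\), which holds because \(v_p(w_k)\geq1\). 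Then \(v_p(g_n(w_\star))\geq\rho\deg g_n\) grows quadratically in \(n\) (away from the finitely many \(n\) with \(g_n(w_\star)=0\)), so the hull has no last vertex.
\end{itemize}
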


\begin{proof}
Fix \(w_\star\).  In \eqref{eq:lagrange-cn}, every truncated term lies
on or above the ghost Newton polygon by
\cref{prop:lagrange-term-bound,thm:minors}.  The unit-remainder term
has ordinate exactly \(v_p(g_n(w_\star))\), by
\cref{lem:unit-remainder}.  Hence every coefficient point
\((n,v_p(c_n(w_\star)))\) lies on or above the ghost polygon.

If \((n,v_p(g_n(w_\star)))\) is a strict ghost vertex, all truncated
terms are strictly higher by \cref{prop:lagrange-term-bound}, while
the remainder has precisely the ghost ordinate.  It cannot cancel with
terms of larger valuation, so
\(v_p(c_n(w_\star))=v_p(g_n(w_\star))\).  Thus the two lower convex
hulls agree at every strict ghost vertex.  These vertices are unbounded.
Indeed, put \(\rho=\min\{v_p(w_\star),1\}>0\).  Apart from the finitely
many indices for which \(g_n(w_\star)=0\), every root factor satisfies
\(v_p(w_\star-w_k)\geq\rho\), and hence
\[
 v_p(g_n(w_\star))\geq\rho\deg g_n
\]
The increments in \cref{prop:degree-increment} grow linearly, so
\(\deg g_n\) grows quadratically and
\(v_p(g_n(w_\star))/n\to+\infty\).  The lower convex hull therefore
cannot have a last vertex.  Equality at the unbounded sequence
of vertices, together with the first inequality, proves equality of the
two Newton polygons.

If $s_\varepsilon=0$, then $g_1=1$ and
$c_1(w)\in\cO[\![w]\!]^\times$ by \cref{lem:first-coefficient}.  Thus the common initial
edge has slope zero.  No factor is removed.  This proves \cref{thm:intro}.
\end{proof}


\begin{thebibliography}{99}
\bibitem{BP}
J. Bergdall and R. Pollack,
\emph{Slopes of modular forms and the ghost conjecture},
Int. Math. Res. Not. IMRN 2017 (2017), no.~4, 1125--1144.

\bibitem{HP}
Y. Hu and V. Pa\v sk\=unas,
\emph{On crystabelline deformation rings of
\(\operatorname{Gal}(\overline{\mathbb Q}_p/\mathbb Q_p)\)},
Math. Ann. 373 (2019), 421--487.

\bibitem{LTXZ1}
R. Liu, N. X. Truong, L. Xiao, and B. Zhao,
\emph{A local analogue of the ghost conjecture of Bergdall and Pollack},
Peking Math. J. 7 (2024), 247--344.

\bibitem{LTXZ2}
R. Liu, N. X. Truong, L. Xiao, and B. Zhao,
\emph{Slopes of modular forms and geometry of eigencurves},
arXiv:2302.07697.

\bibitem{PasBM}
V. Pa\v sk\=unas,
\emph{On the Breuil--M\'ezard conjecture},
Duke Math. J. 164 (2015), 297--359.

\end{thebibliography}
\end{document}